\documentclass[english,letter paper,12pt,leqno]{article}
\usepackage{array}
\usepackage{stmaryrd}
\usepackage{amsmath, amscd, amssymb, mathrsfs, accents, amsfonts,amsthm}
\usepackage[all]{xy}
\xyoption{rotate}
\usepackage{dsfont}
\usepackage{bussproofs}
\usepackage{tikz}
\usepackage{epigraph}
\usepackage{tikz-cd}
\usepackage{bbm}
\usepackage{csquotes}

\definecolor{Myblue}{rgb}{0,0,0.6}
\usepackage[a4paper,colorlinks,citecolor=Myblue,linkcolor=Myblue,urlcolor=Myblue,pdfpagemode=None]{hyperref}
\usepackage[final]{showlabels} 

\SelectTips{cm}{}

\setlength{\evensidemargin}{0.1in}
\setlength{\oddsidemargin}{0.1in}
\setlength{\textwidth}{6.3in}
\setlength{\topmargin}{0.0in}
\setlength{\textheight}{8.5in}
\setlength{\headheight}{0in}

\newtheorem{theorem}{Theorem}[section]
\newtheorem{thm}[theorem]{Theorem}
\newtheorem{proposition}[theorem]{Proposition}
\newtheorem{lemma}[theorem]{Lemma}
\newtheorem{corollary}[theorem]{Corollary}

\newcommand{\tagarray}{\mbox{}\refstepcounter{equation}$(\theequation)$}

\newtheoremstyle{example}{\topsep}{\topsep}
	{}
	{}
	{\bfseries}
	{.}
	{2pt}
	{\thmname{#1}\thmnumber{ #2}\thmnote{ #3}}
	
	\theoremstyle{example}
	\newtheorem{definition}[theorem]{Definition}
	\newtheorem{example}[theorem]{Example}
	\newtheorem{remark}[theorem]{Remark}

\numberwithin{equation}{section}

\newcommand{\call}[1]{\mathcal{#1}}

\newcommand{\comment}[1]{}


\def\be{\begin{equation}}
\def\ee{\end{equation}}

\def\ldot{\,.\,}
\def\FV{\operatorname{FV}}

\def\typearrow{\rightarrow}
\def\imp{\supset}

\begin{document}

\def\ScoreOverhang{1pt}

\makeatletter
\DeclareRobustCommand{\rvdots}{%
  \vbox{
    \baselineskip4\p@\lineskiplimit\z@
    \kern-\p@
    \hbox{}\hbox{.}\hbox{.}\hbox{.}
  }}
\makeatother

\newcommand{\proofvdots}[1]{\overset{\displaystyle #1}{\rvdots}}
\def\Res{\res\!}
\newcommand{\ud}{\mathrm{d}}
\newcommand{\Ress}[1]{\res_{#1}\!}
\newcommand{\cat}[1]{\mathcal{#1}}
\newcommand{\lto}{\longrightarrow}
\newcommand{\xlto}[1]{\stackrel{#1}\lto}
\newcommand{\md}[1]{\mathscr{#1}}
\def\sus{\l}
\def\l{\,|\,}
\def\sgn{\textup{sgn}}
\def\samp{\zeta}
\def\Samp{Z}
\def\traff{N}
\newcommand{\bb}[1]{\mathbb{#1}}
\newcommand{\scr}[1]{\mathscr{#1}}

\title{Gentzen-Mints-Zucker duality}
\author{Daniel Murfet, William Troiani}

\maketitle

\begin{abstract}
The Curry-Howard correspondence is often described as relating proofs (in intutionistic natural deduction) to programs (terms in simply-typed lambda calculus). However this narrative is hardly a perfect fit, due to the computational content of cut-elimination and the logical origins of lambda calculus. We revisit Howard's work and interpret it as an isomorphism between a category of proofs in intuitionistic sequent calculus and a category of terms in simply-typed lambda calculus. In our telling of the story the fundamental duality is not between proofs and programs but between \emph{local} (sequent calculus) and \emph{global} (lambda calculus or natural deduction) points of view on a common logico-computational mathematical structure.

\end{abstract}

\tableofcontents

\setlength{\epigraphwidth}{0.85\textwidth}
\epigraph{There may, indeed, be other applications of the system than its use as a logic.}{A.~Church, \textsl{Postulates for the foundation of logic}}

\section{Introduction}

Sequent calculus and lambda calculus were both invented in the context of logical investigations, the former by Gentzen as a language of proofs \cite{gentzen} and the latter by Church as a language of functions \cite{church}. The computational content of these calculi emerged at different times, with the relevance of $\beta$-reduction of lambda terms to the emerging theory of computation being more quickly realised than the relevance of cut-elimination. By now it is clear that both calculi have logical and computational aspects, and that the two calculi are deeply related to one another. In this paper we revisit this relationship in the form of an isomorphism of categories (Theorem \ref{gentzen_mints_zucker})
\be\label{eq:curry_howard_duality}
\xymatrix@C+2pc{
F_\Gamma: \cat{S}_\Gamma \ar[r]^-{\cong} & \cat{L}_\Gamma
}
\ee
for each sequence $\Gamma$ of formulas (n\'ee types) where $\cat{S}_\Gamma$ is a category of proofs in intuitionistic sequent calculus (defined in Section \ref{section:sequent_calc}) and $\cat{L}_\Gamma$ is a category of simply-typed lambda terms (defined in Section \ref{section:lambda_calc}). Both categories have the same set of objects, viewed either as the formulas of intuitionistic propositional logic or simple types. The set of morphisms $\cat{S}_\Gamma(p,q)$ is the set of proofs of $\Gamma \vdash p \imp q$ up to an equivalence relation $\sim_p$ generated by cut-elimination transformations and commuting conversions together with a small number of additional natural relations, while $\cat{L}_\Gamma(p,q)$ is the set of simply-typed lambda terms of type $p \rightarrow q$ whose free variables have types taken from $\Gamma$, taken up to $\beta \eta$-equivalence. We refer to this isomorphism of categories and the normal form theorem which refines it (Theorem \ref{theorem:normal_form_prop}) as the \emph{Gentzen-Mints-Zucker duality} between sequent calculus and lambda calculus. The name reflects work by Zucker \cite{zucker} and Mints \cite{mints}, elaborated below.

A duality consists of two different points of view on the same object \cite{atiyah}. The greater the difference between the two points of view, the more informative is the duality which relates them. Such correspondences are important because two independent discoveries of the same structure is strong evidence that the structure is natural. The above duality is interesting precisely because sequent calculus proofs and lambda terms are not tautologically the same thing: for example the cut-elimination relations are fundamentally \emph{local} while the $\beta$-equivalence relation is \emph{global} (see Section \ref{section:comparison}). In this sense sequent calculus and lambda calculus are respectively local and global points of view on a common logico-computational mathematical structure.
\\

This duality is related to, but distinct from, the Curry-Howard correspondence. The precise relationship is elaborated in Section \ref{section:ch_intro} below, but broadly speaking it is captured by conceptual diagram of Figure \ref{figure:triumvirate}. The Curry-Howard correspondence gives a bijection between natural deduction proofs and lambda terms, while Gentzen-Mints-Zucker duality reveals that $\beta\eta$-normal lambda terms are \emph{normal forms} for sequent calculus proofs modulo an equivalence relation generated by pairs that are well-motivated from the point of view of the Brouwer-Heyting-Kolmogorov interpretation of intuitionistic proof.
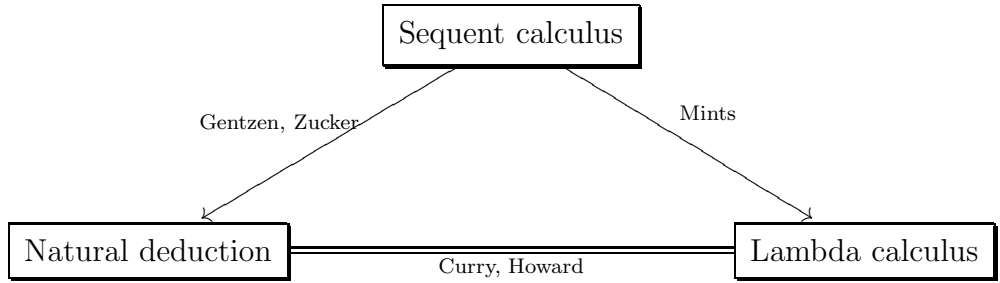
\begin{figure}
\begin{center}
\quad\xymatrix@C+1pc@R+3pc{
& *++[F-,]{\text{Sequent calculus}} \ar[dl(0.85)]_-[@!30]{\text{Gentzen, Zucker}} \ar[dr(0.85)]^-[@]{\text{Mints}}\\
*++[F-,]{\text{Natural deduction}} \ar@{=}[rr]_-[@]{\text{Curry, Howard}} & & *++[F-,]{\text{Lambda calculus}}
}
\end{center}
\caption{The relationship between three logico-computational calculi.}
\label{figure:triumvirate}
\end{figure}

\subsection{The Curry-Howard correspondence}\label{section:ch_intro}

The relationship between proofs and lambda terms (or ``programs'') has become widely known as the Curry-Howard correspondence following Howard's work \cite{howard}, although an informal understanding of the computational content of intuitionistic proofs has older roots in the Brouwer-Heyting-Kolmogorov interpretation \cite{troelstra}. The correspondence has been so influential in logic and computer science that the Curry-Howard correspondence as \emph{philosophy} now overshadows the Curry-Howard correspondence as a \emph{theorem}.

As a theorem, the Curry-Howard correspondence is the observation that the formulas of implicational propositional logic are the same as the types of simply-typed lambda calculus, and that there is a surjective map from the set of all proofs of a sequent $\Gamma \vdash \alpha$ in ``sequent calculus'' to the set of all lambda terms of type $\alpha$ with free variables in $\Gamma$ (due to Howard \cite[\S 3]{howard} building on ideas of Curry and Tait). This map is not a bijection, and as such does not represent the best possible statement about the relationship between proofs and lambda terms. The problem is that there are \emph{two} natural continuations of Howard's work, depending on how one interprets the somewhat vague notion of \emph{proof} in \cite[\S 1]{howard}. 

The vagueness is due to the fact that the system called ``sequent calculus'' by Howard is actually a hybrid of intuitionistic sequent calculus in the sense of Gentzen's LJ (there are weakening, exchange and contraction rules in Howard's system) and natural deduction in the sense of Gentzen \cite{gentzen} and Prawitz \cite{prawitz} (Howard has an elimination rule for implication rather than the left introduction rule of sequent calculus). The use of an elimination rule makes the connection to lambda terms straightforward, and the inclusion of structural rules places the correspondence in its proper context as a relationship between proofs with hypotheses and lambda terms with free variables \cite[\S 3]{howard}.

In resolving this ambiguity subsequent authors writing about the correspondence have almost universally decided that \emph{proof} means \emph{natural deduction proof}; see for example \cite[\S 4.8,\S 7.4, \S 7.6]{sorensen} and \cite{wadler}. The correspondence may then be interpreted as a bijection between lambda terms and natural deduction proofs \cite[\S 6.5]{selinger}. This bijection represents the natural conclusion of one line of development starting with \cite{howard} and henceforth we refer to this bijection as the \emph{Curry-Howard correspondence}. Despite its philosophical importance, this correspondence is not \emph{mathematically} of great interest, because natural deduction and lambda calculus are so similar that the bijection is close to tautological (in the case of closed terms it is left as an exercise in one standard text \cite[Ex. 4.8]{sorensen}). 
\\

In the present work we investigate the second natural continuation of \cite{howard}, which takes seriously the structural rules in Howard's ``sequent calculus'' and seeks to give a bijection between sequent calculus proofs and lambda terms. As soon as explicit structural rules are introduced into proofs, however, there will be multiple proofs that map to the same lambda term, and so for there to be a bijection between proofs and lambda terms, \emph{proof} must mean \emph{equivalence class of preproofs modulo some relation}. If this relation is simply ``maps to the same lambda term'' then what we have constructed is merely a surjective map from proofs to lambda terms, which is hardly more than what is in \cite{howard}. Hence in this second line of thought, the \emph{identity of proofs} becomes a central concern. 

Consequently one of the contributions of this paper is to give explicit generating relations for a relation $\sim_p$ on preproofs such that $\pi_1 \sim_p \pi_2$ if and only if $F_\Gamma(\pi_1) = F_\Gamma(\pi_2)$, and to give a logical justification of these relations independent of the translation to lambda terms. This establishes $\cat{S}_\Gamma$ as a mathematical structure in its own right, so that the comparison to $\cat{L}_\Gamma$ may be meaningfully referred to as a duality. 
\\

Given the Curry-Howard correspondence, the identity of proofs is closely related to the old problem of when two sequent calculus proofs map to the same natural deduction under the translation defined by defined by Gentzen \cite{gentzen} (see Remark \ref{remark:translation_who}). This has been studied by various authors, most notably Zucker \cite{zucker}, Pottinger \cite{pottinger}, Dyckhoff-Pinto \cite{dyckhoffpinto}, Mints \cite{mints} and Kleene \cite{kleene}. The most important results are those obtained by Zucker and Mints, and we restrict ourselves here to comments on their work; see also Section \ref{section:related_work}.

There is substantial overlap between our main results and those of Zucker and Mints, which we became aware of after this paper had been completed. In \cite{zucker} Zucker gives a set of generating relations characterising when two sequent calculus proofs map to the same natural deduction, for a calculus that does not contain weakening and exchange. The main content of Theorem \ref{gentzen_mints_zucker} also lies in identifying an explicit set of generating relations on preproofs, for the map from sequent calculus proofs to lambda terms in a system of sequent calculus that is (as far as is possible for a system that must be translated unambiguously to lambda terms) as close as possible to Gentzen's LJ. As far as we know, this paper is the first place that the generating relations have been established for Gentzen's LJ with all structural rules.

Mints, using ideas of Kleene \cite{kleene}, identifies a set of normal forms of sequent calculus proofs and studies them using the map from proofs to lambda terms. Our proof of the main theorem (Theorem \ref{gentzen_mints_zucker}) relies on the identification of normal forms, which differ slightly from those of Mints (see also Theorem \ref{theorem:normal_form_prop}). Again we treat a standard form of LJ, whereas \cite{mints} follows Kleene's system G \cite{kleene} in the form of its $(L \imp)$ rule.

Finally, since we must argue that $\cat{S}_\Gamma$ has an independent existence in order for the duality to be a relationship between equals we are committed to mounting a purely logical defense of all the generating relations of $\sim_p$. This is not a concern shared by Zucker, Mints or Kleene. The most interesting generating relations are those that we call $\lambda$-equivalences (Definition \ref{inefficiency}) which are justified on the grounds that they represent an internal Brouwer-Heyting-Kolmogorov interpretation, see the discussion preceding Definition \ref{inefficiency} and Section \ref{section:internal_bhk}.



\section{Sequent Calculus}\label{section:sequent_calc}


There is an infinite set of atomic formulas and if $p$ and $q$ are formulas then so is $p \imp q$. Let $\Psi_\imp$ denote the set of all formulas. For each formula $p$ let $Y_p$ be an infinite set of variables associated with $p$. For distinct formulas $p,q$ the sets $Y_p, Y_q$ are disjoint. We write $x : p$ for $x \in Y_p$ and say \emph{$x$ has type $p$}. Let $\call{P}^n$ be the set of all length $n$ sequences of variables with $\call{P}^0 := \lbrace \varnothing \rbrace$, and $\call{P} := \cup_{n = 0}^\infty \call{P}^n$. A \emph{sequent} is a pair $(\Gamma,p)$ where $\Gamma \in \call{P}$ and $p \in \Psi_\imp$, written $\Gamma \vdash p$. We call $\Gamma$ the \emph{antecedent} and $p$ the \emph{succedent} of the sequent. Given $\Gamma$ and a variable $x:p$ we write $\Gamma, x:p$ for the element of $\call{P}$ given by appending $x:p$ to the end of $\Gamma$. A variable $x:p$ may occur more than once in a sequent.

Our intuitionistic sequent calculus is the system LJ of \cite[\S III]{gentzen} restricted to implication, with formulas in the antecedent tagged with variables and a more liberal set of deduction rules (see Remark \ref{remark:liberal_LJ}). We follow the convention of \cite[\S 5.1]{girard} in grouping $(\operatorname{ax})$ and $(\operatorname{cut})$ together rather than including the latter in the structural rules.

\begin{definition}\label{defn:deduction_rules}
A \emph{deduction rule} results from one of the schemata below by a substitution of the following kind: replace $p,q,r$ by arbitrary formulas, $x,y$ by arbitrary variables, and $\Gamma, \Delta, \Theta$ by arbitrary (possibly empty) sequences of formulas separated by commas:
\label{sequentcalc} 
\begin{itemize}
    \item the \textbf{identity group}:
    \begin{itemize}
        \item \textbf{Axiom}:
        \begin{prooftree}
        \AxiomC{}
        \RightLabel{$({\operatorname{ax}})$}
        \UnaryInfC{$x:p \vdash p$}
        \end{prooftree}
        \item
        \textbf{Cut}:
        \begin{prooftree}
        \AxiomC{$\Gamma \vdash p$}
        \AxiomC{$\Delta, x:p,\Theta \vdash q$}
        \RightLabel{$({\operatorname{cut}})$}
        \BinaryInfC{$\Gamma, \Delta, \Theta \vdash q$}
        \end{prooftree}
    \end{itemize}
    \item the \textbf{structural rules}:
    \begin{itemize}
        \item \textbf{Contraction}:
        \begin{prooftree}
        \AxiomC{$\Gamma, x:p, y:p, \Delta \vdash q$}
        \RightLabel{$({\operatorname{ctr}})$}
        \UnaryInfC{$\Gamma, x:p, \Delta \vdash q$}
        \end{prooftree}
        \item \textbf{Weakening}:
        \begin{prooftree}
        \AxiomC{$\Gamma, \Delta \vdash q$}
        \RightLabel{$({\operatorname{weak}})$}
        \UnaryInfC{$\Gamma, x:p, \Delta \vdash q$}
        \end{prooftree}
        \item \textbf{Exchange}:
        \begin{prooftree}
        \AxiomC{$\Gamma, x:p,y:q, \Delta \vdash r$}
        \RightLabel{$({\operatorname{ex}})$}
        \UnaryInfC{$\Gamma, y:q,x:p, \Delta \vdash r$}
        \end{prooftree}
    \end{itemize}
    \item the \textbf{logical rules}:
    \begin{itemize}
        \item 
        \textbf{Right introduction}:
        \begin{prooftree}
        \AxiomC{$\Gamma, x:p, \Delta \vdash q$}
        \RightLabel{$(R\imp)$}
        \UnaryInfC{$\Gamma, \Delta \vdash p \imp q $}
        \end{prooftree}
        \item
        \textbf{Left introduction}:
        \begin{prooftree}
        \AxiomC{$\Gamma \vdash p$}
        \AxiomC{$\Delta, x:q, \Theta \vdash r$}
        \RightLabel{$(L \imp)$}
        \BinaryInfC{$y: p \imp q, \Gamma, \Delta, \Theta \vdash r$}
        \end{prooftree}
    \end{itemize}
\end{itemize}
\end{definition}

\begin{definition}
A \emph{preproof} is a finite rooted planar tree where each edge is labelled by a sequent and each node except for the root is labelled by a valid deduction rule. If the edge connected to the root is labelled by the sequent $\Gamma \vdash p$ then we call the preproof a \emph{preproof of $\Gamma \vdash p$}.
\end{definition}

Observe that the only valid label for a leaf node is an axiom rule, so a preproof reads from the leaves to the root as a deduction of $\Gamma \vdash p$ from axiom rules.

\begin{example}\label{example:church_2} Here is the Church numeral $\underline{2}$ in our sequent calculus
\begin{prooftree}
        \AxiomC{}
        \RightLabel{$({\operatorname{ax}})$}
        \UnaryInfC{$x:p \vdash p$}
        \AxiomC{}
        \RightLabel{$({\operatorname{ax}})$}
        \UnaryInfC{$x:p \vdash p$}
        \AxiomC{}
        \RightLabel{$({\operatorname{ax}})$}
        \UnaryInfC{$x:p \vdash p$}
        \RightLabel{$(L \imp)$}
        \BinaryInfC{$y': p \imp p, x:p \vdash p$}
        \RightLabel{$(L \imp)$}
        \BinaryInfC{$y: p \imp p, y': p \imp p, x:p \vdash p$}
        \RightLabel{$(\operatorname{ctr})$}
        \UnaryInfC{$y: p \imp p, x:p \vdash p$}
        \RightLabel{$(R \imp)$}
        \UnaryInfC{$y:p \imp p \vdash p \imp p$}
\end{prooftree}
\end{example}

\begin{remark} Multiple occurrences of a deduction rule are communicated in the notation with doubled horizontal lines. For example if $\Gamma = x_1: p_1, \ldots, x_n: p_n$ then the preproof
\begin{prooftree}
        \AxiomC{}
        \RightLabel{$({\operatorname{ax}})$}
        \UnaryInfC{$y:q \vdash q$}
        \doubleLine
        \RightLabel{$(\operatorname{weak})$}
        \UnaryInfC{$\Gamma, y: q \vdash q$}
\end{prooftree}
weakens in every formula in the sequence. The doubled horizontal line therefore stands for $n$ occurrences of the rule $(\operatorname{weak})$. The preproofs which perform these weakenings in a different order are, of course, not equal as preproofs, so the notation is an abuse. We will only use it below in the context of defining generating pairs of equivalence relations in cases where any reading of this notation leads to the same equivalence relation.
\end{remark}

\begin{remark}\label{remark:liberal_LJ}
A deduction rule is \emph{strict} if it is an arbitrary $(\operatorname{ax})$ or $(\operatorname{ex})$ rule, or it is one of the other rules and the occurrence of $x:p$ in the rule is leftmost in the antecedent. A \emph{strict preproof} is a preproof in which every deduction rule is strict. These are the deduction rules and preproofs of Gentzen's original sequent calculus \cite[\S III]{gentzen}. A general deduction rule is clearly derivable from the strict rules by exchange, and so we may choose to view non-strict deduction rules as derived rules; see Lemma \ref{lemma:strict_vs_liberal}.

We adopt the more liberal rules since they make the commuting conversions, cut-elimination transformations and the proof of cut-elimination easier to present. A similar calculus is adopted, for similar reasons, in \cite{cutctr} and elsewhere.
\end{remark}

\begin{remark}\label{remark:where_left} We follow Gentzen \cite[\S III]{gentzen} in putting the variable introduced by a $(L \imp)$ rule at the first position in the antecedent. This choice is correct from the point of view of the relationship between sequent calculus proofs and lambda terms, as may be seen in Lemma \ref{lemma:preimage_app} and Section \ref{section:normal_form_seq}.
\end{remark}

When should two preproofs be considered to be the \emph{same} proof? Clearly some of the structure of a preproof is logically insignificant, but it is by no means trivial to identify a precise notion of \emph{proof} as separate from \emph{preproof}. Historically, logic has concerned itself primarily with the provability of sequents $\Gamma \vdash p$ rather than the structure of the set of all preproofs, but as proof theory has developed the question of the identity of proofs has acquired increasing importance; see Ungar \cite{ungar} and Prawitz \cite[\S 4.3]{prawitz_phil}.

We say that a relation $\sim$ on the set of preproofs satisfies condition (C0) if $\pi_1 \sim \pi_2$ implies $\pi_1, \pi_2$ are preproofs of the same sequent. The relation satisfies condition (C1) if it satisfies (C0) and $\pi_1 \sim \pi_2$ implies $\pi_1' \sim \pi_2'$ where $\pi_1', \pi_2'$ are the result of applying the same deduction rule to $\pi_1, \pi_2$ respectively. For example if $\sim$ satisfies (C1) and $\pi_1 \sim \pi_2$ then
\begin{center}
\AxiomC{$\pi_1$}
\noLine
\UnaryInfC{$\vdots$}
\noLine
\UnaryInfC{$\Gamma, \Delta \vdash p$}
\RightLabel{$({\operatorname{weak}})$}
\UnaryInfC{$\Gamma,x:p, \Delta \vdash q$}
\DisplayProof
$\sim$
\AxiomC{$\pi_2$}
\noLine
\UnaryInfC{$\vdots$}
\noLine
\UnaryInfC{$\Gamma, \Delta \vdash p$}
\RightLabel{$({\operatorname{weak}})$}
\UnaryInfC{$\Gamma,x:p, \Delta \vdash q$}
\DisplayProof
\end{center}
Condition (C2) is defined using the following schematics:
    \begin{center}
    \begin{tabular}{ >{\centering}m{6cm} >{\centering}m{6cm} >{\centering}m{0.5cm}}
        \AxiomC{$\pi_i$}
        \noLine
        \UnaryInfC{$\vdots$}
        \noLine
        \UnaryInfC{$\Gamma \vdash p$}
        \AxiomC{$\rho_i$}
        \noLine
        \UnaryInfC{$\vdots$}
        \noLine
        \UnaryInfC{$\Delta, x:p,\Theta \vdash q$}
        \RightLabel{$({\operatorname{cut}})$}
        \BinaryInfC{$\Gamma, \Delta, \Theta \vdash q$}
        \DisplayProof
        &
        \AxiomC{$\pi_i$}
        \noLine
        \UnaryInfC{$\vdots$}
        \noLine
        \UnaryInfC{$\Gamma \vdash p$}
        \AxiomC{$\rho_i$}
        \noLine
        \UnaryInfC{$\vdots$}
        \noLine
        \UnaryInfC{$\Delta, x:q, \Theta \vdash r$}
        \RightLabel{$(L \imp)$}
        \BinaryInfC{$y: p \imp q, \Gamma, \Delta, \Theta \vdash r$}
        \DisplayProof
        &
        \tagarray{\label{condition_c2}}
    \end{tabular}
    \end{center}
We say that a relation $\sim$ on the set of preproofs satisfies condition (C2) if it satisfies (C0) and whenever $\pi_1 \sim \pi_2$ and $\rho_1 \sim \rho_2$ then also $\kappa_1 \sim \kappa_2$ where $\kappa_i$ for $i \in \{1,2\}$ is obtained from the pair $(\pi_i, \rho_i)$ by application of one of the deduction rules in \eqref{condition_c2}.

\begin{definition}
A relation $\sim$ on preproofs is \emph{compatible} if it satisfies (C0),(C1),(C2).
\end{definition}

An \emph{occurrence} of $x:p$ in a preproof $\pi$ is an occurrence in the antecedent $\Gamma$ of a sequent labelling some edge of $\pi$. Some occurrences are related by the flow of information in the preproof, and some are not. More precisely:

\begin{definition}[(Ancestors)]
An occurrence of $z_1:s$ in a preproof $\pi$ is an \emph{immediate strong ancestor} (resp. \emph{immediate weak ancestor}) of an occurrence $z_2:s$ if there is a deduction rule in $\pi$ where $z_1:s$ is in the numerator and $z_2:s$ is in the denominator, and one of the following holds (referring to the schemata in Definition \ref{defn:deduction_rules}):
\begin{itemize}
\item[(i)] $z_1:s, z_2:s$ are in the same position of $\Gamma, \Delta, \Theta$ in the numerator and denominator.
\item[(ii)] the rule is $(\operatorname{ctr})$, $z_1:s$ is the first of the two variables being contracted (resp. $z_1:s$ is either of the variables being contracted) and $z_2:s$ is the result of that contraction.
\item[(iii)] the rule is $(\operatorname{ex})$ and either $z_1:s = x:p, z_2:s = x:p$ or $z_1:s = y:p, z_2:s = y:p$.
\end{itemize}
One occurrence $z:s$ is a \emph{strong ancestor} (resp. \emph{weak ancestor}) of another $z':s$ if there is a sequence $z:s = z_1:s, \ldots, z_n:s = z':s$ of occurrences in $\pi$ with $z_i:s$ an immediate strong (resp. weak) ancestor of $z_{i+1}:s$ for $1 \le i < n$.
\end{definition}

Note that if $z_1:s$ is a strong ancestor of $z_2:s$ then $z_1 = z_2$ but this is not necessarily true for weak ancestors.

\begin{definition}\label{defn:approx} Let $\approx_{str}$ (resp. $\approx_{wk}$) denote the equivalence relation on the set of variable occurrences generated by the strong (resp. weak) ancestor relation.
\end{definition}

\begin{definition}[(Ancestor substitution)]\label{defn:ancestor_sub}
Let $x:p$ be an occurrence of a variable in a preproof $\pi$ and $y:p$ another variable. We denote by $\operatorname{subst}^{str}(\pi, x, y)$ the preproof obtained from $\pi$ by replacing the occurrence $x:p$ and all its strong ancestors by $y$.
\end{definition}

\begin{example}\label{example:weak_ancestor_2} In the preproof $\underline{2}$ of Example \ref{example:church_2} the partition of variable occurrences according to the equivalence relation $\approx_{str}$ is shown by colours in
\begin{prooftree}
        \AxiomC{}
        \RightLabel{$({\operatorname{ax}})$}
        \UnaryInfC{$\textcolor{red}{x:p} \vdash p$}
        \AxiomC{}
        \RightLabel{$({\operatorname{ax}})$}
        \UnaryInfC{$\textcolor{magenta}{x:p} \vdash p$}
        \AxiomC{}
        \RightLabel{$({\operatorname{ax}})$}
        \UnaryInfC{$\textcolor{green}{x:p} \vdash p$}
        \RightLabel{$(L \imp)$}
        \BinaryInfC{$\textcolor{blue}{y': p \imp p}, \textcolor{magenta}{x:p} \vdash p$}
        \RightLabel{$(L \imp)$}
        \BinaryInfC{$\textcolor{cyan}{y: p \imp p}, \textcolor{blue}{y': p \imp p}, \textcolor{red}{x:p} \vdash p$}
        \RightLabel{$(\operatorname{ctr})$}
        \UnaryInfC{$\textcolor{cyan}{y: p \imp p}, \textcolor{red}{x:p} \vdash p$}
        \RightLabel{$(R \imp)$}
        \UnaryInfC{$\textcolor{cyan}{y:p \imp p} \vdash p \imp p$}
\end{prooftree}
and the partition according to $\approx_{wk}$ in
\begin{prooftree}
        \AxiomC{}
        \RightLabel{$({\operatorname{ax}})$}
        \UnaryInfC{$\textcolor{red}{x:p} \vdash p$}
        \AxiomC{}
        \RightLabel{$({\operatorname{ax}})$}
        \UnaryInfC{$\textcolor{magenta}{x:p} \vdash p$}
        \AxiomC{}
        \RightLabel{$({\operatorname{ax}})$}
        \UnaryInfC{$\textcolor{green}{x:p} \vdash p$}
        \RightLabel{$(L \imp)$}
        \BinaryInfC{$ \textcolor{cyan}{y': p \imp p}, \textcolor{magenta}{x:p} \vdash p$}
        \RightLabel{$(L \imp)$}
        \BinaryInfC{$\textcolor{cyan}{y: p \imp p}, \textcolor{cyan}{y': p \imp p}, \textcolor{red}{x:p} \vdash p$}
        \RightLabel{$(\operatorname{ctr})$}
        \UnaryInfC{$\textcolor{cyan}{y: p \imp p}, \textcolor{red}{x:p} \vdash p$}
        \RightLabel{$(R \imp)$}
        \UnaryInfC{$\textcolor{cyan}{y:p \imp p} \vdash p \imp p$}
\end{prooftree}
\end{example}

In our preproofs we have tags, in the form of variables, for hypotheses. Since the precise nature of these tags is immaterial, if the variable is eliminated in a $(R \imp)$, $(L \imp)$,$(\operatorname{ctr})$ or $(\operatorname{cut})$ rule the identity of the proof should be independent of the tag. 

\begin{definition}[($\alpha$-equivalence)]
\label{alphaequivalence}
We define $\sim_\alpha$ to be the smallest compatible equivalence relation on preproofs such that
\begin{center}
    \begin{tabular}{ >{\centering}m{6cm} >{\centering}m{0.5cm} >{\centering}m{6cm} >{\centering}m{0.5cm}}   
        \AxiomC{$\Gamma \vdash p$}
        \AxiomC{$\pi$}
    \noLine
    \UnaryInfC{$\vdots$}
    \noLine
        \UnaryInfC{$\Delta, x:p,\Theta \vdash q$}
        \RightLabel{$({\operatorname{cut}})$}
        \BinaryInfC{$\Gamma, \Delta, \Theta \vdash q$}
        \DisplayProof
        &
        $\sim_\alpha$
        &
        \AxiomC{$\Gamma \vdash p$}
        \AxiomC{$\operatorname{subst}^{str}(\pi,x,y)$}
    \noLine
    \UnaryInfC{$\vdots$}
    \noLine
        \UnaryInfC{$\Delta, y:p,\Theta \vdash q$}
        \RightLabel{$({\operatorname{cut}})$}
        \BinaryInfC{$\Gamma, \Delta, \Theta \vdash q$}
        \DisplayProof
        &
        \tagarray{\label{alpha_cut}}
    \end{tabular}
    \end{center}
    
    \begin{center}
    \begin{tabular}{ >{\centering}m{6cm} >{\centering}m{0.5cm} >{\centering}m{6cm} >{\centering}m{0.5cm}}   
    \AxiomC{$\pi$}
    \noLine
    \UnaryInfC{$\vdots$}
    \noLine 
        \UnaryInfC{$\Gamma, x:p, y:p, \Delta \vdash q$}
        \RightLabel{$({\operatorname{ctr}})$}
        \UnaryInfC{$\Gamma, x:p, \Delta \vdash q$}
        \DisplayProof
        &
        $\sim_\alpha$
        &
        \AxiomC{$\operatorname{subst}^{str}(\pi,y,z)$}
    \noLine
    \UnaryInfC{$\vdots$}
    \noLine 
        \UnaryInfC{$\Gamma, x:p, z:p, \Delta \vdash q$}
        \RightLabel{$({\operatorname{ctr}})$}
        \UnaryInfC{$\Gamma, x:p, \Delta \vdash q$}
        \DisplayProof
        &
        \tagarray{\label{alpha_ctr}}
    \end{tabular}
    \end{center}
    
    \begin{center}
    \begin{tabular}{ >{\centering}m{6cm} >{\centering}m{0.5cm} >{\centering}m{6cm} >{\centering}m{0.5cm}}
    \AxiomC{$\pi$}
    \noLine
    \UnaryInfC{$\vdots$}
    \noLine
    \UnaryInfC{$x:p, \Gamma\vdash q$}
    \RightLabel{$(R\imp)$}
    \UnaryInfC{$\Gamma \vdash p \imp q$}
    \DisplayProof
    &
    $\sim_\alpha$
    &
    \AxiomC{$\operatorname{subst}^{str}(\pi, x, y)$}
    \noLine
    \UnaryInfC{$\vdots$}
    \noLine
    \UnaryInfC{$y:p, \Gamma \vdash q$}
    \RightLabel{$(R\imp)$}
    \UnaryInfC{$\Gamma \vdash p \imp q$}
    \DisplayProof
            &
        \tagarray{\label{alpha_R}}
    \end{tabular}
    \end{center}
    
    \begin{center}
    \begin{tabular}{ >{\centering}m{6cm} >{\centering}m{0.5cm} >{\centering}m{6cm} >{\centering}m{0.5cm}}
    
    \AxiomC{$\Gamma \vdash p$}
    \AxiomC{$\pi$}
    \noLine
    \UnaryInfC{$\vdots$}
    \noLine
    \UnaryInfC{$\Delta, x:q, \Theta \vdash r$}
    \RightLabel{$(L \imp)$}
    \BinaryInfC{$z: p \imp q, \Gamma, \Delta, \Theta \vdash r$}
    \DisplayProof
    &
    $\sim_\alpha$
    &
\AxiomC{$\Gamma \vdash p$}
    \AxiomC{$\operatorname{subst}^{str}(\pi,x,y)$}
    \noLine
    \UnaryInfC{$\vdots$}
    \noLine
    \UnaryInfC{$\Delta, y:q, \Theta \vdash r$}
    \RightLabel{$(L \imp)$}
    \BinaryInfC{$z: p \imp q, \Gamma, \Delta, \Theta \vdash r$}
    \DisplayProof
            &
        \tagarray{\label{alpha_L}}
    \end{tabular}
    \end{center}
for any proof $\pi$ and variables $x,y,z$ of the same type.
\end{definition}

\begin{remark} The generating relation \eqref{alpha_L} of Definiton \ref{alphaequivalence} is to be read as a pair of preproofs $(\psi,\psi') \in \; \sim_\alpha$ where both preproofs have final sequent $z: p \imp q, \Gamma, \Delta, \Theta \vdash r$ and the branch ending in $\Gamma \vdash p$ is any preproof (but it is the same preproof in both $\psi$ and $\psi'$). To avoid clutter we will not label branches, here or elsewhere, if it is clear how to match up the branches in the two preproofs involved in the relation.
\end{remark}

The price for our more liberal deduction rules is the inclusion of $\tau$-equivalences below, which express that two instances of the same deduction rule, operating in different places, are essentially the same.

\begin{definition}[($\tau$-equivalence)]
\label{tauequivalence} We define $\sim_{\tau}$ to be the smallest compatible equivalence relation on preproofs satisfying
    \begin{center}
    \begin{tabular}{ >{\centering}m{6cm} >{\centering}m{0.5cm} >{\centering}m{6cm} >{\centering}m{0.5cm}}
        \AxiomC{$\Gamma \vdash p$}
        \AxiomC{$\Delta, x:p,y:q,\Theta \vdash q$}
        \RightLabel{$({\operatorname{cut}})$}
        \BinaryInfC{$\Gamma, \Delta, y:q, \Theta \vdash q$}
        \DisplayProof
        &
        $\sim_\tau$
        &
        \AxiomC{$\Gamma \vdash p$}
        \AxiomC{$\Delta, x:p,y:q,\Theta \vdash q$}
        \RightLabel{$(\operatorname{ex})$}
        \UnaryInfC{$\Delta, y:q, x:p, \Theta \vdash q$}
        \RightLabel{$({\operatorname{cut}})$}
        \BinaryInfC{$\Gamma, \Delta, y:q, \Theta \vdash q$}
        \DisplayProof
        &
        \tagarray{\label{tau_cut}}
    \end{tabular}
    \end{center}
    
    \begin{center}
    \begin{tabular}{ >{\centering}m{6cm} >{\centering}m{0.5cm} >{\centering}m{6cm} >{\centering}m{0.5cm}}
        \AxiomC{$\Gamma, x:p, x':p, y:q, \Gamma' \vdash r$}
        \RightLabel{$(\operatorname{ctr})$}
        \UnaryInfC{$\Gamma, x:p, y:q, \Gamma' \vdash r$}
        \RightLabel{$(\operatorname{ex})$}
        \UnaryInfC{$\Gamma, y:q, x:p, \Gamma' \vdash r$}
        \DisplayProof
        &
        $\sim_\tau$
        &
        \AxiomC{$\Gamma, x:p, x':p, y: q, \Gamma' \vdash r$}
        \doubleLine
        \RightLabel{$(\operatorname{ex})$}
        \UnaryInfC{$\Gamma, y:q, x:p, x':p, \Gamma' \vdash r$}
        \RightLabel{$\operatorname{(ctr)}$}
        \UnaryInfC{$\Gamma, y:q, x:p, \Gamma' \vdash r$}
        \DisplayProof
        &
        \tagarray{\label{tau_ctr_ex}}
    \end{tabular}
    \end{center}

\begin{center}
\begin{tabular}{ >{\centering}m{6cm} >{\centering}m{0.5cm} >{\centering}m{6cm} >{\centering}m{0.5cm}} 
         \AxiomC{$\Gamma, x:p, \Gamma' \vdash q$}
         \RightLabel{$(\operatorname{weak})$}
         \UnaryInfC{$\Gamma, x:p, y:r, \Gamma' \vdash q$}
         \RightLabel{$(\operatorname{ex})$}
         \UnaryInfC{$\Gamma, y:r, x:p, \Gamma' \vdash q$}
         \DisplayProof
         & $\sim_\tau$ &
         \AxiomC{$\Gamma, x:p, \Gamma' \vdash q$}
         \RightLabel{$(\operatorname{weak})$}
         \UnaryInfC{$\Gamma, y:r, x:p, \Gamma' \vdash q$}
         \DisplayProof
         &
         \tagarray{\label{tau_weak_ex}}
\end{tabular}
\end{center}

    \begin{center}
    \begin{tabular}{>{\centering}m{6cm} >{\centering}m{0.5cm} >{\centering}m{6cm} >{\centering}m{0.5cm}}    
            \AxiomC{$\Gamma, x:p, z:r,\Gamma' \vdash s$}
            \RightLabel{$(R \imp)$}
            \UnaryInfC{$\Gamma, x:p, \Gamma' \vdash r \imp s$}
            \DisplayProof
            &
            $\sim_\tau$
            &
            \AxiomC{$\Gamma, x:p, z:r, \Gamma' \vdash s$}
            \RightLabel{$(\operatorname{ex})$}
            \UnaryInfC{$\Gamma, z:r, x:p, \Gamma' \vdash s$}
            \RightLabel{$(R \imp)$}
            \UnaryInfC{$\Gamma, x:p, \Gamma' \vdash r \imp s$}
            \DisplayProof
            &
            \tagarray{\label{tau_R_ex}}
    \end{tabular}
    \end{center}

    \begin{center}
    \begin{tabular}{>{\centering}m{10cm} >{\centering}m{0.5cm}}    
            \AxiomC{$\Gamma \vdash p$}
        \AxiomC{$\Delta, x:q, z:r, \Delta' \vdash r$}
        \RightLabel{$(L \imp)$}
        \BinaryInfC{$y: p\imp q, \Gamma,\Delta, z:r, \Delta' \vdash r$}
            \DisplayProof\\\vspace{0.5cm}
            $\sim_\tau$\\\vspace{0.5cm}
            \AxiomC{$\Gamma \vdash p$}
        \AxiomC{$\Delta, x:q, z:r, \Delta' \vdash r$}
        \RightLabel{$(\operatorname{ex})$}
        \UnaryInfC{$\Delta, z:r, x:q, \Delta' \vdash r$}
        \RightLabel{$(L \imp)$}
        \BinaryInfC{$y: p\imp q, \Gamma,\Delta, z:r, \Delta' \vdash r$}
            \DisplayProof
            &
            \tagarray{\label{tau_L_ex2}}
    \end{tabular}
    \end{center}
    
\begin{center}
\begin{tabular}{ >{\centering}m{10cm} >{\centering}m{0.5cm}} 
\AxiomC{$x_1: p_1, \ldots, x_n : p_n \vdash q$}
        \doubleLine
        \RightLabel{$(\operatorname{ex}, \sigma_1, \ldots, \sigma_r)$}
        \UnaryInfC{$x_{\tau 1}: p_{\tau 1}, \ldots, x_{\tau n}: p_{\tau n} \vdash q$}
        \DisplayProof\\\vspace{0.5cm}
        $\sim_\tau$\\\vspace{0.5cm}
        \AxiomC{$x_1: p_1, \ldots, x_n : p_n \vdash q$}
        \doubleLine
        \RightLabel{$(\operatorname{ex}, \rho_1, \ldots, \rho_s)$}
        \UnaryInfC{$x_{\tau 1}: p_{\tau 1}, \ldots, x_{\tau n}: p_{\tau n} \vdash q$}
        \DisplayProof
        &
        \tagarray{\label{tau_ex_ex}}
\end{tabular}
\end{center}
where $\tau$ is a permutation and $\sigma_1, \ldots, \sigma_r$ and $\rho_1,\ldots,\rho_s$ are sequences of transpositions of consecutive positions (one or both lists may be empty) with the property that $\sigma_1 \cdots \sigma_r = \tau = \rho_1 \cdots \rho_s$ in the permutation group. The two preproofs in \eqref{tau_ex_ex} are respectively the sequences of exchanges corresponding to the $\sigma_i$ and $\rho_j$.
\end{definition}

\begin{remark} Note that we only include the $\tau$-equivalences with ``right to left'' exchanges, since the other possible relation follows from these and \eqref{tau_ex_ex}, for example:
    \begin{center}
    \begin{tabular}{ >{\centering}m{6cm} >{\centering}m{0.5cm} >{\centering}m{6cm}}
        \AxiomC{$\Gamma, y:q, x:p, x':p, \Gamma' \vdash r$}
        \RightLabel{$(\operatorname{ctr})$}
        \UnaryInfC{$\Gamma, y:q, x:p, \Gamma' \vdash r$}
        \RightLabel{$(\operatorname{ex})$}
        \UnaryInfC{$\Gamma, x:p, y:q, \Gamma' \vdash r$}
        \DisplayProof
        &
        $\sim_\tau$
        &
        \AxiomC{$\Gamma, y:q, x:p, x':p, \Gamma' \vdash r$}
        \RightLabel{$(\operatorname{ex})$}
        \doubleLine
        \UnaryInfC{$\Gamma, x:p, x':p, y:q, \Gamma' \vdash r$}
        \RightLabel{$(\operatorname{ex})$}
        \doubleLine
        \UnaryInfC{$\Gamma, y:q, x:p, x':p, \Gamma' \vdash r$}
        \RightLabel{$(\operatorname{ctr})$}
        \UnaryInfC{$\Gamma, y:q, x:p, \Gamma' \vdash r$}
        \RightLabel{$(\operatorname{ex})$}
        \UnaryInfC{$\Gamma, x:p, y:q, \Gamma' \vdash r$}
        \DisplayProof
    \end{tabular}
    \end{center}
    \begin{center}
    \begin{tabular}{ >{\centering}m{6cm} >{\centering}m{0.5cm} >{\centering}m{6cm}}
        &
        $\sim_\tau$
        &
        \AxiomC{$\Gamma, y:q, x:p, x':p, \Gamma' \vdash r$}
        \RightLabel{$(\operatorname{ex})$}
        \doubleLine
        \UnaryInfC{$\Gamma, x:p, x':p, y:q, \Gamma' \vdash r$}
        \RightLabel{$(\operatorname{ctr})$}
        \UnaryInfC{$\Gamma, x:p, y:q, \Gamma' \vdash r$}
        \RightLabel{$(\operatorname{ex})$}
        \UnaryInfC{$\Gamma, y:q, x:p, \Gamma' \vdash r$}
        \RightLabel{$(\operatorname{ex})$}
        \UnaryInfC{$\Gamma, x:p, y:q, \Gamma' \vdash r$}
        \DisplayProof
    \end{tabular}
    \end{center}
    \begin{center}
    \begin{tabular}{ >{\centering}m{6cm} >{\centering}m{0.5cm} >{\centering}m{6cm}}
        &
        $\sim_\tau$
        &
        \AxiomC{$\Gamma, y:q, x:p, x':p, \Gamma' \vdash r$}
        \RightLabel{$(\operatorname{ex})$}
        \doubleLine
        \UnaryInfC{$\Gamma, x:p, x':p, y:q, \Gamma' \vdash r$}
        \RightLabel{$(\operatorname{ctr})$}
        \UnaryInfC{$\Gamma, x:q, y:q, \Gamma' \vdash r$}
        \DisplayProof
    \end{tabular}
    \end{center}
\end{remark}

\begin{lemma}\label{lemma:strict_vs_liberal} Every preproof is equivalent under $\sim_\tau$ to a strict preproof.
\end{lemma}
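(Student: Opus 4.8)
The plan is to induct on the structure of a preproof $\pi$ of $\Gamma \vdash p$, showing that $\pi$ is $\sim_\tau$-equivalent to a strict preproof. Recall that a deduction rule fails to be strict precisely when it is a $(\operatorname{ctr})$, $(\operatorname{weak})$, $(R\imp)$, $(L\imp)$ or $(\operatorname{cut})$ rule in which the distinguished occurrence of $x:p$ (the variable being contracted, weakened, abstracted, or cut) is not leftmost in the antecedent. The strategy is to "bubble" this occurrence to the front using exchanges, apply the strict form of the rule, and then bubble the result back to where it belongs; the $\tau$-equivalences are exactly what is needed to see that the composite is equivalent to the original non-strict rule.

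First I would set up the induction: given $\pi$, apply the inductive hypothesis to each immediate subproof to replace it by a $\sim_\tau$-equivalent strict preproof, using that $\sim_\tau$ is compatible (satisfies (C1) and (C2)) so that applying the bottom-most rule of $\pi$ to $\sim_\tau$-equivalent strict subproofs yields a $\sim_\tau$-equivalent preproof. This reduces the problem to the case where every rule of $\pi$ except possibly the bottom-most one is strict, and it remains to handle a single non-strict application of each rule on top of a strict preproof. For $(R\imp)$ and $(L\imp)$ the relevant $\tau$-equivalences \eqref{tau_R_ex} and \eqref{tau_L_ex2} directly rewrite a non-strict instance into an exchange followed by the strict instance; more precisely, to move the distinguished variable from position $k$ to the front one iterates these relations $k-1$ times (here one also uses that exchanges of the subproof, when pushed below, can themselves be reorganised via \eqref{tau_ex_ex}). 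For $(\operatorname{ctr})$ one combines \eqref{tau_ctr_ex} with \eqref{tau_ex_ex}; for $(\operatorname{weak})$ one uses \eqref{tau_weak_ex}; and for $(\operatorname{cut})$ one uses \eqref{tau_cut}. In each case the outcome is: a sequence of exchanges, then the strict rule, then a sequence of exchanges — and since exchanges are themselves strict rules, the resulting preproof is strict.

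The main bookkeeping obstacle is that moving a variable past $k-1$ others and then moving the conclusion back requires composing many exchanges, and one must check that the specific sequences of transpositions produced match up — this is where \eqref{tau_ex_ex} is essential, since it lets us assert that any two sequences of consecutive transpositions realising the same permutation give $\sim_\tau$-equivalent stacks of exchanges, so we need not track the exact order in which bubbling is performed. A secondary subtlety is the $(L\imp)$ case, where the rule introduces a fresh variable $y: p\imp q$ at the very front of the antecedent while the distinguished occurrence $x:q$ sits inside $\Delta, x:q, \Theta$; here the non-strictness is about the position of $x:q$ within the right premise, so \eqref{tau_L_ex2} (which exchanges $x:q$ to the front of $\Delta, \Theta$ before applying $(L\imp)$) is the right tool, and no interaction with the newly-introduced $y$ is needed. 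Once these cases are dispatched the induction closes, and the only remaining observation is the trivial base case: an $(\operatorname{ax})$ rule is always strict.
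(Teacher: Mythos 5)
The paper leaves this proof to the reader, and your proposal is a correct and complete elaboration of the intended argument: induct on the preproof using compatibility of $\sim_\tau$ to handle subproofs, then convert a single non-strict bottom rule into exchanges plus a strict instance via the appropriate $\tau$-equivalence, with \eqref{tau_ex_ex} cancelling the bookkeeping exchanges. Your identification of the per-rule tools (\eqref{tau_cut}, \eqref{tau_ctr_ex}, \eqref{tau_weak_ex}, \eqref{tau_R_ex}, \eqref{tau_L_ex2}) and of the need for trailing exchanges only in the cases where the distinguished variable survives into the conclusion (e.g.\ contraction and weakening, but not $(R\imp)$ or $(L\imp)$) is exactly right.
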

\begin{proof}
Left to the reader.
\end{proof}

We have a strong intuition about the structure of logical arguments which leads to the expectation that the antecedent of a sequent is an extended ``space'' disjoint subsets of which may be the locus of independent operations. This independence is formalised by commuting conversions, which identify preproofs that differ only by ``insignificant'' rearranging of deduction rules. 

\begin{definition}[(Commuting conversions)]
\label{commutingequivalence}
We define $\sim_c$ to be the smallest compatible equivalence relation on preproofs generated by the following pairs. We begin with pairs involving two structural rules:
    \begin{center}
    \begin{tabular}{ >{\centering}m{7.1cm} >{\centering}m{0.5cm} >{\centering}m{7.1cm} >{\centering}m{0.5cm}}
        \AxiomC{$\Gamma,\Gamma',\Gamma'' \vdash q$}
        \RightLabel{$(\operatorname{weak})$}
        \UnaryInfC{$\Gamma, x:p, \Gamma',\Gamma'' \vdash q$}
        \RightLabel{$(\operatorname{weak})$}
        \UnaryInfC{$\Gamma, x:p, \Gamma', y:r, \Gamma'' \vdash q$}
        \DisplayProof
        &
        $\sim_c$
        &
        \AxiomC{$\Gamma,\Gamma',\Gamma'' \vdash q$}
        \RightLabel{$(\operatorname{weak})$}
        \UnaryInfC{$\Gamma,\Gamma', y: r,\Gamma''\vdash q$}
        \RightLabel{$(\operatorname{weak})$}
        \UnaryInfC{$\Gamma, x:p, \Gamma', y:r, \Gamma'' \vdash q$}
        \DisplayProof
        &
        \tagarray{\label{comm_weak_weak}}
    \end{tabular}
    \end{center}
    
    \begin{center}
    \begin{tabular}{ >{\centering}m{7.1cm} >{\centering}m{0.5cm} >{\centering}m{7.1cm} >{\centering}m{0.5cm}}
    \AxiomC{$\Gamma, x: p, x': p, \Gamma', \Gamma'' \vdash q$}
        \RightLabel{$({\operatorname{ctr}})$}
        \UnaryInfC{$\Gamma, x:p, \Gamma', \Gamma'' \vdash q$}
        \RightLabel{$({\operatorname{weak}})$}
        \UnaryInfC{$\Gamma, x:p, \Gamma', y:r, \Gamma'' \vdash q$}
        \DisplayProof
        &
        $\sim_c$
        &
        \AxiomC{$\Gamma, x:p, x':p, \Gamma', \Gamma'' \vdash q$}
        \RightLabel{$({\operatorname{weak}})$}
        \UnaryInfC{$\Gamma, x:p, x':p, \Gamma', y:r, \Gamma'' \vdash q$}
        \RightLabel{$({\operatorname{ctr}})$}
        \UnaryInfC{$\Gamma, x:p, \Gamma', y:r, \Gamma'' \vdash q$}
        \DisplayProof
        &
        \tagarray{\label{comm_ctr_weak}}
    \end{tabular}
    \end{center}
    
    \begin{center}
    \begin{tabular}{ >{\centering}m{7.1cm} >{\centering}m{0.5cm} >{\centering}m{7.1cm} >{\centering}m{0.5cm}}
    \AxiomC{$\Gamma,\Gamma', x: p, x': p, \Gamma'' \vdash q$}
        \RightLabel{$({\operatorname{ctr}})$}
        \UnaryInfC{$\Gamma, \Gamma', x:p, \Gamma'' \vdash q$}
        \RightLabel{$({\operatorname{weak}})$}
        \UnaryInfC{$\Gamma, y:r, \Gamma', x:p, \Gamma'' \vdash q$}
        \DisplayProof
        &
        $\sim_c$
        &
        \AxiomC{$\Gamma,\Gamma', x:p, x':p, \Gamma'' \vdash q$}
        \RightLabel{$({\operatorname{weak}})$}
        \UnaryInfC{$\Gamma, y:r, \Gamma',x:p, x':p, \Gamma'' \vdash q$}
        \RightLabel{$({\operatorname{ctr}})$}
        \UnaryInfC{$\Gamma, y:r, \Gamma', x:p, \Gamma'' \vdash q$}
        \DisplayProof
        &
        \tagarray{\label{comm_ctr_weak2}}
    \end{tabular}
    \end{center}
    
    \begin{center}
    \begin{tabular}{  >{\centering}m{7.1cm} >{\centering}m{0.5cm} >{\centering}m{7.1cm} >{\centering}m{0.5cm}}
    \AxiomC{$\Gamma, x:p, y:q, \Gamma', \Gamma'' \vdash r$}
        \RightLabel{$({\operatorname{ex}})$}
        \UnaryInfC{$\Gamma, y:q, x:p, \Gamma', \Gamma'' \vdash r$}
        \RightLabel{$({\operatorname{weak}})$}
        \UnaryInfC{$\Gamma, y:q, x:p, \Gamma', z:s, \Gamma'' \vdash r$}
        \DisplayProof
        &
        $\sim_c$
        &
        \AxiomC{$\Gamma, x:p, y:q, \Gamma', \Gamma'' \vdash r$}
        \RightLabel{$({\operatorname{weak}})$}
        \UnaryInfC{$\Gamma, x:p, y:q, \Gamma', z:s, \Gamma'' \vdash r$}
        \RightLabel{$({\operatorname{ex}})$}
        \UnaryInfC{$\Gamma, y:q, x:p, \Gamma', z:s, \Gamma'' \vdash r$}
        \DisplayProof
        &
        \tagarray{\label{comm_ex_weak}}
    \end{tabular}
    \end{center}
    
    \begin{center}
    \begin{tabular}{  >{\centering}m{7.1cm} >{\centering}m{0.5cm} >{\centering}m{7.1cm} >{\centering}m{0.5cm}}
    \AxiomC{$\Gamma, \Gamma', x:p, y:q, \Gamma'' \vdash r$}
        \RightLabel{$({\operatorname{ex}})$}
        \UnaryInfC{$\Gamma, \Gamma', y:q, x:p, \Gamma'' \vdash r$}
        \RightLabel{$({\operatorname{weak}})$}
        \UnaryInfC{$\Gamma, z:s, \Gamma', y:q, x:p, \Gamma'' \vdash r$}
        \DisplayProof
        &
        $\sim_c$
        &
        \AxiomC{$\Gamma, \Gamma', x:p, y:q, \Gamma'' \vdash r$}
        \RightLabel{$({\operatorname{weak}})$}
        \UnaryInfC{$\Gamma, z:s, \Gamma', x:p, y:q, \Gamma'' \vdash r$}
        \RightLabel{$({\operatorname{ex}})$}
        \UnaryInfC{$\Gamma, z:s, \Gamma', y:q, x:p, \Gamma'' \vdash r$}
        \DisplayProof
        &
        \tagarray{\label{comm_ex_weak2}}
    \end{tabular}
    \end{center}
    
    \begin{center}
    \begin{tabular}{ >{\centering}m{7.1cm} >{\centering}m{0.5cm} >{\centering}m{7.1cm} >{\centering}m{0.5cm}}
        \AxiomC{$\Gamma, x:p, x': p, \Gamma', y:q, y': q, \Gamma'' \vdash r$}
        \RightLabel{$(\operatorname{ctr})$}
        \UnaryInfC{$\Gamma, x:p, \Gamma', y:q, y': q, \Gamma'' \vdash r$}
        \RightLabel{$(\operatorname{ctr})$}
        \UnaryInfC{$\Gamma, x:p, \Gamma', y:q, \Gamma'' \vdash r$}
        \DisplayProof
        &
        $\sim_c$
        &
        \AxiomC{$\Gamma, x:p, x':p, \Gamma', y:p, y':p, \Gamma'' \vdash r$}
        \RightLabel{$(\operatorname{ctr})$}
        \UnaryInfC{$\Gamma, x:p, x':p, \Gamma', y:p, \Gamma'' \vdash r$}
        \RightLabel{$(\operatorname{ctr})$}
        \UnaryInfC{$\Gamma, x:p, \Gamma', y:p, \Gamma'' \vdash r$}
        \DisplayProof
        &
        \tagarray{\label{comm_ctr_ctr2}}
    \end{tabular}
    \end{center}
 
    \begin{center}
    \begin{tabular}{ >{\centering}m{7.1cm} >{\centering}m{0.5cm} >{\centering}m{7.1cm} >{\centering}m{0.5cm}}
         \AxiomC{$\Gamma, x:p, y:q, \Gamma', z: r, z': r \vdash s$}
        \RightLabel{$({\operatorname{ex}})$}
        \UnaryInfC{$\Gamma, y:q, x:p, \Gamma', z:r, z':r \vdash s$}
        \RightLabel{$({\operatorname{ctr}})$}
        \UnaryInfC{$\Gamma, y:q, x:p, \Gamma', z:r \vdash s$}
        \DisplayProof
        &
        $\sim_c$
        &
        \AxiomC{$\Gamma, x:p, y:q, \Gamma', z:r, z':r \vdash s$}
        \RightLabel{$({\operatorname{ctr}})$}
        \UnaryInfC{$\Gamma, x:p, y:q, \Gamma', z:r \vdash s$}
        \RightLabel{$({\operatorname{ex}})$}
        \UnaryInfC{$\Gamma, y:q, x:p, \Gamma', z:r \vdash s$}
        \DisplayProof   
        &
        \tagarray{\label{comm_ex_ctr}}
    \end{tabular}
    \end{center}
    
    \begin{center}
    \begin{tabular}{ >{\centering}m{7.1cm} >{\centering}m{0.5cm} >{\centering}m{7.1cm} >{\centering}m{0.5cm}}
         \AxiomC{$\Gamma, z: r, z': r, \Gamma', x:p, y:q, \Gamma'' \vdash s$}
        \RightLabel{$({\operatorname{ex}})$}
        \UnaryInfC{$\Gamma, z: r, z': r, \Gamma', y:q, x:p, \Gamma'' \vdash s$}
        \RightLabel{$({\operatorname{ctr}})$}
        \UnaryInfC{$\Gamma, z:r, \Gamma', y:q, x:p, \Gamma'' \vdash s$}
        \DisplayProof
        &
        $\sim_c$
        &
        \AxiomC{$\Gamma, z: r, z': r, \Gamma', x:p, y:q, \Gamma'' \vdash s$}
        \RightLabel{$({\operatorname{ctr}})$}
        \UnaryInfC{$\Gamma, z:r, \Gamma', x:p, y:q, \Gamma'' \vdash s$}
        \RightLabel{$({\operatorname{ex}})$}
        \UnaryInfC{$\Gamma, z:r, \Gamma', y:q, x:p, \Gamma'' \vdash s$}
        \DisplayProof   
        &
        \tagarray{\label{comm_ex_ctr2}}
    \end{tabular}
    \end{center}

    
Next are the pairs involving $(R \imp)$:
    \begin{center}
    \begin{tabular}{>{\centering}m{7.1cm} >{\centering}m{0.5cm} >{\centering}m{7.1cm} >{\centering}m{0.5cm}}    
            \AxiomC{$\Gamma, x:p, y:q, \Gamma', z:r,\Gamma'' \vdash s$}
            \RightLabel{$(R \imp)$}
            \UnaryInfC{$\Gamma, x:p, y:q, \Gamma', \Gamma'' \vdash r \imp s$}
            \RightLabel{$(\operatorname{ex})$}
            \UnaryInfC{$\Gamma, y:q, x:p, \Gamma',\Gamma'' \vdash r \imp s $}
            \DisplayProof
            &
            $\sim_c$
            &
            \AxiomC{$\Gamma, x:p, y:q, \Gamma', z:r, \Gamma'' \vdash s$}
            \RightLabel{$(\operatorname{ex})$}
            \UnaryInfC{$\Gamma, y:q, x:p, \Gamma', z:r, \Gamma'' \vdash s$}
            \RightLabel{$(R \imp)$}
            \UnaryInfC{$\Gamma, y:q, x:p, \Gamma', \Gamma'' \vdash r \imp s$}
            \DisplayProof
            &
            \tagarray{\label{comm_R_ex}}
    \end{tabular}
    \end{center}
    
    \begin{center}
    \begin{tabular}{ >{\centering}m{7.1cm} >{\centering}m{0.5cm} >{\centering}m{7.1cm} >{\centering}m{0.5cm}}    
    \AxiomC{$\Gamma, x:p, \Gamma'\vdash q$}
            \RightLabel{$(R \imp)$}
            \UnaryInfC{$\Gamma, \Gamma' \vdash p \imp q$}
            \RightLabel{$(\operatorname{weak})$}
            \UnaryInfC{$\Gamma, y:r, \Gamma' \vdash p \imp q$}
            \DisplayProof
            &
            $\sim_c$
            &
            \AxiomC{$\Gamma, x:p,\Gamma'\vdash q$}
            \RightLabel{$(\operatorname{weak})$}
            \UnaryInfC{$\Gamma, y:r, x:p,\Gamma' \vdash q$}
            \RightLabel{$(R \imp)$}
            \UnaryInfC{$\Gamma, y:r, \Gamma' \vdash p \imp q$}
            \DisplayProof
            &
            \tagarray{\label{comm_R_weak}}
    \end{tabular}
    \end{center}

    \begin{center}
    \begin{tabular}{ >{\centering}m{7.1cm} >{\centering}m{0.5cm} >{\centering}m{7.1cm} >{\centering}m{0.5cm}}    
            \AxiomC{$\Gamma, x:p, x':p, \Gamma', y:q, \Gamma'' \vdash r$}
            \RightLabel{$(R \imp)$}
            \UnaryInfC{$\Gamma, x:p, x':p, \Gamma', \Gamma'' \vdash q \imp r$}
            \RightLabel{$(\operatorname{ctr})$}
            \UnaryInfC{$\Gamma, x:p, \Gamma', \Gamma'' \vdash q \imp r$}
            \DisplayProof
            &
            $\sim_c$
            &
            \AxiomC{$\Gamma, x:p,x':p,\Gamma',y:q,\Gamma'' \vdash r$}
            \RightLabel{$(\operatorname{ctr})$}
            \UnaryInfC{$\Gamma, x:p, \Gamma', y:q,\Gamma'' \vdash r$}
            \RightLabel{$(R \imp)$}
            \UnaryInfC{$\Gamma, x:p, \Gamma',\Gamma'' \vdash q \imp r $}
            \DisplayProof    
            &
            \tagarray{\label{comm_R_ctr}}
    \end{tabular}
    \end{center}

    \begin{center}
    \begin{tabular}{ >{\centering}m{7.1cm} >{\centering}m{0.5cm} >{\centering}m{7.1cm} >{\centering}m{0.5cm}}    
            \AxiomC{$\Gamma \vdash r$}
            \AxiomC{$\Delta, z:s, x:p, \Delta' \vdash q$}
            \RightLabel{$(R \imp)$}
            \UnaryInfC{$\Delta, z:s, \Delta' \vdash p \imp q$}
            \RightLabel{$(L \imp)$}
            \BinaryInfC{$y: r \imp s, \Gamma, \Delta, \Delta' \vdash p \imp q$}
            \DisplayProof
            &
            $\sim_c$
            &
            \AxiomC{$\Gamma \vdash r$}
            \AxiomC{$\Delta, z:s, x:p, \Delta' \vdash q$}
            \RightLabel{$(L \imp)$}
            \BinaryInfC{$y: r \imp s,\Gamma, \Delta, x:p, \Delta' \vdash q$}
            \RightLabel{$(R \imp)$}
            \UnaryInfC{$y: r\imp s, \Gamma, \Delta, \Delta' \vdash p \imp q$}
            \DisplayProof
            &
            \tagarray{\label{comm_R_L}}            
    \end{tabular}
    \end{center}
Those pairs involving $(L \imp)$ (one of which was considered already above):
            \begin{center}
            \begin{tabular}{ >{\centering}m{10cm} >{\centering}m{0.5cm}}    
                \AxiomC{$\Gamma, \Gamma' \vdash p$}
                \RightLabel{$(\operatorname{weak})$}
                \UnaryInfC{$\Gamma, z:r, \Gamma' \vdash p$}
                \AxiomC{$\Delta, x:q, \Delta' \vdash r$}
                \RightLabel{$(L \imp)$}
                \BinaryInfC{$y:p \imp q, \Gamma, z:r, \Gamma', \Delta, \Delta' \vdash r$}
                \DisplayProof\\\vspace{0.5cm}
                $\sim_c$\\\vspace{0.5cm}
                \AxiomC{$\Gamma, \Gamma' \vdash p$}
                \AxiomC{$\Delta, x:q, \Delta' \vdash r$}
                \RightLabel{$(L \imp)$}
                \BinaryInfC{$y: p \imp q, \Gamma, \Gamma', \Delta, \Delta' \vdash r$}
                \RightLabel{$(\operatorname{weak})$}
                \UnaryInfC{$y:p \imp q, \Gamma, z:r, \Gamma', \Delta, \Delta' \vdash r$}
                \DisplayProof\\\vspace{0.5cm}
                $\sim_c$\\\vspace{0.5cm}
                \AxiomC{$\Gamma, \Gamma' \vdash p$}
                \AxiomC{$\Delta, x:q, \Delta' \vdash r$}
                \RightLabel{$(\operatorname{weak})$}
                \UnaryInfC{$z:r, \Delta, x:q, \Delta' \vdash r$}
                \RightLabel{$(L \imp)$}
                \BinaryInfC{$y:p \imp q, \Gamma, \Gamma', z:r, \Delta, \Delta' \vdash r$}
                \doubleLine
                \RightLabel{$(\operatorname{ex})$}
                \UnaryInfC{$y: p \imp q, \Gamma, z:r, \Gamma', \Delta, \Delta' \vdash r$}
                \DisplayProof
                & 
                \tagarray{\label{comm_weak_L}}            
            \end{tabular}
            \end{center}
            
            \begin{center}
            \begin{tabular}{ >{\centering}m{10cm} >{\centering}m{0.5cm}}    
                 \AxiomC{$\Gamma, z:r, z':r, \Gamma' \vdash p$}
                \AxiomC{$\Delta, x:q, \Delta' \vdash r$}
                \RightLabel{$(L \imp)$}
                \BinaryInfC{$y:p \imp q, \Gamma, z:r,z':r, \Gamma', \Delta, \Delta' \vdash r$}
                \RightLabel{$(\operatorname{ctr})$}
                \UnaryInfC{$y:p \imp q, \Gamma, z:r, \Gamma', \Delta, \Delta' \vdash r$}
                \DisplayProof\\\vspace{0.5cm}
                $\sim_c$\\\vspace{0.5cm}
                \AxiomC{$\Gamma, z:r, z':r, \Gamma' \vdash p$}
                \RightLabel{$(\operatorname{ctr})$}
                \UnaryInfC{$\Gamma, z:r, \Gamma' \vdash p$}
                \AxiomC{$\Delta, x:q, \Delta' \vdash r$}
                \RightLabel{$(L \imp)$}
                \BinaryInfC{$y:p \imp q,\Gamma, z:r, \Gamma', \Delta, \Delta' \vdash r$}
                \DisplayProof
                &
                \tagarray{\label{comm_L_ctr}}
            \end{tabular}
            \end{center}

            \begin{center}
            \begin{tabular}{ >{\centering}m{10cm} >{\centering}m{0.5cm}}
                \AxiomC{$\Gamma \vdash p$}
                \AxiomC{$\Delta, x:q, z:r, z':r, \Delta' \vdash r$}
                \RightLabel{$(L \imp)$}
                \BinaryInfC{$y:p \imp q,\Gamma, \Delta, z:r,z':r, \Delta' \vdash r$}
                \RightLabel{$(\operatorname{ctr})$}
                \UnaryInfC{$y:p \imp q, \Gamma, \Delta, z:r,\Delta' \vdash r$}
                \DisplayProof
                \\\vspace{0.5cm}
                $\sim_c$\\\vspace{0.5cm}
                \AxiomC{$\Gamma \vdash p$}
                \RightLabel{$(L \imp)$}
                \AxiomC{$\Delta, x:q, z:r, z':r, \Delta' \vdash r$}
                \RightLabel{$(\operatorname{ctr})$}
                \UnaryInfC{$\Delta, x:q, z:r, \Delta' \vdash r$}
                \RightLabel{$(L \imp)$}
                \BinaryInfC{$y:p \imp q, \Gamma, \Delta,z:r,\Delta' \vdash r$}
                \DisplayProof
                &
                \tagarray{\label{comm_L_ctr2}}
            \end{tabular}
            \end{center}

            \begin{center}
            \begin{tabular}{ >{\centering}m{10cm} >{\centering}m{0.5cm}}
                \AxiomC{$\Gamma, z:r,z':s, \Gamma' \vdash p$}
                \AxiomC{$\Delta, x:q,\Delta' \vdash r$}
                \RightLabel{$(L \imp)$}
                \BinaryInfC{$y: p \imp q,\Gamma, z:r, z':s, \Gamma', \Delta, \Delta' \vdash r$}
                \RightLabel{$(\operatorname{ex})$}
                \UnaryInfC{$y:p\imp q,\Gamma, z':s,z:r, \Gamma', \Delta,  \Delta' \vdash r$}
                \DisplayProof\\\vspace{0.5cm}
                $\sim_c$\\\vspace{0.5cm}
                \AxiomC{$\Gamma, z:r, z':s, \Gamma' \vdash p$}
                \RightLabel{$(\operatorname{ex})$}
                \UnaryInfC{$\Gamma, z':s, z:r, \Gamma' \vdash p$}
                \AxiomC{$\Delta, x:q, \Delta' \vdash r$}
                \RightLabel{$(L \imp)$}
                \BinaryInfC{$y: p \imp q, \Gamma, z':s, z:r, \Gamma', \Delta, \Delta' \vdash r$}
                \DisplayProof
                &
                \tagarray{\label{comm_L_ex}}
            \end{tabular}
            \end{center}
        
            \begin{center}
            \begin{tabular}{ >{\centering}m{10cm} >{\centering}m{0.5cm}}
                \AxiomC{$\Gamma \vdash p$}
                \AxiomC{$\Delta, z:r, z':s,x:q, \Delta'\vdash r$}
                \RightLabel{$(L \imp)$}
                \BinaryInfC{$y: p\imp q, \Gamma, \Delta, z:r, z':s, \Delta'\vdash r$}
                \RightLabel{$(\operatorname{ex})$}
                \UnaryInfC{$y:p\imp q, \Gamma, \Delta, z':s, z:s, \Delta'\vdash r$}
                \DisplayProof\\\vspace{0.5cm}
                $\sim_c$\\\vspace{0.5cm}
                \AxiomC{$\Gamma \vdash p$}
                \AxiomC{$\Delta, z:r, z':s, x:q, \Delta'\vdash r$}
                \RightLabel{$(\operatorname{ex})$}
                \UnaryInfC{$\Delta, z':s, z:r, x:q, \Delta'\vdash r$}
                \RightLabel{$(L \imp)$}
                \BinaryInfC{$y: p \imp q, \Gamma, \Delta, z':s, z:r, \Delta' \vdash r$}
                \DisplayProof
                &
                \tagarray{\label{comm_L_ex2}}
            \end{tabular}
            \end{center}

            \begin{center}
            \begin{tabular}{ >{\centering}m{10cm} >{\centering}m{0.5cm}}
                \AxiomC{$\Gamma \vdash p$}
                \AxiomC{$\Delta \vdash q$}
                \AxiomC{$\Theta, x:r,y:l, \Theta' \vdash s$}
                \RightLabel{$(L \imp)$}
                \BinaryInfC{$z: q \imp l, \Delta, \Theta, x:r, \Theta' \vdash s$}
                \RightLabel{$(L \imp)$}
                \BinaryInfC{$z': p \imp r, \Gamma, z: q \imp l, \Delta, \Theta, \Theta' \vdash s$}
                \DisplayProof\\\vspace{0.5cm}
                $\sim_c$\\\vspace{0.5cm}
                \AxiomC{$\Delta \vdash q$}
                \AxiomC{$\Gamma \vdash p$}
                \AxiomC{$\Theta, x:r, y:l, \Theta' \vdash s$}
                \RightLabel{$(L \imp)$}
                \BinaryInfC{$z': p \imp r, \Gamma, \Theta, y:l, \Theta' \vdash r$}
                \RightLabel{$(L \imp)$}
                \BinaryInfC{$z:q \imp l ,\Delta,z': p \imp r, \Gamma, \Theta,   \Theta' \vdash s$}
                \doubleLine
                \RightLabel{$(\operatorname{ex})$}
                \UnaryInfC{$z': p \imp r, \Gamma, z: q \imp l, \Delta, \Theta, \Theta' \vdash s$}
                \DisplayProof
                &
                \tagarray{\label{comm_L_L}}
            \end{tabular}
            \end{center}
            
            \begin{center}
            \begin{tabular}{ >{\centering}m{10cm} >{\centering}m{0.5cm}}
            \AxiomC{$\Gamma \vdash l$}
                \AxiomC{$\Delta, x:p, \Delta' \vdash q$}
                \AxiomC{$\Theta, y:r, \Theta' \vdash s$}
                \RightLabel{$(L \imp)$}
                \BinaryInfC{$z: q \imp r, \Delta, x:p, \Delta', \Theta, \Theta' \vdash s$}
                \RightLabel{$(L \imp)$}
                \BinaryInfC{$z': l \imp p, \Gamma, z: q \imp r,\Delta, \Delta', \Theta,  \Theta' \vdash s$}
                \DisplayProof\\\vspace{0.5cm}
                $\sim_c$\\\vspace{0.5cm}
                \AxiomC{$\Gamma \vdash l$}
                \AxiomC{$\Delta, x:p, \Delta' \vdash q$}
                \RightLabel{$(L \imp)$}
                \BinaryInfC{$z':l\imp p, \Gamma, \Delta, \Delta' \vdash q$}
                \AxiomC{$\Theta, y:r, \Theta' \vdash s$}
                \RightLabel{$(L \imp)$}
                \BinaryInfC{$z: q \imp r, z':l\imp p,\Gamma, \Delta, \Delta', \Theta,  \Theta' \vdash s$}
                \doubleLine
                \RightLabel{$(\operatorname{ex})$}
                \UnaryInfC{$z': l \imp p, \Gamma, z: q \imp r,\Delta, \Delta', \Theta,  \Theta' \vdash s$}
                \DisplayProof
                &
                \tagarray{\label{comm_L_L2}}
            \end{tabular}
            \end{center}
\end{definition}

Should the order of two variables $x:p,y:p$ that are contracted be logically significant? We are prevented from identifying contraction on $x:p,y:p$ with contraction on $y:p,x:p$ because the former leaves $x:p$ and the latter $y:p$, but a sufficient cocommutativity principle is expressed by \eqref{co_ctr_comm_alt}. Similarly \eqref{co_ctr_assoc} expresses that contraction is coassociative. The rule \eqref{co_weak_ctr} is counitality, which says that we attach no logical meaning to contraction with a variable which has been weakened in. These principles assert that contraction is \emph{coalgebraic}, a point of view further ramified in linear logic.

\begin{definition}[($co$-equivalence)]
\label{co_equivalence} We define $\sim_{co}$ to be the smallest compatible equivalence relation on preproofs satisfying
    \begin{center}
    \begin{tabular}{ >{\centering}m{6cm} >{\centering}m{0.5cm} >{\centering}m{6cm} >{\centering}m{0.5cm}}
    \AxiomC{$\Gamma, x: p, y: p, z: p,\Delta \vdash q$}
        \RightLabel{$({\operatorname{ctr}})$}
        \UnaryInfC{$\Gamma, x:p, z: p, \Delta \vdash q$}
        \RightLabel{$({\operatorname{ctr}})$}
        \UnaryInfC{$\Gamma, x:p, \Delta \vdash q$}
        \DisplayProof
        &
        $\sim_{co}$
        &
        \AxiomC{$\Gamma, x: p, y: p, z: p,\Delta \vdash q$}
        \RightLabel{$({\operatorname{ctr}})$}
        \UnaryInfC{$\Gamma, x:p, y: p, \Delta \vdash q$}
        \RightLabel{$({\operatorname{ctr}})$}
        \UnaryInfC{$\Gamma, x:p, \Delta \vdash q$}
        \DisplayProof
        &
        \tagarray{\label{co_ctr_assoc}}
    \end{tabular}
    \end{center}
    \begin{center}
    \begin{tabular}{ >{\centering}m{6cm} >{\centering}m{0.5cm} >{\centering}m{6cm} >{\centering}m{0.5cm}}
            \AxiomC{$\pi$}
        \noLine
        \UnaryInfC{$\vdots$}
        \noLine
    \UnaryInfC{$\Gamma, x: p, y:p,\Delta \vdash q$}
        \RightLabel{$({\operatorname{ctr}})$}
        \UnaryInfC{$\Gamma, x:p, \Delta \vdash q$}
        \DisplayProof
        &
        $\sim_{co}$
        &
        \AxiomC{$\operatorname{subst}^{str}( \pi, y, x)$}
        \noLine
        \UnaryInfC{$\vdots$}
        \noLine
        \UnaryInfC{$\Gamma, x: p, x: p, \Delta \vdash q$}
        \RightLabel{$({\operatorname{ex}})$}
        \UnaryInfC{$\Gamma, x:p, x: p, \Delta \vdash q$}
        \RightLabel{$({\operatorname{ctr}})$}
        \UnaryInfC{$\Gamma, x:p, \Delta \vdash q$}
        \DisplayProof
        &
        \tagarray{\label{co_ctr_comm_alt}}
    \end{tabular}
    \end{center}
\begin{center}
\begin{tabular}{ >{\centering}m{6cm} >{\centering}m{0.5cm} >{\centering}m{6cm} >{\centering}m{0.5cm}} 
        \AxiomC{$\Gamma, x:p, \Delta \vdash q$}
        \DisplayProof
        & $\sim_{co}$ &
        \AxiomC{$\Gamma, x:p, \Delta\vdash q$}
        \RightLabel{$(\operatorname{weak})$}
        \UnaryInfC{$\Gamma, x:p,x':p, \Delta \vdash q$}
        \RightLabel{$(\operatorname{ctr})$}
        \UnaryInfC{$\Gamma, x:p, \Delta \vdash q$}
        \DisplayProof
        &
        \tagarray{\label{co_weak_ctr}}
\end{tabular}
\end{center}
\end{definition}

\begin{remark}\label{remark:zucker_1} The relation \eqref{co_ctr_assoc} appears as the contraction conversion \cite[\S 3.1.2 (b)(i)]{zucker} of Zucker. The sequent calculus of \cite{zucker} does not contain explicit weakening or exchange so the relations there do not include \eqref{co_ctr_comm_alt} or \eqref{co_weak_ctr}. This omission obscures the coalgebraic structure, which we believe to be an important logical principle.
\end{remark}

\begin{remark}\label{remark:old_co_ctr_comm2}
The rule \eqref{co_weak_ctr} has a left-handed version
\begin{center}
\begin{tabular}{ >{\centering}m{5cm} >{\centering}m{0.5cm} >{\centering}m{5cm}} 
        \AxiomC{$\pi$}
        \noLine
        \UnaryInfC{$\vdots$}
        \noLine
        \UnaryInfC{$\Gamma, x:p, \Delta\vdash q$}
        \RightLabel{$(\operatorname{weak})$}
        \UnaryInfC{$\Gamma, x':p,x:p, \Delta \vdash q$}
        \RightLabel{$(\operatorname{ctr})$}
        \UnaryInfC{$\Gamma, x':p, \Delta \vdash q$}
        \DisplayProof
        & $\stackrel{\eqref{co_ctr_comm_alt}}{\sim}$ &
        \AxiomC{$\operatorname{subst}^{str}(\pi,x,x')$}
        \noLine
        \UnaryInfC{$\vdots$}
        \noLine
        \UnaryInfC{$\Gamma, \textcolor{blue}{x'}:p, \Delta\vdash q$}
        \RightLabel{$(\operatorname{weak})$}
        \UnaryInfC{$\Gamma, x':p,\textcolor{blue}{x'}:p, \Delta \vdash q$}
        \RightLabel{$(\operatorname{ex}$)}
        \UnaryInfC{$\Gamma, \textcolor{blue}{x'}:p,x':p, \Delta \vdash q$}
        \RightLabel{$(\operatorname{ctr})$}
        \UnaryInfC{$\Gamma, \textcolor{blue}{x'}:p, \Delta \vdash q$}
        \DisplayProof
\end{tabular}
\end{center}
\begin{center}
\begin{tabular}{ >{\centering}m{5cm} >{\centering}m{0.5cm} >{\centering}m{5cm}} 
        & $\sim_{\tau}$ &
        \AxiomC{$\operatorname{subst}^{str}(\pi,x,x')$}
        \noLine
        \UnaryInfC{$\vdots$}
        \noLine
        \UnaryInfC{$\Gamma, \textcolor{blue}{x'}:p, \Delta\vdash q$}
        \RightLabel{$(\operatorname{weak})$}
        \UnaryInfC{$\Gamma, \textcolor{blue}{x'}:p,x':p, \Delta \vdash q$}
        \RightLabel{$(\operatorname{ctr})$}
        \UnaryInfC{$\Gamma, \textcolor{blue}{x'}:p, \Delta \vdash q$}
        \DisplayProof
\end{tabular}
\end{center}
\begin{center}
\begin{tabular}{ >{\centering}m{5cm} >{\centering}m{0.5cm} >{\centering}m{5cm}} 
        & $\stackrel{\eqref{co_weak_ctr}}{\sim}$ &
        \AxiomC{$\operatorname{subst}^{str}(\pi,x,x')$}
        \noLine
        \UnaryInfC{$\vdots$}
        \noLine
        \UnaryInfC{$\Gamma, \textcolor{blue}{x'}:p, \Delta \vdash q$}
        \DisplayProof
\end{tabular}
\end{center}
\end{remark}

Principles \eqref{lambda_L_L_ctr} and \eqref{lambda_weak_L} below are of profound importance, as they are the internal manifestation in our system of the Brouwer-Heyting-Kolmogorov interpretation of proofs in intuitionistic logic \cite{troelstra}. Under that interpretation a proof of a hypothesis $y: p \imp q$ reads as a transformation of proofs of $p$ to proofs of $q$. Rule \eqref{lambda_L_L_ctr} expresses that if the output proof of $q$ is to be used multiple times the transformation must be employed once for each copy. Rule \eqref{lambda_weak_L} expresses that if the output is not needed, neither is the transformation nor any of its inputs. Note that these principles mirror \eqref{cut:log_vs_ctr} and \eqref{cut:log_vs_weak} and therefore in some sense realise $(L \imp)$ as an \emph{internalised cut}. We develop this point of view more systematically in Section \ref{section:internal_bhk}.

\begin{definition}[($\lambda$-equivalence)]
\label{inefficiency}
We define $\sim_{\lambda}$ to be the smallest compatible equivalence relation on preproofs satisfying
    \begin{center}
    \begin{tabular}{ >{\centering}m{10cm} >{\centering}m{0.5cm}}
        \AxiomC{$\pi_1$}
        \noLine
        \UnaryInfC{$\vdots$}
        \noLine
        \UnaryInfC{$\Gamma \vdash p$}
        \AxiomC{$\pi_2$}
        \noLine
        \UnaryInfC{$\vdots$}
        \noLine
        \UnaryInfC{$\Delta, x:q, x':q,\Delta' \vdash r$}
        \RightLabel{$(\operatorname{ctr})$}
        \UnaryInfC{$\Delta, x:q, \Delta' \vdash r$}
        \RightLabel{$(L \imp)$}
        \BinaryInfC{$y: p \imp q, \Gamma, \Delta, \Delta' \vdash r$}
        \DisplayProof
        
        \vspace{0.5cm}
        $\sim_\lambda$
        \vspace{0.5cm}

        \AxiomC{$\pi_1$}
        \noLine
        \UnaryInfC{$\vdots$}
        \noLine
        \UnaryInfC{$\Gamma \vdash p$}
        \AxiomC{$\pi_1$}
        \noLine
        \UnaryInfC{$\vdots$}
        \noLine
        \UnaryInfC{$\Gamma \vdash p$}
        \AxiomC{$\pi_2$}
        \noLine
        \UnaryInfC{$\vdots$}
        \noLine
        \UnaryInfC{$\Delta, x:q, x': q, \Delta' \vdash r$}
        \RightLabel{$(L \imp)$}
        \BinaryInfC{$y': p \imp q, \Gamma, \Delta, x:q, \Delta' \vdash r$}
        \RightLabel{$(L \imp)$}
        \BinaryInfC{$y: p\imp q, \Gamma,y': p\imp q, \Gamma, \Delta,  \Delta' \vdash r$}
        \doubleLine
        \RightLabel{$(\operatorname{ex})$}
        \UnaryInfC{$y: p\imp q, y': p\imp q, \Gamma, \Gamma, \Delta,  \Delta' \vdash r$}
        \RightLabel{$(\operatorname{ctr})$}
        \UnaryInfC{$y: p \imp q, \Gamma, \Gamma, \Delta, \Delta'\vdash r$}
        \doubleLine
        \RightLabel{$(\operatorname{ctr/ex})$}
        \UnaryInfC{$y: p \imp q, \Gamma, \Delta, \Delta'\vdash r$}
        \DisplayProof        
        &
        \tagarray{\label{lambda_L_L_ctr}}
    \end{tabular}
    \end{center}
    
\begin{center}
\begin{tabular}{ >{\centering}m{6cm} >{\centering}m{0.5cm} >{\centering}m{6cm} >{\centering}m{0.5cm}} 
     \AxiomC{$\pi_1$}
     \noLine
     \UnaryInfC{$\vdots$}
     \noLine
     \UnaryInfC{$\Gamma \vdash p$}
     \AxiomC{$\pi_2$}
     \noLine
     \UnaryInfC{$\vdots$}
     \noLine
     \UnaryInfC{$\Delta, \Delta' \vdash r$}
     \RightLabel{$(\operatorname{weak})$}
     \UnaryInfC{$\Delta, x: q, \Delta' \vdash r$}
     \RightLabel{$(L \imp)$}
     \BinaryInfC{$y: p \imp q, \Gamma, \Delta, \Delta' \vdash r$}
     \DisplayProof &$\sim_\lambda$&
     \AxiomC{$\pi_2$}
     \noLine
     \UnaryInfC{$\vdots$}
     \noLine
     \UnaryInfC{$\Delta, \Delta' \vdash r$}
     \RightLabel{$(\operatorname{weak})$}
     \UnaryInfC{$y: p \imp q, \Delta, \Delta' \vdash r$}
     \doubleLine
     \RightLabel{$(\operatorname{weak})$}
     \UnaryInfC{$y: p \imp q, \Gamma, \Delta, \Delta' \vdash r$}
     \DisplayProof
     &
     \tagarray{\label{lambda_weak_L}}
\end{tabular}
\end{center}
\end{definition}


\begin{remark}\label{remark:zucker_2} The relation \eqref{lambda_L_L_ctr} appears as contraction conversion \cite[\S 3.1.2 (b)(iii)]{zucker} of Zucker. The sequent calculus of \cite{zucker} does not contain explicit weakening so the relations there do not include \eqref{lambda_weak_L}. The relation \eqref{lambda_L_L_ctr} is also implicit in \cite[Lemma 12]{kleene} and \cite[Lemma 2]{mints} and \eqref{lambda_weak_L} is implicit in \cite[Lemma 4]{kleene} and \cite[Lemma 1]{mints}.
\end{remark}

Let us consider the assertion that $(\operatorname{ax})$, which in our system is available for any formula $p$, should be restricted to \emph{atomic} formulas. Let $\Sigma^\Gamma_q$ denote the set of preproofs of $\Gamma \vdash q$ under our system and $\Pi^\Gamma_q$ the set of preproofs under this system with a restricted axiom rule. Clearly $\Pi^\Gamma_q \subseteq \Sigma^\Gamma_q$ and if $\Sigma^\Gamma_q$ is nonempty then so is $\Pi^\Gamma_q$. Since the restriction on the axiom rule does not affect provability we are free to adopt it, either directly by changing the deduction rules, or indirectly by keeping the deduction rules as given but adopting an equivalence relation on preproofs which effectively makes the axiom rule on compound formulas a derived rule:

\begin{definition}[($\eta$-equivalence)]
\label{etaequivalence}
We define $\sim_\eta$ to be the smallest compatible equivalence relation on preproofs such that for arbitrary formulas $p,q$
\begin{center}
\begin{tabular}{ >{\centering}m{7cm} >{\centering}m{0.5cm} >{\centering}m{5cm} >{\centering}m{0.5cm}} 
 \AxiomC{}
 \RightLabel{$(\operatorname{ax})$}
 \UnaryInfC{$x: p \vdash p$}
 \AxiomC{}
 \RightLabel{$(\operatorname{ax})$}
 \UnaryInfC{$y: q \vdash q$}
 \RightLabel{$(L\imp)$}
 \BinaryInfC{$z: p \imp q, x:p, \vdash q$}
 \RightLabel{$(R \imp)$}
 \UnaryInfC{$z: p\imp q \vdash p \imp q$}
 \DisplayProof
 &$\sim_\eta$&
 \AxiomC{}
 \RightLabel{$(\operatorname{ax})$}
 \UnaryInfC{$z: p \imp q \vdash p \imp q$}
 \DisplayProof
 &
 \tagarray{\label{eta}}
 \end{tabular}
\end{center}
\end{definition}

The logical justification of the cut-elimination transformations is the \emph{inversion principle} \cite[\S II]{prawitz} which states that the left introduction rule $(L \imp)$ is, in a sense, the inverse of the right introduction rule $(R \imp)$. This principle is made manifest in the cut-elimination theorem of Gentzen (Theorem \ref{cutfree}). To make the point in a slightly different way, note that the $(\operatorname{cut})$ rule asserts that an occurrence of $A$ on the left of the turnstile is precisely as strong as an occurrence on the right; see \cite[\S 3.2.1, \S 3.3.3]{girard_blind}. The cut-elimination theorem says that this balance of strength is implicit already in the rules without $(\operatorname{cut})$.

Note that rule \eqref{cut:ax_left} below uses ancestor substitution (Definition \ref{defn:ancestor_sub}). It is convenient to call a deduction rule \emph{proper} if it is not $\operatorname{(cut)}$. 

\begin{definition}[(Single step cut reduction)]
\label{cutreduction}
We define $\to_{\operatorname{cut}}$ to be the smallest compatible relation (not necessarily an equivalence relation) on preproofs containing:
\begin{itemize}
    \item For any proper deduction rule $(r)$ 
\begin{center}
\begin{tabular}{ >{\centering}m{7cm} >{\centering}m{0.5cm} >{\centering}m{4cm} >{\centering}m{0.5cm}} 
        \AxiomC{}
        \RightLabel{$({\operatorname{ax}})$}
        \UnaryInfC{$x:p \vdash p$}
        \AxiomC{$\pi$}
        \noLine
        \UnaryInfC{$\vdots$}
        \RightLabel{$(r)$}
        \UnaryInfC{$y:p, \Gamma \vdash q$}
        \RightLabel{$({\operatorname{cut}})$}
        \BinaryInfC{$x:p, \Gamma \vdash q$}
        \DisplayProof
        & $\to_{\operatorname{cut}}$ &
        \AxiomC{$\operatorname{subst}^{str}(\pi,y,x)$}
        \noLine
        \UnaryInfC{$\vdots$}
        \RightLabel{$(r)$}
        \UnaryInfC{$x:p, \Gamma \vdash q$}
        \DisplayProof
        & \tagarray{\label{cut:ax_left}}
\end{tabular}
\end{center}

\begin{center}
\begin{tabular}{>{\centering}m{7cm} >{\centering}m{0.5cm} >{\centering}m{4cm} >{\centering}m{0.5cm}} 
        \AxiomC{$\pi$}
        \noLine
        \UnaryInfC{$\vdots$}
        \RightLabel{$(r)$}
        \UnaryInfC{$\Gamma\vdash p$}
        \AxiomC{}
        \RightLabel{$({\operatorname{ax}})$}
        \UnaryInfC{$x:p \vdash p$}
        \RightLabel{$({\operatorname{cut}})$}
        \BinaryInfC{$\Gamma\vdash p$}
        \DisplayProof
        & $\to_{\operatorname{cut}}$ &
        \AxiomC{$\pi$}
        \noLine
        \UnaryInfC{$\vdots$}
        \RightLabel{$(r)$}
        \UnaryInfC{$\Gamma\vdash p$}
        \DisplayProof
        & \tagarray{\label{cut:ax_right}}
\end{tabular}
\end{center}
    \item Let $(r_0)$ be a structural rule, $(r)$ any proper deduction rule. Then 
\begin{center}
\begin{tabular}{>{\centering}m{6.5cm} >{\centering}m{0.5cm} >{\centering}m{6cm} >{\centering}m{0.5cm}}
        \AxiomC{$\pi_1$}
        \noLine
        \UnaryInfC{$\vdots$}
        \UnaryInfC{$\Gamma \vdash p$}
        \RightLabel{$(r_0)$}
        \UnaryInfC{$\Gamma' \vdash p$}
        \AxiomC{$\pi_2$}
        \noLine
        \UnaryInfC{$\vdots$}
        \RightLabel{$(r)$}
        \UnaryInfC{$y:p, \Delta \vdash s$}
        \RightLabel{$({\operatorname{cut}})$}
        \BinaryInfC{$\Gamma', \Delta \vdash s$}
        \DisplayProof
        &$\to_{\operatorname{cut}}$&
        \AxiomC{$\pi_1$}
        \noLine
        \UnaryInfC{$\vdots$}
        \UnaryInfC{$\Gamma \vdash p$}
        \AxiomC{$\pi_2$}
        \noLine
        \UnaryInfC{$\vdots$}
        \RightLabel{$(r)$}
        \UnaryInfC{$y:p, \Delta \vdash s$}
        \RightLabel{$({\operatorname{cut}})$}
        \BinaryInfC{$\Gamma, \Delta \vdash s$}
        \RightLabel{$(r_0)$}
        \UnaryInfC{$\Gamma', \Delta \vdash s$}
        \DisplayProof
        & \tagarray{\label{cut:struc_vs_any}}
\end{tabular}
\end{center}
\item $(L \imp)$ on the left and $(r)$ any proper deduction rule 
\begin{center}
\begin{tabular}{>{\centering}m{10cm} >{\centering}m{1cm}}
            \AxiomC{$\pi_1$}
            \noLine
            \UnaryInfC{$\vdots$}
            \noLine
            \UnaryInfC{$\Gamma \vdash p$}
            \AxiomC{$\pi_2$}
            \noLine
            \UnaryInfC{$\vdots$}
            \noLine
            \UnaryInfC{$\Delta, x:q, \Theta \vdash s$}
            \RightLabel{$(L\imp)$}
            \BinaryInfC{$y:p \imp q, \Gamma, \Delta, \Theta \vdash s$}
            \AxiomC{$\pi_3$}
            \noLine
            \UnaryInfC{$\vdots$}
            \RightLabel{$(r)$}
            \UnaryInfC{$z:s, \Lambda \vdash l$}
            \RightLabel{$({\operatorname{cut}})$}
            \BinaryInfC{$y: p \imp q, \Gamma, \Delta, \Theta, \Lambda \vdash l$}
            \DisplayProof\\\vspace{0.5cm}
            $\to_{\operatorname{cut}}$\\\vspace{0.5cm}
            \AxiomC{$\pi_1$}
            \noLine
            \UnaryInfC{$\vdots$}
            \noLine
            \UnaryInfC{$\Gamma \vdash p$}
            \AxiomC{$\pi_2$}
            \noLine
            \UnaryInfC{$\vdots$}
            \noLine
            \UnaryInfC{$\Delta, x:q, \Theta \vdash s$}
            \AxiomC{$\pi_3$}
            \noLine
            \UnaryInfC{$\vdots$}
            \RightLabel{$(r)$}
            \UnaryInfC{$z:s, \Lambda \vdash l$}
            \RightLabel{$({\operatorname{cut}})$}
            \BinaryInfC{$\Delta, x:q, \Theta, \Lambda \vdash l$}
            \RightLabel{$(L \imp)$}
            \BinaryInfC{$y: p \imp q, \Gamma, \Delta, \Theta, \Lambda \vdash l$}
            \DisplayProof
            &
            \tagarray{\label{cut:L_vs_any}}
        \end{tabular}
        \end{center}
        
\item For any logical rule $(r_1)$ and structural rule $(r_0)$, where the cut variable $x:p$ was not manipulated by $(r_0)$:
\begin{center}
\begin{tabular}{>{\centering}m{6.5cm} >{\centering}m{0.5cm} >{\centering}m{6.5cm} >{\centering}m{0.5cm}}
        \AxiomC{$\pi_1$}
        \noLine
        \UnaryInfC{$\vdots$}
        \RightLabel{$(r_1)$}
        \UnaryInfC{$\Gamma \vdash p$}
        \AxiomC{$\pi_2$}
        \noLine
        \UnaryInfC{$\vdots$}
        \noLine
        \UnaryInfC{$x:p, \Delta \vdash q$}
        \RightLabel{$(r_0)$}
        \UnaryInfC{$x:p, \Delta' \vdash q$}
        \RightLabel{$(\operatorname{cut})$}
        \BinaryInfC{$\Gamma, \Delta' \vdash q$}
        \DisplayProof
        & $\to_{\operatorname{cut}}$ &
        \AxiomC{$\pi_1$}
        \noLine
        \UnaryInfC{$\vdots$}
        \RightLabel{$(r_1)$}
        \UnaryInfC{$\Gamma \vdash p$}
        \AxiomC{$\pi_2$}
        \noLine
        \UnaryInfC{$\vdots$}
        \noLine
        \UnaryInfC{$x:p, \Delta \vdash q$}
        \RightLabel{$(\operatorname{cut})$}
        \BinaryInfC{$\Gamma, \Delta \vdash q$}
        \RightLabel{$(r_0)$}
        \UnaryInfC{$\Gamma, \Delta' \vdash q$}
        \DisplayProof
        & \tagarray{\label{cut:log_vs_struc_np}}
\end{tabular}
\end{center}

\item For any logical rule $(r_1)$:
\begin{center}
\begin{tabular}{>{\centering}m{10cm} >{\centering}m{1cm}}
        \AxiomC{$\pi_1$}
        \noLine
        \UnaryInfC{$\vdots$}
        \RightLabel{$(r_1)$}
        \UnaryInfC{$\Gamma \vdash p$}
        \AxiomC{$\pi_2$}
        \noLine
        \UnaryInfC{$\vdots$}
        \noLine
        \UnaryInfC{$\Delta, y:p, y':p, \Theta \vdash s$}
        \RightLabel{$({\operatorname{ctr}})$}
        \UnaryInfC{$\Delta, y:p, \Theta \vdash s$}
        \RightLabel{$({\operatorname{cut}})$}
        \BinaryInfC{$\Gamma, \Delta, \Theta \vdash s$}
        \DisplayProof\\\vspace{0.5cm}
         $\to_{\operatorname{cut}}$\\\vspace{0.5cm}
        \AxiomC{$\pi_1$}
        \noLine
        \UnaryInfC{$\vdots$}
        \RightLabel{$(r_1)$}
        \UnaryInfC{$\Gamma \vdash p$}
        \AxiomC{$\pi_1$}
        \noLine
        \UnaryInfC{$\vdots$}
        \RightLabel{$(r_1)$}
        \UnaryInfC{$\Gamma \vdash p$}
        \AxiomC{$\pi_2$}
        \noLine
        \UnaryInfC{$\vdots$}
        \noLine
        \UnaryInfC{$\Delta, y:p, y':p, \Theta \vdash s$}
        \RightLabel{$({\operatorname{cut}})$}
        \BinaryInfC{$\Gamma, \Delta, y':p, \Theta \vdash s$}
        \RightLabel{$({\operatorname{cut}})$}
        \BinaryInfC{$\Gamma, \Gamma, \Delta, \Theta \vdash s$}
        \RightLabel{$({\operatorname{ctr/ex}})$}
        \doubleLine
        \UnaryInfC{$\Gamma, \Delta, \Theta \vdash s$}
        \DisplayProof
        & 
        \tagarray{\label{cut:log_vs_ctr}}
\end{tabular}
\end{center}
\begin{center}
\begin{tabular}{>{\centering}m{6cm} >{\centering}m{0.5cm} >{\centering}m{4cm} >{\centering}m{0.5cm}}
        \AxiomC{$\pi_1$}
        \noLine
        \UnaryInfC{$\vdots$}
        \RightLabel{$(r_1)$}
        \UnaryInfC{$\Gamma \vdash p$}
        \AxiomC{$\pi_2$}
        \noLine
        \UnaryInfC{$\vdots$}
        \noLine
        \UnaryInfC{$\Delta, \Theta \vdash q$}
        \RightLabel{$({\operatorname{weak}})$}
        \UnaryInfC{$\Delta, x:p, \Theta \vdash q$}
        \RightLabel{$({\operatorname{cut}})$}
        \BinaryInfC{$\Gamma, \Delta, \Theta \vdash q$}
        \DisplayProof
        & $\to_{\operatorname{cut}}$ &
        \AxiomC{$\pi_2$}
        \noLine
        \UnaryInfC{$\vdots$}
        \noLine
        \UnaryInfC{$\Delta, \Theta \vdash q$}
        \RightLabel{$({\operatorname{weak}})$}
        \doubleLine
        \UnaryInfC{$\Gamma, \Delta, \Theta \vdash q$}
        \DisplayProof
        &
        \tagarray{\label{cut:log_vs_weak}}
\end{tabular}
\end{center}


\end{itemize}
The remaining cases correspond to having a logical rule on both the left and right:
\begin{itemize}
\item $(R\imp)$ on the left and $(R\imp)$ on the right: 
\begin{center}
\begin{tabular}{>{\centering}m{10cm} >{\centering}m{1cm}}
        \AxiomC{$\pi_1$}
        \noLine
        \UnaryInfC{$\vdots$}
        \noLine
        \UnaryInfC{$\Gamma, x:p, \Delta \vdash q$}
        \RightLabel{$(R\imp)$}
        \UnaryInfC{$\Gamma, \Delta \vdash p \imp q$}
        \AxiomC{$\pi_2$}
        \noLine
        \UnaryInfC{$\vdots$}
        \noLine
        \UnaryInfC{$y: p \imp q, \Theta, z:s \vdash l$}
        \RightLabel{$(R\imp)$}
        \UnaryInfC{$y: p \imp q, \Theta \vdash s \imp l$}
        \RightLabel{$({\operatorname{cut}})$}
        \BinaryInfC{$\Gamma, \Delta, \Theta \vdash s \imp l$}
        \DisplayProof\\\vspace{0.5cm}
        $\to_{\operatorname{cut}}$\\\vspace{0.5cm} 
        \AxiomC{$\pi_1$}
        \noLine
        \UnaryInfC{$\vdots$}
        \noLine
        \UnaryInfC{$\Gamma, x:p, \Delta \vdash q$}
        \RightLabel{$(R\imp)$}
        \UnaryInfC{$\Gamma, \Delta \vdash p \imp q$}
        \AxiomC{$\pi_2$}
        \noLine
        \UnaryInfC{$\vdots$}
        \noLine
        \UnaryInfC{$y: p \imp q, \Theta, z : s \vdash l$}
        \RightLabel{$({\operatorname{cut}})$}
        \BinaryInfC{$\Gamma, \Delta, \Theta, z:s \vdash l$}
        \RightLabel{$(R\imp)$}
        \UnaryInfC{$\Gamma, \Delta, \Theta \vdash s \imp l$}
        \DisplayProof
        &
        \tagarray{\label{cut:R_vs_R}}
        \end{tabular}
        \end{center}
        
    \item $(R\imp)$ on the left and $(L\imp)$ on the right: 
\begin{center}
\begin{tabular}{>{\centering}m{10cm} >{\centering}m{1cm}}
            \AxiomC{$\pi_1$}
            \noLine
            \UnaryInfC{$\vdots$}
            \noLine
            \UnaryInfC{$\Gamma, x:p, \Delta \vdash q$}
            \RightLabel{$(R\imp)$}
            \UnaryInfC{$\Gamma, \Delta \vdash p \imp q$}
            \AxiomC{$\pi_2$}
            \noLine
            \UnaryInfC{$\vdots$}
            \noLine
            \UnaryInfC{$\Theta \vdash p$}
            \AxiomC{$\pi_3$}
            \noLine
            \UnaryInfC{$\vdots$}
            \noLine
            \UnaryInfC{$\Lambda, x': q, \Omega \vdash s$}
            \RightLabel{$(L\imp)$}
            \BinaryInfC{$y: p \imp q, \Theta, \Lambda, \Omega \vdash s$}
            \RightLabel{$({\operatorname{cut}})$}
            \BinaryInfC{$\Gamma, \Delta, \Theta, \Lambda, \Omega \vdash s$}
            \DisplayProof\\\vspace{0.5cm}
            $\to_{\operatorname{cut}}$\\\vspace{0.5cm}
            \AxiomC{$\pi_2$}
            \noLine
            \UnaryInfC{$\vdots$}
            \noLine
            \UnaryInfC{$\Theta \vdash p$}
            \AxiomC{$\pi_1$}
            \noLine
            \UnaryInfC{$\vdots$}
            \noLine
            \UnaryInfC{$\Gamma, x:p, \Delta \vdash q$}
            \RightLabel{$({\operatorname{cut}})$}
            \BinaryInfC{$\Theta, \Gamma, \Delta \vdash q$}
            \AxiomC{$\pi_3$}
            \noLine
            \UnaryInfC{$\vdots$}
            \noLine
            \UnaryInfC{$\Lambda, x':q, \Omega \vdash s$}
            \RightLabel{$({\operatorname{cut}})$}
            \BinaryInfC{$\Theta, \Gamma, \Delta, \Lambda, \Omega \vdash s$}
            \doubleLine
            \RightLabel{$({\operatorname{ex}})$}
            \UnaryInfC{$\Gamma, \Delta, \Theta, \Lambda, \Omega \vdash s$}
            \DisplayProof
            &
        \tagarray{\label{cut:R_vs_L}}
        \end{tabular}
        \end{center}
        \item $(R \imp)$ on the left and $(L \imp)$ on the right but $(L \imp)$ does not introduce the variable which is involved in the $(\operatorname{cut})$
\begin{center}
\begin{tabular}{>{\centering}m{10cm} >{\centering}m{1cm}}
            \AxiomC{$\pi_1$}
            \noLine
            \UnaryInfC{$\vdots$}
            \RightLabel{$(R\imp)$}
            \UnaryInfC{$\Gamma \vdash p$}
            \AxiomC{$\pi_2$}
            \noLine
            \UnaryInfC{$\vdots$}
            \noLine
            \UnaryInfC{$\Delta \vdash q$}
            \AxiomC{$\pi_3$}
            \noLine
            \UnaryInfC{$\vdots$}
            \noLine
            \UnaryInfC{$x:p, \Theta, y: r, \Lambda \vdash s$}
            \RightLabel{$(L \imp)$}
            \BinaryInfC{$z: q \imp r, \Delta, x:p, \Theta, \Lambda \vdash s$}
            \RightLabel{$(\operatorname{cut})$}
            \BinaryInfC{$\Gamma,z: q \imp r, \Delta, \Theta, \Lambda \vdash s$}
            \DisplayProof\\\vspace{0.5cm}
            $\to_{\operatorname{cut}}$\\\vspace{0.5cm}
            \AxiomC{$\pi_2$}
            \noLine
            \UnaryInfC{$\vdots$}
            \noLine
            \UnaryInfC{$\Delta \vdash q$}
            \AxiomC{$\pi_1$}
            \noLine
            \UnaryInfC{$\vdots$}
            \RightLabel{$(R \imp)$}
            \UnaryInfC{$\Gamma \vdash p$}
            \AxiomC{$\pi_3$}
            \noLine
            \UnaryInfC{$\vdots$}
            \noLine
            \UnaryInfC{$x:p, \Theta, y:r, \Lambda \vdash s$}
            \RightLabel{$(\operatorname{cut})$}
            \BinaryInfC{$\Gamma, \Theta, y:r, \Lambda \vdash s$}
            \RightLabel{$(L \imp)$}
            \BinaryInfC{$z:q \imp r, \Delta, \Gamma, \Theta, \Lambda \vdash s$}
            \doubleLine
            \RightLabel{$(\operatorname{ex})$}
            \UnaryInfC{$\Gamma, z: q \imp r, \Delta, \Theta, \Lambda \vdash s$}
            \DisplayProof
            &
            \tagarray{\label{cut:R_vs_L_nonp}}
        \end{tabular}
        \end{center}
        and
\begin{center}
\begin{tabular}{>{\centering}m{10cm} >{\centering}m{1cm}}
            \AxiomC{$\pi_1$}
            \noLine
            \UnaryInfC{$\vdots$}
            \RightLabel{$(R\imp)$}
            \UnaryInfC{$\Gamma \vdash p$}
            \AxiomC{$\pi_2$}
            \noLine
            \UnaryInfC{$\vdots$}
            \noLine
            \UnaryInfC{$x:p, \Delta \vdash q$}
            \AxiomC{$\pi_3$}
            \noLine
            \UnaryInfC{$\vdots$}
            \noLine
            \UnaryInfC{$\Theta, y:r, \Lambda \vdash s$}
            \RightLabel{$(L \imp)$}
            \BinaryInfC{$z: q \imp r, x:p, \Delta, \Theta, \Lambda \vdash s$}
            \RightLabel{$(\operatorname{cut})$}
            \BinaryInfC{$\Gamma, z: q \imp r, \Delta, \Theta, \Lambda \vdash s$}
            \DisplayProof\\\vspace{0.5cm}
            $\to_{\operatorname{cut}}$\\\vspace{0.5cm}
            \AxiomC{$\pi_1$}
            \noLine
            \UnaryInfC{$\vdots$}
            \RightLabel{$(R \imp)$}
            \UnaryInfC{$\Gamma \vdash p$}
            \AxiomC{$\pi_2$}
            \noLine
            \UnaryInfC{$\vdots$}
            \noLine
            \UnaryInfC{$x:p, \Delta \vdash q$}
            \RightLabel{$(\operatorname{cut})$}
            \BinaryInfC{$\Gamma, \Delta \vdash q$}
            \AxiomC{$\pi_3$}
            \noLine
            \UnaryInfC{$\vdots$}
            \noLine
            \UnaryInfC{$\Theta, y:r, \Lambda \vdash q$}
            \RightLabel{$(L \imp)$}
            \BinaryInfC{$z: q \imp r, \Gamma, \Delta, \Theta, \Lambda \vdash s$}
            \doubleLine
            \RightLabel{$(\operatorname{ex})$}
            \UnaryInfC{$\Gamma, z: q \imp r, \Delta, \Theta, \Lambda \vdash s$}
            \DisplayProof
            &
            \tagarray{\label{cut:R_vs_L_nonp2}}
        \end{tabular}
        \end{center}
    \end{itemize}
\end{definition}

\begin{definition}\label{cutequivalence}
We define $\sim_{\operatorname{cut}}$ to be the smallest equivalence relation on preproofs containing the relation $\to_{\operatorname{cut}}$.
\end{definition}


\begin{definition}[(Proof equivalence)]
We define $\sim_p$ to be the smallest compatible equivalence relation on preproofs containing the union of
\begin{itemize}
\item $\alpha$-equivalence (Definition \ref{alphaequivalence}),
\item $\tau$-equivalence (Definition \ref{tauequivalence}),
\item Commuting conversions (Definition \ref{commutingequivalence}),
\item $co$-equivalence (Definition \ref{co_equivalence}),
\item $\lambda$-equivalence (Definition \ref{inefficiency}),
\item $\eta$-equivalence (Definition \ref{etaequivalence}),
\item Cut equivalence (Definition \ref{cutequivalence}).
\end{itemize}
A \emph{proof} is an equivalence class of preproofs under proof equivalence. We say that two preproofs are \emph{equivalent} if they are equivalent under $\sim_p$.
\end{definition}

\subsection{Cut-elimination}

Why give yet another proof of cut-elimination? The structure of our proof is similar to Gentzen's \cite{gentzen} but we avoid the ``mix'' rule by making use of commuting conversions. We include the details so as to make clear which conversions are used. The treatment in the literature most similar to ours is \cite{cutctr}, however there the induction is structured differently and the focus is on weakening rather than contraction trees.

At a conceptual level, in order to justify the generating rules for proof equivalence, particularly the $\lambda$-equivalence rules, we have chosen our cut-elimination transformations (Definition \ref{cutreduction}) to bring out as clearly as possible the parallels between $(\operatorname{cut})$ and $(L \imp)$ (see Remark \ref{remark:limp_internal_cut}). Our proof of cut-elimination reinforces this connection, with some of the key steps in eliminating $(\operatorname{cut})$ repeated below to eliminate a subset of $(L \imp)$ rules in Section \ref{chiso} (see Lemma \ref{lemma:L_normal_form} and Lemma \ref{lemma:preimage_app}).

\begin{definition}
The \emph{width} $w(q)$ of a formula $q$ is the number of occurrences of $\imp$.
\end{definition}

\begin{definition}
The \emph{height} of a preproof $\pi$, denoted $h(\pi)$, is one less than the number of deduction rules encountered on the longest path in the underlying tree of the preproof.
\end{definition}

Note that two preproofs can be equivalent under $\sim_p$ but have different heights. A proof consisting of an axiom rule has height zero. A preproof which does not contain an occurrence of the $(\operatorname{cut})$ rule is called \emph{cut-free}.

\begin{thm}
\label{cutfree}
Every preproof is equivalent under $\sim_p$ to a cut-free preproof.
\end{thm}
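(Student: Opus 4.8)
The plan is to prove cut-elimination by the standard Gentzen double induction, adapted to our liberal calculus. The statement is that every preproof $\pi$ is $\sim_p$-equivalent to a cut-free preproof. We argue by induction on the number of $(\operatorname{cut})$ rules in $\pi$, and it suffices to show that a preproof with a single cut at the root --- more precisely, a preproof whose final rule is $(\operatorname{cut})$ and whose two immediate subtrees are cut-free --- is $\sim_p$-equivalent to a cut-free preproof. (In general one picks a topmost cut, i.e.\ a cut both of whose subtrees are cut-free, and replaces the subtree below it; this does not disturb the cuts elsewhere, and one repeats.) So fix such a $\pi$ with final rule
\begin{prooftree}
\AxiomC{$\Gamma \vdash p$}
\AxiomC{$\Delta, x:p, \Theta \vdash q$}
\RightLabel{$(\operatorname{cut})$}
\BinaryInfC{$\Gamma, \Delta, \Theta \vdash q$}
\end{prooftree}
with cut-free left branch $\pi_1$ and cut-free right branch $\pi_2$.

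For this base case I would run a secondary induction on the pair $(w(p), h(\pi_1) + h(\pi_2))$ ordered lexicographically, where $w(p)$ is the width of the cut formula. The cases are dictated by the last rule of $\pi_1$ and the last rule of $\pi_2$, and in each case one of the single-step cut reductions of Definition \ref{cutreduction} (or $\alpha$-, $\tau$-equivalence to put things in strict position first) applies, pushing the cut either upward or replacing it with cuts on formulas of strictly smaller width. Concretely: if either branch ends in $(\operatorname{ax})$, rules \eqref{cut:ax_left} or \eqref{cut:ax_right} eliminate the cut outright. If the last rule of $\pi_1$ is a structural rule, \eqref{cut:struc_vs_any} moves it below the cut (reducing $h(\pi_1)$). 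If the last rule of $\pi_1$ is $(L\imp)$, \eqref{cut:L_vs_any} moves it below. If $\pi_1$ ends in $(R\imp)$ but the last rule of $\pi_2$ does not act on (an ancestor of) the cut variable $x:p$, then that last rule of $\pi_2$ is either structural --- handled by \eqref{cut:log_vs_struc_np}, \eqref{cut:log_vs_ctr}, \eqref{cut:log_vs_weak} --- or it is $(R\imp)$ \eqref{cut:R_vs_R}, or it is $(L\imp)$ not introducing the cut variable \eqref{cut:R_vs_L_nonp}, \eqref{cut:R_vs_L_nonp2}; in all these the cut moves up past the right rule, decreasing $h(\pi_2)$ while keeping $w(p)$ fixed. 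The genuine logical case is $\pi_1$ ending in $(R\imp)$ and $\pi_2$ ending in $(L\imp)$ introducing the cut variable: then \eqref{cut:R_vs_L} replaces the single cut on $p = s' \imp q'$ by two cuts, one on the premise formula and one on $q'$, both of strictly smaller width, so the primary component $w(p)$ drops. In every case the new preproof is $\sim_p$-equivalent to the old one (by whichever cut reduction was used, which is contained in $\sim_{\operatorname{cut}} \subseteq \sim_p$), its outermost cut has been moved to a position of smaller measure or destroyed, and the subtrees feeding it remain cut-free (or, in the branching cases, are cut-free by construction); applying the secondary induction hypothesis finishes the base case, and then the primary induction on cut-count finishes the theorem.

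Two bookkeeping points need care but are routine given the setup. First, the liberal deduction rules mean a rule need not act in leftmost position, so before quoting a cut reduction one must use $\tau$-equivalence (Lemma \ref{lemma:strict_vs_liberal} and the relations of Definition \ref{tauequivalence}) to bring the relevant occurrence of $x:p$ to the front of the antecedent; this is absorbed into the argument without changing the measure in an essential way, since the reductions \eqref{cut:struc_vs_any}, \eqref{cut:log_vs_struc_np} and the $(\operatorname{ex})$ case are exactly what license this, and $(\operatorname{ex})$ commuting with a cut costs nothing. Second, several reductions (e.g.\ \eqref{cut:log_vs_ctr}, \eqref{cut:R_vs_L}) duplicate a subtree $\pi_1$; one must check that the duplicated copies are themselves cut-free, which they are since $\pi_1$ was, and that the measure on each resulting cut has genuinely decreased --- in \eqref{cut:log_vs_ctr} the cut formula is unchanged but the right subtree of each new cut is shorter, and in \eqref{cut:R_vs_L} the cut formula strictly shrinks --- so the induction is well-founded.

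The main obstacle I anticipate is not any single case but the verification that the measure $(w(p), h(\pi_1)+h(\pi_2))$ actually strictly decreases in every branch of the case analysis once the auxiliary $(\operatorname{ex})$ and $\tau$-manipulations are interleaved --- in particular making sure that the ``strictification'' steps do not increase $h$ in a way that outpaces the decrease from the cut reduction. The clean way to handle this is to prove cut-elimination first for strict preproofs (where every occurrence involved in a rule is leftmost, so no $\tau$-shuffling is needed and the reductions of Definition \ref{cutreduction} apply verbatim), and then deduce the general statement from Lemma \ref{lemma:strict_vs_liberal}, which says every preproof is $\sim_\tau$-equivalent, hence $\sim_p$-equivalent, to a strict one; combined with the observation that the cut reductions send strict preproofs to preproofs that are $\sim_\tau$-equivalent to strict ones (the $(\operatorname{ctr/ex})$ and doubled-$(\operatorname{ex})$ blocks in \eqref{cut:log_vs_ctr}, \eqref{cut:R_vs_L}, etc.\ are exactly the sites where $\sim_\tau$ is invoked), this isolates all the permutation bookkeeping into one lemma and keeps the main induction clean.
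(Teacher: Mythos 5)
Your overall architecture (pick a topmost cut, then a lexicographic induction on $(w(p),\,h(\pi_1)+h(\pi_2))$ with a case split on the last rules of the two branches) is the same as the paper's, and most of your cases are handled exactly as in Proposition \ref{prop:actualwork}. But there is a genuine gap in the one case that is actually hard: $(r_1) = (R\imp)$ with $(r_2) = (\operatorname{ctr})$ contracting (an ancestor of) the cut variable. You claim that after \eqref{cut:log_vs_ctr} ``the right subtree of each new cut is shorter.'' That is true only for the \emph{upper} of the two new cuts. The lower cut has as its right branch the entire upper cut (and, after you eliminate that upper cut by the inner inductive hypothesis, a cut-free proof whose height may have grown enormously in the process). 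So for the lower cut the width is unchanged and the height component of your measure is \emph{not} bounded by $n-1$; the double induction does not go through. This is precisely the classical obstruction that forced Gentzen to introduce the mix rule, and your proposal does not resolve it.

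The paper's proof spends most of its length on exactly this case, and the resolution is not a refinement of the measure but a structural preprocessing step: using Lemma \ref{lemma:contract_normal_form} (contraction normal form, which relies on the commuting conversions and $co$-equivalences) one first gathers \emph{all} contractions active for the cut variable at the bottom of $\pi_2$, so that a single block of applications of \eqref{cut:log_vs_ctr} produces $l$ copies of $\pi_1$ cut against a proof $\pi_2''$ containing \emph{no} remaining active contractions. One then commutes the rule introducing each ancestor $x_i$ ($(\operatorname{ax})$, $(\operatorname{weak})$ or $(L\imp)$) down to meet its cut, and disposes of the resulting cuts via Lemma \ref{lemma:ax_actualwork} (a separate induction for the axiom-introduced ancestors), \eqref{cut:log_vs_weak}, or \eqref{cut:R_vs_L} together with the \emph{outer} inductive hypothesis on width. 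Without some such device --- contraction normal forms, a mix/multicut rule, or a more refined measure such as cut-rank plus a count of contractions above the cut --- the induction you propose is not well-founded, so this case needs to be reworked rather than just bookkept.
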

\begin{proof}
Given any preproof $\pi$, we can choose an instance of the $(\operatorname{cut})$ rule in $\pi$ which is at the greatest possible height, and apply Proposition \ref{prop:actualwork} below to the subproof given by taking this as the root. Iterating this finitely many times yields the result.
\end{proof}

With reference to the prototype contraction in Definition \ref{defn:deduction_rules} we say that the variables $x:p, y:p$ are \emph{involved} in that deduction rule.

\begin{definition}\label{defn:active_contract} Let $\pi$ be a preproof. We say that a particular instance of $(\operatorname{ctr})$ in the proof tree is \emph{active} with respect to an occurrence of a variable $x:p$ in the preproof if the involved variables in the contraction are weak ancestors of that occurrence.
\end{definition}

We begin with an easy special case:

\begin{lemma}\label{lemma:ax_actualwork} Suppose given a preproof $\pi$ of the form
\begin{center}
    \AxiomC{$\pi_1$}
    \noLine
    \UnaryInfC{$\vdots$}
    \RightLabel{$(R \imp)$}
    \UnaryInfC{$\Gamma \vdash p$}
    \AxiomC{$\pi_2$}
    \noLine
    \UnaryInfC{$\vdots$}
    \RightLabel{$(r)$}
    \UnaryInfC{$x:p, \Delta \vdash q$}
    \RightLabel{$(\operatorname{cut})$}
    \BinaryInfC{$\Gamma, \Delta \vdash q$}
    \DisplayProof
\end{center}
where $\pi_1$ and $\pi_2$ are both cut-free, the cut variable $x:p$ is introduced in $\pi_2$ by an axiom rule and $\pi_2$ contains no active contractions with respect to the displayed occurrence of $x:p$. Then $\pi$ is equivalent under $\sim_p$ to a cut-free preproof.
\end{lemma}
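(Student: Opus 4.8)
The plan is to push the $(\operatorname{cut})$ rule upward through $\pi_2$, following the chain of weak ancestors of the displayed occurrence of $x:p$, until it reaches the axiom rule introducing $x:p$, at which point it is deleted by \eqref{cut:ax_right}. The argument proceeds by induction on the height $h(\pi_2)$.

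First I record what the hypotheses say about this chain. Since $\pi_2$ contains no active contractions with respect to the displayed occurrence of $x:p$ (Definition \ref{defn:active_contract}), and any contraction whose result is a weak ancestor of that occurrence would be active, clause (ii) in the definition of immediate weak ancestor is never invoked along the chain. Hence the weak ancestors of $x:p$ form a single path $x:p = z_1:p, z_2:p, \ldots, z_n:p$ in $\pi_2$; the variable name is constant along it, since only a contraction could change it; and $z_n:p$ is the principal formula of an axiom rule $z_n:p \vdash p$ --- it is neither a weakened-in variable nor an $(L\imp)$-introduced variable, because by hypothesis $x:p$ is introduced by an axiom.

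If $h(\pi_2) = 0$ then $\pi_2$ is the single axiom $x:p \vdash p$, so $\Delta = \varnothing$, $q = p$, and $\pi = \operatorname{cut}(\pi_1, x:p \vdash p)$ has conclusion $\Gamma \vdash p$; by \eqref{cut:ax_right} we have $\pi \to_{\operatorname{cut}} \pi_1$, which is cut-free. For the inductive step, let $(r)$ be the last rule of $\pi_2$ and let $\rho$ be the premise of $(r)$ carrying $z_2:p$ and the rest of the chain. Because $x:p$ occurs in the antecedent of the conclusion of $\pi_2$ yet is introduced only by the axiom at the top of the chain, $(r)$ neither introduces nor abstracts $x:p$; so $(r)$ is one of: (a) a structural rule not manipulating $x:p$; (b) an exchange swapping $x:p$ with a neighbour; (c) an $(R\imp)$ abstracting a variable other than $x:p$; (d) an $(L\imp)$ introducing a variable other than $x:p$. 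In cases (a), (c), (d) the variable $z_2:p$ is merely a context variable of $(r)$, and since $\pi_1$ ends in $(R\imp)$ --- so that the cut formula $p$ is exactly the formula just introduced on the left --- the cut commutes above $(r)$: via \eqref{cut:log_vs_struc_np} in case (a), \eqref{cut:R_vs_R} in case (c), and \eqref{cut:R_vs_L_nonp} or \eqref{cut:R_vs_L_nonp2} in case (d) (according to which premise of the $(L\imp)$ carries $z_2:p$). Each such step rewrites $\pi$, via $\to_{\operatorname{cut}}$, to $\operatorname{cut}(\pi_1, \rho)$ with $(r)$ --- and possibly a block of exchanges, which introduce no cut --- applied below it. In case (b) we instead use $\tau$-equivalence (the \eqref{tau_cut} family, together with \eqref{tau_ex_ex} to reach either direction of the exchange) to absorb $(r)$ into the $(\operatorname{cut})$, again reducing $\pi$, modulo a block of exchanges, to $\operatorname{cut}(\pi_1, \rho)$. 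In every case $\operatorname{cut}(\pi_1, \rho)$ again satisfies the hypotheses of the lemma: $\pi_1$ and $\rho$ are cut-free, the cut variable $z_2:p$ is introduced in $\rho$ by the same axiom, and $\rho$ inherits the absence of active contractions with respect to it; and $h(\rho) < h(\pi_2)$. By the induction hypothesis $\operatorname{cut}(\pi_1, \rho) \sim_p$ a cut-free preproof, whence applying $(r)$ and the exchanges (using condition (C1)) and recalling that $\to_{\operatorname{cut}}$ and $\sim_\tau$ are contained in $\sim_p$, we conclude that $\pi \sim_p$ a cut-free preproof.

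The conceptual core is short --- find the chain, march the cut up to the axiom, delete it --- and the hypotheses on $x:p$ are precisely what ensures that the cut never encounters the genuinely logical reduction \eqref{cut:R_vs_L} (which would force the cut formula to shrink), the $\pi_1$-duplicating reduction \eqref{cut:log_vs_ctr}, or the $\pi_1$-erasing reduction \eqref{cut:log_vs_weak}, any of which would wreck the induction on $h(\pi_2)$. I expect the principal obstacle to be bookkeeping rather than mathematics: each commuting reduction in Definition \ref{cutreduction} is displayed with the cut variable and the active variable of $(r)$ in particular positions, so matching it to the positions actually arising in $\pi_2$ forces one to interleave $\tau$-equivalences and commuting conversions, and the exchange case (b) --- which is not literally an instance of $\to_{\operatorname{cut}}$ --- has to be handled separately.
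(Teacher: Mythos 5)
Your proposal is correct and follows essentially the same route as the paper: induction on $h(\pi_2)$ with a case split on the final rule $(r)$, pushing the cut upward via \eqref{cut:log_vs_struc_np}, \eqref{cut:R_vs_R}, \eqref{cut:R_vs_L_nonp}, \eqref{cut:R_vs_L_nonp2} and terminating at the axiom with \eqref{cut:ax_right}. Your explicit treatment of the exchange-moves-$x{:}p$ subcase via \eqref{tau_cut} is a point the paper's proof glosses over, and is a welcome extra bit of care.
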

\begin{proof}
By induction on the height of $\pi_2$. In the base case $\pi$ is
\begin{center}
        \AxiomC{$\pi_1$}
        \noLine
        \UnaryInfC{$\vdots$}
        \RightLabel{$(R \imp)$}
        \UnaryInfC{$\Gamma \vdash p$}
        \AxiomC{}
        \RightLabel{$(\operatorname{ax})$}
        \UnaryInfC{$x:p \vdash p$}
        \RightLabel{$(\operatorname{cut})$}
        \BinaryInfC{$\Gamma \vdash p$}
        \DisplayProof
\end{center}
which is equivalent by \eqref{cut:ax_right} to $\pi_1$. For the inductive step where $\pi_2$ has height $> 0$ we break into cases depending on the rule $(r)$:
\begin{itemize}
\item $(r)$ is a structural rule. Since $x:p$ is introduced by $(\operatorname{ax})$ and there are no active contractions in $\pi_2$, the cut variable $x:p$ is not manipulated by $(r)$ and so this case follows by the inductive hypothesis and \eqref{cut:log_vs_struc_np}.
\item $(r) = (R \imp)$ by \eqref{cut:R_vs_R} and the inductive hypothesis.
\item $(r) = (L \imp)$ by \eqref{cut:R_vs_L_nonp} and \eqref{cut:R_vs_L_nonp2} and the inductive hypothesis, using that $x:p$ is not introduced by $(L \imp)$.
\end{itemize}
This completes the inductive step and the proof of the lemma.
\end{proof}

\begin{proposition}\label{prop:actualwork} Any preproof $\pi$ of the form
\begin{center}
    \AxiomC{$\pi_1$}
    \noLine
    \UnaryInfC{$\vdots$}
    \RightLabel{$(r_1)$}
    \UnaryInfC{$\Gamma \vdash p$}
    \AxiomC{$\pi_2$}
    \noLine
    \UnaryInfC{$\vdots$}
    \RightLabel{$(r_2)$}
    \UnaryInfC{$x:p, \Delta \vdash q$}
    \RightLabel{$(\operatorname{cut})$}
    \BinaryInfC{$\Gamma, \Delta \vdash q$}
    \DisplayProof
\end{center}
where $\pi_1$ and $\pi_2$ are both cut-free, is equivalent under $\sim_p$ to a cut-free preproof.
\end{proposition}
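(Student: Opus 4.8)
The plan is to prove Proposition \ref{prop:actualwork} by a lexicographic induction whose primary component is the width $w(p)$ of the cut formula and whose secondary component $m(\pi)$ records the size of the \emph{contraction tree} of the displayed occurrence of $x:p$ in $\pi_2$ — the tree obtained by tracing the weak ancestors of that occurrence upward through $\pi_2$, which branches exactly at the $(\operatorname{ctr})$ rules active with respect to $x:p$ — refined by $h(\pi_2)$ and then $h(\pi_1)$ to break ties. Every cut reduction we invoke rewrites $\pi$ into a preproof assembled from cut-free pieces together with at most two cuts, each strictly smaller in this order; since $\sim_p$ is compatible, we may replace such a cut by an equivalent cut-free preproof supplied by the inductive hypothesis and the replacement propagates through the surrounding deduction rules. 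Using $\tau$-equivalence, in particular \eqref{tau_cut}, we may always assume the cut variable is leftmost in the antecedent of the right premise, so the reducts genuinely match the form of the statement.

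First I case on the last rule $(r_1)$ of $\pi_1$. If $(r_1)=(\operatorname{ax})$ then $\Gamma$ is a single variable $x':p$ and \eqref{cut:ax_left} rewrites $\pi$ to $\operatorname{subst}^{str}(\pi_2,x,x')$ (or \eqref{cut:ax_right} if $\pi_2$ is itself the axiom), which is cut-free. If $(r_1)$ is a structural rule we commute the cut above it by \eqref{cut:struc_vs_any}; if $(r_1)=(L\imp)$ we use \eqref{cut:L_vs_any}; in both cases the reduct has a single cut with the same $w(p)$ and the same contraction tree but strictly smaller $h(\pi_1)$, so the inductive hypothesis applies and we conclude by reapplying $(r_1)$. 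This disposes of every case with $p$ atomic, since then $(r_1)$ cannot be $(R\imp)$.

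It remains to treat $(r_1)=(R\imp)$, so $p=p_1\imp p_2$ is compound, and I now case on the last rule $(r_2)$ of $\pi_2$ and on how the displayed $x:p$ is treated. If no $(\operatorname{ctr})$ in $\pi_2$ is active with respect to $x:p$ and $x:p$ is introduced by an axiom, Lemma \ref{lemma:ax_actualwork} applies directly, covering in particular the case $(r_2)=(\operatorname{ax})$. If $(r_2)$ introduces $x:p$ by a weakening, \eqref{cut:log_vs_weak} deletes the cut outright. If $(r_2)$ is $(R\imp)$, or a structural rule or an $(L\imp)$ that does not act on $x:p$ (or an exchange bringing $x:p$ to the front, handled via \eqref{tau_cut}), we push the cut above it by \eqref{cut:R_vs_R}, \eqref{cut:log_vs_struc_np}, or \eqref{cut:R_vs_L_nonp}/\eqref{cut:R_vs_L_nonp2} respectively: a single cut with strictly smaller $h(\pi_2)$ and no larger contraction tree. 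If $(r_2)=(L\imp)$ with $x:p$ as its principal variable, \eqref{cut:R_vs_L} rewrites $\pi$ into two nested cuts on $p_1$ and on $p_2$, both of strictly smaller width; since all the subproofs involved are cut-free, two appeals to the primary inductive hypothesis complete this case.

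The genuinely delicate case is $(r_2)=(\operatorname{ctr})$ contracting two weak ancestors of $x:p$. Here \eqref{cut:log_vs_ctr} replaces the cut by two nested cuts on the \emph{same} formula $p$, so the primary component does not drop and progress must come from the contraction tree. The upper cut has as right premise $\pi_2$ with its final contraction deleted, in which the contraction tree of the relevant cut-variable occurrence is a proper subtree of the original, so the inductive hypothesis yields an equivalent cut-free $\psi$; the lower cut is then $\pi_1$ cut against $\psi$. The point I expect to be the main obstacle — and where the argument needs care — is to check that eliminating the upper cut does not enlarge the contraction tree of the remaining passenger occurrence of the cut variable, so that the lower cut, too, has strictly smaller secondary measure; intuitively this holds because the cut reductions only rearrange the ancestry of the variable actually being cut, and the copies of $\pi_1$ duplicated along the way do not contain the passenger variable. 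Granting this, both new cuts fall under the inductive hypothesis, and assembling all the cases above completes the proof.
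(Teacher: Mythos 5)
Your architecture differs from the paper's: you run one lexicographic induction on $\bigl(w(p), m(\pi), h(\pi_2), h(\pi_1)\bigr)$ and peel off the final contraction one branch at a time, whereas the paper inducts on $\bigl(w(p), h(\pi_1)+h(\pi_2)\bigr)$ and, in the contraction case, first puts $\pi_2$ into contraction normal form (Lemma \ref{lemma:contract_normal_form}), applies \eqref{cut:log_vs_ctr} to \emph{all} active contractions at once to build a tower of cuts against a premise with no remaining active contractions, and then commutes each rule introducing a copy of the cut variable \emph{down to its own cut}, so that every same-width cut is destroyed in a single step by \eqref{cut:log_vs_weak}, \eqref{cut:R_vs_L} or Lemma \ref{lemma:ax_actualwork}. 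This difference is not cosmetic: it is precisely what lets the paper avoid the obstacle you flag, and as written your argument does not get past it.

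The gap is the lower cut in your $(\operatorname{ctr})$ case. After \eqref{cut:log_vs_ctr} you invoke the inductive hypothesis to replace the upper cut by \emph{some} cut-free $\psi$, and you then need the contraction tree of the passenger occurrence of the cut variable in $\psi$ to still be a proper subtree of the original. But your inductive hypothesis asserts only the existence of an equivalent cut-free preproof and gives no control over the contraction trees of the other variables in its final sequent, and the invariant you appeal to (``cut reductions only rearrange the ancestry of the variable actually being cut'') is false. Concretely: suppose an ancestor of the upper cut variable $y:p$ is introduced in $\pi_2'$ by an $(L\imp)$ rule whose left branch $\Theta \vdash p_1$ contains a weak ancestor of the passenger $y':p$. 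Then \eqref{cut:R_vs_L} produces a cut of the proof of $\Theta \vdash p_1$ against the body $\Gamma, z:p_1, \Delta \vdash p_2$ of $\pi_1$; if that body contains contractions active for $z:p_1$, the ensuing applications of \eqref{cut:log_vs_ctr} duplicate the proof of $\Theta \vdash p_1$ and re-contract the copies, so the final occurrence of $y':p$ acquires \emph{new} active contractions. Its contraction tree in $\psi$ can therefore strictly exceed the subtree you started from (and need not be bounded by the original tree of $x:p$ at all), so the secondary component of your measure does not decrease for the lower cut. You are right that the duplicated copies of $\pi_1$ do not contain $y'$; the duplication that hurts you is of sub-branches of $\pi_2'$, occurring inside the smaller-width cuts spawned by \eqref{cut:R_vs_L}. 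To repair this you would either have to strengthen the induction so that it returns a cut-free proof with controlled contraction trees for all boundary variables, or restructure the contraction case as the paper does, discharging every same-width cut before any smaller-width elimination is performed.
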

\begin{proof}
Let $P(w, n)$ denote the following statement: any preproof $\pi$ with cut-free branches $\pi_1, \pi_2$ and final cut variable $x:p$ (as above) satisfying $w(p) = w$ and $n = h(\pi_1) + h(\pi_2)$ is equivalent under $\sim_p$ to a cut-free preproof. Let $P(w)$ denote $\forall n P(w,n)$. We will prove $\forall w P(w)$ by induction on $w$. Thus we must show $P(0)$ and that if for all $v < w$ $P(v)$ then $P(w)$. We refer to this as the \emph{outer induction}.

\textbf{Base case of the outer induction:} to prove $P(0)$ (that is, $\forall n P(0,n)$) we proceed by induction on $n$, which we refer to as the \emph{inner induction}. In the base case $P(0,0)$ of the inner induction both $(r_1),(r_2)$ are axiom rules, so the claim follows from \eqref{cut:ax_left}, \eqref{cut:ax_right}. Now assume $n > 0$ and that $P(0,k)$ holds for all $k < n$. If $(r_1)$ is $(\operatorname{ax})$ then we are done by \eqref{cut:ax_left}. If $(r_1)$ is a structural rule then the claim follows by applying the inner inductive hypothesis and \eqref{cut:struc_vs_any}. If $(r_1)$ is a logical rule then since $w(x:p) = 0$ it must be $(L \imp)$ and the claim follows from \eqref{cut:L_vs_any} and the inner inductive hypothesis.

\textbf{Inductive step of the outer induction}: now suppose that $w > 0$ is fixed and $P(v)$ holds for all $v < w$. To prove $P(w)$ (that is, $\forall n P(w,n)$) we proceed by induction on $n$, which we again refer to as the inner induction. If $n \le 1$ then one of $(r_1),(r_2)$ is $(\operatorname{ax})$ so the claim follows from \eqref{cut:ax_left}, \eqref{cut:ax_right}. Suppose now that $n > 1$ and that $P(w,k)$ holds for all $k < n$. We again divide into cases depending on the final deduction rules $(r_1),(r_2)$. Some cases follow from the inner inductive hypothesis as in the proof of the base case of the outer induction above, and we will not repeat them. The new cases that are easily dispensed with:
\begin{itemize}
\item $(r_1) = (R \imp), (r_2) = (R \imp)$ follows by \eqref{cut:R_vs_R} and the inner inductive hypothesis.
\item $(r_1) = (R \imp), (r_2) = (L \imp)$ may be divided into two subcases. Either the $(L \imp)$ does not introduce the cut variable $x:p$, in which case the claim follows by \eqref{cut:R_vs_L_nonp} and the inner inductive hypothesis, or the $(L \imp)$ \emph{does} introduce the cut variable $x$ of type $p = r \imp s$, in which case $\pi$ is by \eqref{cut:R_vs_L} equivalent to a proof of the form
\begin{center}
            \AxiomC{$\pi'_2$}
            \noLine
            \UnaryInfC{$\vdots$}
            \noLine
            \UnaryInfC{$\Theta \vdash r$}
            \AxiomC{$\pi'_1$}
            \noLine
            \UnaryInfC{$\vdots$}
            \noLine
            \UnaryInfC{$\Gamma', y:r, \Gamma'' \vdash s$}
            \RightLabel{$({\operatorname{cut}})$}
            \BinaryInfC{$\Theta, \Gamma', \Gamma'' \vdash s$}
            \AxiomC{$\pi''_2$}
            \noLine
            \UnaryInfC{$\vdots$}
            \noLine
            \UnaryInfC{$\Lambda, z:s, \Omega \vdash s$}
            \RightLabel{$({\operatorname{cut}})$}
            \BinaryInfC{$\Theta, \Gamma', \Gamma'', \Lambda, \Omega \vdash s$}
            \doubleLine
            \RightLabel{$({\operatorname{ex}})$}
            \UnaryInfC{$\Gamma, \Delta \vdash q$}
            \DisplayProof
\end{center}
where $\Gamma = \Gamma', \Gamma''$ and $\Delta = \Theta, \Lambda, \Omega$. Since both of these cuts involve types of lower width than $p$, the claim follows from the outer inductive hypothesis.

\item $(r_1)$ is logical and $(r_2)$ is one of $(\operatorname{weak}), (\operatorname{ex})$ follow from the inner inductive hypothesis and \eqref{cut:log_vs_weak}, \eqref{tau_cut} respectively.
\item $(r_1)$ is $(L \imp)$ and $(r_2)$ is $(\operatorname{ctr})$ follows as above in the proof of the base case of the outer induction, by the inner inductive hypothesis and \eqref{cut:L_vs_any}.
\end{itemize}
The only remaining case is where $(r_1) = (R \imp)$ and $(r_2) = (\operatorname{ctr})$, which will occupy the rest of the proof. In this case $\pi$ is of the form
    \begin{center}
    \begin{tabular}{ >{\centering}m{10cm} >{\centering}m{0.5cm}}
    \AxiomC{$\pi_1$}
    \noLine
    \UnaryInfC{$\vdots$}
    \RightLabel{$(R \imp)$}
            \UnaryInfC{$\Gamma \vdash p$}
        \AxiomC{$\pi'_2$}
        \noLine
        \UnaryInfC{$\vdots$}
        \noLine
        \UnaryInfC{$x_1:p, x_2:p, \Delta \vdash q$}
        \RightLabel{$({\operatorname{ctr}})$}
        \UnaryInfC{$x_1:p, \Delta \vdash q$}
        \RightLabel{$({\operatorname{cut}})$}
        \BinaryInfC{$\Gamma, \Delta \vdash q$}
        \DisplayProof
        &
\end{tabular}
\end{center}
where we set $x_1 = x$. Using $\tau$-equivalence, commuting conversions and $co$-equivalence we can manipulate $\pi_2$ (meaning $\pi_2'$ plus the final contraction) so that all the active contractions with respect to the cut variable $x_1:p$ occur at the bottom of the proof tree (see Lemma \ref{lemma:contract_normal_form} and Remark \ref{remark:coassoc_tree} below). Note that the final deduction rule of $\pi_2$ is, by hypothesis, an active contraction.\footnote{It is possible that $\pi_2$ contains other contractions on variables of type $p$, perhaps even the variable $x:p$ but which are not weak ancestors of the cut variable; these we all ignore.} After this step we see that $\pi$ is equivalent to
\begin{center}
\begin{tabular}{ >{\centering}m{10cm} >{\centering}m{0.5cm}}
        \AxiomC{$\pi_1$}
        \noLine
        \UnaryInfC{$\vdots$}
        \RightLabel{$(R \imp)$}
        \UnaryInfC{$\Gamma \vdash p$}
        \AxiomC{$\pi_2''$}
        \noLine
        \UnaryInfC{$\vdots$}
        \RightLabel{$(r)$}
        \UnaryInfC{$x_1, x_2, \ldots, x_l, \Delta \vdash q$}
        \RightLabel{$(\operatorname{ctr})$}
        \UnaryInfC{$x_1, x_2, \ldots, x_{l-1}, \Delta \vdash q$}
        \noLine
        \UnaryInfC{$\vdots$}
        \RightLabel{$(\operatorname{ctr})$}
        \UnaryInfC{$x_1, x_2, \Delta \vdash q$}
        \RightLabel{$({\operatorname{ctr}})$}
        \UnaryInfC{$x_1, \Delta \vdash q$}
        \RightLabel{$({\operatorname{cut}})$}
        \BinaryInfC{$\Gamma, \Delta \vdash q$}
        \DisplayProof
        &
        \tagarray{\label{eq:cut_tree_actualwork}}
\end{tabular}
    \end{center}
where $\pi_2''$ is cut-free and contains no active contractions with respect to $x_1$. To reduce clutter we have dropped the types from the variables $x_i:p$. By repeated applications of \eqref{cut:log_vs_ctr} we obtain the following preproof equivalent to $\pi$:
\begin{center}
        \AxiomC{$\pi_1$}
        \noLine
        \UnaryInfC{$\vdots$}
        \RightLabel{$(R \imp)$}
        \UnaryInfC{$\Gamma \vdash p$}
        \AxiomC{$\pi_1$}
        \noLine
        \UnaryInfC{$\vdots$}
        \RightLabel{$(R \imp)$}
        \UnaryInfC{$\Gamma \vdash p$}
        \AxiomC{$\pi_1$}
        \noLine
        \UnaryInfC{$\vdots$}
        \RightLabel{$(R \imp)$}
        \UnaryInfC{$\Gamma \vdash p$}
        \AxiomC{$\pi_1$}
        \noLine
        \UnaryInfC{$\vdots$}
        \RightLabel{$(R \imp)$}
        \UnaryInfC{$\Gamma \vdash p$}
        \AxiomC{$\pi_2''$}
        \noLine
        \UnaryInfC{$\vdots$}
        \RightLabel{$(r)$}
        \UnaryInfC{$x_1,\ldots,x_l,\Delta \vdash q$}
        \RightLabel{$(\operatorname{cut})$}
        \BinaryInfC{$\Gamma, x_1,\ldots,x_{l-1}, \Delta \vdash q$}
        \UnaryInfC{$\vdots$}
        \RightLabel{$(\operatorname{cut})$}
        \BinaryInfC{$(l-2)\Gamma,x_1,x_2,\Delta \vdash q$}
        \RightLabel{$({\operatorname{cut}})$}
        \BinaryInfC{$(l-1)\Gamma, x_1, \Delta \vdash q$}
        \RightLabel{$({\operatorname{cut}})$}
        \BinaryInfC{$l \Gamma,\Delta \vdash q$}
        \RightLabel{$(\operatorname{ctr/ex})$}
        \doubleLine
        \UnaryInfC{$\Gamma, \Delta \vdash q$}
        \DisplayProof
\end{center}
where $r \Gamma$ denotes the concatenation of $r$ copies of the sequence $\Gamma$. Note that this proof contains no active contractions for the final cut variable $x:p = x_1:p$.

The variable $x_i$ is introduced inside $\pi_2''$ by an instance $(r_i)$ of a deduction rule which is $(\operatorname{weak}), (L \imp)$ or $(\operatorname{ax})$. Possibly using \eqref{co_ctr_comm_alt} to rearrange the ordering, we may assume that there is an integer $1 \le m \le l$ such that for all $1 \le i \le m$ the variable $x_i$ is introduced by either $(\operatorname{weak})$ or $(L \imp)$ and for $i > m$ it is introduced by $(\operatorname{ax})$.\footnote{Note that the ordering on the $x_i$ has no meaning, and we do not require $(r_i)$ to be in any sense ``above'' or ``below'' $(r_j)$ if $i < j$.} First we deal with the cases $1 \le i \le m$. Using commuting conversions $(r_i)$ may be commuted downwards in $\pi_2''$ past the rule $(r)$. Further by \eqref{cut:log_vs_struc_np}, \eqref{cut:R_vs_L_nonp} the rule $(r_i)$ may be commuted past not only the $(\operatorname{cut})$ directly below $(r)$ but every cut down to the one that is actually against the variable $x_i:p$ introduced by $(r_i)$. Here we use in an essential way that the active contractions have been accounted for in the the previous step.

At the end of this process we see that $\pi$ is equivalent to a preproof, roughly of the same shape as above, with $l$ copies of $\pi_1$ being cut against the ``trunk'' of the tree at the ``crown'' of which is a preproof $\pi_2'''$ of $x_{m+1},\ldots,x_l, \Delta' \vdash q$ derived from $\pi_2''$. The first $m$ of these copies of $\pi_1$ are cut against variables $x_1,\ldots,x_m$ introduced immediately before the cut, and the final $l-m$ copies of $\pi_1$ are cut against a proof of $x_{m+1},\ldots,x_i,\Delta' \vdash q$ for some $m+1 \le i \le l$. These final $l-m$ cuts may be eliminated using Lemma \ref{lemma:ax_actualwork} (this does not use either the inner or outer inductive hypothesis) noting that in the notation of that lemma, any variable in $\Delta$ introduced by an $(\operatorname{ax})$ in $\pi_2$ is still introduced by an $(\operatorname{ax})$ in the cut-free proof produced which is equivalent to $\pi$, so that the lemma may be applied multiple times. The remaining cuts on $x_1,\ldots,x_m$ may then be sequentially eliminated using either \eqref{cut:log_vs_weak} or \eqref{cut:R_vs_L} and the outer inductive hypothesis. The end result is a cut-free preproof equivalent to $\pi$.
\end{proof}

\begin{remark} Note that the proof of cut-elimination (including the proof of the existence of contraction normal from in Lemma \ref{lemma:contract_normal_form}) only uses $\tau$-equivalence, $co$-equivalence, commuting conversions and the cut-elimination transformations \eqref{cut:ax_left}-\eqref{cut:R_vs_L_nonp2} of Definition \ref{cutreduction} (note that all of these cut-elimination transformations are used). So the cut-elimination theorem holds without $\lambda$-equivalence or $\eta$-equivalence.
\end{remark}

In the rest of the section we develop the notion of a contraction normal form, which was used in the proof of cut-elimination. To avoid conflicting with the notation for the preproof $\pi_2$ there we denote the subject of following by $\varphi$.

\begin{definition} Let $\varphi$ be a cut-free preproof of $x:p, \Delta \vdash q$. The \emph{contraction tree} of $(\varphi, x:p)$ is the labelled oriented graph whose vertices are the final occurrence of $x:p$ together with all weak ancestors of $x:p$ in $\varphi$, where we draw an edge $y:p \rightarrow z:p$ if $z:p$ is an immediate weak ancestor of $y:p$ in $\varphi$. We label each edge with the corresponding deduction rule. The final occurrence of $x:p$ is the root of the tree.

A \emph{slack vertex} of $(\varphi, x:p)$ is a trivalent vertex $z:p$ in the contraction tree where the incoming edge $y:p \rightarrow z:p$ is labelled by any rule other than a contraction active with respect to the final occurrence of $x:p$. The \emph{slack} of $(\varphi, x:p)$ is the number of slack vertices. We say $(\varphi, x:p)$ is in \emph{contraction normal form} if it has a slack of zero.
\end{definition}

\begin{example}\label{example:slack_2} The contraction tree of the pair $\underline{2}, y: p \imp p$ of Example \ref{example:weak_ancestor_2} is
\[
\xymatrix@C+2pc{
& & *++[o][F]{y'}\\
*++[o][F]{y} & & *++[o][F]{y'}\ar[u]_-{(L \imp)}\\
& *++[o][F]{y}\ar[ul]^-{(\operatorname{ctr})}\ar[ur]_-{(\operatorname{ctr})}\\
& *++[o][F]{y}\ar[u]_-{(R \imp)}
}
\]
The pair $\underline{2}, y: p \imp p$ therefore has a slack of $1$. Using \eqref{comm_R_ctr} we see that $\underline{2}$ is equivalent under $\sim_p$ to the following proof $\underline{2}'$ in which the weak ancestors of the final $y: p \imp p$ are again marked in blue:
\begin{prooftree}
        \AxiomC{}
        \RightLabel{$({\operatorname{ax}})$}
        \UnaryInfC{$x:p \vdash p$}
        \AxiomC{}
        \RightLabel{$({\operatorname{ax}})$}
        \UnaryInfC{$x:p \vdash p$}
        \AxiomC{}
        \RightLabel{$({\operatorname{ax}})$}
        \UnaryInfC{$x:p \vdash p$}
        \RightLabel{$(L \imp)$}
        \BinaryInfC{$\textcolor{blue}{y': p \imp p}, x:p \vdash p$}
        \RightLabel{$(L \imp)$}
        \BinaryInfC{$\textcolor{blue}{y: p \imp p}, \textcolor{blue}{y': p \imp p}, x:p \vdash p$}
        \RightLabel{$(R \imp)$}
        \UnaryInfC{$\textcolor{blue}{y: p \imp p}, \textcolor{blue}{y': p \imp p} \vdash p \imp p$}
        \RightLabel{$(\operatorname{ctr})$}
        \UnaryInfC{$\textcolor{blue}{y: p \imp p} \vdash p \imp p$}
\end{prooftree}
The contraction tree of $(\underline{2}', y: p \imp p)$ is
\[
\xymatrix@C+2pc{
& & *++[o][F]{y'}\\
*++[o][F]{y} & & *++[o][F]{y'}\ar[u]_-{(L \imp)}\\
*++[o][F]{y}\ar[u]^-{(R \imp)} & & *++[o][F]{y'}\ar[u]_-{(R \imp)}\\
& *++[o][F]{y}\ar[ul]^-{(\operatorname{ctr})}\ar[ur]_-{(\operatorname{ctr})}
}
\]
which has slack zero, so $(\underline{2}', y: p \imp p)$ is in contraction normal form.
\end{example}

\begin{lemma}\label{lemma:contract_normal_form} Any cut-free preproof $\varphi$ of $x:p, \Delta \vdash q$ is equivalent under $\sim_p$ to a cut-free preproof in contraction normal form.
\end{lemma}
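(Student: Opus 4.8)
I would argue by induction on a numerical measure of failure of contraction normal form. For a cut-free preproof $\psi$ with distinguished variable $x:p$, let $D(\psi)$ be the sum, over all instances of $(\operatorname{ctr})$ in $\psi$ that are active with respect to the final occurrence of $x:p$, of the number of deduction rules that lie strictly below that contraction on the path to the root and are not themselves active contractions. First I would record two easy facts. The trivalent vertices of the contraction tree of $(\psi,x:p)$ are exactly the active contractions of $\psi$, since only a $(\operatorname{ctr})$ rule can furnish an occurrence with two distinct immediate weak ancestors, and a contraction both of whose involved occurrences are weak ancestors of a vertex of the contraction tree of $x:p$ is itself active (trace the weak-ancestor chains down to the final $x:p$). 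Consequently $D(\psi)=0$ if and only if $(\psi,x:p)$ has slack zero: if there is no slack vertex then the rule immediately below any active contraction is again an active contraction, so descending from an active contraction one meets only active contractions until reaching the root and each contributes $0$; conversely $D(\psi)=0$ forces the rule immediately below each active contraction to be an active contraction, so no trivalent vertex is slack. The base case $D(\varphi)=0$ is therefore exactly the assertion that $\varphi$ is in contraction normal form, and we take $\varphi$ itself.

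For the inductive step suppose $D(\varphi)>0$. Then $\varphi$ has a slack vertex: pick an active contraction with at least one non-active rule below it on the path to the root and descend; the first non-active rule $(r)$ encountered sits immediately below an active contraction $(c_0)$, and $(c_0)$ is then a slack vertex. I would produce a cut-free preproof equivalent to $\varphi$ under $\sim_p$ in which $(c_0)$ has been commuted downward past $(r)$. As $(c_0)$ is active, the two occurrences it contracts have a type that is a weak ancestor of the final $x:p$ and so survive below $(r)$; in particular they are not the variable weakened in, abstracted, or introduced by $(r)$. One then commutes by: \eqref{comm_ctr_weak} or \eqref{comm_ctr_weak2} when $(r)=(\operatorname{weak})$; \eqref{comm_ex_ctr} or \eqref{comm_ex_ctr2} when $(r)=(\operatorname{ex})$; \eqref{comm_ctr_ctr2} when $(r)=(\operatorname{ctr})$, which by the characterisation above is necessarily inactive with respect to $x:p$, so the occurrences contracted by $(c_0)$ merely pass through it; \eqref{comm_R_ctr} when $(r)=(R\imp)$, the case illustrated by $\underline{2}\sim_p\underline{2}'$ in Example \ref{example:slack_2}; and \eqref{comm_L_ctr} or \eqref{comm_L_ctr2} when $(r)=(L\imp)$, according as $(c_0)$ lies in the left or the right premise of $(r)$.

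Because $(c_0)$ and $(r)$ are drawn from the liberal rules, the displayed schemata will seldom match on the nose: the contracted occurrences may be non-adjacent, and the weakened, abstracted, or principal variable may lie between them. Each case therefore needs a preliminary rearrangement using $\tau$-equivalence, $co$-equivalence \eqref{co_ctr_comm_alt}, and the exchange commuting conversions (\eqref{comm_ex_weak}, \eqref{comm_L_ex}, and their relatives) to put the preproof into the shape the conversion demands; this bookkeeping is the only laborious point, and the main obstacle. Granting it, I would finish by checking that the commutation changes $D$ by exactly $-1$: none of the conversions used creates or removes a $(\operatorname{cut})$, none introduces a new deduction rule (each has the same multiset of rule labels on both sides), and none changes whether a given occurrence is a weak ancestor of the final $x:p$; hence the active contractions of the preproof are unchanged, and for each active contraction other than $(c_0)$ the multiset of rules on its path to the root — hence its contribution to $D$ — is unchanged, while for $(c_0)$ the non-active rule $(r)$ has moved from below it to above it, so its contribution drops by one. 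By the inductive hypothesis the resulting preproof, and therefore $\varphi$, is equivalent under $\sim_p$ to a cut-free preproof in contraction normal form.
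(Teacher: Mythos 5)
Your strategy coincides with the paper's: your measure $D$ is exactly the paper's $n(\varphi,x{:}p)=\sum_{y:p\in\mathbb{S}}d(y{:}p)$, your characterisation of $D=0$ as slack zero is the same, and the commutations you invoke case by case are the ones the paper uses. (One small simplification you miss: in this calculus $(\operatorname{ctr})$ always contracts \emph{adjacent} occurrences, and the liberal schemata such as \eqref{comm_ctr_weak}/\eqref{comm_ctr_weak2} already cover the positional variants, so most of the ``preliminary rearrangement'' you flag as the main obstacle is not actually needed.)

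The genuine problem is in your final accounting. You justify the claim that $D$ drops by exactly one by asserting that every conversion used ``has the same multiset of rule labels on both sides,'' so that the contribution of every active contraction other than $(c_0)$ is unchanged. This fails in the case you half-suppress: when the rule $(r)$ below the slack contraction is an exchange that moves the contracted occurrence itself. That case is not covered by \eqref{comm_ex_ctr} or \eqref{comm_ex_ctr2} (there the exchanged pair is disjoint from the contracted pair); it requires \eqref{tau_ctr_ex}, in which a single exchange on one side becomes \emph{two} exchanges on the other. After that conversion every active contraction lying strictly above $(c_0)$ in the contraction tree acquires one extra non-active edge on its path to the root, so its contribution to $D$ increases by one; if even one such contraction exists, $D$ does not decrease at this step and your induction stalls. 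The paper is more cautious here — it lists \eqref{tau_ctr_ex} explicitly, notes that it doubles the edges labelled $(r)$, and claims only $d(f(z))\le d(z)$ for the other vertices together with strict decrease at the chosen one — but the exchange subcase is genuinely the delicate point, and your proof needs either to include \eqref{tau_ctr_ex} in the case analysis with a corrected bookkeeping (e.g.\ a measure insensitive to the duplication of exchanges, or a preliminary normalisation of exchange blocks via \eqref{tau_ex_ex}) or to order the slack vertices so that the increase cannot occur. As written, the termination argument is broken at exactly this point.
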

\begin{proof}
Consider a slack vertex $y:p$ in $(\varphi, x:p)$ with incoming edge labelled by the rule $(r)$. If $(r)$ is $(\operatorname{ex})$ using \eqref{tau_ctr_ex}, \eqref{comm_ex_ctr},\eqref{comm_ex_ctr2}, or $(r)$ is $(\operatorname{weak})$ using \eqref{comm_ctr_weak}, or $(r)$ is $(R \imp)$ using \eqref{comm_R_ctr}, or $(r)$ is $(L \imp)$ using \eqref{comm_L_ctr}, \eqref{comm_L_ctr2}, or $(r)$ is an $(\operatorname{ctr})$ which is not active for the final occurrence of $x:p$ by \eqref{comm_ctr_ctr2}, we have an equivalence of preproofs $\varphi \sim_p \varphi'$ under which the contraction tree is changed around $y:p$ as follows:
    \begin{center}
    \begin{tabular}{ >{\centering}m{6cm} >{\centering}m{2cm} >{\centering}m{6cm}}
\[
\xymatrix{
*++[o][F]{y} & & *++[o][F]{y'}\\
& *++[o][F]{y}\ar[ul]^-{(\operatorname{ctr})}\ar[ur]_-{(\operatorname{ctr})}\\
& {}\ar[u]_-{(r)}
}
\]
        & $\rightarrow$ &
\[
\xymatrix{
{} & & {}\\
*++[o][F]{y}\ar@{=>}[u]_-{(r)} & & *++[o][F]{y'}\ar@{=>}[u]_-{(r)}\\
& *++[o][F]{y}\ar[ul]^-{(\operatorname{ctr})}\ar[ur]_-{(\operatorname{ctr})}\\
}
\]
    \end{tabular}
    \end{center}
The doubled arrows reflect the fact that in the case \eqref{tau_ctr_ex} there are two edges labelled $(\operatorname{r})$ rather than one. Note that if $(r)$ is $(L \imp)$ then the contraction cannot be as in \eqref{lambda_L_L_ctr} because this contraction cannot be active with respect to the final occurrence of $x:p$.\footnote{Note that the transformation from $\varphi$ to $\varphi'$ may act nontrivially on other parts of the contraction tree: for instance if $\varphi$ at $y:p$ is as in \eqref{comm_L_ctr} then there are two occurrences of $\Delta$ (which, if $\Delta$ contains a weak ancestor of $x:p$ will contribute two vertices in the contraction tree) whereas $\varphi'$ contains three occurrences of $\Delta$.}

Let $\mathbb{S} = \mathbb{S}(\varphi, x:p)$ be the set of vertices in the contraction tree with two outgoing edges, or what is the same, the set of active contractions for $x:p$ in $\varphi$. We define the \emph{depth} $d(y:p)$ of such a vertex to be the number of rules $(r)$ on the unique path from that vertex to the root which are not active contractions for the final occurrence of $x:p$. The proof of the lemma is by induction on the integer
\[
n(\varphi,x:p) = \sum_{y:p \in \mathbb{S}} d(y:p)\,.
\]
In the base case $n = 0$ the pair $(\varphi, x:p)$ is already in contraction normal form and there is nothing to prove. Given $(\varphi,x:p)$ with $n(\varphi,x:p) > 0$ there exists a slack vertex $y:p$ in $\varphi$ and we let $\varphi \sim_p \varphi'$ be the corresponding transformation as constructed above. There is a canonical bijection
\[
f: \mathbb{S}(\varphi, x:p) \lto \mathbb{S}(\varphi', x:p)
\]
and by inspection of the proof transformations $d( f(z:p) ) \le d(z:p)$ for every $z:p$ in $\mathbb{S}(\varphi,x:p)$. By construction $d( f(y:p) ) < d(y:p)$ so that $n(\varphi',x:p) < n(\varphi,x:p)$ and the claim follows by the inductive hypothesis.
\end{proof}

\begin{remark}\label{remark:coassoc_tree} In a cut-free preproof in contraction normal form, all the active contractions appear the bottom of the tree but the pattern of these contractions is arbitrary. In the proof of Proposition \ref{prop:actualwork}, specifically in \eqref{eq:cut_tree_actualwork}, we assume that the contractions may be organised such that only the rightmost two ancestors in the list are ever contracted; this is possible by \eqref{co_ctr_assoc} and \eqref{co_ctr_comm_alt}.
\end{remark}

\subsection{The category of proofs}
\label{categoryofproofs}

Under the Brouwer-Heyting-Kolmogorov interpretation of intuitionistic logic \cite{troelstra} a proof of $\Gamma \vdash p \imp q$ is viewed as a transformation from proofs of $p$ to proofs of $q$. Thus it is natural to view such proofs as \emph{morphisms} from $p$ to $q$ in a category where objects are formulas, morphisms are proofs and composition is $(\operatorname{cut})$. Throughout this section $\Gamma$ is a sequence of variables. Let $\Psi_{\imp}$ denote the set of formulas.

\begin{definition} For a formula $p$ we denote by $\Sigma^\Gamma_p$ the set of preproofs of $\Gamma \vdash p$.
\end{definition}


\begin{definition}\label{defn:convert_closed_to_open} Given a preproof $\pi$ of $\Gamma \vdash p \imp q$ and $x:p$ let $\pi\{x\}$ denote
\begin{center}
\begin{tabular}{ >{\centering}m{10cm} >{\centering}m{0.5cm}}
\AxiomC{$\pi$}
\noLine
\UnaryInfC{$\vdots$}
\noLine
\UnaryInfC{$\Gamma \vdash p \imp q$}
\AxiomC{}
\RightLabel{$({\operatorname{ax}})$}
\UnaryInfC{$x : p \vdash p$}
\AxiomC{}
\RightLabel{$({\operatorname{ax}})$}
\UnaryInfC{$y: q \vdash q$}
\RightLabel{$(L\imp)$}
\BinaryInfC{$z: p \imp q, x:p \vdash q$}
\RightLabel{$(\operatorname{cut})$}
\BinaryInfC{$\Gamma,x:p \vdash q$}
\DisplayProof
&
\tagarray{\label{move_to_left}}
\end{tabular}
\end{center}
This preproof is independent up to $\sim_p$ of $y:p, z:q$ by \eqref{alpha_L} and \eqref{alpha_cut}.
\end{definition}

\begin{lemma}\label{lemma:semi_normal_form} Any preproof $\pi$ of $\Gamma \vdash p \imp q$ is equivalent under $\sim_p$ to
\begin{center}
\begin{tabular}{ >{\centering}m{10cm} >{\centering}m{0.5cm}}
\AxiomC{$\pi\{x\}$}
\noLine
\UnaryInfC{$\vdots$}
\noLine
\UnaryInfC{$\Gamma, x:p \vdash q$}
\RightLabel{$(R \imp)$}
\UnaryInfC{$\Gamma \vdash p \imp q$}
\DisplayProof
&
\tagarray{\label{move_to_left2}}
\end{tabular}
\end{center}
\end{lemma}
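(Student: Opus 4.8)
The plan is to prove the equivalent statement $\pi \sim_p (R\imp)(\pi\{x\})$ by induction on the height $h(\pi)$, after first using Theorem~\ref{cutfree} (and, for convenience, Lemma~\ref{lemma:strict_vs_liberal}) to reduce to the case where $\pi$ is cut-free and strict. This reduction is harmless: $\sim_p$ is compatible, so by conditions (C1) and (C2) the passage $\pi \mapsto (R\imp)(\pi\{x\})$ respects $\sim_p$, and a cut-free preproof of $\Gamma \vdash p\imp q$ is either a single axiom rule, or ends in a structural rule, or ends in $(L\imp)$, or ends in $(R\imp)$. The point of passing to cut-free $\pi$ is precisely that there is then no $(\operatorname{cut})$ at the foot of $\pi$ to contend with; commuting the outer cut of $\pi\{x\}$ past such a cut would otherwise require an associativity-of-cut relation which is not among our generating relations.

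In the base case $\pi$ is an axiom $w : p\imp q \vdash p\imp q$. Then the $(\operatorname{cut})$ in \eqref{move_to_left} has an axiom as its left premise, so \eqref{cut:ax_left} fires and $\pi\{x\}$ reduces to the preproof consisting of $(L\imp)$ applied to two axioms, with conclusion $w : p\imp q, x : p \vdash q$; after the trailing $(R\imp)$ this is exactly the left-hand side of the $\eta$-equivalence \eqref{eta} (with its bound variable named $w$), which is $\sim_\eta$, hence $\sim_p$, to the axiom $\pi$. For the inductive step I would transport the operation ``$-\{x\}$ followed by $(R\imp)$'' upward past the final rule $(r)$ of $\pi$. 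If $(r)$ is structural it acts only inside $\Gamma$, so \eqref{cut:struc_vs_any} slides it below the $(\operatorname{cut})$ of $\pi\{x\}$, and then \eqref{comm_R_weak}, \eqref{comm_R_ctr} or \eqref{comm_R_ex} slides it below the subsequent $(R\imp)$ — legitimately, because that $(R\imp)$ abstracts the fresh variable $x : p$ at the far right of the antecedent, which $(r)$ leaves alone — reducing to the inductive hypothesis applied to the preproof above $(r)$. If $(r) = (L\imp)$ the same pattern works via \eqref{cut:L_vs_any} followed by \eqref{comm_R_L}, with the inductive hypothesis applied to the right premise of that $(L\imp)$, whose succedent is $p\imp q$.

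The essential case is $(r) = (R\imp)$, say $\pi = (R\imp)$ applied to $\pi_1$ of conclusion $a : p, \Gamma \vdash q$ (strictness puts $a:p$ leftmost). Now the $(\operatorname{cut})$ in $\pi\{x\}$ is a principal $(R\imp)$-against-$(L\imp)$ cut in which the $(L\imp)$ introduces the cut variable, so \eqref{cut:R_vs_L} applies; the two auxiliary cuts it produces, against the axioms $x : p \vdash p$ and $y : q \vdash q$, disappear by \eqref{cut:ax_left} and \eqref{cut:ax_right}, leaving $\pi\{x\} \sim_p \operatorname{subst}^{str}(\pi_1, a, x)$ (up to the exchanges repositioning $x:p$), with conclusion $\Gamma, x : p \vdash q$. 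Applying $(R\imp)$, using \eqref{tau_R_ex} to absorb those exchanges, and then invoking \eqref{alpha_R}, gives $(R\imp)(\pi\{x\}) \sim_p (R\imp)(\operatorname{subst}^{str}(\pi_1, a, x)) \sim_p (R\imp)(\pi_1) = \pi$, with no appeal to the inductive hypothesis.

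Conceptually this $(R\imp)$ case is the heart of the matter: it realises $(R\imp)(\pi\{x\})$ as an $\eta$-expansion of $\pi$ witnessed by a single principal cut reduction, and the remaining cases merely conjugate that step through the structural and $(L\imp)$ layers of $\pi$. I expect the main obstacle to be bookkeeping rather than anything conceptual: because the deduction rules are liberal (Remark~\ref{remark:liberal_LJ}), keeping the variable $x : p$ and the variables abstracted by the various $(R\imp)$ rules in the positions demanded by \eqref{cut:struc_vs_any}, \eqref{cut:L_vs_any}, \eqref{cut:R_vs_L} and by the commuting conversions requires a steady application of $\tau$-equivalence and \eqref{tau_R_ex}, and one must check at each stage that the transported preproof lines up on the nose with $\pi\{x\}$.
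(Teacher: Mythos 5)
Your proof is correct and rests on the same ingredients as the paper's own argument --- Theorem \ref{cutfree}, the commuting conversions \eqref{comm_R_ex}, \eqref{comm_R_weak}, \eqref{comm_R_ctr}, \eqref{comm_R_L}, the principal reduction \eqref{cut:R_vs_L} followed by \eqref{cut:ax_left}, \eqref{cut:ax_right}, and $\tau$-equivalence to absorb the residual exchanges --- but it organizes them differently. The paper first locates the first $(R\imp)$ on the walk from the root that takes the right branch at every $(L\imp)$, pulls that rule down to the root by commuting conversions to write $\pi \sim_p$ an $(R\imp)$ applied to some $\psi$, and only then performs a single cut reduction identifying $\pi\{x\}$ with $\psi$; you instead push the cut of $\pi\{x\}$ upward through the final rules of $\pi$ by induction on height, so the $(R\imp)$ commuting conversions and the cut-propagation rules \eqref{cut:struc_vs_any}, \eqref{cut:L_vs_any} are interleaved rather than separated into two phases. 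The two arguments are essentially conjugate computations and neither is more economical. One genuine merit of your version: your base case, where $\pi$ is the axiom $w : p\imp q \vdash p\imp q$ and the conclusion follows from \eqref{cut:ax_left} plus the $\eta$-equivalence \eqref{eta}, covers a case the paper's argument silently skips --- the rightmost walk need not encounter any $(R\imp)$ at all if it terminates in an axiom on the compound formula $p \imp q$, so the paper's assertion that the walk ``must eventually encounter a $(R\imp)$ rule'' requires exactly the $\eta$-expansion you supply. Your closing caveat about positional bookkeeping ($\tau$-equivalence, \eqref{tau_R_ex}, and the fact that the commuting conversions are stated for particular relative positions of the abstracted variable) is apt, but the paper's proof leans on the same conversions at the same level of detail, so this is not a gap relative to the paper's standard.
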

\begin{proof}
By Theorem \ref{cutfree} we may assume $\pi$ is cut-free. Consider walking the tree underlying the preproof $\pi$ starting from the root, and taking the right hand branch at every $(L \imp)$ rule. This walk must eventually encounter a $(R \imp)$ rule. Take the first such rule and by commuting conversions \eqref{comm_R_ex},\eqref{comm_R_weak},\eqref{comm_R_ctr},\eqref{comm_R_L} move this rule down so that it is the final rule in a preproof of the form
\begin{center}
\begin{tabular}{ >{\centering}m{10cm} >{\centering}m{0.5cm}}
\AxiomC{$\psi$}
\noLine
\UnaryInfC{$\vdots$}
\noLine
\UnaryInfC{$\Gamma, x:p \vdash q$}
\RightLabel{$(R \imp)$}
\UnaryInfC{$\Gamma \vdash p \imp q$}
\DisplayProof
&
\tagarray{\label{move_to_left3}}
\end{tabular}
\end{center}
which is equivalent to $\pi$ under $\sim_p$. Now observe that $\pi\{x\}$ is equivalent under $\sim_p$ to
\begin{center}
\AxiomC{$\psi$}
\noLine
\UnaryInfC{$\vdots$}
\noLine
\UnaryInfC{$\Gamma, x:p \vdash q$}
\RightLabel{$(R \imp)$}
\UnaryInfC{$\Gamma \vdash p \imp q$}
\AxiomC{}
\RightLabel{$({\operatorname{ax}})$}
\UnaryInfC{$x : p \vdash p$}
\AxiomC{}
\RightLabel{$({\operatorname{ax}})$}
\UnaryInfC{$y: q \vdash q$}
\RightLabel{$(L\imp)$}
\BinaryInfC{$z: p \imp q, x:p \vdash q$}
\RightLabel{$(\operatorname{cut})$}
\BinaryInfC{$\Gamma,x:p \vdash q$}
\DisplayProof
\end{center}
which is by \eqref{cut:R_vs_L} equivalent to
\begin{center}
\AxiomC{}
\RightLabel{$({\operatorname{ax}})$}
\UnaryInfC{$x : p \vdash p$}
\AxiomC{$\psi$}
\noLine
\UnaryInfC{$\vdots$}
\noLine
\UnaryInfC{$\Gamma, x:p \vdash q$}
\RightLabel{$(\operatorname{cut})$}
\BinaryInfC{$x:p, \Gamma \vdash q$}
\AxiomC{}
\RightLabel{$({\operatorname{ax}})$}
\UnaryInfC{$y : q \vdash q$}
\RightLabel{$(\operatorname{cut})$}
\BinaryInfC{$x:p, \Gamma \vdash q$}
\RightLabel{$(\operatorname{ex})$}
\doubleLine
\UnaryInfC{$\Gamma,x:p \vdash q$}
\DisplayProof
\end{center}
which is equivalent by \eqref{cut:ax_left},\eqref{cut:ax_right},\eqref{tau_ex_ex} to $\psi$ which completes the proof.
\end{proof}

\begin{definition}\label{definition:s_gamma}
The category $\cat{S}_\Gamma$ has objects $\Psi_{\imp} \cup \lbrace \boldsymbol{1}\rbrace$ and morphisms
\begin{align*}
\cat{S}_\Gamma(p,q) &= \Sigma^\Gamma_{p\imp q}\,/\sim_p\\
\cat{S}_\Gamma(\boldsymbol{1},q) &= \Sigma^\Gamma_{q}\,/\sim_p
\end{align*}
with special cases $\cat{S}_\Gamma(p,\boldsymbol{1}) = \lbrace \ast \rbrace$, $\cat{S}_\Gamma(\boldsymbol{1},\boldsymbol{1}) = \lbrace \ast \rbrace$. For formulas $p,q,r$ composition
\[\cat{S}_\Gamma(q,r) \times \cat{S}_\Gamma(p,q) \to \cat{S}_\Gamma(p,r)\]
sends the pair $(\psi, \pi)$ to the proof $\psi \circ \pi$ given by
\begin{center}
\begin{tabular}{ >{\centering}m{10cm} >{\centering}m{0.5cm}}
    \AxiomC{$\pi\{x\}$}
    \noLine
    \UnaryInfC{$\vdots$}
    \UnaryInfC{$\Gamma,x:p \vdash q$}
    \AxiomC{$\psi\{y\}$}
    \noLine
    \UnaryInfC{$\vdots$}
    \UnaryInfC{$\Gamma,y: q \vdash r$}
    \RightLabel{$(\operatorname{cut})$}
    \BinaryInfC{$\Gamma,x:p,\Gamma \vdash r$}
    \doubleLine
    \RightLabel{$(\operatorname{ex}/\operatorname{ctr})$}
    \UnaryInfC{$\Gamma, x:p \vdash r$}
    \RightLabel{$(R\imp)$}
    \UnaryInfC{$\Gamma\vdash p \imp r$}
    \DisplayProof
    &
    \tagarray{\label{composite_proofs_1}}
\end{tabular}
\end{center}
The special cases of the composition map are defined as follows: for formulas $p,q$ the map $\cat{S}_\Gamma(p,q) \times \cat{S}_\Gamma(\boldsymbol{1},p) \to \cat{S}_\Gamma(\boldsymbol{1},q)$ sends $(\psi, \pi)$ to
\begin{center}
\begin{tabular}{ >{\centering}m{10cm} >{\centering}m{0.5cm}}
\AxiomC{$\pi$}
\noLine
\UnaryInfC{$\vdots$}
\UnaryInfC{$\Gamma \vdash p$}
\AxiomC{$\psi\{x\}$}
\noLine
\UnaryInfC{$\vdots$}
\UnaryInfC{$\Gamma,x:p \vdash q$}
\RightLabel{$(\operatorname{cut})$}
\BinaryInfC{$\Gamma,\Gamma\vdash q$}
\doubleLine
\RightLabel{$(\operatorname{ex}/\operatorname{ctr})$}
\UnaryInfC{$\Gamma\vdash q$}
\DisplayProof
&
    \tagarray{\label{composite_proofs_2}}
\end{tabular}
\end{center}
and the map $\cat{S}_\Gamma(\boldsymbol{1},q) \times \cat{S}_\Gamma(p,\boldsymbol{1}) \to \cat{S}_\Gamma(p,q)$ sends $(\pi, \ast)$ to
\begin{center}
\begin{tabular}{ >{\centering}m{10cm} >{\centering}m{0.5cm}}
    \AxiomC{$\pi$}
    \noLine
    \UnaryInfC{$\vdots$}
    \UnaryInfC{$\Gamma \vdash q$}
    \RightLabel{$(\operatorname{weak})$}
    \UnaryInfC{$\Gamma, x:p \vdash q$}
    \RightLabel{$(R\imp)$}
    \UnaryInfC{$\Gamma \vdash p \imp q$}
    \DisplayProof
    &
    \tagarray{\label{composite_proofs_3}}
\end{tabular}
\end{center}
and $\cat{S}_\Gamma(\boldsymbol{1},p) \times \cat{S}_\Gamma(\boldsymbol{1},\boldsymbol{1}) \to \cat{S}_\Gamma(\boldsymbol{1},p)$ is the projection.
\end{definition}

Note that the composition $\psi \circ \pi$ depends as a preproof on the choices of intermediate variables $x:p, y:q$ but is independent of these choices by \eqref{alpha_R} and \eqref{alpha_cut}. The identity morphism $1_p: p \lto p$ in $\cat{S}_\Gamma$ for a formula $p$ is the proof
\begin{center}
\AxiomC{}
\RightLabel{$(\operatorname{ax})$}
\UnaryInfC{$x:p \vdash p$}
\RightLabel{$(\operatorname{weak})$}
\doubleLine
\UnaryInfC{$\Gamma, x:p \vdash p$}
\RightLabel{$(R \imp)$}
\UnaryInfC{$\Gamma \vdash p \imp p$}
\DisplayProof
\end{center}

\section{Lambda calculus}\label{section:lambda_calc}


We define a category $\cat{L}$ whose objects are the types of simply-typed lambda calculus, and whose morphisms are the terms of that calculus. The natural desiderata for such a category are that the fundamental algebraic structure of lambda calculus, function application and lambda abstraction, should be realised by categorical algebra. 

We assume familiarity with simply-typed lambda calculus; some details are recalled in Appendix \ref{section:intro_lambda}. Following Church's original presentation our lambda calculus only contains function types and $\Phi_{\typearrow}$ denotes the set of simple types. We write $\Lambda_\sigma$ for the set of $\alpha$-equivalence classes of lambda terms of type $\sigma$. 

\begin{definition}[(Category of lambda terms)]\label{definition:lambda_cat} The category $\cat{L}$ has objects
\[
\operatorname{ob}(\cat{L}) = \Phi_{\typearrow} \cup \{ \bold{1} \}
\]
and morphisms given for types $\sigma, \tau \in \Phi_{\typearrow}$ by
\begin{align*}
\cat{L}(\sigma, \tau) &= \Lambda_{\sigma \typearrow \tau}/\!=_{\beta\eta}\,\\
\cat{L}(\bold{1}, \sigma) &= \Lambda_{\sigma}/\!=_{\beta\eta}\,\\
\cat{L}(\sigma, \bold{1}) &= \{ \star \}\,\\
\cat{L}(\bold{1},\bold{1}) &= \{ \star \}\,,
\end{align*}
where $\star$ is a new symbol. For $\sigma, \tau, \rho \in \Phi_{\typearrow}$ the composition rule is the function
\begin{gather*}
\cat{L}(\tau, \rho) \times \cat{L}(\sigma, \tau) \lto \cat{L}(\sigma, \rho)\,\\
(N,M) \longmapsto \lambda x^\sigma \ldot (N \, (M \, x))\,,
\end{gather*}
where $x \notin \FV(N) \cup \FV(M)$. We write the composite as $N \circ M$. In the remaining special cases the composite is given by the rules
\begin{align*}
\cat{L}(\tau, \rho) \times \cat{L}(\bold{1}, \tau) \lto \cat{L}(\bold{1}, \rho)\,, \qquad & N \circ M = (N \, M)\,,\\
\cat{L}(\bold{1}, \rho) \times \cat{L}(\bold{1}, \bold{1}) \lto \cat{L}(\bold{1}, \rho)\,, \qquad & N \circ \star = N\,,\\
\cat{L}(\bold{1}, \rho) \times \cat{L}(\sigma, \bold{1}) \lto \cat{L}(\sigma, \rho)\,, \qquad & N \circ \star = \lambda t^\sigma \ldot N\,,
\end{align*}
where in the final rule $t \notin \FV(N)$. All other cases are trivial. Note that these functions, which have been described using a choice of representatives from a $\beta\eta$-equivalence class, are nonetheless well-defined.
\end{definition}

For terms $M,N$ the expression $M = N$ always means equality of terms (that is, up to $\alpha$-equivalence) and we write $M =_{\beta\eta}$ if we want to indicate equality up to $\beta\eta$-equivalence (for example as morphisms in the category $\cat{L}$). Since the free variable set of a lambda term is not invariant under $\beta$-reduction, some care is necessary in defining the category $\cat{L}_Q$ below. Let $\twoheadrightarrow_\beta$ denote multi-step $\beta$-reduction \cite[Definition 1.3.3]{sorensen}.

\begin{lemma}\label{lemma:beta_reduce_FV} If $M \twoheadrightarrow_\beta N$ then $\FV(N) \subseteq \FV(M)$.
\end{lemma}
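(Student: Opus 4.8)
The plan is to reduce to the one-step case and then induct on term structure. Since $\twoheadrightarrow_\beta$ is the reflexive-transitive closure of single-step $\beta$-reduction $\to_\beta$, a trivial induction on the length of a reduction sequence $M = M_0 \to_\beta M_1 \to_\beta \cdots \to_\beta M_k = N$ reduces the statement to the claim that $M \to_\beta N$ implies $\FV(N) \subseteq \FV(M)$ (the reflexive case $M = N$ being immediate).

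For the single-step case I would first isolate the substitution sub-lemma: for all terms $P,Q$ and any variable $x$,
\[
\FV(P[x := Q]) \subseteq (\FV(P) \setminus \{x\}) \cup \FV(Q),
\]
proved by a routine induction on the structure of $P$, using the capture-avoiding convention for substitution under a $\lambda$ so that a bound variable of $P$ gets renamed away from $\FV(Q)$ and contributes nothing to the left-hand side. Granting this, I would prove $M \to_\beta N \implies \FV(N) \subseteq \FV(M)$ by induction on the position of the contracted redex. In the base case $M = (\lambda x.P)Q$ and $N = P[x:=Q]$; here $\FV(M) = (\FV(P)\setminus\{x\}) \cup \FV(Q)$, so the sub-lemma gives the inclusion directly. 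The inductive cases — $M = M_1 M_2$ with the redex in $M_1$ or in $M_2$, and $M = \lambda y.M_1$ with $M_1 \to_\beta M_1'$ — all follow at once from the inductive hypothesis together with the defining equations $\FV(M_1 M_2) = \FV(M_1)\cup\FV(M_2)$ and $\FV(\lambda y.M_1) = \FV(M_1)\setminus\{y\}$.

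No genuine obstacle is expected: the argument is elementary bookkeeping. The only point requiring the slightest care is the $\lambda$-case of the substitution sub-lemma, where one must invoke the renaming convention for $P[x:=Q]$; this is precisely what prevents the bound variable from leaking into $\FV(P[x:=Q])$, and hence is essential rather than cosmetic.
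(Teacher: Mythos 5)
Your argument is correct and is the standard one; the paper in fact states this lemma without proof, so your write-up (reduction to the one-step case, the substitution sub-lemma $\FV(P[x:=Q]) \subseteq (\FV(P)\setminus\{x\})\cup\FV(Q)$, and induction on the redex position) supplies exactly the routine argument the paper leaves implicit. No gaps.
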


\begin{definition} Given a term $M$ we define
\[
\FV_\beta(M) = \bigcap_{N =_{\beta} M} \FV(N)
\]
where the intersection is over all terms $N$ which are $\beta$-equivalent to $M$. 
\end{definition}

Clearly if $M =_{\beta} M'$ then $\FV_\beta(M) = \FV_\beta(M')$.

\begin{lemma} Given terms $M: \sigma \typearrow \rho$ and $N : \sigma$ we have
\[
\FV_\beta( (M N) ) \subseteq \FV_\beta(M) \cup \FV_\beta(N)\,.
\]
\end{lemma}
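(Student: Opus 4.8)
The plan is to prove $\FV_\beta((MN)) \subseteq \FV_\beta(M) \cup \FV_\beta(N)$ by exhibiting, for each variable $z \notin \FV_\beta(M) \cup \FV_\beta(N)$, a term $\beta$-equivalent to $(MN)$ that does not contain $z$ free. Fix such a $z$. By definition of $\FV_\beta$ there are terms $M' =_\beta M$ and $N' =_\beta N$ with $z \notin \FV(M')$ and $z \notin \FV(N')$. Then $(M'N') =_\beta (MN)$ since $\beta$-equivalence is a congruence with respect to application, and $\FV((M'N')) = \FV(M') \cup \FV(N')$ does not contain $z$. Hence $z \notin \bigcap_{L =_\beta (MN)} \FV(L) = \FV_\beta((MN))$. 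Since $z$ was an arbitrary variable outside $\FV_\beta(M) \cup \FV_\beta(N)$, the claimed inclusion follows by contraposition.

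The only facts I need beyond the definitions are: (i) $\beta$-equivalence is a congruence, so $M' =_\beta M$ and $N' =_\beta N$ imply $(M'N') =_\beta (MN)$; and (ii) $\FV$ of an application is the union of the $\FV$ of its parts. Both are entirely standard for simply-typed lambda calculus and are recalled in the appendix referenced in the excerpt (Appendix \ref{section:intro_lambda}); (i) is immediate from the definition of $=_\beta$ as the symmetric-transitive closure of multi-step $\beta$-reduction together with the compatibility of $\beta$-reduction with the term-forming operations. I should note that we do not even need Lemma \ref{lemma:beta_reduce_FV} here, though it is the analogous statement for one-sided reduction; the present lemma is about the $\beta$-\emph{equivalence} closure and so requires only congruence.

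I do not anticipate a genuine obstacle; the proof is a short diagram-chase through the definitions. The one subtlety worth a sentence of care is that $M'$ and $N'$ are chosen \emph{independently}, and one must check that combining them into $(M'N')$ is legitimate — but this is exactly the congruence property, and there is no interaction between the two choices to worry about (in particular no variable-capture issue, since we are forming an application, not a substitution). So the write-up is essentially the paragraph above, perhaps preceded by the remark that it suffices to prove the contrapositive pointwise in $z$.

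\begin{proof}
It suffices to show that every variable $z$ with $z \notin \FV_\beta(M) \cup \FV_\beta(N)$ also satisfies $z \notin \FV_\beta((MN))$. So fix such a $z$. By definition of $\FV_\beta$ there exist terms $M'$ and $N'$ with $M' =_\beta M$, $N' =_\beta N$, $z \notin \FV(M')$ and $z \notin \FV(N')$. Since $\beta$-equivalence is compatible with application we have $(M'N') =_\beta (MN)$, and $\FV((M'N')) = \FV(M') \cup \FV(N')$, so $z \notin \FV((M'N'))$. Therefore
\[
\FV_\beta((MN)) = \bigcap_{L =_\beta (MN)} \FV(L) \subseteq \FV((M'N'))
\]
does not contain $z$. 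As $z$ ranged over all variables outside $\FV_\beta(M) \cup \FV_\beta(N)$, this proves $\FV_\beta((MN)) \subseteq \FV_\beta(M) \cup \FV_\beta(N)$.
\end{proof}
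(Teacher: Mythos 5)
Your proof is correct, but it takes a genuinely different route from the paper's. The paper argues via normal forms: it replaces $M$ and $N$ by their $\beta$-normal forms (legitimate since $\FV_\beta$ is an invariant of the $\beta$-equivalence class), observes that $(M\,N)$ then reduces to its normal form $\widehat{(M\,N)}$, and concludes from Lemma \ref{lemma:beta_reduce_FV} that $\FV_\beta((M\,N)) = \FV(\widehat{(M\,N)}) \subseteq \FV(M) \cup \FV(N) = \FV_\beta(M) \cup \FV_\beta(N)$. This leans (at least implicitly) on strong normalisation and confluence of the simply-typed calculus, the same facts invoked in Lemma \ref{lemma:twolambdaQs}. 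You instead work directly from the definition of $\FV_\beta$ as an intersection over the equivalence class, using only that $=_\beta$ is a congruence for application and that $\FV$ of an application is the union of the $\FV$ of its parts; your observation that the two witnesses $M'$ and $N'$ can be chosen independently and combined without any capture issue is exactly the right point to flag. Your argument is more elementary and more robust — it would survive in the untyped calculus, where normal forms need not exist — while the paper's version is a one-liner given the normalisation machinery it has already assembled and is the template it reuses verbatim for Lemma \ref{lemma:comp}.
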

\begin{proof}
We may assume $M,N$ $\beta$-normal, in which case there is a chain of $\beta$-reductions $(M N) \twoheadrightarrow_\beta \widehat{(M N)}$ whence we are done by Lemma \ref{lemma:beta_reduce_FV}.
\end{proof}


By the same argument

\begin{lemma}\label{lemma:comp} Given $M: \sigma \typearrow \rho$ and $N: \tau \typearrow \sigma$ we have
\be\label{eq:weak_func}
\FV_\beta( M \circ N ) \subseteq \FV_\beta(M) \cup \FV_\beta(N)\,.
\ee
\end{lemma}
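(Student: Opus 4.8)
The plan is to mimic the proof of the preceding lemma: reduce to $\beta$-normal representatives and then apply Lemma~\ref{lemma:beta_reduce_FV}. First I would note that both sides of the claimed inclusion depend only on the $\beta$-equivalence classes of $M$ and $N$. The right-hand side does because $\FV_\beta$ is a $\beta$-invariant; the left-hand side does because if $M =_\beta M'$ and $N =_\beta N'$ then $M \circ N =_\beta M' \circ N'$, since $\beta$-reduction is a congruence and $M \circ N = \lambda x^\tau \ldot (M\,(N\,x))$ (with $x \notin \FV(M) \cup \FV(N)$). Hence I may assume that $M$ and $N$ are $\beta$-normal.

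Next I would record the elementary fact that $\FV_\beta(P) = \FV(P)$ whenever $P$ is $\beta$-normal: by confluence, any term $\beta$-equivalent to $P$ reduces to $P$, so by Lemma~\ref{lemma:beta_reduce_FV} its free variable set contains $\FV(P)$, giving $\FV(P) \subseteq \FV_\beta(P)$; the reverse inclusion is immediate from the definition of $\FV_\beta$. In particular, with $M, N$ chosen $\beta$-normal, the right-hand side of the desired inclusion equals $\FV(M) \cup \FV(N)$.

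Finally I would compute directly. Since $x \notin \FV(M) \cup \FV(N)$ we have $\FV(M \circ N) = \FV\bigl((M\,(N\,x))\bigr) \setminus \{x\} = \bigl(\FV(M) \cup \FV(N) \cup \{x\}\bigr) \setminus \{x\} = \FV(M) \cup \FV(N)$. Picking a reduction $M \circ N \twoheadrightarrow_\beta \widehat{M \circ N}$ to $\beta$-normal form, Lemma~\ref{lemma:beta_reduce_FV} gives $\FV(\widehat{M \circ N}) \subseteq \FV(M \circ N) = \FV(M) \cup \FV(N)$. As $\widehat{M \circ N}$ is one of the terms in the intersection defining $\FV_\beta(M \circ N)$, we conclude $\FV_\beta(M \circ N) \subseteq \FV(\widehat{M \circ N}) \subseteq \FV(M) \cup \FV(N) = \FV_\beta(M) \cup \FV_\beta(N)$.

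I do not expect any genuine obstacle here: the content is the same free-variable bookkeeping used in the preceding two lemmas, and the only step warranting a moment's care is the reduction to $\beta$-normal representatives together with the identity $\FV_\beta(P) = \FV(P)$ for $\beta$-normal $P$, which rests on the Church--Rosser property.
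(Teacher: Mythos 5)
Your proof is correct and follows essentially the same route as the paper, which simply invokes ``the same argument'' as the preceding lemma: pass to $\beta$-normal representatives, reduce $M \circ N$ to normal form, and apply Lemma \ref{lemma:beta_reduce_FV}. You have merely spelled out the details (well-definedness under $\beta$-equivalence, the identity $\FV_\beta(P)=\FV(P)$ for $\beta$-normal $P$, and the free-variable computation for the abstraction) that the paper leaves implicit.
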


Given a set $Q$ of variables we write $\Lambda^Q_\sigma$ for the set of lambda terms $M$ of type $\sigma$ with $\operatorname{FV}(M) \subseteq Q$. Let $=_{\beta\eta}$ denote the induced relation on this subset of $\Lambda_\sigma$.

\begin{lemma}\label{lemma:twolambdaQs} For any type $\sigma$ and set $Q$ of variables the image of the injective map
\be\label{eq:twolambdaQs}
\Lambda^Q_p/=_{\beta\eta} \lto \Lambda_p/=_{\beta\eta}
\ee
is the set of equivalence classes of terms $M$ with $\operatorname{FV}_\beta(M) \subseteq Q$.
\end{lemma}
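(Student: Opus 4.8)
The crux of the lemma is the identification of $\FV_\beta(M)$ with the set of free variables of the $\beta$-normal form of $M$, so I would establish that first. Since simply-typed lambda calculus is (weakly) normalizing, $M$ has a $\beta$-normal form $M^*$. As $M^* =_\beta M$, the inclusion $\FV_\beta(M)\subseteq\FV(M^*)$ is immediate from the definition of $\FV_\beta(M)$ as an intersection over the $\beta$-class of $M$. Conversely, for any $N =_\beta M$ the Church--Rosser theorem together with normality of $M^*$ forces $N \twoheadrightarrow_\beta M^*$, so $\FV(M^*)\subseteq\FV(N)$ by Lemma~\ref{lemma:beta_reduce_FV}; intersecting over all such $N$ gives $\FV(M^*)\subseteq\FV_\beta(M)$, hence $\FV_\beta(M)=\FV(M^*)$. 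This also shows $\FV_\beta$ is an invariant of the $=_{\beta\eta}$-class of $M$: if $M=_{\beta\eta}M'$ then $M^*$ and $M'^*$ are $\beta$-normal and $\beta\eta$-equivalent, hence $\eta$-equivalent, and $\eta$-conversion preserves free variables exactly; so the set $\{[M] : \FV_\beta(M)\subseteq Q\}$ on the right-hand side of \eqref{eq:twolambdaQs} is well defined as a set of $=_{\beta\eta}$-classes. Injectivity of the map requires nothing further, since $=_{\beta\eta}$ on $\Lambda^Q_\sigma$ is by definition the restriction of $=_{\beta\eta}$ on $\Lambda_\sigma$.

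The two inclusions are then quick. For the forward inclusion, suppose $[M]$ lies in the image; choose a representative $M'$ with $\FV(M')\subseteq Q$. Taking $N=M'$ in the defining intersection gives $\FV_\beta(M')\subseteq\FV(M')\subseteq Q$, and $\FV_\beta(M)=\FV_\beta(M')$ by the invariance noted above, so $\FV_\beta(M)\subseteq Q$. For the reverse inclusion, suppose $\FV_\beta(M)\subseteq Q$ and let $M^*$ be the $\beta$-normal form of $M$. By the key fact $\FV(M^*)=\FV_\beta(M)\subseteq Q$, so $M^*\in\Lambda^Q_\sigma$; and since $M^*=_\beta M$ the class of $M^*$ in $\Lambda^Q_\sigma/=_{\beta\eta}$ maps to $[M^*]=[M]$ in $\Lambda_\sigma/=_{\beta\eta}$, so $[M]$ is in the image.

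I do not expect a serious obstacle here: the argument is essentially the observation that $\beta$-normal forms realise the intersection $\FV_\beta$ exactly. The only points needing care are the appeals to weak normalization and confluence of simply-typed lambda calculus (both standard, recalled in Appendix~\ref{section:intro_lambda}), and the verification that the right-hand side of \eqref{eq:twolambdaQs} is genuinely a property of $=_{\beta\eta}$-classes rather than of individual terms, which is why I would make the $\beta\eta$-invariance of $\FV_\beta$ explicit at the outset.
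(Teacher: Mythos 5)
Your proof is correct and takes essentially the same route as the paper: identify $\FV_\beta(M)$ with $\FV(\widehat{M})$ for the unique $\beta$-normal form $\widehat{M}$, using strong normalisation and confluence, and then read off the characterisation of the image. You additionally make explicit the $\beta\eta$-invariance of $\FV_\beta$ and the easy forward inclusion, both of which the paper leaves implicit.
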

\begin{proof}
Since the simply-typed lambda calculus is strongly normalising \cite[Theorem 3.5.1]{sorensen} and confluent \cite[Theorem 3.6.3]{sorensen} there is a unique normal form $\widehat{M}$ in the $\beta$-equivalence class of $M$, and $\FV_\beta(M) = \FV(\widehat{M})$. Hence if $\operatorname{FV}_\beta(M) \subseteq Q$ then $\operatorname{FV}(\widehat{M}) \subseteq Q$ and so $M$ is in the image of \eqref{eq:twolambdaQs}.
\end{proof}



\begin{definition}\label{definition:lambda_Q} For a set of variables $Q$ we define a subcategory $\cat{L}_Q \subseteq \cat{L}$ by
\[
\operatorname{ob}(\cat{L}_Q) = \operatorname{ob}(\cat{L}) = \Phi_{\typearrow} \cup \{ \bold{1} \}
\]
and for types $\sigma, \rho$
\begin{align*}
\cat{L}_Q(\sigma, \rho) &= \{ M \in \cat{L}(\sigma, \rho) \l \FV_\beta(M) \subseteq Q \}\,,\\
\cat{L}_Q(\bold{1}, \sigma) &= \{ M \in \cat{L}(\bold{1}, \sigma) \l \FV_\beta(M) \subseteq Q \}\,,\\
\cat{L}_Q(\sigma, \bold{1} ) &= \cat{L}(\sigma, \bold{1}) = \{ \star \}\,,\\
\cat{L}_Q(\bold{1}, \bold{1}) &= \cat{L}(\bold{1}, \bold{1}) = \{ \star \}\,.
\end{align*}
Note that the last two lines have the same form using the convention that $\FV_\beta(\star) = \emptyset$.
\end{definition}

The fact that $\cat{L}_Q$ is a subcategory follows from Lemma \ref{lemma:comp}. 

\begin{remark}
We sketch how function application and lambda abstraction in the simply-typed lambda calculus are realised as natural categorical algebra in $\cat{L}$. Function application is composition, and lambda abstraction is given by a universal property involving factorisation of morphisms in $\cat{L}$ through morphisms in $\cat{L}_Q$. 

To explain, let $M \in \cat{L}(\sigma, \rho)$ be a morphism and $q: \tau$ a variable. We can consider the set of all commutative diagrams in $\cat{L}$ of the form
\be
\xymatrix@C+1pc@R+1pc{
\sigma \ar[rr]^-{M}\ar[dr]_-{f} & & \rho\\
& \kappa \ar[ur]
}
\ee
where $q \notin \FV_\beta(f)$. Taking $f = \lambda q.M$ gives the universal such factorisation.
\end{remark}

\begin{remark}
In the standard approach to associating a category to the simply-typed lambda calculus, due to Lambek and Scott \cite[\S I.11]{lambek_scott}, one extends the lambda calculus to include product types and the objects of the category $\cat{C}_{\typearrow, \times}$ are the types of the extended calculus (which includes an empty product $\bold{1}$) and the set $\cat{C}_{\typearrow, \times}(\sigma, \rho)$ is a set of equivalence classes of pairs $(x:\sigma, M:\rho)$ where $x$ is a variable and $M$ is a term with $\FV(M) \subseteq \{ x \}$. 

The relation to the approach given above is as follows: for $Q$ finite $\cat{L}_Q$ may be viewed as a polynomial category over $\cat{L}_{\emptyset}$ and if we write $\cat{L}_{\emptyset}^{\neq \bold{1}} \subseteq \cat{L}_{\emptyset}$ for the subcategory whose objects are types $\Phi_{\typearrow}$ there is an equivalence of categories $\cat{C}_{\typearrow} \cong \cat{L}_{\emptyset}^{\neq \bold{1}}$ where $\cat{C}_{\typearrow}$ denotes the full subcategory of $\scr{C}_{\typearrow, \times}$ whose objects are elements of the set $\Phi_{\typearrow}$.
\end{remark}

\section{Gentzen-Mints-Zucker duality}
\label{chiso}

We have defined a category of formulas and proofs $\cat{S}_\Gamma$ in intuitionistic sequent calculus (Definition \ref{definition:s_gamma}) for any finite sequence $\Gamma$ of variables, and a category of types and terms $\cat{L}_Q$ in simply-typed lambda calculus (Definition \ref{definition:lambda_Q}) for any set of variables $Q$. In logic we have associated variables to formulas and in lambda calculus to types, but identifying atomic formulas with atomic types and $\imp$ with $\rightarrow$ gives a bijection between the set $\Psi_{\imp}$ of formulas and the set $\Phi_{\rightarrow}$ of types, and we now make this identification. 

Given a sequence $\Gamma$ of variables we denote by $[\Gamma]$ the underlying \emph{set} of variables
\[
[ x_1: p_1, \ldots, x_n: p_n ] = \{ x_1: p_1, \ldots, x_n: p_n \}\,.
\]
We prove that $\cat{S}_\Gamma \cong \cat{L}_{[\Gamma]}$ if $\Gamma$ is repetition-free. To define the precise translation from proofs to lambda terms, we have to pay close attention to the variables annotating hypotheses in proofs, and this requires some preliminary comments.

Given a preproof $\pi$ of $\Gamma \vdash p$ an equivalence class $\bold{x}$ of $\approx_{str}$ (Definition \ref{defn:approx}) can be written as a sequence $\bold{x} = ( x_1, \ldots, x_n )$ of copies $x_i$ of a variable $x:p$, with $x_1$ introduced in one of $(\operatorname{ax}), (\operatorname{weak}), (L \imp)$ and $x_n$ either in the antecedent of the final sequent (labelling the root node of $\pi$) or eliminated in $(\operatorname{cut}), (\operatorname{ctr}), (R \imp)$ or $(L \imp)$. If $x_n$ is in the antecedent of the final sequent we say $\bold{x}$ is a \emph{boundary class} otherwise it is an \emph{interior class}.

\begin{definition} A preproof $\pi$ is \emph{well-labelled} if for any interior class $\bold{x}$ of occurrences of a variable $x:p$ in $\pi$, the only occurrences of $x:p$ in $\pi$ are the ones in $\bold{x}$.
\end{definition}


\begin{lemma}\label{lemma:all_well} Every preproof is equivalent under $\sim_p$ to a well-labelled preproof.
\end{lemma}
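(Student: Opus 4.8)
The plan is to induct on the structure of the preproof $\pi$, using the compatibility of $\sim_p$ (conditions (C1) and (C2)) to reduce to the question of what happens at the final deduction rule, given that all immediate subproofs have already been made well-labelled by the inductive hypothesis. Recall that $\pi$ fails to be well-labelled precisely when some interior equivalence class $\mathbf{x} = (x_1,\dots,x_n)$ of $\approx_{str}$ shares its variable $x\colon p$ with another, distinct, equivalence class of occurrences somewhere in $\pi$. So the real content is a renaming argument: we want to replace the occurrences in $\mathbf{x}$ (together with a suitable family of occurrences in other classes that must move with it) by a fresh variable not occurring anywhere else in $\pi$, in a way that changes $\pi$ only up to $\sim_p$.

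The key tool is $\alpha$-equivalence (Definition \ref{alphaequivalence}), which is exactly engineered for this: whenever an interior class $\mathbf{x}$ is eliminated at a $(\operatorname{cut})$, $(\operatorname{ctr})$, $(R\imp)$ or $(L\imp)$ rule, one of the generating pairs \eqref{alpha_cut}, \eqref{alpha_ctr}, \eqref{alpha_R} or \eqref{alpha_L} lets us apply $\operatorname{subst}^{str}(-,x,z)$ to the relevant subproof for a fresh $z$, and the result is $\sim_\alpha$-equivalent to $\pi$. The subtlety is that $\operatorname{subst}^{str}$ renames an occurrence \emph{and all its strong ancestors} — and by the remark after the definition of ancestors, strong ancestors all carry the same variable name, so applying it to any occurrence in the class $\mathbf{x}$ renames precisely the occurrences in $\mathbf{x}$ (within the subproof above the eliminating rule) and nothing else. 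Thus the natural induction is: pick an interior class $\mathbf{x}$ whose variable collides with some other occurrence; identify the rule $R$ at which $\mathbf{x}$ is eliminated; apply the appropriate $\alpha$-generating pair at $R$ to rename $\mathbf{x}$ to a globally fresh variable; observe this strictly decreases the number of interior classes that collide with another occurrence (or some similar well-founded measure, e.g. the number of variables that are shared between two or more $\approx_{str}$-classes); and iterate.

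I would carry this out as follows. First, set up the measure $m(\pi)$ = number of interior $\approx_{str}$-classes $\mathbf{x}$ such that the variable of $\mathbf{x}$ occurs in $\pi$ outside $\mathbf{x}$; note $\pi$ is well-labelled iff $m(\pi)=0$. Second, assuming $m(\pi)>0$, choose such an interior class $\mathbf{x}$ with variable $x\colon p$ and locate its eliminating rule $R$ (it is eliminated, not in the final antecedent, by definition of interior). Third, case on $R \in \{(\operatorname{cut}),(\operatorname{ctr}),(R\imp),(L\imp)\}$ and apply the matching relation among \eqref{alpha_cut}–\eqref{alpha_L}, taking the target variable $z$ to be fresh for all of $\pi$; this produces $\pi' \sim_\alpha \pi$ (hence $\pi' \sim_p \pi$) in which $\mathbf{x}$ has been renamed to $z$. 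Fourth, verify $m(\pi') < m(\pi)$: the renamed class no longer collides, and renaming to a fresh variable cannot create a new collision for any other class. Fifth, conclude by induction on $m$. The $(\operatorname{ctr})$ case needs a small remark — if the class $\mathbf{x}$ is the one \emph{produced} by the contraction rather than an input, one uses \eqref{alpha_ctr} applied to the appropriate input occurrence, and the two input occurrences of a contraction always lie in distinct $\approx_{str}$-classes by the definition of immediate strong ancestor — but this is routine.

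The main obstacle I anticipate is bookkeeping rather than conceptual: one must be careful that "interior class $\mathbf{x}$" is eliminated at a \emph{unique} identifiable rule and that $\operatorname{subst}^{str}$ at that rule touches exactly $\mathbf{x}$ and no occurrence of any colliding class (so that the measure genuinely drops). This is guaranteed by the fact that strong ancestry is name-preserving and that the four eliminating rules each have a single denominator occurrence (or, for $(\operatorname{ctr})$, a controlled pair), so the strong-ancestor closure of the eliminated occurrence, computed within the subproof above $R$, is precisely $\mathbf{x}$ minus its final (eliminated) member. Making this precise — and handling the doubled-rule conventions and the liberal vs.\ strict rule distinction of Remark \ref{remark:liberal_LJ} cleanly — is the part that requires care, but none of it requires an idea beyond the $\alpha$-equivalence machinery already in place.
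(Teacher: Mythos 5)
Your proposal is correct and is exactly the argument the paper intends: the paper's own proof of this lemma is the single line ``Using $\alpha$-equivalence,'' and your write-up (rename each colliding interior $\approx_{str}$-class to a fresh variable via the appropriate generating pair \eqref{alpha_cut}--\eqref{alpha_L} at its eliminating rule, with a decreasing collision-count measure) is the standard elaboration of that one-liner.
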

\begin{proof}
Using $\alpha$-equivalence.
\end{proof}

\begin{example}\label{example:well_2} The preproof $\underline{2}$ of Example \ref{example:weak_ancestor_2} is not well-labelled, but it is equivalent under $\sim_\alpha$ to the following well-labelled preproof:
\begin{prooftree}
        \AxiomC{}
        \RightLabel{$({\operatorname{ax}})$}
        \UnaryInfC{$\textcolor{red}{x:p} \vdash p$}
        \AxiomC{}
        \RightLabel{$({\operatorname{ax}})$}
        \UnaryInfC{$\textcolor{magenta}{x':p} \vdash p$}
        \AxiomC{}
        \RightLabel{$({\operatorname{ax}})$}
        \UnaryInfC{$\textcolor{green}{x'':p} \vdash p$}
        \RightLabel{$(L \imp)$}
        \BinaryInfC{$\textcolor{magenta}{x':p}, \textcolor{blue}{y': p \imp p} \vdash p$}
        \RightLabel{$(L \imp)$}
        \BinaryInfC{$\textcolor{red}{x:p}, \textcolor{cyan}{y: p \imp p}, \textcolor{blue}{y': p \imp p} \vdash p$}
        \RightLabel{$(\operatorname{ctr})$}
        \UnaryInfC{$\textcolor{red}{x:p}, \textcolor{cyan}{y: p \imp p} \vdash p$}
        \RightLabel{$(R \imp)$}
        \UnaryInfC{$\textcolor{cyan}{y:p \imp p} \vdash p \imp p$}
\end{prooftree}
\end{example}

In the following $\Gamma$ is a sequence of variables, possibly empty, with $Q = [\Gamma]$. Given a sequent $\Gamma \vdash p$ we let $\Sigma^\Gamma_p$ denote the set of all preproofs of that sequent, and given a finite set $Q$ of variables we denote by $\Lambda^Q_p$ the subset of $\Lambda_p$ consisting of terms with free variables contained in $Q$. Below we make use of the substitution operation of Definition \ref{defn:subst}.

\begin{definition}[(Translation)]\label{definition:translation} We let
\be
f^\Gamma_p: \Sigma^\Gamma_p \lto \Lambda^Q_p
\ee
denote the function defined on well-labelled preproofs by annotating the succedent of the deduction rules of Definition \ref{sequentcalc} with lambda terms so that each preproof may be read as a construction of a term:
        \begin{center}
        \begin{tabular}{ >{\centering}m{10cm} >{\centering}m{0.5cm}}
        \AxiomC{}
        \RightLabel{$(\operatorname{ax})$}
        \UnaryInfC{$x:p \vdash x:p$}
        \DisplayProof
        &
        \tagarray{\label{functor_ax}}
        \end{tabular}
        \end{center}

        \begin{center}
        \begin{tabular}{ >{\centering}m{10cm} >{\centering}m{0.5cm}}
        \AxiomC{$\Gamma \vdash N : p$}
        \AxiomC{$\Delta, x:p, \Theta \vdash M : q$}
        \RightLabel{$(\operatorname{cut})$}
        \BinaryInfC{$\Gamma, \Delta, \Theta \vdash M[x := N] : q$}
        \DisplayProof
        &
        \tagarray{\label{functor_cut}}
        \end{tabular}
        \end{center}
        
        \begin{center}
        \begin{tabular}{ >{\centering}m{10cm} >{\centering}m{0.5cm}}
        \AxiomC{$\Gamma, x:p,y:p, \Delta \vdash M : q$}
        \RightLabel{$(\operatorname{ctr})$}
        \UnaryInfC{$\Gamma, x:p, \Delta \vdash M[y := x] : q$}
        \DisplayProof
        &
        \tagarray{\label{functor:ctr}}
        \end{tabular}
        \end{center}
        
        \begin{center}
        \begin{tabular}{ >{\centering}m{10cm} >{\centering}m{0.5cm}}
        \AxiomC{$\Gamma, \Delta \vdash M : q$}
        \RightLabel{$(\operatorname{weak})$}
        \UnaryInfC{$\Gamma, x:p, \Delta \vdash M : q$}
        \DisplayProof
        &
        \tagarray{\label{functor:weak}}
        \end{tabular}
        \end{center}
        
        \begin{center}
        \begin{tabular}{ >{\centering}m{10cm} >{\centering}m{0.5cm}}
        \AxiomC{$\Gamma, x:p,y:q, \Delta \vdash M : r$}
        \RightLabel{$(\operatorname{ex})$}
        \UnaryInfC{$\Gamma, y:q,x:p, \Delta \vdash M : r$}
        \DisplayProof
        &
        \tagarray{\label{functor_ex}}
        \end{tabular}
        \end{center}
        
        \begin{center}
        \begin{tabular}{ >{\centering}m{10cm} >{\centering}m{0.5cm}}
        \AxiomC{$\Gamma, x:p, \Delta \vdash M : q$}
        \RightLabel{$(R \imp)$}
        \UnaryInfC{$\Gamma,\Delta \vdash \lambda x. M : p \imp q $}
        \DisplayProof
        &
        \tagarray{\label{functor_R}}
        \end{tabular}
        \end{center}
        
        \begin{center}
        \begin{tabular}{ >{\centering}m{10cm} >{\centering}m{0.5cm}}
        \AxiomC{$\Gamma \vdash N : p$}
        \AxiomC{$\Delta, x:q, \Theta \vdash M : r$}
        \RightLabel{$(L \imp)$}
        \BinaryInfC{$y: p\imp q,\Gamma,\Delta, \Theta \vdash M[x := (y\,N)] : r$}
        \DisplayProof
        &
        \tagarray{\label{functor_L}}
        \end{tabular}
        \end{center}
Given a well-labelled preproof $\pi$ annotated as above, $f^\Gamma_p(\pi)$ is the lambda term annotating the succedent on the root of $\pi$. If $\pi$ is not well-labelled, we first $\alpha$-rename as necessary using \eqref{alpha_cut}, \eqref{alpha_ctr}, \eqref{alpha_R}, \eqref{alpha_L} any interior equivalence class under $\approx_{str}$ to obtain a preproof $\pi'$ which is well-labelled and define $f^\Gamma_p(\pi) := f^\Gamma_p(\pi')$. This term is independent of choices made during $\alpha$-renaming. We refer to $f^\Gamma_p(\pi)$ as the \emph{translation} of $\pi$.
\end{definition}

\begin{remark}\label{remark:translation_who} The function from sequent calculus proofs to derivations in natural deduction is implicit in Gentzen \cite{gentzen} as the concatenation of a translation from sequent calculus LJ to the Hilbert-style system LHJ \cite[\S V.5]{gentzen} and a translation from LHJ to natural deduction NJ \cite[\S V.3]{gentzen}. The map from LJ to NJ is also discussed very briefly by Prawitz \cite[p.90-91]{prawitz}. The translation to natural deduction appears explicitly in Zucker \cite{zucker} and the translation to lambda terms in Mints \cite{mints}. For a textbook treatment of the former see \cite[\S 3.3.1]{troelstra_sch} and for the latter \cite[\S 7.4]{sorensen}.
\end{remark}

\begin{remark} The constraint that $\pi$ is well-labelled is necessary for Definition \ref{definition:translation} to capture the intended translation from proofs to lambda terms. For example if there are additional occurrences of $y$ in the part of the antecedent labelled $\Delta$ in the numerator of the contraction rule which are not in the same $\approx_{str}$-equivalence class as the occurrence $y$ being contracted, then the substitution $M[ y := x ]$ will rewrite these other occurrences to $x$, which is not what we intend.
\end{remark}


\begin{definition}\label{definition:simo} We define $\sim_o$ to be the smallest compatible equivalence relation on preproofs containing the union of $\alpha$-equivalence, $\tau$-equivalence, commuting conversions, $co$-equivalence and $\lambda$-equivalence.
\end{definition}

\begin{lemma}\label{lemma:F_welldefined} Let $\pi,\pi'$ be preproofs of $\Gamma \vdash p$. Then
\begin{itemize}
\item[(i)] If $\pi \sim_o \pi'$ then $f^\Gamma_p(\pi) = f^\Gamma_p(\pi')$.
\item[(ii)] If $\pi \sim_p \pi'$ then $f^\Gamma_p(\pi) =_{\beta \eta} f^\Gamma_p(\pi')$.
\end{itemize}
\end{lemma}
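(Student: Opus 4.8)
The plan is to prove both parts by checking that each generating pair of the relevant relations is sent to a pair of terms that is equal (part (i)) or $\beta\eta$-equal (part (ii)), and then invoking compatibility. More precisely, since $\sim_o$ is the \emph{smallest} compatible equivalence relation containing $\alpha$-, $\tau$-equivalence, commuting conversions, $co$- and $\lambda$-equivalence, it suffices to show: (a) the relation $R$ on preproofs defined by $\pi\, R\, \pi'$ iff $f^\Gamma_p(\pi) = f^\Gamma_p(\pi')$ is a compatible equivalence relation, and (b) $R$ contains each generating pair of those five relations. For (ii) one repeats the argument with $R$ defined using $=_{\beta\eta}$ in place of $=$, and additionally checks the $\eta$-generating pair \eqref{eta} and each cut-reduction rule \eqref{cut:ax_left}--\eqref{cut:R_vs_L_nonp2}.

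First I would verify (a). That $R$ is an equivalence relation is immediate since $=$ (resp.\ $=_{\beta\eta}$) is. Condition (C0) is built into the setup ($f^\Gamma_p$ is only defined between preproofs of the same sequent). For (C1) and (C2) one must check that if $f^\Gamma_p(\pi_i)$ agree then so do the translations of the preproofs obtained by applying a common deduction rule (or, for (C2), one of the two rules in \eqref{condition_c2}). This is a direct consequence of the inductive shape of Definition \ref{definition:translation}: each deduction rule acts on the annotating term by a fixed syntactic operation — identity, a substitution $M[x:=N]$, $M[y:=x]$, $\lambda x.M$, or $M[x:=(y\,N)]$ — and these operations respect $=$ (trivially) and $=_{\beta\eta}$ (because substitution is compatible with $\beta\eta$, recalled in Appendix \ref{section:intro_lambda}). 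One subtlety: $f^\Gamma_p$ is defined via $\alpha$-renaming to a well-labelled representative, so one should note that applying a deduction rule and then $\alpha$-renaming gives the same term as $\alpha$-renaming first and then applying the rule, which holds because $\alpha$-renaming interior classes does not affect the resulting term (stated in Definition \ref{definition:translation}).

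Next I would work through the generating pairs. For $\alpha$-equivalence \eqref{alpha_cut}--\eqref{alpha_L} the two sides translate to the same term essentially by definition, since $f^\Gamma_p$ is computed on a well-labelled representative and $\alpha$-renamed interior classes do not change the output. For $\tau$-equivalence \eqref{tau_cut}--\eqref{tau_ex_ex}: exchange translates as the identity on terms, so any two ways of rearranging by exchanges, or inserting an exchange before a cut/$(L\imp)$/$(R\imp)$/contraction, produce literally the same term. For commuting conversions \eqref{comm_weak_weak}--\eqref{comm_L_L2}: here one checks that the two orders of applying structural/logical rules yield the same term; the key algebraic facts are that weakening does nothing to the term, that $(\operatorname{ctr})$ and $(L\imp)$ act by substitutions which commute when the relevant variables are distinct, and that $\lambda x.(-)$ commutes with substitutions not involving $x$. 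The $co$-equivalences \eqref{co_ctr_assoc}, \eqref{co_ctr_comm_alt}, \eqref{co_weak_ctr} become the identities $M[y:=x][z:=x] = M[z:=x][y:=x]$-type statements and $M[x':=x]=M$ when $x'\notin\FV(M)$ (which holds since $x'$ was just weakened in). The main obstacle is the $\lambda$-equivalences \eqref{lambda_L_L_ctr} and \eqref{lambda_weak_L}: these are the only generating pairs where the two preproofs have genuinely different shapes and where both sides must be carefully unwound. For \eqref{lambda_weak_L} one computes that the left side gives $M[x:=(y\,N_1)]$ where $x\notin\FV(M)$, hence equals $M$, while the right side also weakens in $y$ and $\Gamma$ and gives $M$; so both sides equal $M$. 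For \eqref{lambda_L_L_ctr} one computes that the left side is $(M[x:=x'])[x':=(y\,N)][\ldots]$ — i.e.\ the term from $\pi_2$ with both $x,x'$ replaced by $(y\,N_1)$ — and the right side, tracing through the two $(L\imp)$'s (one producing $(y'\,N_1)$ for the copy $x$ and one producing $(y\,N_1)$, then identified by contraction $y'\mapsto y$), also yields $M$ with $x\mapsto (y\,N_1)$ and $x'\mapsto(y\,N_1)$; the two copies of $\pi_1$ are needed precisely because $N_1$ appears twice. So both sides equal $M[x:=(y\,N_1)][x':=(y\,N_1)]$, where $N_1 = f^\Gamma_p(\pi_1)$. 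This already proves (i). For (ii) it remains to handle \eqref{eta}, which translates on the left to $\lambda x.(z\,x) =_\eta z$ matching the right side $z$, and each cut-reduction rule: \eqref{cut:ax_left}, \eqref{cut:ax_right} are trivial substitution identities ($M[y:=x]$ after renaming, resp.\ $x[x:=N]=N$); \eqref{cut:struc_vs_any}, \eqref{cut:log_vs_struc_np}, \eqref{cut:L_vs_any} and the permutative cuts \eqref{cut:R_vs_L_nonp}, \eqref{cut:R_vs_L_nonp2} reduce to commutation of substitutions with structural/logical operations and the substitution lemma; \eqref{cut:log_vs_ctr}, \eqref{cut:log_vs_weak} mirror the $\lambda$-equivalences and are handled the same way; \eqref{cut:R_vs_R}, and finally the principal case \eqref{cut:R_vs_L}, where the left side produces $M'[y:=(\lambda x.M)]$ with $y$ occurring applied to arguments, and the right side produces the result of cutting $\pi_2$ into $\pi_1$ then into $\pi_3$ — here $\beta$-reduction is genuinely used: $(\lambda x.M)\,N \to_\beta M[x:=N]$, and the equality holds up to $=_\beta$. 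I would organise this last case carefully as it is where the computational content of cut-elimination is located; everything else is routine substitution bookkeeping.
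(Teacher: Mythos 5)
Your proposal is correct and is essentially the paper's proof carried out in full: the paper's argument is literally ``by inspection of the generating relations,'' and the remark immediately following the lemma confirms your key observation that all generating pairs translate to syntactically equal terms except \eqref{cut:R_vs_L} (which translates to a $\beta$-reduction) and \eqref{eta} (which translates to an $\eta$-expansion). The minimality-of-the-compatible-closure framing and the case analysis you give are exactly the intended inspection.
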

\begin{proof}
By inspection of the generating relations.
\end{proof}

\begin{remark} A more precise statement than Lemma \ref{lemma:F_welldefined} is that if $\pi, \pi'$ are related by any of the generating relations for proof equivalence other than \eqref{eta}, \eqref{cut:R_vs_L} then $f^\Gamma_p(\pi) = f^\Gamma_p(\pi)$. The translation of \eqref{eta} is $\eta$-equivalence
\begin{center}
\begin{tabular}{ >{\centering}m{7cm} >{\centering}m{0.5cm} >{\centering}m{5cm} >{\centering}m{0.5cm}} 
 \AxiomC{}
 \RightLabel{$(\operatorname{ax})$}
 \UnaryInfC{$x: p \vdash \textcolor{blue}{x}:p$}
 \AxiomC{}
 \RightLabel{$(\operatorname{ax})$}
 \UnaryInfC{$y: q \vdash \textcolor{blue}{y}:q$}
 \RightLabel{$(L\imp)$}
 \BinaryInfC{$z: p \imp q, x:p \vdash \textcolor{blue}{(z \, x)}: q$}
 \RightLabel{$(R \imp)$}
 \UnaryInfC{$z: p\imp q \vdash \textcolor{blue}{\lambda x . (z \, x)}: p \imp q$}
 \DisplayProof
 &$\sim_\eta$&
 \AxiomC{}
 \RightLabel{$(\operatorname{ax})$}
 \UnaryInfC{$z: p \imp q \vdash \textcolor{blue}{z}: p \imp q$}
 \DisplayProof
 &
 \tagarray{\label{eta_annotated}}
 \end{tabular}
\end{center}
and the translation of \eqref{cut:R_vs_L} is $\beta$-reduction.
\end{remark}

\begin{example}\label{example:translation_2} The lambda term associated to the well-labelled $\underline{2}$ from Example \ref{example:well_2} is
\begin{prooftree}
        \AxiomC{}
        \RightLabel{$({\operatorname{ax}})$}
        \UnaryInfC{$x:p \vdash x:p$}
        \AxiomC{}
        \RightLabel{$({\operatorname{ax}})$}
        \UnaryInfC{$x':p \vdash x':p$}
        \AxiomC{}
        \RightLabel{$({\operatorname{ax}})$}
        \UnaryInfC{$x'':p \vdash \textcolor{blue}{x''}:p$}
        \RightLabel{$(L \imp)$}
        \BinaryInfC{$y': p \imp p, x':p \vdash \textcolor{blue}{(y'\, x')} : p$}
        \RightLabel{$(L \imp)$}
        \BinaryInfC{$y: p \imp p, y': p \imp p, x:p \vdash \textcolor{blue}{(y' \, (y \, x))} : p$}
        \RightLabel{$(\operatorname{ctr})$}
        \UnaryInfC{$y: p \imp p , x:p\vdash \textcolor{blue}{(y \, (y \, x))}:p$}
        \RightLabel{$(R \imp)$}
        \UnaryInfC{$y:p \imp p \vdash \textcolor{blue}{\lambda x . (y \, (y \, x))} : p \imp p$}
\end{prooftree}
\end{example}

\begin{lemma}\label{curryhowardfunctor} For any sequence $\Gamma$ there is a functor $F_\Gamma: \cat{S}_\Gamma \lto \cat{L}_Q$ which is the identity on objects and which is defined on morphisms for formulas $p,q$ by
\begin{align*}
F_\Gamma(p,q) = f^\Gamma_{p \imp q} &: \cat{S}_\Gamma(p,q) \lto \cat{L}_\Gamma(p,q)\,,\\
F_\Gamma(\boldsymbol{1}, q) = f_q &: \cat{S}_\Gamma(\boldsymbol{1},q) \lto \cat{L}_\Gamma(\boldsymbol{1}, q)\,.
\end{align*}
\end{lemma}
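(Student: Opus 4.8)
The plan is to verify the two functor axioms --- preservation of identities and of composition --- by direct computation, reducing everything to the calculation of $f^\Gamma_p$ on the relevant preproofs. The well-definedness of $F_\Gamma$ on morphisms is already supplied by Lemma \ref{lemma:F_welldefined}(ii): since $\cat{S}_\Gamma(p,q) = \Sigma^\Gamma_{p \imp q}/\!\sim_p$ and $\cat{L}_\Gamma(p,q) = \Lambda^Q_{p \imp q}/\!=_{\beta\eta}$, the assignment $[\pi] \mapsto [f^\Gamma_{p\imp q}(\pi)]$ is well-defined. One also needs to note that the target really lands in $\cat{L}_Q$ rather than merely $\cat{L}$, i.e.\ that $\FV_\beta(f^\Gamma_{p\imp q}(\pi)) \subseteq Q$; but by construction $\FV(f^\Gamma_p(\pi)) \subseteq [\Gamma] = Q$ for a well-labelled $\pi$, and $\FV_\beta \subseteq \FV$, so this is immediate. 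The genuinely substantive content is therefore functoriality.

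First I would check identities: the identity morphism $1_p$ in $\cat{S}_\Gamma$ is the preproof displayed just before Section \ref{section:lambda_calc}, namely $(\operatorname{ax})$ on $x:p \vdash p$ followed by a block of $(\operatorname{weak})$ to reach $\Gamma, x:p \vdash p$ and then $(R\imp)$. Annotating via \eqref{functor_ax}, \eqref{functor:weak}, \eqref{functor_R} one reads off $f^\Gamma_{p\imp p}(1_p) = \lambda x. x$, which is exactly the identity of $\cat{L}_Q$; for the unit object the special cases $\cat{S}_\Gamma(\boldsymbol 1, \boldsymbol 1) = \{\ast\} \mapsto \{\star\}$ are handled by fiat. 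Next, composition. Given proofs $\pi \in \cat{S}_\Gamma(p,q)$ and $\psi \in \cat{S}_\Gamma(q,r)$, the composite $\psi \circ \pi$ is the preproof \eqref{composite_proofs_1} built from $\pi\{x\}$ and $\psi\{y\}$ glued by a $(\operatorname{cut})$, a block of $(\operatorname{ex}/\operatorname{ctr})$ contracting the two copies of $\Gamma$, and a final $(R\imp)$. The key computational lemma to prove first is that $f^\Gamma_q(\pi\{x\})$, the translation of the ``opened up'' preproof of $\Gamma, x:p \vdash q$ of Definition \ref{defn:convert_closed_to_open}, equals $(f^\Gamma_{p\imp q}(\pi)\, x)$ up to $\beta\eta$ --- this follows by annotating \eqref{move_to_left}: the $(L\imp)$ against the axioms produces $y[y := (z\,x)] = (z\,x)$, and the $(\operatorname{cut})$ substitutes $f^\Gamma_{p\imp q}(\pi)$ for $z$, giving $(f^\Gamma_{p\imp q}(\pi)\, x)$ exactly. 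Granting this, annotating \eqref{composite_proofs_1} gives: at the cut, $f^\Gamma_r(\psi\{y\})[y := f^\Gamma_q(\pi\{x\})] = (g\, y)[y := (h\, x)] = (g\,(h\,x))$ where $g = f^\Gamma_{q \imp r}(\psi)$, $h = f^\Gamma_{p\imp q}(\pi)$; the $(\operatorname{ex}/\operatorname{ctr})$ block leaves the term unchanged by \eqref{functor:ctr}, \eqref{functor_ex}; and the final $(R\imp)$ abstracts $x$, yielding $\lambda x.(g\,(h\,x))$, which is precisely the composite $g \circ h$ in $\cat{L}_Q$ from Definition \ref{definition:lambda_cat}. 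The special cases --- composition with morphisms out of or into $\boldsymbol 1$, handled by \eqref{composite_proofs_2}, \eqref{composite_proofs_3} --- are checked the same way: \eqref{composite_proofs_2} annotates to $(g\, h)$ where $g = f^\Gamma_{p\imp q}(\psi)$, $h = f^\Gamma_p(\pi)$, matching $N \circ M = (N\,M)$; \eqref{composite_proofs_3} annotates to $\lambda x. f^\Gamma_q(\pi)$ with $x \notin \FV$, matching $N \circ \star = \lambda t^\sigma. N$; and the remaining projection cases are trivial.

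The main obstacle is bookkeeping around free variables and well-labelledness rather than anything conceptually deep. The translation $f^\Gamma_p$ is only directly defined on well-labelled preproofs, and the composite preproofs \eqref{composite_proofs_1}--\eqref{composite_proofs_3} involve two copies of $\Gamma$ and freshly chosen intermediate variables $x:p$, $y:q$, so one must take care that the $\alpha$-renaming built into Definition \ref{definition:translation} interacts correctly with the substitutions in \eqref{functor_cut} and \eqref{functor_L} --- in particular that the substituted terms $N = f^\Gamma_q(\pi\{x\})$ do not capture variables, and that the duplicated $\Gamma$ really is contracted away to give a term with free variables in $Q$ and not in some renamed copy. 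This is exactly the kind of subtlety flagged in the remark following Definition \ref{definition:translation}, and the clean way to manage it is to invoke $\alpha$-equivalence (Lemma \ref{lemma:all_well}) to assume all preproofs in sight are well-labelled, perform the annotation, and then use Lemma \ref{lemma:F_welldefined}(i) to see the answer is independent of these choices. With that hygiene in place the verification is the routine diagram-chase sketched above, so I would present it compactly, displaying the annotated versions of \eqref{composite_proofs_1}, the key identity $f^\Gamma_q(\pi\{x\}) =_{\beta\eta} (f^\Gamma_{p\imp q}(\pi)\,x)$, and leaving the fully analogous unit-object cases to the reader.
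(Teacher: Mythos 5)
Your proposal is correct, and its overall shape (annotate the composite preproofs \eqref{composite_proofs_1}--\eqref{composite_proofs_3}, read off the term, compare with Definition \ref{definition:lambda_cat}) matches the paper's. The one genuine difference is in how the relationship between $\pi$ and $\pi\{x\}$ is handled. The paper first invokes Lemma \ref{lemma:semi_normal_form} to replace $\psi,\pi$ by $\sim_p$-equivalent preproofs ending in $(R\imp)$ applied to $\psi\{y\},\pi\{x\}$, so that $f^\Gamma_{p\imp q}(\pi) = \lambda x.N$ with $N = f^{\Gamma,x:p}_q(\pi\{x\})$; the annotated composite then yields $\lambda x.M[y:=N]$, and a short $\beta$-computation shows this agrees with $(\lambda y.M)\circ(\lambda x.N)$ in $\cat{L}$. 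You instead compute $f^{\Gamma,x:p}_q(\pi\{x\}) = (f^\Gamma_{p\imp q}(\pi)\,x)$ directly from Definition \ref{defn:convert_closed_to_open} for an \emph{arbitrary} $\pi$, after which the annotation of \eqref{composite_proofs_1} produces $\lambda x.(g\,(h\,x))$, literally the composite in $\cat{L}$ with no $\beta$-step needed. Your route is slightly more elementary --- it bypasses Lemma \ref{lemma:semi_normal_form}, which rests on cut-elimination and commuting conversions, and it establishes the functoriality identity as an equality of terms rather than a $\beta\eta$-equality --- while the paper's normalization pays off later (the same lemma is reused repeatedly in Section \ref{chiso}), so little is saved globally. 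Your extra attention to well-definedness, to $\FV_\beta \subseteq Q$, and to the well-labelling hygiene around the duplicated copy of $\Gamma$ is sound and goes somewhat beyond what the paper records explicitly.
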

\begin{proof}
For any formula $p$ it is clear that $F_\Gamma(1_p) = 1_p$. If $p,q,r$ are formulas we need to show that the diagram
\be\label{eq:functor_square}
\xymatrix{
\cat{S}_\Gamma(q,r) \times \cat{S}_\Gamma(p,q) \ar[r]\ar[d]_-{f^\Gamma_{q \imp r} \times f^\Gamma_{p \imp q}} & \cat{S}_\Gamma(p,r) \ar[d]^-{f^\Gamma_{p \imp r}}\\
\cat{L}_Q(q,r) \times \cat{L}_Q(p,q) \ar[r] & \cat{L}_Q(p,r)
}
\ee
commutes. Let a pair of preproofs $\psi, \pi$ of $\Gamma \vdash q \imp r$ and $\Gamma \vdash p \imp q$ respectively be given. We may assume by Lemma \ref{lemma:semi_normal_form} that $\psi, \pi$ are obtained respectively by $(R \imp)$ rules from preproofs $\psi\{y\}, \pi\{x\}$ of sequents $\Gamma, y:q \vdash r$ and $\Gamma, x:p \vdash q$. If the translations of these preproofs are $M, N$ respectively then the following annotated proof tree
\begin{center}
\begin{tabular}{ >{\centering}m{10cm} >{\centering}m{0.5cm}}
    \AxiomC{$\pi\{x\}$}
    \noLine
    \UnaryInfC{$\vdots$}
    \UnaryInfC{$\Gamma,x:p \vdash N:q$}
    \AxiomC{$\psi\{y\}$}
    \noLine
    \UnaryInfC{$\vdots$}
    \UnaryInfC{$\Gamma,y: q \vdash M:r$}
    \RightLabel{$(\operatorname{cut})$}
    \BinaryInfC{$\Gamma,x:p,\Gamma \vdash M[ y := N ]:r$}
    \doubleLine
    \RightLabel{$(\operatorname{ex}/\operatorname{ctr})$}
    \UnaryInfC{$\Gamma, x:p \vdash M[ y := N ]:r$}
    \RightLabel{$(R\imp)$}
    \UnaryInfC{$\Gamma\vdash \lambda x. M[ y := N ] : p \imp r$}
    \DisplayProof
    &
    \tagarray{\label{composite_proofs_functor}}
\end{tabular}
\end{center}
computes that $f^\Gamma_{p \imp r}( \psi \circ \pi ) = \lambda x. M[ y := N ]$. The other way around \eqref{eq:functor_square} gives
\begin{align*}
f^\Gamma_{q \imp r}( \psi ) \circ f^\Gamma_{p \imp q}( \pi ) &= (\lambda y.M) \circ (\lambda x.N)\\
&= \lambda x.( \lambda y.M ( \lambda x.N \, x) )\\
&=_{\beta} \lambda x.( \lambda y.M \, N )\\
&=_{\beta} \lambda x.M[ y := N ]
\end{align*}
as required. The remaining special cases are left to the reader.
\end{proof}


\begin{lemma}\label{lemma:cutfree_means_betanormal} If $\pi$ is cut-free then $f^\Gamma_p(\pi)$ is a $\beta$-normal form.
\end{lemma}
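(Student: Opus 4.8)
The plan is to argue by structural induction on the cut-free preproof $\pi$, tracking the shape of the term $f^\Gamma_p(\pi)$ annotating each sequent. The key observation is that a $\beta$-redex $(\lambda x.M)\,N$ can only be created in the translation by the interaction of a $\lambda$-abstraction (which is introduced \emph{only} by an $(R\imp)$ rule, via \eqref{functor_R}) with an application of that abstraction to an argument. In a cut-free preproof the only rules that produce an application node are $(L\imp)$, via \eqref{functor_L}, which forms $M[x := (y\,N)]$, and here the head of the new application is the \emph{hypothesis variable} $y$, not a $\lambda$-abstraction. So no redex is ever formed. To make this precise I would strengthen the induction hypothesis.

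The strengthened statement I would prove is: for every cut-free preproof $\pi$ of $\Gamma \vdash p$, the term $t = f^\Gamma_p(\pi)$ is $\beta$-normal, and moreover every subterm of $t$ of the form $(s_1\,s_2)$ (an application) has its head variable $\mathrm{hd}(s_1)$ equal to one of the variables occurring in $\Gamma$ (equivalently, $t$ is a $\beta$-normal term whose applicative subterms are all ``neutral'', headed by a free-or-bound variable rather than an abstraction). I would then run the induction through the cut-free rules. The cases $(\operatorname{ax})$, $(\operatorname{weak})$, $(\operatorname{ex})$ are immediate since the term is unchanged (or is just a variable). For $(\operatorname{ctr})$, the term is $M[y := x]$: substituting one variable for another variable neither creates nor destroys redexes and preserves the head-variable property, so the claim is inherited. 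For $(R\imp)$, the new term is $\lambda x.M$ where by hypothesis $M$ is $\beta$-normal with the neutrality property; prepending a $\lambda$ creates no redex (there is nothing applied to it yet) and the applicative subterms are unchanged, so the invariant holds. The crucial case is $(L\imp)$: the new term is $M[x := (y\,N)]$ where, by the inductive hypotheses applied to the two cut-free subproofs, $M$ and $N$ are $\beta$-normal with the neutrality property. Here I would note that $x$ is a hypothesis variable of $M$ and $N$ is a closed-up-to-$\Gamma$ term; since $x$ occurs in $M$ only in applicative position as a head or an argument (being a variable in a neutral term), replacing $x$ by the \emph{neutral} term $(y\,N)$ — whose head is the variable $y$ — never places an abstraction in function position, so no redex is created, and the head-variable property is preserved with $y$ now heading the relevant applicative subterms. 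I would use Lemma \ref{lemma:all_well} to reduce to well-labelled preproofs so that the substitutions in \eqref{functor:ctr} and \eqref{functor_L} behave as intended.

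The main obstacle is the $(L\imp)$ case, specifically verifying carefully that substituting the neutral term $(y\,N)$ for the variable $x$ inside the neutral normal form $M$ cannot create a redex. This requires a small lemma on substitution: if $M$ is $\beta$-normal with all applicative subterms neutral, and $L$ is likewise neutral, then $M[x := L]$ is $\beta$-normal with all applicative subterms neutral. This is a routine induction on $M$ — the only way a substitution creates a redex is if $x$ appears as the head of an application $x\,M_1$ and $L$ is an abstraction, which is excluded by neutrality of $L$ — but it is the one place where something genuinely needs to be checked rather than read off. Everything else is bookkeeping, and once this substitution lemma is in hand the structural induction closes immediately, giving in particular that $f^\Gamma_p(\pi)$ is $\beta$-normal.
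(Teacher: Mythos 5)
Your proposal is correct and is essentially the paper's own argument, made explicit: the paper's proof simply observes that in a cut-free preproof the only applications introduced are those of the form $(y\,N)$ with $y$ a variable (via the $(L\imp)$ annotation), so no abstraction ever occupies function position and no $\beta$-redex can arise. Your strengthened induction with the neutrality invariant and the substitution sub-lemma is a careful unpacking of that same one-line observation.
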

\begin{proof}
We may assume that $\pi$ is well-labelled. Without the cut rule the only occurrences of applications are those introduced by  \eqref{functor_L} which have the form $(y \, N)$ with $y$ a variable and so $f^\Gamma_p(\pi)$ contains no $\beta$-redexes.
\end{proof}

The translation from preproofs to lambda terms is not well-behaved if the sequent $\Gamma$ contains repetitions, as the following example shows:

\begin{example}\label{example:counter} The lambda term associated to the following well-labelled preproof $\underline{001}$ is (part of) the standard representation in lambda calculus of the binary integer $001$:
\begin{center}
\AxiomC{}
\RightLabel{$(\operatorname{ax})$}
\UnaryInfC{$x:p \vdash x:p$}
\AxiomC{}
\RightLabel{$(\operatorname{ax})$}
\UnaryInfC{$x':p \vdash x':p$}
\AxiomC{}
\RightLabel{$(\operatorname{ax})$}
\UnaryInfC{$x'':p \vdash x'':p$}
\AxiomC{}
\RightLabel{$(\operatorname{ax})$}
\UnaryInfC{$x''':p \vdash x''':p$}
\RightLabel{$(L \imp)$}
\BinaryInfC{$y'':p \imp p, x'':p \vdash (y'' \, x''):p$}
\RightLabel{$(L \imp)$}
\BinaryInfC{$y':p \imp p, x':p, y'':p \imp p \vdash (y'' \, (y' \, x')):p$}
\RightLabel{$(L \imp)$}
\BinaryInfC{$y:p \imp p, x:p, y':p \imp p, y'':p \imp p  \vdash (y'' \, (y' \, (y \, x))):p$}
\RightLabel{$(\operatorname{ex})$}
\UnaryInfC{$y:p \imp p, y':p \imp p,x:p,  y'':p \imp p  \vdash (y'' \, (y' \, (y \, x))):p$}
\RightLabel{$(\operatorname{ctr})$}
\UnaryInfC{$y:p \imp p, x:p, y'':p \imp p \vdash (y'' \, (y \, (y \, x))):p$}
\DisplayProof
\end{center}
Note that the following preproof, denoted $\underline{001}'$ is well-labelled and differs only in the variable annotations chosen to be introduced by one of the $(L \imp)$ rules (shown highlighted):
\begin{center}
\AxiomC{}
\RightLabel{$(\operatorname{ax})$}
\UnaryInfC{$x:p \vdash x:p$}
\AxiomC{}
\RightLabel{$(\operatorname{ax})$}
\UnaryInfC{$x':p \vdash x':p$}
\AxiomC{}
\RightLabel{$(\operatorname{ax})$}
\UnaryInfC{$x'':p \vdash x'':p$}
\AxiomC{}
\RightLabel{$(\operatorname{ax})$}
\UnaryInfC{$x''':p \vdash x''':p$}
\RightLabel{$(L \imp)$}
\BinaryInfC{$\textcolor{blue}{y}:p \imp p, x'':p \vdash (y \, x''):p$}
\RightLabel{$(L \imp)$}
\BinaryInfC{$y':p \imp p, x':p, \textcolor{blue}{y}:p \imp p \vdash (y \, (y' \, x')):p$}
\RightLabel{$(L \imp)$}
\BinaryInfC{$y:p \imp p, x:p, y':p \imp p, \textcolor{blue}{y}:p \imp p  \vdash (y \, (y' \, (y \, x))):p$}
\RightLabel{$(\operatorname{ex})$}
\UnaryInfC{$y:p \imp p, y':p \imp p,x:p,  \textcolor{blue}{y}:p \imp p  \vdash (y \, (y' \, (y \, x))):p$}
\RightLabel{$(\operatorname{ctr})$}
\UnaryInfC{$y:p \imp p, x:p, \textcolor{blue}{y}:p \imp p \vdash (y \, (y \, (y \, x))):p$}
\DisplayProof
\end{center}
From the encoding of $011$ we may in a similar way construct a well-labelled preproof $\underline{011}'$ of the same sequent as $\underline{001}'$ which is not equivalent under $\sim_p$ to $\underline{001}'$ but whose translation is the same lambda term. The clash of variables is innocuous in sequent calculus because we have enough additional information to disambiguate the role of the two variables, but this information is not present in the lambda term. This shows that the map from $\sim_p$-equivalence classes of preproofs to $\beta\eta$-equivalence classes of terms is \emph{not} injective if $\Gamma$ has multiple occurrences of variables.

Another, simpler, example of this phenomenon is the following pair of preproofs where the variable introduced by the weakening is highlighted:
\begin{center}
\begin{tabular}{ >{\centering}m{5cm} >{\centering}m{5cm} >{\centering}m{0.5cm}}
\AxiomC{}
\RightLabel{$(\operatorname{ax})$}
\UnaryInfC{$z:s \vdash z:s$}
\RightLabel{$(\operatorname{weak})$}
\UnaryInfC{$\textcolor{blue}{z:s}, z:s \vdash z:s$}
\DisplayProof
&
\AxiomC{}
\RightLabel{$(\operatorname{ax})$}
\UnaryInfC{$z:s \vdash z:s$}
\RightLabel{$(\operatorname{weak})$}
\UnaryInfC{$z:s, \textcolor{blue}{z:s} \vdash z:s$}
\DisplayProof
&
\tagarray{\label{ex:var_counter}}
\end{tabular}
\end{center}
If we consider the effect of cutting another proof against the first $z:s$ we see that they cannot be equivalent under $\sim_p$ but their translations are the same lambda term.
\end{example}

\begin{definition}
We say that $\Gamma$ is \emph{repetition-free} it for any variable $x:p$ the sequence $\Gamma$ contains at most one occurrence of $x:p$.
\end{definition}

\begin{thm}[Gentzen-Mints-Zucker duality]
\label{gentzen_mints_zucker}
If $\Gamma$ is repetition-free then the translation functor $F_\Gamma: \cat{S}_\Gamma \lto \cat{L}_Q$ is an isomorphism of categories.
\end{thm}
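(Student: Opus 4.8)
The plan is to establish that $F_\Gamma$ is a bijection on each hom-set; functoriality is already Lemma \ref{curryhowardfunctor}, and since $F_\Gamma$ is the identity on objects, full faithfulness will give the isomorphism. Fix formulas $p, q$ and write the morphism set in terms of $\cat{S}_\Gamma(p,q) = \Sigma^\Gamma_{p \imp q}/\!\sim_p$ and $\cat{L}_Q(p,q) = \{ M \in \Lambda_{p \imp q}/\!=_{\beta\eta} \mid \FV_\beta(M) \subseteq Q \}$; by Lemma \ref{lemma:twolambdaQs} the latter is the same as $\Lambda^Q_{p \imp q}/\!=_{\beta\eta}$. The cases involving $\boldsymbol{1}$ are trivial, and the general hom-set $\cat{S}_\Gamma(p,q)$ reduces via Lemma \ref{lemma:semi_normal_form} (the semi-normal form $\pi \sim_p (R\imp)$ applied to $\pi\{x\}$) to understanding the translation on preproofs of $\Gamma, x:p \vdash q$, so it suffices to show that for any sequent $\Delta \vdash r$ with $\Delta$ repetition-free, the map $f^\Delta_r : \Sigma^\Delta_r/\!\sim_p \to \Lambda^{[\Delta]}_r/\!=_{\beta\eta}$ is a bijection.

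\textbf{Surjectivity.} First I would prove surjectivity by induction on the structure of a $\beta$-normal lambda term $M$ with $\FV(M) \subseteq [\Delta]$ of type $r$, exhibiting a cut-free preproof $\pi$ of $\Delta \vdash r$ with $f^\Delta_r(\pi) = M$. A $\beta$-normal term of function type $r = r_1 \imp r_2$ is either an abstraction $\lambda z. M'$, handled by the induction hypothesis applied to $M'$ followed by an $(R\imp)$ rule, or it is an application spine $x\, N_1 \cdots N_k$ with $x$ a (free or bound) variable, which one builds by an iterated $(L\imp)$-type construction against the variable $x:p$: this is exactly the content of the translation rule \eqref{functor_L} read in reverse, and corresponds to the way \eqref{eq:cut_tree_actualwork} and Lemma \ref{lemma:preimage_app} are set up. The base case of an atomic-type variable uses $(\operatorname{ax})$ together with $(\operatorname{weak})$ rules to install the remaining variables of $\Delta$. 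Repetition-freeness of $\Delta$ is what guarantees that the variables of $\Delta$ are pairwise distinguishable, so the weakenings and $(L\imp)$ constructions can be carried out unambiguously and the resulting preproof has its free-variable labelling matching $M$.

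\textbf{Injectivity.} This is the heart of the theorem and the main obstacle. One must show that if $f^\Delta_r(\pi_1) =_{\beta\eta} f^\Delta_r(\pi_2)$ then $\pi_1 \sim_p \pi_2$. The route is: (1) by the cut-elimination theorem (Theorem \ref{cutfree}) reduce to cut-free $\pi_1, \pi_2$, whose translations are $\beta$-normal by Lemma \ref{lemma:cutfree_means_betanormal}, so $f^\Delta_r(\pi_1)$ and $f^\Delta_r(\pi_2)$ are $\eta$-equivalent $\beta$-normal terms; (2) using $\eta$-equivalence \eqref{eta} of preproofs and the associated contraction/slack machinery, further reduce $\pi_1, \pi_2$ to a rigid normal form of sequent calculus proofs — this is where Theorem \ref{theorem:normal_form_prop} (the normal form theorem refining the duality) does the work — so that one may assume the two terms are literally equal $\beta\eta$-normal forms; (3) prove that two cut-free preproofs in this normal form with the \emph{same} translation are related by $\sim_p$, by induction on the structure of the common $\beta\eta$-normal term. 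In step (3), an abstraction $\lambda z.M'$ forces both $\pi_i$ to end (up to commuting conversions \eqref{comm_R_ex}, \eqref{comm_R_weak}, \eqref{comm_R_ctr}, \eqref{comm_R_L}, which is the Lemma \ref{lemma:semi_normal_form} argument) in an $(R\imp)$ on a subproof translating to $M'$; an application spine $x\,N_1\cdots N_k$ forces both to be built from $(L\imp)$ rules against $x$, where the $\lambda$-equivalences \eqref{lambda_L_L_ctr} and \eqref{lambda_weak_L} and the $co$-equivalences are precisely what is needed to reconcile the two preproofs when a single variable occurrence in the term corresponds to differently-arranged contraction trees or weakenings in the two proofs. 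The crux — and the step I expect to be genuinely delicate — is organising this induction so that the bookkeeping of which $(L\imp)$ in $\pi_i$ produced which application node in the term is coherent across the $\sim_p$-classes, which is exactly why the paper invests in well-labelled preproofs, contraction normal forms, and the explicit $\lambda$-equivalence relations; with these in hand the inductive step is a finite case analysis on the outermost structure of the normal-form term.

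Finally, combining surjectivity and injectivity gives that each $F_\Gamma(p,q)$ and $F_\Gamma(\boldsymbol{1},q)$ is a bijection, and since $F_\Gamma$ is a functor which is the identity on objects and bijective on morphisms, it is an isomorphism of categories, completing the proof.
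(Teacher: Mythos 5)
Your proposal is correct and follows essentially the same route as the paper: full faithfulness is reduced to a bijection $\Sigma^\Gamma_p/\!\sim_p \;\cong\; \Lambda^Q_p/\!=_{\beta\eta}$ (the paper's Proposition \ref{prop:curry_howard_actualwork}), with surjectivity by induction on normal terms and injectivity by passing to cut-free preproofs in a rigid normal form (the paper's special $(L\imp)$-normal forms) whose translations are literal $\beta\eta$-normal forms, followed by induction on the common term. One caveat: in your injectivity step you should not appeal to Theorem \ref{theorem:normal_form_prop}, whose proof itself relies on Proposition \ref{prop:curry_howard_actualwork}; the non-circular ingredients are the special $(L\imp)$-normal form machinery (Lemmas \ref{lemma:L_normal_form}, \ref{lemma:special_l_normal}, \ref{lemma:special_means_betaetanormal}) together with Proposition \ref{prop:alpha_implies_p}.
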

\begin{proof}
The functor is a bijection on objects, so we have to show that it is fully faithful and this follows immediately from Proposition \ref{prop:curry_howard_actualwork} below, using Lemma \ref{lemma:twolambdaQs}.
\end{proof}

\begin{proposition}\label{prop:curry_howard_actualwork} For any sequent $\Gamma \vdash p$ with $\Gamma$ repetition-free there is a bijection
\be\label{eq:most_important_map}
\xymatrix{
\Sigma^\Gamma_p / \sim_p \ar^-{\cong}[r] & \Lambda^Q_p / \! =_{\beta \eta}
}
\ee
induced by the function $f^\Gamma_p$.
\end{proposition}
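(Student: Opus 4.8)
The plan is to prove \eqref{eq:most_important_map} is a bijection by exhibiting its inverse on the level of representatives, constructed from $\beta\eta$-normal lambda terms, and then verifying both round trips. By Lemma \ref{lemma:F_welldefined}(ii) the map $f^\Gamma_p$ descends to a well-defined map $\Sigma^\Gamma_p/\sim_p \to \Lambda^Q_p/=_{\beta\eta}$, and by Lemma \ref{lemma:twolambdaQs} the codomain is genuinely the set of $=_{\beta\eta}$-classes of terms with $\FV_\beta \subseteq Q$; since $\Gamma$ is repetition-free this is exactly what we want to hit. First I would fix, for each $\beta\eta$-equivalence class, its unique $\beta\eta$-normal representative (strong normalisation and confluence, cited already via \cite{sorensen}), and define a map $g^\Gamma_p$ from $\beta\eta$-normal terms $M$ with $\FV(M)\subseteq Q$ to $\sim_p$-classes of preproofs of $\Gamma\vdash p$ by recursion on $M$. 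A $\beta\eta$-normal term of function type is either a variable, or an application $(x\,N_1\cdots N_k)$ with $x$ free (a ``neutral'' term), or a $\lambda$-abstraction; one builds the corresponding cut-free preproof using $(\operatorname{ax})$, iterated $(L\imp)$ (reading Remark \ref{remark:where_left} and Lemma \ref{lemma:preimage_app}, which the paper flags for exactly this purpose), and $(R\imp)$, inserting $(\operatorname{weak})$ and $(\operatorname{ctr})$ and $(\operatorname{ex})$ to match the free-variable multiset to $\Gamma$. The subtle bookkeeping — which variables get weakened in, how contractions are arranged — is precisely controlled by the normal form theorem for sequent calculus proofs (this is where the contraction normal form of Lemma \ref{lemma:contract_normal_form}, the semi-normal form of Lemma \ref{lemma:semi_normal_form}, and presumably the forthcoming Theorem \ref{theorem:normal_form_prop} do their work), so I would lean on that machinery to show $g^\Gamma_p$ is well-defined up to $\sim_p$, i.e. independent of the order in which structural rules are applied (the $\tau$- and $co$- and commuting-conversion relations absorb all such ambiguity).

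The two verifications are then: (a) $f^\Gamma_p \circ g^\Gamma_p = \mathrm{id}$ on $\beta\eta$-normal terms, and (b) $g^\Gamma_p \circ f^\Gamma_p = \mathrm{id}$ on $\sim_p$-classes. For (a), one checks by induction on the structure of the normal term $M$ that reading off the lambda annotation from the preproof $g^\Gamma_p(M)$ via the rules \eqref{functor_ax}–\eqref{functor_L} returns $M$ (the $(L\imp)$ annotation \eqref{functor_L} producing exactly the applicative structure of a neutral term, the $(R\imp)$ rule producing the abstraction); this is a routine unwinding. For (b), given a preproof $\pi$, one first uses Theorem \ref{cutfree} to replace $\pi$ by a cut-free preproof, then uses the commuting conversions and $\lambda$-equivalences and $\eta$-equivalence to push it into the chosen normal form — the point being that $f^\Gamma_p(\pi)$ is already $\beta$-normal when $\pi$ is cut-free (Lemma \ref{lemma:cutfree_means_betanormal}), and $\eta$-equivalence handles the $\eta$-expansions so that $f^\Gamma_p(\pi)$'s $\beta\eta$-normal form is reached — and then observes that $g^\Gamma_p$ applied to that normal term reproduces the normal-form preproof, which is $\sim_p$-equivalent to $\pi$. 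This last step is really the assertion that $\sim_p$-equivalence classes of preproofs have canonical representatives matching $\beta\eta$-normal terms, i.e. a normal form theorem; I would cite Theorem \ref{theorem:normal_form_prop} for the existence and uniqueness of these representatives if its proof is independent, or else prove the needed portion here.

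The main obstacle I expect is (b): showing that if two cut-free preproofs have the same $\beta\eta$-normal translation then they are $\sim_p$-equivalent. This is the genuinely hard direction and the heart of the theorem — it is where every one of the generating relations of $\sim_p$ must earn its keep. Concretely, two cut-free preproofs with the same $\beta$-normal translation can differ by: the order and placement of weakenings and contractions and exchanges (handled by $\tau$, $co$, and the structural commuting conversions \eqref{comm_weak_weak}–\eqref{comm_ex_ctr2}); the order in which independent $(L\imp)$ and $(R\imp)$ rules are applied (handled by \eqref{comm_R_ex}–\eqref{comm_L_L2}); and — the deepest point — the difference between applying a transformation $y:p\imp q$ once and then contracting its output versus applying it twice on two copies of its input (handled by the $\lambda$-equivalence \eqref{lambda_L_L_ctr}), or applying it when its output is discarded (handled by \eqref{lambda_weak_L}). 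The strategy is to prove a rigidity statement: every cut-free preproof of $\Gamma\vdash p$ with $\Gamma$ repetition-free is $\sim_p$-equivalent to one in a rigid normal form determined entirely by its translation, by first using $\lambda$-equivalence to ensure each free hypothesis $y:p\imp q$ is used in a ``tree-like'' fashion matching the occurrences of $y$ in the normal term, then using commuting conversions to fix the relative order of logical rules, then using $co$- and $\tau$-equivalence to fix the contraction/weakening/exchange skeleton; the repetition-freeness of $\Gamma$ is exactly what rules out the ambiguity exhibited in Example \ref{example:counter} and makes this normal form well-defined. Once this rigid normal form is established, injectivity of $f^\Gamma_p$ on $\sim_p$-classes is immediate, and combined with surjectivity from (a) the proof is complete.
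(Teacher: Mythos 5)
Your proposal follows essentially the same route as the paper: reduce to cut-free preproofs and then to a rigid normal form determined by the variable/abstraction/application structure of the $\beta\eta$-normal translation (this is precisely the role of the paper's Lemmas \ref{lemma:preimage_vars}, \ref{lemma:preimage_abstractions}, \ref{lemma:preimage_app} and Proposition \ref{prop:alpha_implies_p}), with surjectivity obtained by recursion on $\beta\eta$-normal terms exactly as in the paper's construction of application and abstraction normal forms. The one caution is that you cannot cite Theorem \ref{theorem:normal_form_prop} for the rigidity statement, since in the paper its proof depends on this very proposition --- but you anticipate this possibility, and the fallback you describe (proving the needed normal-form uniqueness directly) is what the paper in fact does.
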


We prove the proposition in a series of lemmas. Recall that by combining Theorem \ref{cutfree} and Lemma \ref{lemma:all_well} any preproof is equivalent under $\sim_p$ to a cut-free well-labelled preproof. Recall from Definition \ref{defn:active_contract} the notion of a contraction rule \emph{active} for a variable occurrence.

\begin{definition}\label{definition:Limp_normal_form} A preproof $\pi$ of $\Gamma \vdash p$ is called a \emph{$(L \imp)$-normal form} if 
\begin{itemize}
\item[(i)] it is cut-free and well-labelled
\item[(ii)] no contraction is active for a variable occurrence eliminated in a $(L \imp)$ rule.
\item[(iii)] no variable occurrence introduced by a $(\operatorname{weak})$ rule is equivalent under the relation $\approx_{str}$ to a variable occurrence eliminated by a $(L \imp)$ rule.
\end{itemize}
\end{definition}

\begin{lemma}\label{lemma:L_normal_form} Every preproof $\pi$ is equivalent under $\sim_p$ to a $(L \imp)$-normal form.
\end{lemma}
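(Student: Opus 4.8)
The plan is to reduce an arbitrary preproof to an $(L\imp)$-normal form by successively eliminating the two ways in which conditions (ii) and (iii) of Definition \ref{definition:Limp_normal_form} can fail, using the $\lambda$-equivalences \eqref{lambda_L_L_ctr} and \eqref{lambda_weak_L} together with the infrastructure already developed for cut-elimination. First I would invoke Theorem \ref{cutfree} and Lemma \ref{lemma:all_well} to reduce to the case where $\pi$ is cut-free and well-labelled, so condition (i) holds; the task is then to arrange conditions (ii) and (iii) while preserving (i). The key observation is that the structure of a $(L\imp)$ rule eliminating a variable occurrence $x:q$ (of type $q$, the target of the $y:p\imp q$ it introduces) is formally parallel to the structure of a $(\operatorname{cut})$ on $x:q$, and the relations \eqref{lambda_L_L_ctr}, \eqref{lambda_weak_L} are the exact analogues for $(L\imp)$ of the cut-reduction steps \eqref{cut:log_vs_ctr}, \eqref{cut:log_vs_weak} — this is the point flagged in the discussion preceding Definition \ref{inefficiency}.

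Concretely, I would proceed by induction on a suitable complexity measure. For condition (ii): given a $(L\imp)$ rule $r$ eliminating an occurrence $x:q$ for which some contraction is active, I would first use the contraction-normal-form machinery (Lemma \ref{lemma:contract_normal_form} and Remark \ref{remark:coassoc_tree}), applied to the subproof above the right premise of $r$ with respect to the occurrence $x:q$, to push all active contractions for $x:q$ down to sit immediately above $r$, organised in the coassociative pattern of \eqref{co_ctr_assoc}/\eqref{co_ctr_comm_alt} so that only the rightmost two ancestors are ever contracted. Then repeated application of \eqref{lambda_L_L_ctr} replaces the single $(L\imp)$ (whose right branch carries a contraction on two ancestors of $x$) by a pair of $(L\imp)$ rules with the contraction removed, at the cost of duplicating $\pi_1$ and introducing contractions/exchanges on the copies of $\Gamma$ lower down — exactly mirroring the treatment of the $(R\imp)$ vs $(\operatorname{ctr})$ case in Proposition \ref{prop:actualwork}. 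After this step the variable occurrences introduced by the resulting $(L\imp)$ rules have no active contractions. For condition (iii): if an occurrence introduced by $(\operatorname{weak})$ is $\approx_{str}$-equivalent to one eliminated by a $(L\imp)$, then by commuting conversions (the $(\operatorname{weak})$-vs-structural rules and \eqref{comm_R_weak}, \eqref{comm_weak_L}) I would move that weakening up the proof until it sits immediately above the offending $(L\imp)$ in the $\operatorname{weak}$-then-$L\imp$ configuration of \eqref{lambda_weak_L}, and then \eqref{lambda_weak_L} removes the weakening and the entire left branch $\pi_1$ of that $(L\imp)$, replacing it by a sequence of weakenings. This strictly decreases the number of $(L\imp)$ rules in the preproof, so it can only happen finitely often.

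The organising principle is to first discharge all violations of (iii) — each strictly reduces the count of $(L\imp)$ rules — and then, with that count fixed, discharge violations of (ii) by an inner induction whose measure is the total "slack" of active contractions above $(L\imp)$ rules, in direct analogy with the integer $n(\varphi,x{:}p)$ of Lemma \ref{lemma:contract_normal_form}. One must check that eliminating a (ii)-violation does not create a (iii)-violation: the new copies of $\pi_1$ introduced by \eqref{lambda_L_L_ctr} are cut against variables introduced immediately before the new $(L\imp)$ rules, and any weakening inside $\pi_1$ that was previously harmless remains harmless because the $\approx_{str}$-classes it touches are not those eliminated by the new $(L\imp)$ rules — but this bookkeeping is the delicate part and needs care, just as the footnotes in the proof of Lemma \ref{lemma:contract_normal_form} track how the contraction tree changes.

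The main obstacle I expect is precisely this interaction: ensuring that the normalisation terminates, i.e.\ that the operations used to fix (ii) and (iii) can be ordered so that neither reintroduces a violation already removed, and that the commuting conversions needed to stage the $\lambda$-equivalences (moving weakenings and contractions into the required configurations) do not themselves disturb well-labelledness. The well-labelledness point is handled by re-applying Lemma \ref{lemma:all_well} / $\alpha$-equivalence after each stage if necessary, and termination is handled by the lexicographic measure (number of $(L\imp)$ rules, then active-contraction slack). Once the preproof is in $(L\imp)$-normal form the remaining results — reading off a $\beta\eta$-normal lambda term and proving injectivity of $f^\Gamma_p$ on $\sim_p$-classes, which is where repetition-freeness of $\Gamma$ is used (cf.\ Example \ref{example:counter}) — will follow in the subsequent lemmas; here the goal is only the existence of the normal form.
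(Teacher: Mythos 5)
Your toolkit is the right one and matches the paper's: reduce to a cut-free well-labelled preproof, use Lemma \ref{lemma:contract_normal_form} together with \eqref{lambda_L_L_ctr} to split active contractions, and use commuting conversions together with \eqref{lambda_weak_L} to dispose of weakening-introduced ancestors. But your organising principle --- fix all (iii)-violations first, then fix (ii)-violations with the number of $(L \imp)$ rules held fixed, terminating by the lexicographic measure (number of $(L\imp)$ rules, then slack) --- does not work, for two concrete reasons. First, \eqref{lambda_L_L_ctr} replaces one $(L\imp)$ rule by two \emph{and duplicates the entire left branch $\pi_1$}, so each application strictly increases the number of $(L\imp)$ rules in the preproof; the first coordinate of your measure goes up during the second phase, so the measure does not certify termination. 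Second, fixing a (ii)-violation genuinely can create a (iii)-violation: condition (iii) is stated in terms of $\approx_{str}$, and at a contraction only the \emph{first} of the two contracted occurrences is a strong ancestor of the result. So before splitting, a weak ancestor of the eliminated $x:q$ that was introduced by $(\operatorname{weak})$ on the non-strong side of an active contraction is not a (iii)-violation; after \eqref{lambda_L_L_ctr} each ancestor is eliminated by its own $(L\imp)$ rule and that weakening becomes the strong ancestor of an eliminated occurrence, i.e.\ a fresh (iii)-violation. Your phase ordering therefore terminates with violations of (iii) still present.

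The paper resolves exactly this interaction by reversing the order and localising the induction: it processes defective $(L\imp)$ rules from greatest to lowest height, so that at each step the two branches may be assumed already $(L\imp)$-normal; it then \emph{first} puts the right branch in contraction normal form and applies \eqref{lambda_L_L_ctr} to unfold the active contractions into a tower of $(L\imp)$ rules, each eliminating a single ancestor $x_i$, and \emph{only afterwards} inspects how each $x_i$ is introduced, killing the weakening-introduced ones with \eqref{lambda_weak_L}. Termination comes from this structural (top-down) recursion rather than from a global count of $(L\imp)$ rules. Reorganising your argument along these lines --- contractions first, weakenings second, induction on the position of the defective rule rather than on a global $(L\imp)$-count --- repairs the gap; the rest of your proposal is sound.
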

\begin{proof}
The proof parallels the proof of cut-elimination in Proposition \ref{prop:actualwork}. We may assume $\pi$ is cut-free and well-labelled. Call a $(L \imp)$ rule in $\pi$ \emph{defective} if either condition (ii) or (iii) of Definition \ref{definition:Limp_normal_form} fails for that particular rule. Applying the following reasoning to each defective $(L \imp)$ rule in $\pi$ from greatest to lowest height, it suffices to consider the case where $\pi$ is
\begin{center}
\begin{tabular}{ >{\centering}m{10cm} >{\centering}m{0.5cm}}
 \AxiomC{$\pi_1$}
 \noLine
 \UnaryInfC{$\vdots$}
 \noLine
 \UnaryInfC{$\Delta \vdash p$}
 \AxiomC{$\pi_2$}
 \noLine
 \UnaryInfC{$\vdots$}
 \noLine
 \UnaryInfC{$x:q, \Theta \vdash s$}
 \RightLabel{$(L \imp)$}
 \BinaryInfC{$y: p \imp q,\Delta, \Theta \vdash s$}
 \DisplayProof
 &
 \tagarray{\label{eq:L_cut_free_0}}
 \end{tabular}
\end{center}
for $(L\imp)$-normal forms $\pi_1$ and $\pi_2$. By Lemma \ref{lemma:contract_normal_form} we can put the pair $(\pi_2, x:q)$ in contraction normal form, so that $\pi$ equivalent under $\sim_p$ to a preproof of the form
\begin{center}
 \AxiomC{$\pi_1$}
 \noLine
 \UnaryInfC{$\vdots$}
 \noLine
 \UnaryInfC{$\Delta \vdash p$}
 \AxiomC{$\pi_2'$}
 \noLine
 \UnaryInfC{$\vdots$}
 \noLine
 \UnaryInfC{$x_1,...,x_l, \Theta \vdash s$}
 \RightLabel{$(\operatorname{ctr})$}
 \UnaryInfC{$x_1,...,x_{l-1}, \Theta \vdash s$}
 \noLine
 \UnaryInfC{$\vdots$}
 \RightLabel{$(\operatorname{ctr})$}
 \UnaryInfC{$x_1, x_2, \Theta \vdash s$}
 \RightLabel{$(\operatorname{ctr})$}
 \UnaryInfC{$x_1, \Theta \vdash s$}
 \RightLabel{$(L \imp)$}
 \BinaryInfC{$y: p \imp q, \Delta, \Theta\vdash s$}
 \DisplayProof
\end{center}
where $x_1 = x$ and we drop the formula $q$ from the notation. Considering the algorithm implicit in the proof of Lemma \ref{lemma:contract_normal_form} we see that we may assume $\pi_2'$ to be a $(L \imp)$-normal form containing no active contractions for $x_1$. By zero or more applications of $\eqref{lambda_L_L_ctr}$ and commuting conversions we obtain a preproof equivalent to $\pi$ of the form
\begin{center}
\begin{tabular}{ >{\centering}m{12cm} >{\centering}m{0.5cm}}
 \AxiomC{$\pi_1$}
 \noLine
 \UnaryInfC{$\vdots$}
 \noLine
 \UnaryInfC{$\Delta \vdash p$}
 \AxiomC{$\pi_1$}
 \noLine
 \UnaryInfC{$\vdots$}
 \noLine
 \UnaryInfC{$\Delta \vdash p$}
 \AxiomC{$\pi_1$}
 \noLine
 \UnaryInfC{$\vdots$}
 \noLine
 \UnaryInfC{$\Delta \vdash p$}
 \AxiomC{$\pi_2'$}
 \noLine
 \UnaryInfC{$\vdots$}
 \noLine
 \UnaryInfC{$x_1,\ldots,x_l,\Theta \vdash s$}
 \RightLabel{$(L \imp)$}
 \BinaryInfC{$y_l,\Delta, x_1,\ldots,x_{l-1}, \Theta \vdash s$}
 \noLine
 \UnaryInfC{$\vdots$}
 \noLine
 \UnaryInfC{$y_3, \Delta, \ldots, y_l, \Delta, x_1,x_2, \Theta \vdash s$}
 \RightLabel{$(L \imp)$}
 \BinaryInfC{$y_2, \Delta, y_3, \Delta, \ldots, y_l, \Delta, x_1, \Theta \vdash s$}
 \RightLabel{$(L \imp)$}
 \BinaryInfC{$y_1, \Delta, \ldots, y_l, \Delta, \Theta \vdash s$}
 \doubleLine
 \RightLabel{$(\operatorname{ex}/\operatorname{ctr})$}
 \UnaryInfC{$y_1, \Delta, \Theta \vdash s$}
 \DisplayProof
 &
 \tagarray{\label{eq:tower_L_cuts}}
 \end{tabular}
\end{center}
where each $y_i$ is a variable of type $p \imp q$ and $y_1 = y$. Note that no contraction in this preproof is active for any of the variables eliminated in a $(L \imp)$ rule. For $1 \le i \le l$ the rule which introduces $x_i$ in $\pi_2'$ must be either $(\operatorname{ax})$ or $(\operatorname{weak})$. If $x_i$ is introduced by $(\operatorname{weak})$ then this rule can be moved, using commuting conversions, down to the corresponding $(L \imp)$ rule and then eliminated with \eqref{lambda_weak_L}. Repeating this finitely many times yields a $(L \imp)$-normal form $\pi'$ equivalent to $\pi$.
\end{proof}

\begin{lemma}\label{lemma:var_occurs} Given a preproof
\begin{center}
\begin{tabular}{ >{\centering}m{10cm} >{\centering}m{0.5cm}}
 \AxiomC{$\pi_1$}
 \noLine
 \UnaryInfC{$\vdots$}
 \noLine
 \UnaryInfC{$\Delta \vdash p$}
 \AxiomC{$\pi_2$}
 \noLine
 \UnaryInfC{$\vdots$}
 \noLine
 \UnaryInfC{$x:q, \Theta \vdash s$}
 \RightLabel{$(L \imp)$}
 \BinaryInfC{$y: p \imp q,\Delta, \Theta \vdash s$}
 \DisplayProof
 &
 \tagarray{\label{eq:L_cut_free}}
 \end{tabular}
\end{center}
which is a $(L \imp)$-normal form, the variable $x$ occurs as a free variable in $f^{x:q, \Theta}_s(\pi_2)$.
\end{lemma}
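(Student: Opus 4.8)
The plan is to deduce Lemma~\ref{lemma:var_occurs} from a slightly more general claim proved by induction on the number of deduction rules in a preproof: \emph{let $\rho$ be a cut-free, well-labelled preproof of $\Gamma' \vdash s$ satisfying condition~(iii) of Definition~\ref{definition:Limp_normal_form}; if $w$ is a variable occurring in $\Gamma'$ such that no occurrence of a variable introduced by a $(\operatorname{weak})$ rule of $\rho$ is $\approx_{str}$-equivalent to $w$, then $w \in \FV(f^{\Gamma'}_s(\rho))$}. Lemma~\ref{lemma:var_occurs} is then the instance $\rho = \pi_2$, $w = x$: indeed $\pi_2$, being a subproof of a $(L \imp)$-normal form, is cut-free, well-labelled and satisfies~(iii); and the occurrence $x:q$ at the root of $\pi_2$ is eliminated by the displayed $(L \imp)$ rule of the whole preproof, so condition~(iii) \emph{for the whole $(L \imp)$-normal form} says precisely that no $(\operatorname{weak})$ rule of $\pi_2$ introduces an occurrence $\approx_{str}$-equivalent to $x$.

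The heart of the induction is combinatorial: under these hypotheses no deduction rule of $\rho$ can erase a free occurrence of the variable $w$ from the term it annotates. Inspecting the annotation clauses \eqref{functor_ax}--\eqref{functor_L}, the only clauses that could remove a free $w$ are a $(\operatorname{cut})$ substituting for $w$, a $(L \imp)$ substituting for $w$, a $(\operatorname{ctr})$ having $w$ as its renamed-away (second) variable, and a $(R \imp)$ binding $w$. The first is excluded by cut-freeness; and in each of the remaining three the affected occurrence of $w$ is consumed by the rule in question, hence is the lowest member of an \emph{interior} $\approx_{str}$-class, whereupon well-labelledness would force \emph{every} occurrence of $w$ in $\rho$ --- including the one in $\Gamma'$, which lies in a boundary class --- into that class, a contradiction. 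Consequently, reading $\rho$ from its leaves to its root, once $w$ occurs freely in the term annotating some sequent it occurs freely in every annotation below, in particular in $f^{\Gamma'}_s(\rho)$; so it suffices to produce one rule of $\rho$ at whose conclusion $w$ already occurs freely.

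To find that rule, trace the $\approx_{str}$-class of the occurrence of $w$ in $\Gamma'$ upward to its topmost member; that member is introduced by $(\operatorname{ax})$, $(\operatorname{weak})$ or $(L \imp)$, and the hypothesis rules out $(\operatorname{weak})$. If it is introduced by $(\operatorname{ax})$, clause \eqref{functor_ax} annotates the corresponding leaf with the term $w$ itself and we are done. If it is introduced by $(L \imp)$, then $w$ is the fresh variable of that rule, whose left premise $\sigma_1$ and right premise $\sigma_2$ combine by clause \eqref{functor_L} to annotate the conclusion with $f(\sigma_2)[v := (w\, f(\sigma_1))]$, where $v$ is the variable consumed from the root of $\sigma_2$; hence $w$ occurs freely in this annotation as soon as $v$ occurs freely in the translation of $\sigma_2$. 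Now $\sigma_2$ is a proper subproof of $\rho$, so it has strictly fewer rules and is again cut-free, well-labelled and satisfies~(iii); the variable $v$ occurs in the final sequent of $\sigma_2$; and since $v$ is eliminated by a $(L \imp)$ rule, condition~(iii) for $\rho$ guarantees that no $(\operatorname{weak})$ rule of $\sigma_2$ introduces an occurrence $\approx_{str}$-equivalent to $v$. The inductive hypothesis applies to $(\sigma_2, v)$ and yields $v \in \FV(f(\sigma_2))$, which completes the induction.

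I expect the main obstacle to be exactly the combinatorial step of the second paragraph: one must verify, case by case against the annotation clauses and using that $w$ is a boundary variable of a well-labelled preproof, that the translation can neither $\lambda$-bind nor substitutionally erase $w$, so that ``$w$ occurs somewhere in $\rho$'' genuinely propagates to the root annotation. The remaining points --- that cut-freeness, well-labelledness and condition~(iii) are inherited by subproofs, and that the topmost member of an $\approx_{str}$-class is introduced by one of $(\operatorname{ax})$, $(\operatorname{weak})$, $(L \imp)$ --- are routine.
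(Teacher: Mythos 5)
Your proof is correct and follows essentially the same route as the paper's: an induction (the paper uses the height of $\pi_2$) that traces the $\approx_{str}$-class of the eliminated occurrence up to its introduction rule, rules out $(\operatorname{weak})$ via condition (iii) of the $(L\imp)$-normal form, settles the $(\operatorname{ax})$ case by noting the occurrence propagates to the root annotation, and settles the $(L\imp)$ case by the inductive hypothesis applied to the right branch of the introducing rule. One small point: your enumeration of clauses that can erase a free $w$ omits an $(L\imp)$ rule whose \emph{left} premise contains $w$ free but whose substitution is vacuous because the eliminated variable is not free in the right premise's term; this is excluded by invoking your inductive hypothesis at each intermediate $(L\imp)$ on the path to the root, not only at the introducing one (the paper is equally terse here, asserting only that ``no subsequent rule can remove it'').
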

\begin{proof}
By induction on the height of $\pi_2$. The base case is where $\pi_2$ is an axiom rule, which is clear. Suppose the height of $\pi_2$ is positive. If $x:q$ is introduced in an axiom rule, then no subsequent rule can remove it. If $x:q$ is introduced by a $(L \imp)$ rule then by the inductive hypothesis that $(L \imp)$ rule eliminates a variable $z:s$ which occurred as a free variable in the translation $R$ of its right hand branch, yielding a term $R[ z := (x \, L) ]$ which contains an occurrence of $x$ as a free variable.
\end{proof}

\begin{lemma}\label{lemma:yeehoo} Suppose $\pi$ is a preproof of $\Gamma \vdash p$ which satisfies
\begin{itemize}
\item $\pi$ is a $(L \imp)$-normal form
\item $\pi$ contains no variable occurrences introduced by $(\operatorname{weak})$ which are $\approx_{str}$-equivalent to occurrences in the final sequent $\Gamma$.
\end{itemize}
Then with $M = f^\Gamma_p(\pi)$ we have $[ \Gamma ] = \operatorname{FV}(M)$.
\end{lemma}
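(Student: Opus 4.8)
The plan is to prove the two inclusions $\operatorname{FV}(M) \subseteq [\Gamma]$ and $[\Gamma] \subseteq \operatorname{FV}(M)$ separately. The first is immediate and holds for every well-labelled preproof of $\Gamma \vdash p$: by construction $f^\Gamma_p$ takes values in $\Lambda^Q_p$ (Definition \ref{definition:translation}), which follows from a routine induction showing that at each node the annotating term has free variables contained in the underlying set of that node's antecedent. All the content is in the reverse inclusion, and here I would not induct on the statement as written, because a case analysis on the last rule of $\pi$ produces subproofs whose final sequent carries an extra occurrence --- the variable abstracted by an $(R\imp)$, or the second variable of a $(\operatorname{ctr})$ --- which may fail the second hypothesis. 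Instead I would prove by induction on the height of $\pi$ the more robust statement: \emph{if $\pi$ is any $(L\imp)$-normal form of $\Gamma \vdash p$ and $z{:}r$ is an occurrence in $\Gamma$ whose $\approx_{str}$-class is not introduced by a $(\operatorname{weak})$ rule, then $z \in \operatorname{FV}(f^\Gamma_p(\pi))$.} This suffices: a freshly weakened occurrence is a source of the strong-ancestor forest, so the only occurrence of an $\approx_{str}$-class that can be $(\operatorname{weak})$-introduced is its (unique) topmost occurrence; hence the second hypothesis of Lemma \ref{lemma:yeehoo} says exactly that every $\approx_{str}$-class meeting $\Gamma$ has a non-$(\operatorname{weak})$ top, and the robust statement then places every variable of $\Gamma$ into $\operatorname{FV}(M)$.

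The induction proceeds by cases on the last rule of $\pi$, using throughout that an $(L\imp)$-normal form is well-labelled (condition (i) of Definition \ref{definition:Limp_normal_form}), so that the class of any interior variable is the only place that variable occurs and the substitutions in \eqref{functor_cut}, \eqref{functor:ctr}, \eqref{functor_L} never touch the wrong occurrences. The cases $(\operatorname{ax})$ (base case, $M = z$), $(\operatorname{ex})$ and $(\operatorname{weak})$ are immediate: in the latter two the translation is unchanged, the subproof is again an $(L\imp)$-normal form, and the class of $z$ restricted to the subproof still has a non-$(\operatorname{weak})$ top, so the inductive hypothesis applies (a $(\operatorname{weak})$ step introduces only a $(\operatorname{weak})$-class occurrence, about which nothing is claimed). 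For $(\operatorname{ctr})$ and $(R\imp)$ the translation of $\pi$ is $M'[y := x]$, respectively $\lambda x.M'$, with $M'$ the translation of the subproof; the inductive hypothesis gives $z \in \operatorname{FV}(M')$, and well-labelledness gives $z \neq y$, respectively $z \neq x$ (the contracted-away and the abstracted variable have interior classes, hence private names not occurring in $\Gamma$), so $z$ survives into $M$.

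The one substantial case is $(L\imp)$, where $\pi$ has premises $\pi_1$ of $\Gamma_1 \vdash a$ and $\pi_2$ of $\Delta_2, x{:}b, \Theta_2 \vdash p$, conclusion $y{:}a\imp b,\Gamma_1,\Delta_2,\Theta_2 \vdash p$, and translation $M = M_2[x := (y\,M_1)]$ with $M_i = f(\pi_i)$; both $\pi_1,\pi_2$ are again $(L\imp)$-normal forms. Since $x$ is eliminated by this final $(L\imp)$, the argument of Lemma \ref{lemma:var_occurs} (whose proof uses only that $\pi_2$ is an $(L\imp)$-normal form in which $x$ is eliminated by an $(L\imp)$, not the position of $x$ in the antecedent) gives $x \in \operatorname{FV}(M_2)$. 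Hence $(y\,M_1)$ is actually substituted into $M_2$ and $\operatorname{FV}(M) = (\operatorname{FV}(M_2)\setminus\{x\}) \cup \{y\} \cup \operatorname{FV}(M_1)$, which settles $z = y$. If instead $z$ lies in $\Gamma_1$ (respectively in $\Delta_2$ or $\Theta_2$) with non-$(\operatorname{weak})$ class, the inductive hypothesis applied to $\pi_1$ (respectively $\pi_2$) gives $z \in \operatorname{FV}(M_1)$ (respectively $z \in \operatorname{FV}(M_2)$), and well-labelledness gives $z \neq x$, so again $z \in \operatorname{FV}(M)$ by the displayed description of $\operatorname{FV}(M)$.

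The main obstacle is pinning down the correct inductive statement --- realising that one must quantify only over non-$(\operatorname{weak})$ classes, so that the hypothesis is preserved when peeling off $(R\imp)$ and $(\operatorname{ctr})$ --- together with the bookkeeping in the $(L\imp)$ case, where one simultaneously invokes Lemma \ref{lemma:var_occurs} to see that the principal variable $y$ is not lost to the substitution $x := (y\,M_1)$ and invokes well-labelledness to see that this substitution does not erase the other required free variables.
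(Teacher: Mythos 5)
Your proof is correct and follows essentially the same route as the paper's: both reduce the content to showing that every occurrence in $\Gamma$ whose $\approx_{str}$-class is topped by $(\operatorname{ax})$ or $(L\imp)$ survives into $\operatorname{FV}(M)$, invoking Lemma \ref{lemma:var_occurs} for the $(L\imp)$ case. The paper's proof is only a few lines and leaves implicit the persistence argument (that such a free variable, once present in the translation of a subproof, is not erased by the rules below it), which your strengthened induction --- quantifying only over non-$(\operatorname{weak})$-topped classes precisely so the hypothesis survives peeling off $(R\imp)$ and $(\operatorname{ctr})$ --- makes explicit.
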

\begin{proof}
By construction of the translation $\operatorname{FV}(M) \subseteq [ \Gamma ]$, so we need only argue that any variable $x:q$ in $\Gamma$ occurs as a free variable in $M$. This is clear if a strong ancestor of $x:q$ is introduced by $(\operatorname{ax})$. If a strong ancestor of $x:q$ is introduced by $(L \imp)$ then it follows from Lemma \ref{lemma:var_occurs}.
\end{proof}

Recall from Definition \ref{definition:simo} the relation $\sim_o$ which is weaker than $\sim_p$.

\begin{lemma}\label{lemma:preimage_vars} If $\pi$ is a preproof of $\Gamma \vdash p$ which is a $(L \imp)$-normal form and $f^\Gamma_s(\pi)$ is a variable $z:s$ then $\pi$ is equivalent under $\sim_o$ to
\begin{center}
\begin{tabular}{ >{\centering}m{10cm} >{\centering}m{0.5cm}}
 \AxiomC{}
 \RightLabel{$(\operatorname{ax})$}
 \UnaryInfC{$z:s \vdash s$}
 \doubleLine
 \RightLabel{$(\operatorname{weak})$}
 \UnaryInfC{$\Gamma \vdash s$}
 \DisplayProof
 &
 \tagarray{\label{eq:var_normal_form}}
 \end{tabular}
\end{center}
We call a preproof such as \eqref{eq:var_normal_form} a \emph{variable normal form}.
\end{lemma}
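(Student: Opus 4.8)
The plan is to prove the lemma by induction on the height $h(\pi)$, dividing into cases according to the final deduction rule of the $(L\imp)$-normal form $\pi$, and to show in each case that $\pi$ is $\sim_o$-equivalent to a variable normal form. Throughout I work inside $\sim_o$, which by Definition \ref{definition:simo} contains $\alpha$-equivalence, $\tau$-equivalence, the commuting conversions, $co$-equivalence and $\lambda$-equivalence; in particular the ``doubled line'' weakening in \eqref{eq:var_normal_form} is unambiguous up to $\sim_o$.

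First I would eliminate most possibilities for the final rule. Since $\pi$ is cut-free the final rule is not $(\operatorname{cut})$. If it were $(R\imp)$ then $f^\Gamma_s(\pi)$ would be an abstraction $\lambda x.M$, not a variable. If it were $(L\imp)$ then, $\pi$ being a $(L\imp)$-normal form, Lemma \ref{lemma:var_occurs} applies and shows that the eliminated variable $x$ occurs free in the translation $M$ of the right branch, so $f^\Gamma_s(\pi) = M[x := (y\,N)]$ contains the application $(y\,N)$ as a subterm and is not a variable. In the base case $h(\pi) = 0$ the preproof $\pi$ is a single axiom $z:s \vdash z:s$ with $\Gamma = (z:s)$, which is already a variable normal form with no weakenings. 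Hence we may assume $h(\pi) > 0$ and the final rule is one of $(\operatorname{weak})$, $(\operatorname{ex})$ or $(\operatorname{ctr})$.

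For $(\operatorname{weak})$ and $(\operatorname{ex})$ I would let $\pi'$ be $\pi$ with its final rule deleted. One checks that $\pi'$ is again a $(L\imp)$-normal form: cut-freeness and well-labelledness persist, and conditions (ii),(iii) of Definition \ref{definition:Limp_normal_form} are inherited since the relevant $(\operatorname{ctr})$, $(L\imp)$ and $(\operatorname{weak})$ rules of $\pi'$ already occur in $\pi$. By \eqref{functor:weak} respectively \eqref{functor_ex} the translation $f(\pi')$ is still the variable $z:s$, and $h(\pi') < h(\pi)$, so by the inductive hypothesis $\pi' \sim_o \chi'$ for some variable normal form $\chi'$. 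Applying the final rule to both sides (condition (C1)) gives $\pi \sim_o$ (that rule applied to $\chi'$), which is an axiom $z:s\vdash s$ followed by a block of weakenings and exchanges producing $\Gamma$; using the commuting conversions \eqref{comm_weak_weak}, \eqref{comm_ex_weak}, \eqref{comm_ex_weak2} and the $\tau$-equivalences \eqref{tau_weak_ex}, \eqref{tau_ex_ex} this is $\sim_o$-equivalent to the canonical variable normal form of $\Gamma \vdash s$.

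The main work, and the step I expect to be the real obstacle, is the case where the final rule is $(\operatorname{ctr})$, contracting an adjacent pair $x:r, x':r$ in the antecedent of the subproof $\pi'$. Here $f^\Gamma_s(\pi) = M[x' := x]$ with $M = f(\pi')$, and since a substitution into an application or an abstraction is again an application or abstraction, $M$ must itself be a variable; so $M$ is $x$, or $x'$, or a variable $w$ occurring elsewhere in the antecedent, with $z$ equal to $x$, $x$, or $w$ respectively. In each case $\pi'$ is a $(L\imp)$-normal form of smaller height whose translation is a variable, so by induction $\pi' \sim_o$ a variable normal form, and thus $\pi$ is $\sim_o$-equivalent to that variable normal form followed by the contraction of $x:r,x':r$. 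I would finish by pushing this contraction upward past every weakening not touching $x$ or $x'$ (equivalently, reordering so that $x$ and $x'$ are the last two variables weakened in), leaving a local pattern ``weaken in a variable of type $s$, then contract it against an adjacent variable of type $s$''. When the surviving variable is the one already present, the counitality rule \eqref{co_weak_ctr} deletes this block; when the surviving variable is the freshly weakened one, the left-handed version of \eqref{co_weak_ctr} derived in Remark \ref{remark:old_co_ctr_comm2} deletes it and simultaneously performs the ancestor renaming that identifies the result with the variable normal form of $\Gamma \vdash s$. The only genuinely delicate point is the bookkeeping of antecedent positions through these reorderings; the equivalences themselves are all available in $\sim_o$.
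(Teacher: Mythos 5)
Your proof is correct and rests on the same ingredients as the paper's: Lemma \ref{lemma:var_occurs} to exclude $(L\imp)$ rules, the counitality relation \eqref{co_weak_ctr} together with Remark \ref{remark:old_co_ctr_comm2} to cancel contractions against weakenings, and commuting conversions plus \eqref{tau_weak_ex}, \eqref{tau_ex_ex} to absorb exchanges. The only difference is organizational: the paper first observes globally that $\pi$ can contain no logical rules at all and then normalizes the resulting tree of structural rules, whereas you reach the same conclusion by induction on height, peeling off the final rule.
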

\begin{proof}
We deduce by inspection of the translation in Definition \ref{definition:translation} that the only possible rules which appear in $\pi$ are $(\operatorname{ax}), (\operatorname{ctr}), (\operatorname{weak}), (\operatorname{ex})$, or a $(L \imp)$ rule in which the eliminated variable does not occur, so that no substitution takes place. But by Lemma \ref{lemma:var_occurs} no such $(L \imp)$ rule can occur in $\pi$ from which we deduce that $\pi$ must be $(L \imp)$-free.

The preproof $\pi$ of $\Gamma \vdash s$ contains precisely one $(\operatorname{ax})$ rule and otherwise consists entirely of structural rules. Consider an occurrence of $(\operatorname{ctr})$ in $\pi$ which contracts $x:p,x':p$. A strong ancestor of either $x:p$ or $x':p$ must be introduced by $(\operatorname{weak})$ and using commuting conversions we may move this rule down the tree and eliminate it with the contraction using \eqref{co_weak_ctr} and Remark \ref{remark:old_co_ctr_comm2}. Thus $\pi$ is equivalent under $\sim_o$ to a preproof containing no contractions, and similarly one may also using commuting conversions and \eqref{tau_ex_ex},\eqref{tau_weak_ex} to eliminate all exchanges. The resulting preproof is of the desired form. Note that $\Gamma$ may contain multiple occurrences of $z:s$.
\end{proof}

\begin{lemma}\label{lemma:preimage_abstractions} If $\pi$ is a preproof of $\Gamma \vdash p$ which is cut-free and well-labelled and $f^\Gamma_{q \imp r}(\pi)$ is an abstraction $\lambda x . N$ then $\pi$ is equivalent under $\sim_o$ to a preproof of the form
\begin{center}
\begin{tabular}{ >{\centering}m{10cm} >{\centering}m{0.5cm}}
    \AxiomC{$\psi$}
    \noLine
    \UnaryInfC{$\vdots$}
    \noLine
    \UnaryInfC{$\Delta, x:q, \Delta' \vdash r$}
    \RightLabel{$(R \imp)$}
    \UnaryInfC{$\Gamma \vdash q \imp r$}
 \DisplayProof
 &
 \tagarray{\label{eq:abstract_normal_form}}
 \end{tabular}
\end{center}
where $f^{\Delta,x:q,\Delta'}_r( \psi ) = N$. We call such a preproof an \emph{abstraction normal form}. 
\end{lemma}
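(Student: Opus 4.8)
The plan is to induct on the height $h(\pi)$, splitting in the inductive step on the final deduction rule of $\pi$. The base case $h(\pi)=0$ is vacuous, since then $\pi$ is an axiom rule and $f^\Gamma_{q\imp r}(\pi)$ is a variable, not an abstraction; and in the inductive step the final rule can be neither $(\operatorname{cut})$ (as $\pi$ is cut-free) nor $(\operatorname{ax})$ (same reason as the base case). If the final rule is $(R\imp)$ then by \eqref{functor_R} the preproof is, after possibly renaming the bound variable via \eqref{alpha_R} (an instance of $\sim_\alpha\subseteq\sim_o$), already an abstraction normal form whose premise translates to $N$.

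So suppose the final rule is one of $(\operatorname{ctr})$, $(\operatorname{weak})$, $(\operatorname{ex})$, $(L\imp)$, let $\rho$ be the relevant premise (the unique premise for a structural rule, the right premise for $(L\imp)$), and let $M_0$ denote its translation. First I would check that $M_0$ is again an abstraction. For $(\operatorname{weak})$ and $(\operatorname{ex})$ we have $M_0 = f^\Gamma_{q\imp r}(\pi)$. For $(\operatorname{ctr})$ we have $f^\Gamma_{q\imp r}(\pi) = M_0[y':=x]$ by \eqref{functor:ctr} for suitable $x,y'$, and a variable-for-variable substitution preserves the top constructor unless $M_0$ is the substituted variable, which would make $f^\Gamma_{q\imp r}(\pi)$ a variable. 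For $(L\imp)$ we have $f^\Gamma_{q\imp r}(\pi) = M_0[c := (y\,N_0)]$ by \eqref{functor_L}, where $N_0$ is the translation of the left premise and $c$ the variable eliminated from $\rho$; again the top constructor of $M_0$ is preserved unless $M_0 = c$, in which case $f^\Gamma_{q\imp r}(\pi)=(y\,N_0)$ would be an application. Hence $M_0$ is an abstraction $\lambda a.P$, and since $\pi$ is well-labelled — a property inherited by the subtree $\rho$ — the bound variable $a$ does not occur in $y$ or in $N_0$, so no capture occurs and in the $(L\imp)$ case $N = P[c:=(y\,N_0)]$.

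Next I would apply the inductive hypothesis to $\rho$, which is cut-free, well-labelled, of strictly smaller height, and has an abstraction as translation, obtaining $\rho\sim_o\rho'$ with $\rho'$ an abstraction normal form ending in an $(R\imp)$ rule. Since $\sim_o$ is compatible (conditions (C1), (C2)), applying the original final rule of $\pi$ to $\rho'$ in place of $\rho$ yields a preproof still $\sim_o$-equivalent to $\pi$ in which an $(R\imp)$ rule sits immediately above the final $(\operatorname{ctr})$, $(\operatorname{weak})$, $(\operatorname{ex})$ or $(L\imp)$. I would then push this $(R\imp)$ to the bottom using the corresponding commuting conversion \eqref{comm_R_ctr}, \eqref{comm_R_weak}, \eqref{comm_R_ex} or \eqref{comm_R_L}; the resulting preproof ends in $(R\imp)$, and chasing the annotations through the conversion confirms that the premise of that $(R\imp)$ still translates to $N$.

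The step I expect to be the main obstacle is the $(L\imp)$ case. Beyond the substitution argument above, the conversion \eqref{comm_R_L} applies only when the variable eliminated by the $(R\imp)$ of $\rho'$ lies immediately to the right of the variable eliminated by the $(L\imp)$; to secure this I would first move the bound variable into position using $\tau$-equivalence together with \eqref{comm_R_ex} (exchanges leave translations untouched). Everything else amounts to a routine verification against Definition \ref{definition:translation}.
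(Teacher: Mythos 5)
Your argument is correct and is essentially the paper's: the paper proves this lemma in one line by citing Lemma \ref{lemma:semi_normal_form}, whose proof is exactly your induction unrolled — walk from the root taking the right branch at each $(L\imp)$, find the first $(R\imp)$, and commute it down with \eqref{comm_R_ex}, \eqref{comm_R_weak}, \eqref{comm_R_ctr}, \eqref{comm_R_L}, none of which change the translated term. Your extra verification that the translation of each intermediate premise remains an abstraction is not needed in the paper's formulation (Lemma \ref{lemma:semi_normal_form} applies to any preproof of an implication sequent), but it is harmless and correct.
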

\begin{proof}
By the proof of Lemma \ref{lemma:semi_normal_form} we see $\pi$ is equivalent under $\sim_o$ to a preproof \eqref{eq:abstract_normal_form} using relations that do not change the translated term, so that the translation of \eqref{eq:abstract_normal_form} is still $\lambda x. N$. From this the claim follows. 
\end{proof}

\begin{definition} Given a lambda term $M$ let $\operatorname{FV}^{seq}(M)$ denote the sequence of distinct free variables in $M$ ordered by first occurrence.
\end{definition}

\begin{example} Let $M = (y'' \, (y \, (y\, x))):p$ be as in Example \ref{example:counter}. Then $\operatorname{FV}^{seq}(M)$ is the sequence $y'':p \imp p, y:p \imp p, x:p$.
\end{example}

\begin{definition}\label{definition:ladder} A \emph{ladder} is a sequence of rules of the form
 \begin{center}
\begin{tabular}{ >{\centering}m{10cm} >{\centering}m{0.5cm}}
\AxiomC{$\Gamma, x_1:p_1,\ldots,x_n:p_n,y:q,\Delta \vdash q$}
 \RightLabel{$(\operatorname{ex})$}
 \UnaryInfC{$\Gamma, x_1:p_1,\ldots,y:q,x_n:p_n,\Delta \vdash q$}
 \noLine
 \UnaryInfC{$\vdots$}
 \RightLabel{$(\operatorname{ex})$}
 \UnaryInfC{$\Gamma, y:q, x_1:p_1, \ldots, x_n:p_n, \Delta \vdash p$}
 \DisplayProof
 &
 \tagarray{\label{eq:ladder}}
 \end{tabular}
 \end{center} 
 The \emph{tail index} of a ladder is the position of $y:q$ in $\Gamma, y:q, x_1:p_1, \ldots, x_n:p_n, \Delta \vdash p$. A ladder is \emph{maximal} in a preproof $\pi$ if there is no larger ladder in $\pi$ containing it. We write $(\operatorname{lad})^i$ for a maximal ladder with tail index $i$.
\end{definition}

\begin{definition}\label{definition:derived_ctr} A \emph{derived contraction} is a sequence of rules of the form
 \begin{center}
\begin{tabular}{ >{\centering}m{10cm} >{\centering}m{0.5cm}}
\AxiomC{$\Gamma, x:p, \Delta, y:p, \Theta \vdash q$}
 \RightLabel{$(\operatorname{ex})$}
 \doubleLine
 \UnaryInfC{$\Gamma, x:p, y:p, \Delta, \Theta \vdash q$}
 \RightLabel{$(\operatorname{ctr})$}
 \UnaryInfC{$\Gamma,x:p, \Delta, \Theta \vdash p$}
 \DisplayProof
 &
 \tagarray{\label{eq:d_ctr}}
 \end{tabular}
 \end{center} 
The \emph{tail index} of a derived contraction is the position of $x:p$ in $\Gamma, x:p, \Delta, \Theta$ and the \emph{head index} is the position of $y:p$ in $\Gamma, x:p, \Delta, y:p, \Theta$. We write $(\operatorname{dctr})^{i,j}$ to stand for a derived contraction with tail index $i$ and head index $j$. A derived contraction in a preproof $\pi$ is \emph{maximal} if there is no larger derived contraction in $\pi$ containing it.
\end{definition}

\begin{definition}\label{definition:index_weak} The \emph{index} of a weakening rule, with reference to the rule schemata of Definition \ref{defn:deduction_rules}, is the position of $x:p$ in $\Gamma, x:p, \Delta$. We write $(\operatorname{weak})^i$ for a weakening rule with index $i$.
\end{definition}

\begin{definition}\label{definition:well_ordered} A preproof $\pi$ of $\Gamma \vdash p$ is \emph{well-ordered} if $\Gamma = \operatorname{FV}^{seq}(M)$ where $M = f^\Gamma_p(\pi)$.
\end{definition}

It is more difficult to give a normal form for preproofs whose translation is an application. Note that technically speaking we should require that $\Gamma$ contains no variable from the canonical series \eqref{eq:canonical_series}.

\begin{lemma}\label{lemma:preimage_app} If $\Gamma$ is repetition-free and $\pi$ is a preproof of $\Gamma \vdash p$ which is a $(L \imp)$-normal form and $f^\Gamma_p(\pi)$ is an application $(M^1 \, M^2)$ with $M^1: r \imp p$ and $M^2 : r$ then $\pi$ is equivalent under $\sim_o$ to a preproof of the form
\begin{center}
 \AxiomC{$\tau_b$}
 \noLine
 \UnaryInfC{$\vdots$}
 \noLine
 \UnaryInfC{$\Gamma_b \vdash L_b:p_b$}
 \AxiomC{$\tau_{b-1}$}
 \noLine
 \UnaryInfC{$\vdots$}
 \noLine
 \UnaryInfC{$\Gamma_{b-1} \vdash L_{b-1}:p_{b-1}$}
 \AxiomC{$\tau_1$}
 \noLine
 \UnaryInfC{$\vdots$}
 \noLine
 \UnaryInfC{$\Gamma_1 \vdash L_1:p_1$}
    \AxiomC{$\zeta$}
    \noLine
    \UnaryInfC{$\vdots$}
    \noLine
    \UnaryInfC{$\Delta \vdash R : s$}
    \AxiomC{}
    \RightLabel{$(\operatorname{ax})$}
    \UnaryInfC{$x:p \vdash x:p$}
    \RightLabel{$(L \imp)$}
    \BinaryInfC{$y, \Delta \vdash (y \, R):p$}
 \RightLabel{$(L \imp)$}
 \BinaryInfC{$y_1, \Gamma_1,\Delta \vdash ((y_1 \, L_1) \, R):p$}
 \noLine
 \UnaryInfC{$\vdots$}
 \noLine
 \UnaryInfC{$y_{b-2}, \Gamma_{b-2},\ldots,\Gamma_1,\Delta \vdash p$}
 \RightLabel{$(L \imp)$}
 \BinaryInfC{$y_{b-1}, \Gamma_{b-1},\ldots,\Gamma_1,\Delta \vdash p$}
 \RightLabel{$(L \imp)$}
 \BinaryInfC{$y_{b}, \Gamma_{b},\ldots,\Gamma_1,\Delta \vdash p$}
  \doubleLine
 \RightLabel{$(\operatorname{dctr})$}
 \UnaryInfC{$\Theta \vdash p$}
 \RightLabel{$(\operatorname{lad})$}
 \doubleLine
 \UnaryInfC{$\Theta' \vdash p$}
 \RightLabel{$(\operatorname{weak})$}
 \doubleLine
 \UnaryInfC{$\Gamma \vdash (M^1 \, M^2):p$}
 \DisplayProof
\end{center}
with the following properties:
\begin{itemize}
\item[(i)] $\zeta$ and $\tau_j$ are $(L \imp)$-normal forms for $1 \le j \le b$.
\item[(ii)] $\zeta$ and $\tau_j$ are well-ordered for $1 \le j \le b$.
\item[(iii)] No variable occurrence in $\Gamma$ has more than one weak ancestor in $\Delta$, and no variable occurrence in $\Gamma$ has more than one weak ancestor in $\Gamma_j$ for $1 \le j \le b$.
\item[(iv)] The series of derived contractions
\begin{center}
\begin{tabular}{ >{\centering}m{10cm} >{\centering}m{0.5cm}}
\AxiomC{}
\noLine
 \UnaryInfC{$y_{b}, \Gamma_{b},\ldots,\Gamma_1,\Delta \vdash p$}
 \doubleLine
 \RightLabel{$(\operatorname{dctr})$}
 \UnaryInfC{$\Theta \vdash p$}
 \DisplayProof
 &
 \tagarray{\label{eq:tiny_ctr}}
 \end{tabular}
 \end{center} 
is of the form $(\operatorname{dctr})^{a_1,b_1},(\operatorname{dctr})^{a_2,b_2},\ldots,(\operatorname{dctr})^{a_m,b_m}$ with
\[
(a_1,b_1) \le (a_2,b_2) \le \cdots \le (a_m,b_m)
\]
in the lexicographic order.
 \item[(v)] Let $\Lambda(u)$ be the sequence obtained from the numerator in \eqref{eq:tiny_ctr} by deleting from $y_b,\Gamma_b,\ldots,\Gamma_1,\Delta$ any variable occurrence which is either not of type $u$ or which is a strong ancestor of an occurrence in $\Theta$. Then for any $u$ we require that $\Lambda(u)$ is equal to an initial segment (possibly empty) of some fixed ``canonical series'' of variables
 \be\label{eq:canonical_series}
 \aleph^u_1:u,\aleph^u_2:u,\aleph^u_3:u,\ldots
 \ee
 \item[(vi)] The series of weakenings
 \begin{center}
\begin{tabular}{ >{\centering}m{10cm} >{\centering}m{0.5cm}}
\AxiomC{}
\noLine
 \UnaryInfC{$\Theta' \vdash p$}
 \doubleLine
 \RightLabel{$(\operatorname{weak})$}
 \UnaryInfC{$\Gamma \vdash p$}
 \DisplayProof
 &
 \tagarray{\label{eq:tiny_weak}}
 \end{tabular}
 \end{center}
 is of the form $(\operatorname{weak})^{d_1},(\operatorname{weak})^{d_2},\ldots,(\operatorname{weak})^{d_m}$ with $d_1 < d_2 < \cdots < d_m$.
 \item[(vii)] The series of maximal ladders
\begin{center}
\begin{tabular}{ >{\centering}m{10cm} >{\centering}m{0.5cm}}
\AxiomC{}
\noLine
 \UnaryInfC{$\Theta \vdash p$}
 \doubleLine
 \RightLabel{$(\operatorname{lad})$}
 \UnaryInfC{$\Theta' \vdash p$}
 \DisplayProof
 &
 \tagarray{\label{eq:tiny_lad}}
 \end{tabular}
 \end{center} 
is of the form $(\operatorname{lad})^{c_1},(\operatorname{lad})^{c_2},\ldots,(\operatorname{lad})^{c_n}$ with $c_1 < c_2 < \ldots < c_n$.
\end{itemize}
This representation is unique, in the following sense: any other such representation involves the same index $b$, the same sequents $\Gamma_j \vdash p_j$ and the same lambda terms $R$ and $L_j$ for $1 \le j \le b$. We call such a preproof an \emph{application normal form}.
\end{lemma}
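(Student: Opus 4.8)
The plan is to follow the template of the cut-elimination proof and of Lemma \ref{lemma:L_normal_form}, peeling structure off the root of $\pi$ and driving surplus structural rules downward, with an auxiliary induction on the height of $\pi$ controlling termination. Since $\pi$ is cut-free its translation $f^\Gamma_p(\pi) = (M^1\,M^2)$ is $\beta$-normal (Lemma \ref{lemma:cutfree_means_betanormal}), so $M^1$ is not an abstraction and the term has a head variable $z$ with $(M^1\,M^2) = z\,L_b\,L_{b-1}\cdots L_1\,M^2$ for some $b \ge 0$; being a free variable of the whole term, $z$ lies in $[\Gamma]$. The first step is to bring $\pi$ into \emph{spine form}: walking up from the root and repeatedly applying $\tau$-equivalence, $co$-equivalence and the commuting conversions that move a structural rule past $(L \imp)$ — principally \eqref{comm_weak_L}, \eqref{comm_L_ctr}, \eqref{comm_L_ctr2}, \eqref{comm_L_ex}, \eqref{comm_L_ex2}, \eqref{comm_L_L}, \eqref{comm_L_L2} — one rewrites $\pi$ as a tower of $(L\imp)$ rules (the \emph{spine}) whose innermost right branch is the axiom $x:p \vdash x:p$ introducing the succedent variable, whose left branches are subproofs $\tau_b,\dots,\tau_1$ translating to $L_b,\dots,L_1$, and whose innermost left branch is a subproof $\zeta$ translating to $R = M^2$, all sitting above a block of structural rules. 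Every move used is a $\sim_o$-move, so the translated term is never altered.

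The second step cleans up the subproofs and establishes (i)--(iii). Apply Lemma \ref{lemma:L_normal_form} to each of $\tau_b,\dots,\tau_1,\zeta$ to put them in $(L\imp)$-normal form; then commute any weakening whose introduced variable survives into the final antecedent of a subproof \emph{down past the spine} into the bottom block via \eqref{comm_weak_L}, after which Lemma \ref{lemma:yeehoo} forces the final antecedent of each subproof to equal the free-variable set of its translation, and a final round of ladders and \eqref{tau_ex_ex} reorders these antecedents into first-occurrence order; this gives (ii), and one checks the reordering preserves the $(L\imp)$-normal-form conditions, giving (i). Property (iii) is where $\lambda$-equivalence enters: applying \eqref{lambda_L_L_ctr} in the manner of the proof of Lemma \ref{lemma:L_normal_form} — duplicating the left branch of a spine $(L\imp)$ rule whenever its output is shared — one arranges that each argument occurrence of the term has its own spine rule and its own subproof, so that all sharing of free variables among the $\Gamma_j$ and $\Delta$ is realised by contractions in the bottom block.

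The third step normalises the bottom block into the three ordered segments. Using $co$-equivalence \eqref{co_ctr_assoc}, \eqref{co_ctr_comm_alt} and the structural commuting conversions \eqref{comm_ctr_ctr2}, \eqref{comm_ex_ctr}, \eqref{comm_ex_ctr2}, the contractions are reorganised into a sequence of derived contractions in lexicographically increasing order, with the occurrences of each fixed type arranged to form an initial segment of the canonical series \eqref{eq:canonical_series} (this is where one assumes $\Gamma$ avoids the canonical variables), giving (iv) and (v); the surviving exchanges are gathered into maximal ladders of increasing tail index using \eqref{tau_ex_ex} and the commuting conversions between $(\operatorname{ex})$ and the other rules, giving (vii); and the weakenings are collected below everything else and sorted by increasing index using \eqref{comm_weak_weak}, \eqref{comm_ex_weak}, \eqref{comm_ex_weak2} and \eqref{tau_weak_ex}, giving (vi). Repetition-freeness of $\Gamma$ is used here to guarantee that, once the subproofs and the spine are fixed, the antecedent produced by the bottom block is forced, so the canonical orderings determine that block uniquely. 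Uniqueness then follows: the term $(M^1\,M^2)$ determines $z$, $b$ and the arguments $R, L_1,\dots,L_b$ with their types, hence (the subproofs being well-ordered) the sequents $\Gamma_j \vdash p_j$; and conditions (iv)--(vii) determine the sequences of derived contractions, ladders and weakenings as the unique canonically-ordered structural derivations with conclusion $\Gamma \vdash p$.

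I expect the main obstacle to lie in the bookkeeping of the second and third steps: one must verify simultaneously that (a) every rearrangement is a $\sim_o$-move, so that \eqref{cut:R_vs_L} and \eqref{eta} are never invoked and the translated term is genuinely preserved; (b) the successive reorderings — making the $\tau_j$ well-ordered, duplicating spine rules by \eqref{lambda_L_L_ctr}, sorting the bottom block — do not undo one another, for which one needs a monovariant analogous to the slack of Lemma \ref{lemma:contract_normal_form} witnessing termination; and (c) repetition-freeness of $\Gamma$ really does pin down both property (iii) and the uniqueness of the bottom block. Tracking how the weak-ancestor relation between occurrences of $\Gamma$ and occurrences in $\Delta$ and the $\Gamma_j$ evolves under the commuting conversions is the delicate part of the verification.
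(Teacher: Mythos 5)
Your overall strategy --- extract a spine of $(L\imp)$ rules sitting above a bottom block of structural rules, migrate stray $(L\imp)$ rules into the branches via \eqref{comm_L_L}, \eqref{comm_L_L2}, normalise the branches, sort the bottom block, and read uniqueness off the term --- is the same as the paper's (which calls the part of the tree below the spine the ``porch''). But your mechanism for condition (iii) is a genuine gap. You invoke \eqref{lambda_L_L_ctr} to ``duplicate the left branch of a spine $(L\imp)$ rule whenever its output is shared'', yet the hypothesis that $\pi$ is a $(L\imp)$-normal form already guarantees (condition (ii) of Definition \ref{definition:Limp_normal_form}) that no contraction is active for a variable eliminated by a $(L\imp)$ rule, so this step is vacuous: $\lambda$-equivalence is consumed in establishing Lemma \ref{lemma:L_normal_form} and is not needed again here. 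More seriously, your stated goal --- ``all sharing of free variables among the $\Gamma_j$ and $\Delta$ is realised by contractions in the bottom block'' --- is the opposite of what (iii) demands. Condition (iii) forbids a variable occurrence of $\Gamma$ from having two weak ancestors \emph{within a single} $\Delta$ or $\Gamma_j$; if $\Delta$ contains two such occurrences, the contraction merging them must be pushed \emph{up into} $\zeta$ (via \eqref{co_ctr_assoc}, \eqref{co_ctr_comm_alt} and commuting conversions) so that $\zeta$'s final antecedent carries only one occurrence, while sharing \emph{across distinct} branches is what legitimately remains as derived contractions in the bottom block. Your recipe leaves the within-branch sharing below the spine and therefore never establishes (iii).

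This also breaks the ordering of your second step: you derive (ii) before (iii), but $\operatorname{FV}^{seq}$ is by definition a sequence of distinct variables, so $\Delta = \operatorname{FV}^{seq}(R)$ presupposes that $\Delta$ is repetition-free --- and that is precisely what follows from (iii) together with repetition-freeness of $\Gamma$ (any repetition in $\Delta$ would have to be corrected by a contraction on the spine, which (iii) rules out). So (iii) must be secured first, and it is the only place where repetition-freeness of $\Gamma$ enters the existence half of the argument; your proposal instead locates that hypothesis only in the uniqueness discussion. The remainder of your outline (sorting the bottom block for (iv)--(vii), the canonical-series renaming for (v), and reading $b$, the types, and the terms $R, L_j$ off the shape of the $\beta$-normal application for uniqueness) matches the paper's proof.
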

\begin{proof}
Walk the tree underlying the preproof $\pi$ starting from the root, taking the right hand branch at every $(L \imp)$ rule, and stop at the first instance of the $(L \imp)$ rule which satisfies the following property: the preproof constituting the right hand branch has for its translation under Definition \ref{definition:translation} a variable $x:p$ and this is the variable eliminated by the $(L \imp)$ rule. Note that a $(L \imp)$ rule satisfying this property will be encountered on the  walk, because $M$ is an application. By Lemma \ref{lemma:preimage_vars} the preproof $\pi$ is therefore equivalent under $\sim_o$ to a preproof of the form
\begin{center}
\begin{tabular}{ >{\centering}m{10cm} >{\centering}m{0.5cm}}
    \AxiomC{$\zeta$}
    \noLine
    \UnaryInfC{$\vdots$}
    \noLine
    \UnaryInfC{$\Delta \vdash R : s$}
    \AxiomC{}
    \RightLabel{$(\operatorname{ax})$}
    \UnaryInfC{$x:p \vdash x:p$}
    \RightLabel{$(L \imp)$}
    \BinaryInfC{$y:s\imp p, \Delta \vdash (y \, R):p$}
    \noLine
    \UnaryInfC{$\vdots$}
    \noLine
    \UnaryInfC{$\Gamma\vdash (M^1 \, M^2):p$}
    \DisplayProof
    &
    \tagarray{\label{eq:Lbranching}}
\end{tabular}
\end{center}
where we have used \eqref{comm_weak_L} to move the weakenings in Lemma \ref{lemma:preimage_vars} below the $(L \imp)$ rule. We refer to the sequence of deduction rules connecting the root of the preproof to the displayed $(L \imp)$ rule as the \emph{porch} (note that the preproof may contain other branches that meet the displayed preproof as left hand branches at deduction rules within the porch). The porch may contain $(\operatorname{ctr})$, $(\operatorname{ex})$, $(\operatorname{weak})$ and $(L \imp)$ rules. Since $\pi$ is a $(L \imp)$-normal form none of these weakenings or contractions are relevant to the variables eliminated by $(L \imp)$ rules in the porch, so we may use commuting conversions to ensure that the $(L \imp)$ rules are all above any of these other rules.

We index the $(L \imp)$ rules on the porch, from top to bottom, by indices $\alpha$
\begin{prooftree}
\AxiomC{$\tau_\alpha$}
\noLine
\UnaryInfC{$\vdots$}
\noLine
\UnaryInfC{$\Gamma_\alpha \vdash p_\alpha$}
\AxiomC{}
\noLine
\UnaryInfC{$\vdots$}
\noLine
\UnaryInfC{$t_\alpha:q_\alpha, \Lambda_\alpha \vdash p$}
\RightLabel{$(r_\alpha)$}
\BinaryInfC{$y_\alpha: p_\alpha \imp q_\alpha, \Gamma_\alpha, \Lambda_\alpha \vdash p$}
\end{prooftree}
We now migrate $(L \imp)$ rules on the porch into $\zeta$ and the $\tau_\alpha$ branches. The variable $t_1$ either has a strong ancestor in $\zeta$ or its strong ancestor is the $y: s \imp p$ introduced by the $(L \imp)$ displayed in \eqref{eq:Lbranching}. In the former case, we can by \eqref{comm_L_L2} move the rule $(r_1)$ up into $\zeta$. In the latter case, we do nothing. Now assume that $\alpha > 1$ is given and that for all $\beta < \alpha$ the variable $t_\beta$ is introduced by one of the previous $(L \imp)$ rules on the porch. If $t_\alpha$ is introduced by one of the previous $(L \imp)$ rules on the porch then we do nothing, otherwise if $t_\alpha$ is introduced in $\zeta$ (resp. $\tau_\beta$ for $\beta < \alpha$) then we use \eqref{comm_L_L}, \eqref{comm_L_L2} to move $(r_\alpha)$ into $\zeta$ (resp. $\tau_\beta$). These applications of \eqref{comm_L_L}, \eqref{comm_L_L2} may introduce $(\operatorname{ex})$ rules onto the porch, which may either be absorbed into $(L \imp)$ rules by \eqref{tau_L_ex2} or moved to the bottom of the porch as above. Proceeding in this way through all the indices $\alpha \in \{1,\ldots,b\}$ in increasing order completes the migration.

This migration procedure shows that we may as well have assumed from the beginning that the only $(L \imp)$ rules on the porch are those in which $t_\alpha$ is introduced by the previous $(L \imp)$ rule on the porch. We now make this assumption. Using commuting conversions we can move any $(\operatorname{weak})$ rules in $\zeta$ (resp. any $\tau_j$) which introduce variables equivalent under $\approx_{str}$ to an occurrence in $\Delta$ (resp. $\Gamma_j$) down to the bottom of the porch. This shows that $\pi$ is equivalent under $\sim_o$ to a preproof of the form given in the statement of the lemma where $\zeta$ and all the $\tau_j$ are $(L \imp)$-normal forms satisfying the hypotheses of Lemma \ref{lemma:yeehoo} so that $[ \Delta ] = \operatorname{FV}(R)$ and $[ \Gamma_j ] = \operatorname{FV}(L_j)$. Using \eqref{co_ctr_assoc},\eqref{co_ctr_comm_alt} and commuting conversions we may also assume that the condition (iii) is satisfied by moving contractions up into the branches.

Now we use for the first time the hypothesis that $\Gamma$ is repetition-free. If any repetitions occurred in $\Delta$ or one of the $\Gamma_j$'s then this would have to be corrected by a contraction on the porch, which by (iii) is impossible. So $\Delta$ and all the $\Gamma_j$ are also repetition-free. Without loss of generality we may therefore assume, possibly inserting exchanges into $\zeta$ and $\tau_j$ that $\Delta = \operatorname{FV}^{seq}(R)$ and $\Gamma_j = \operatorname{FV}^{seq}(L_j)$ which is condition (ii). Condition (iv) can be arranged using \eqref{co_ctr_assoc}, \eqref{co_ctr_comm_alt}, \eqref{co_weak_ctr}. In the notation of (v) observe that for any $u$ the sequence $\Lambda(u)$ consists of variable occurrences which are eliminated in contraction rules within \eqref{eq:tiny_ctr} and so by $\alpha$-equivalence \eqref{alpha_ctr} we can rename them as we wish, provided the result is well-labelled. In particular we can rename them according to the specified rules with respect to a predetermined canonical series. This completes the proof of the existence of an application normal form and it only remains to prove uniqueness. 

Considering the translation of the normal form we see that $M = (M^1\, M^2)$ is obtained from
\be\label{eq:normal_form_app_term}
\big( (\cdots( (y_b \, L_b) \, L_{b-1} ) \, \cdots L_1) \, R\big)
\ee
by some number of contractions. Thus the index $b$ and the types $p_1,\ldots,p_b,s$ in the normal form can be read off from the term $M$. Suppose that
\be\label{eq:normal_form_app_term2}
(M^1\, M^2) = \big( (\cdots( (y_b \, L'_b) \, L'_{b-1} ) \, \cdots L'_1) \, R'\big)\,.
\ee
Now consider the sequence
\be
y_b,\operatorname{FV}^{seq}(L'_b),\ldots,\operatorname{FV}^{seq}(L'_1), \operatorname{FV}^{seq}(R')
\ee
and perform the following operation: if any variable $z:u$ is repeated in this sequence then replace all but the first occurrence by a special symbol $\bullet_u$ associated to $u$ but independent of $z$. This is done for every type $u$ and every variable of type $u$ before the next step. In the next step, for each type $u$ replace all the occurrences of $\bullet_u$ in order by variables taken from the canonical series \eqref{eq:canonical_series} for $u$. By conditions (ii),(iii),(iv),(v) the result of this operation is the sequence $y_b,\Gamma_b,\ldots,\Gamma_1,\Delta$ which is therefore determined by $M$ and is independent of any choices made above. Suppose that free variables $z_1:u_1,\ldots,z_k:u_k$ in $L'_j$ are replaced by this procedure with $\aleph^{u_1}_{t_1},\ldots,\aleph^{u_k}_{t_k}$. Then
\[
L_j = L'_j[ z_1 := \aleph^{u_1}_{t_1}, \ldots, z_k := \aleph^{u_k}_{t_k} ]
\]
and similarly for $R$, which completes the proof of the uniqueness statement.
\end{proof}

Actually the application normal form is unique in a much stronger sense, but we return to this in Section \ref{section:normal_form_seq}. We note that $b = 0$ is allowed in the definition of an application normal form, in which case there is a single $(L \imp)$ rule with left branch $\zeta$, followed by exchanges, contractions and weakenings as above.

\begin{lemma}\label{lemma:app_normal_wello} Let $\pi$ be an application normal form in which the rule series \eqref{eq:tiny_weak}, \eqref{eq:tiny_lad} are empty. Then $\pi$ is well-ordered.
\end{lemma}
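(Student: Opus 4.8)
The plan is to unwind the definition of application normal form in the special case at hand and then track, in parallel, the effect of the derived contraction series on the antecedent and on the translated term. Since the ladder series \eqref{eq:tiny_lad} and the weakening series \eqref{eq:tiny_weak} are empty we have $\Theta = \Theta' = \Gamma$, so $\pi$ consists of the tower of $(L\imp)$ rules over the branches $\zeta,\tau_1,\ldots,\tau_b$ with conclusion sequent $S_0 := y_b,\Gamma_b,\ldots,\Gamma_1,\Delta \vdash p$ and translation $M_0 := \big( (\cdots( (y_b \, L_b) \, L_{b-1} ) \, \cdots L_1) \, R\big)$, immediately followed by the derived contraction series \eqref{eq:tiny_ctr} with conclusion $\Gamma \vdash (M^1\,M^2):p$. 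Thus $M := f^\Gamma_p(\pi)$ is obtained from $M_0$ by the sequence of variable renamings induced by the derived contractions, and we must show $\Gamma = \operatorname{FV}^{seq}(M)$.

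First I would record two elementary facts about $\operatorname{FV}^{seq}$: for terms $N_1,N_2$ the sequence $\operatorname{FV}^{seq}((N_1\,N_2))$ is the concatenation of $\operatorname{FV}^{seq}(N_1)$ with $\operatorname{FV}^{seq}(N_2)$ with all but the first occurrence of each variable deleted, and $\operatorname{FV}^{seq}(N[y:=x])$ is obtained from $\operatorname{FV}^{seq}(N)$ by the pointwise substitution $y\mapsto x$ followed by the same deletion. Applying the first fact repeatedly to $M_0$ and invoking property (ii) of Lemma \ref{lemma:preimage_app} (each $\tau_j$ and $\zeta$ is well-ordered, i.e.\ $\Gamma_j = \operatorname{FV}^{seq}(L_j)$ and $\Delta = \operatorname{FV}^{seq}(R)$) shows that $\operatorname{FV}^{seq}(M_0)$ is exactly $S_0$ with all but the first occurrence of each variable deleted. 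In particular when $b=0$ and the derived contraction series is empty we have $S_0 = \Gamma$, which is repetition-free by hypothesis, so $\operatorname{FV}^{seq}(M)=\operatorname{FV}^{seq}(M_0)=\Gamma$ and the lemma holds in this base case.

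In general I would argue by induction on the number $m$ of derived contractions, with the invariant: after the first $k$ of them the partial preproof has antecedent $S_k$ — obtained from $S_{k-1}$ by deleting the occurrence at the head index of the $k$-th derived contraction — and translation $M_k$ with $\operatorname{FV}^{seq}(M_k)$ equal to $S_k$ with all but the first occurrence of each variable deleted. For the inductive step, note that a derived contraction is a block of exchanges followed by a single $(\operatorname{ctr})$, and by \eqref{functor_ex} and \eqref{functor:ctr} its net effect is to delete the head occurrence from the antecedent and to apply $[\,y_k := x_k\,]$ to the term, where $x_k$ (tail) and $y_k$ (head) are the two contracted variables. Here property (v) of Lemma \ref{lemma:preimage_app} enters decisively: the head occurrences of the derived contractions are precisely the occurrences of $S_0$ that were renamed along the canonical series \eqref{eq:canonical_series}, so each head variable $y_k$ occurs exactly once in $S_{k-1}$; and since the tail index is strictly smaller than the head index (Definition \ref{definition:derived_ctr}), the unique occurrence of $x_k$ in the deduplicated sequence precedes that of $y_k$. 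Consequently applying $[\,y_k := x_k\,]$ to $\operatorname{FV}^{seq}(M_{k-1})$ and deleting the resulting duplicate yields exactly $\operatorname{FV}^{seq}(M_{k-1})$ with the $y_k$-entry removed, which is $S_k$ with all but the first occurrence of each variable deleted — the invariant at stage $k$. (Property (iv), the lexicographic ordering of the derived contractions, is what makes these deletions consistently indexed.) At $k=m$ the antecedent is $S_m = \Gamma$, which is repetition-free, so $\operatorname{FV}^{seq}(M) = \operatorname{FV}^{seq}(M_m) = \Gamma$ and $\pi$ is well-ordered.

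The genuinely delicate point — and the step I expect to require the most care — is the bookkeeping in the inductive step: verifying that the purely combinatorial operation ``delete the head occurrence'' on the antecedent corresponds, under the translation, to ``rename head to tail and remove the ensuing duplicate'' on $\operatorname{FV}^{seq}$. Everything hinges on property (v), which guarantees that at each stage the head variable being eliminated occurs exactly once in the current antecedent and carries a name disjoint from the surviving variables, so that the substitution induces no spurious identification of free variables; the rest is elementary manipulation of the translation rules of Definition \ref{definition:translation}.
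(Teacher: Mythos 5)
Your proof is correct and takes essentially the same route as the paper: both arguments rest on condition (ii) of Lemma \ref{lemma:preimage_app} (the branches are well-ordered), on the fact that the first-occurrence order of free variables in \eqref{eq:normal_form_app_term} follows the antecedent order $y_b,\Gamma_b,\ldots,\Gamma_1,\Delta$, and on the fact that each derived contraction only renames a later occurrence to a strictly earlier one, so it merely deletes the head from $\operatorname{FV}^{seq}$. The paper organises this as a pairwise comparison of strong-ancestor positions after first invoking Lemma \ref{lemma:yeehoo} to get $[\Gamma]=\operatorname{FV}(M)$, whereas you compute $\operatorname{FV}^{seq}$ inductively through the derived-contraction series; the bookkeeping differs but the underlying argument is the same.
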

\begin{proof}
Let $\pi$ be an application normal form as in the statement of Lemma \ref{lemma:preimage_app}. By Lemma \ref{lemma:yeehoo} we have $[\Gamma] = \operatorname{FV}(M)$. Suppose that $z:u, z':u'$ appear in this order within $\Gamma$ so that their strong ancestors appear in the same order within $y_b, \Gamma_b, \ldots, \Gamma_1, \Delta$. If $z = y_b$ then it is clear that the first free occurrence of $z':u'$ in $M$ appears after the first free occurrence of $z:u$. Otherwise there are two cases: in the first case $z:u, z':u'$ both appear within the same $\Gamma_j$ or both within $\Delta$, and in this case the variables appear in the same order within $\operatorname{FV}^{seq}(M)$ by condition (ii) of an application normal form. In the second case $z:u$ is in $\Gamma_j$ for some $j$ and $z':u'$ is in $\Gamma_{j'}$ for $j' < j$ or is in $\Delta$. In this case by inspection of \eqref{eq:normal_form_app_term}, \eqref{eq:normal_form_app_term2} it is clear that $z:u$ appears before $z':u'$ in $\operatorname{FV}^{seq}(M)$. 
\end{proof}

\begin{proposition}\label{prop:alpha_implies_p} If $\Gamma$ is repetition-free and $\pi_1,\pi_2$ are preproofs of $\Gamma \vdash p$ that are $(L \imp)$-normal forms then $f^\Gamma_p(\pi_1) = f^\Gamma_p(\pi_2)$ implies $\pi_1 \sim_o \pi_2$.
\end{proposition}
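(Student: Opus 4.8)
The plan is to argue by strong induction on the size of the lambda term $M := f^\Gamma_p(\pi_1) = f^\Gamma_p(\pi_2)$. Since $\pi_1$ and $\pi_2$ are $(L\imp)$-normal forms they are cut-free, so $M$ is $\beta$-normal by Lemma~\ref{lemma:cutfree_means_betanormal}; hence $M$ is a variable, an abstraction $\lambda x . N$, or an application $(M^1 \, M^2)$ with $M^1, M^2$ $\beta$-normal of strictly smaller size, and these are the three cases of the induction, the variable case being the base. One preliminary remark will be used repeatedly: the proof of Lemma~\ref{lemma:L_normal_form} invokes cut-elimination transformations only in order to reduce to the cut-free case, so a preproof which is \emph{already} cut-free and well-labelled is $\sim_o$-equivalent (not merely $\sim_p$-equivalent) to a $(L\imp)$-normal form. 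This lets me re-normalise subproofs without leaving $\sim_o$. In the base case $M$ is a variable $z:p$, and by Lemma~\ref{lemma:preimage_vars} each $\pi_i$ is $\sim_o$-equivalent to a variable normal form \eqref{eq:var_normal_form}; any two variable normal forms of $\Gamma \vdash p$ with the same axiom variable are $\sim_o$-equivalent (the weakenings can be permuted into any order using \eqref{comm_weak_weak}), and since $\sim_o$ preserves the translation (Lemma~\ref{lemma:F_welldefined}(i)) the axiom variable is $z$ in both, so $\pi_1 \sim_o \pi_2$.

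\emph{Abstraction case.} If $M = \lambda x . N$ has type $q \imp r$, then by Lemma~\ref{lemma:preimage_abstractions} each $\pi_i$ is $\sim_o$-equivalent to an abstraction normal form with subproof $\psi_i$ of a sequent $\Delta_i, x_i : q, \Delta_i' \vdash r$, where $\Delta_i \Delta_i' = \Gamma$ and $f(\psi_i) = N$. The abstracted variable $x_i$ lies in an interior $\approx_{str}$-class, so $x_i \notin [\Gamma]$ by well-labelledness; hence using \eqref{alpha_R} I may assume $x_1 = x_2 = x$, and using the $\tau$-equivalence \eqref{tau_R_ex} repeatedly I may slide $x$ to the front, arranging that $\psi_i$ has sequent $x:q, \Gamma \vdash r$. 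The inserted exchanges keep $\psi_i$ cut-free and well-labelled, so by the preliminary remark I may take $\psi_i$ to be a $(L\imp)$-normal form, still translating to $N$. Since $x:q, \Gamma$ is repetition-free and $N$ is strictly smaller than $M$, the inductive hypothesis gives $\psi_1 \sim_o \psi_2$, and applying $(R\imp)$ to both (compatibility of $\sim_o$) gives $\pi_1 \sim_o \pi_2$.

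\emph{Application case.} If $M = (M^1 \, M^2)$, then by Lemma~\ref{lemma:preimage_app} each $\pi_i$ is $\sim_o$-equivalent to an application normal form $\pi_i'$. The uniqueness clause of that lemma says $\pi_1'$ and $\pi_2'$ share the same index $b$, the same sequents $\Gamma_j \vdash p_j$, and the same lambda terms $R$ and $L_j$; together with conditions (ii)--(vii) of an application normal form --- which fix the sequents $\operatorname{FV}^{seq}(R)$ and $\operatorname{FV}^{seq}(L_j)$, the arrangement of derived contractions, maximal ladders and weakenings on the porch, and, through the canonical series \eqref{eq:canonical_series} and \eqref{alpha_ctr}, all interior variable names --- this forces $\pi_1'$ and $\pi_2'$ to have an identical skeleton, differing only in the branches $\zeta^{(i)}$ and $\tau_j^{(i)}$. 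These branches are $(L\imp)$-normal forms (condition (i)) over the repetition-free sequents $\operatorname{FV}^{seq}(R) \vdash s$ and $\operatorname{FV}^{seq}(L_j) \vdash p_j$, translating respectively to $R$ and $L_j$, which are strictly smaller than $M$. The inductive hypothesis therefore gives $\zeta^{(1)} \sim_o \zeta^{(2)}$ and $\tau_j^{(1)} \sim_o \tau_j^{(2)}$ for each $j$; substituting these $\sim_o$-equivalences into the common skeleton and using compatibility of $\sim_o$ repeatedly yields $\pi_1' \sim_o \pi_2'$, hence $\pi_1 \sim_o \pi_2$.

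I expect the application case to be the main obstacle: one must check that the uniqueness statement of Lemma~\ref{lemma:preimage_app} together with conditions (ii)--(vii) genuinely pins down everything about the application normal form except the two families of branches --- in particular the exact placement of weakenings, exchanges and contractions on the porch and the names of the interior variables --- so that the problem really does reduce to a recursive comparison of $\zeta$ and the $\tau_j$. A secondary technical point to watch is the claim, used in the abstraction case, that a cut-free well-labelled preproof can be brought to $(L\imp)$-normal form within $\sim_o$ rather than only $\sim_p$, which requires inspecting that the algorithm of Lemma~\ref{lemma:L_normal_form} invokes no cut-elimination transformation once the input is cut-free.
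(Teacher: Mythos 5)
Your proposal is correct and follows essentially the same route as the paper: induction on the length of the translated term, with the variable, abstraction and application cases handled via Lemmas \ref{lemma:preimage_vars}, \ref{lemma:preimage_abstractions} and \ref{lemma:preimage_app} respectively, and the uniqueness clause of the application normal form reducing the last case to the branches. Your added care about re-normalising subproofs within $\sim_o$ (rather than $\sim_p$) and about repositioning the abstracted variable makes explicit points the paper leaves implicit, but does not change the argument.
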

\begin{proof}
To be clear $f^\Gamma_p(\pi_1) = f^\Gamma_p(\pi_2)$ means equality of terms (that is, $\alpha$-equivalence of preterms). We set $M_i := f^\Gamma_p(\pi_i)$ for $i \in \{1,2\}$ so that by hypothesis $M_1 = M_2$ as terms. We proceed by induction on the length of the term $M = M_1 = M_2$. In the base case $M$ is a variable, and Lemma \ref{lemma:preimage_vars} shows that $\pi_i$ is equivalent under $\sim_o$ to
\begin{center}
 \AxiomC{}
 \RightLabel{$(\operatorname{ax})$}
 \UnaryInfC{$z:s \vdash s$}
 \doubleLine
 \RightLabel{$(\operatorname{weak})$}
 \UnaryInfC{$\Delta_i, z:s, \Theta_i \vdash s$}
 \DisplayProof
 \end{center}
for some decomposition $\Gamma = \Delta_i, z:s, \Theta_i$. Since $\Gamma$ is repetition-free there is only one occurrence of $z:s$ in $\Gamma$ so $\Delta_1 = \Delta_2, \Theta_1 = \Theta_2$ and this variable normal form is the same for both $\pi_1,\pi_2$. Hence $\pi_1 \sim_o \pi_2$ as required.

Next, suppose that $M = \lambda x . N$ is an abstraction where $p = q \imp r$. By Lemma \ref{lemma:preimage_abstractions} each $\pi_i$ is equivalent under $\sim_o$ to an abstraction normal form $\pi_i'$. Let $\psi_i$ denote the preproof obtained from $\pi'_i$ by deleting the final $(R \imp)$ rule, which we may assume eliminates a variable $x:q$ in both $\pi'_1$ and $\pi'_2$ which does not occur in $\Gamma$ and which is leftmost in the antecedent. Then
\[
f^{\Gamma,x:q}_r(\psi_1) = N = f^{\Gamma,x:q}_r(\psi_2)
\]
so by the inductive hypothesis $\psi_1 \sim_o \psi_2$ from which we deduce $\pi_1 \sim_o \pi_2$.

Finally suppose that $M$ is an application $(M^1 \, M^2):p$ with $M^1:r \imp p$ and $M^2:r$. By Lemma \ref{lemma:preimage_app} each $\pi_i$ is equivalent under $\sim_o$ to an application normal form $\pi_i'$. The proof of the lemma shows that the types $p_1,\ldots,p_b,s$, sequences $\Gamma_b,\ldots,\Gamma_1,\Delta,y_b$ and terms $L_b,\ldots,L_1,R$ may be read off from $M$ and therefore coincide in the normal forms for $\pi_1,\pi_2$. Let $\tau^i_j, \zeta^i$ denote the preproofs involved in the normal form for $\pi_i$. We deduce
\[
f^{\Gamma_j}_{p_j}(\tau^1_j) = f^{\Gamma_j}_{p_j}(\tau^2_j) \qquad 1 \le j \le b
\]
and $f^{\Delta}_s(\zeta^1) = f^{\Delta}_s(\zeta^2)$. Since $\Delta$ and $\Gamma_j$ for $1 \le j \le b$ are repetition-free it follows from the inductive hypothesis that $\tau^1_j \sim_o \tau^2_j$ for $1 \le j \le b$ and $\zeta^1 \sim_o \zeta^2$ and hence $\pi_1 \sim_o \pi_2$ which completes the proof of the inductive step.
\end{proof}

\begin{definition}\label{definition:eta_pattern} Let $\pi$ be a preproof of $\Gamma \vdash p$ which is a $(L \imp)$-normal form. A \emph{$\eta$-pattern} in $\pi$ is a configuration of rules within $\pi$ of the form
\begin{center}
\begin{tabular}{ >{\centering}m{10cm} >{\centering}m{0.5cm}}
    \AxiomC{$\zeta$}
    \noLine
    \UnaryInfC{$\vdots$}
    \noLine
    \UnaryInfC{$\Delta \vdash s$}
    \AxiomC{$\theta$}
    \noLine
    \UnaryInfC{$\vdots$}
    \noLine
    \UnaryInfC{$\Theta,z:p,\Theta' \vdash p$}
    \RightLabel{$(L \imp)$}
    \BinaryInfC{$y:s \imp p,\Delta,\Theta,\Theta' \vdash p$}
 \noLine
 \UnaryInfC{$\vdots$}
 \noLine
 \UnaryInfC{$\Gamma,x:s,\Gamma' \vdash p$}
 \RightLabel{$(R \imp)$}
 \UnaryInfC{$\Gamma,\Gamma' \vdash s \imp p$}
 \DisplayProof
 &
 \tagarray{\label{eq:eta_pattern}}
 \end{tabular}
\end{center}
with the following properties
\begin{itemize}
\item[(i)] The path from the displayed $(R \imp)$ rule to the displayed $(L \imp)$ rule takes only the right hand branch of any intermediate $(L \imp)$ rule and contains no $(R \imp)$ rules.
\item[(ii)] $f^{\Delta}_s(\zeta)$ and $f^{\Theta,z:p,\Theta'}_p(\theta)$ are both variables.
\item[(iii)] The contraction tree of the occurrence of $x:s$ eliminated by the $(R \imp)$ rule contains as leaves one occurrence introduced by an axiom in $\zeta$ and all other leaves are occurrences introduced by weakenings.
\end{itemize}
\end{definition}

\begin{example}\label{example:bad_eta} The prototypical example of an $\eta$-pattern is \eqref{eta_annotated}. However the reader should be aware that weakenings can complicate this picture:
\begin{center}
\begin{tabular}{ >{\centering}m{10cm} >{\centering}m{0.5cm}} 
 \AxiomC{}
 \RightLabel{$(\operatorname{ax})$}
 \UnaryInfC{$x': p \vdash \textcolor{blue}{x'}:p$}
 \RightLabel{$(\operatorname{weak})$}
 \UnaryInfC{$x:p,x':p \vdash \textcolor{blue}{x'}:p$}
 \RightLabel{$(\operatorname{ctr})$}
 \UnaryInfC{$x:p \vdash \textcolor{blue}{x}:p$}
 \AxiomC{}
 \RightLabel{$(\operatorname{ax})$}
 \UnaryInfC{$y: q \vdash \textcolor{blue}{y}:q$}
 \RightLabel{$(L\imp)$}
 \BinaryInfC{$z: p \imp q ,x:p\vdash \textcolor{blue}{(z \, x)}: q$}
 \RightLabel{$(R \imp)$}
 \UnaryInfC{$z: p\imp q \vdash \textcolor{blue}{\lambda x . (z \, x)}: p \imp q$}
 \DisplayProof
 &
 \tagarray{\label{eta_annotated_bad}}
 \end{tabular}
\end{center}
\end{example}

\begin{definition}\label{defn:special_l_normal} Let $\pi$ be a preproof of $\Gamma \vdash p$. We say that $\pi$ is a \emph{special $(L \imp)$-normal form} if it is a $(L \imp)$-normal form which contains no $\eta$-pattern.
\end{definition}

Recall that an \emph{$\eta$-redex} in a lambda term $M$ is a subterm of the form $\lambda x. (N \, x)$ in which $x$ does not occur as a free variable in $N$.

\begin{lemma}\label{lemma:etapattern_implies_redex} A preproof $\pi$ of $\Gamma \vdash p$ which is a $(L \imp)$-normal form contains an $\eta$-pattern if and only if $f^\Gamma_p(\pi)$ contains an $\eta$-redex.
\end{lemma}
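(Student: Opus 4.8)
The plan is to prove the equivalence by structural induction on the $(L\imp)$-normal form $\pi$, exploiting the fact that in a cut-free well-labelled preproof each $\lambda$-abstraction in $f^\Gamma_p(\pi)$ is produced by a unique $(R\imp)$ rule and each application node comes from a $(L\imp)$ rule. For the inductive step, if $\pi$ ends in a rule other than $(R\imp)$ then that last rule neither creates nor destroys $\eta$-redexes in the translated term: $(\operatorname{weak})$ and $(\operatorname{ex})$ leave the term unchanged, $(\operatorname{ctr})$ only renames a free variable, and $(L\imp)$ substitutes $[z := (y\,L)]$; in each case a direct inspection --- using well-labelledness to keep the name of an interior class disjoint from every other class, so that no spurious $\eta$-redex such as $\lambda x.(P\,x)$ can appear by a clash of names --- shows $f^\Gamma_p(\pi)$ has an $\eta$-redex iff the translation of the immediate subproof does. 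On the preproof side, a last rule other than $(R\imp)$ cannot be the $(R\imp)$ of a new $\eta$-pattern, and cannot be the $(L\imp)$ of one either (that would require an $(R\imp)$ strictly below it), so $\pi$ contains an $\eta$-pattern iff its immediate subproof does. Hence by the inductive hypothesis these cases are immediate.

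The crux is the case $\pi = (R\imp)$ applied to a $(L\imp)$-normal form $\psi$, abstracting a variable $x:s$, so $f^\Gamma_p(\pi) = \lambda x. f(\psi)$. The $\eta$-redexes of $\lambda x.f(\psi)$ are those of $f(\psi)$ --- which by the inductive hypothesis correspond to $\eta$-patterns of $\psi$, and these are also $\eta$-patterns of $\pi$ --- together with the ``new'' redex $\lambda x.f(\psi)$ itself, present exactly when $f(\psi) = (N\,x)$ with $x \notin \operatorname{FV}(N)$. Likewise an $\eta$-pattern of $\pi$ either lies in $\psi$ or has this final $(R\imp)$ as its displayed $(R\imp)$ rule. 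So everything reduces to the claim: \emph{$\psi$ together with the final $(R\imp)$ forms an $\eta$-pattern $\iff$ $f(\psi) = (N\,x)$ with $x \notin \operatorname{FV}(N)$.}

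For the forward direction, the $(L\imp)$ rule of the $\eta$-pattern produces, since $f(\zeta)$ is a variable and (by Lemma \ref{lemma:var_occurs}, applied inside the $(L\imp)$-normal form) $f(\theta)$ is the eliminated variable, the term $(y\,w)$ with $w = f(\zeta)$. Walking down the right-branch-only, $(R\imp)$-free path to the final $(R\imp)$, the tracked occurrence of $w$ is never the subject of a substitution --- its current name is always an ancestor of the abstracted $x$, hence not a variable eliminated by any $(L\imp)$, and well-labelledness keeps that name apart from those that are eliminated --- so the running translation stays of the form $(\,\cdot\,,w')$, while condition (iii) of an $\eta$-pattern forces the abstracted occurrence to have exactly one productive weak-ancestor leaf (the axiom in $\zeta$) and hence to occur free exactly once in $f(\psi)$, in that argument slot; thus $f(\psi) = (N\,x)$ with $x \notin \operatorname{FV}(N)$. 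Conversely, given $f(\psi) = (N\,x)$ with $x \notin \operatorname{FV}(N)$, walk up the right-branch spine of $\psi$ from its root, skipping structural rules and taking the right branch at each $(L\imp)$; since $f(\psi)$ is an application this walk meets no $(R\imp)$ before an $(L\imp)$, and it proceeds past $(L\imp)$ rules whose right branch still translates to an application until it reaches the $(L\imp)$ rule $L_{\mathrm{pat}}$ whose right branch $\theta$ translates to its eliminated variable (Lemma \ref{lemma:var_occurs}) and whose left branch $\zeta$ translates to the variable occupying the argument slot of $(N\,x)$; no $(R\imp)$ can occur between $L_{\mathrm{pat}}$ and the final $(R\imp)$, since once the running translation is an application it remains one. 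Finally, because $x$ occurs free exactly once in $f(\psi)$, the weak-ancestor tree of the abstracted occurrence has a single productive leaf --- an axiom, necessarily the one in $\zeta$ supplying that occurrence, as confirmed by Lemma \ref{lemma:preimage_vars} identifying $\zeta$ up to $\sim_o$ with a variable normal form --- with every other leaf a weakening (a second axiom leaf, or an $(L\imp)$ leaf, would contribute a second free occurrence of $x$, in argument or head position respectively); this is exactly condition (iii), and conditions (i), (ii) have been arranged by the construction.

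The main obstacle is this $(R\imp)$ case, and within it the translation of the purely syntactic condition ``$x$ occurs free exactly once, in argument position'' into condition (iii) of Definition \ref{definition:eta_pattern}: one must account for weakening leaves, which --- as Example \ref{example:bad_eta} shows --- can make the contraction tree of $x$ genuinely non-trivial even in an honest $\eta$-pattern, and one must repeatedly invoke well-labelledness to prevent a bound variable and a free variable of the same type from being conflated, both when tracking the surviving occurrence down the right-branch spine and when checking that no ambient rule spuriously creates an $\eta$-redex. The remaining bookkeeping (termination of the upward walk, and matching ``$f(\zeta), f(\theta)$ are variables'' with Lemmas \ref{lemma:var_occurs} and \ref{lemma:preimage_vars}) is routine.
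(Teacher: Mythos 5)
Your proof is correct and follows essentially the same route as the paper: the core of both arguments is the identification, at a given $(R\imp)$ rule, of condition (iii) of an $\eta$-pattern with ``$x$ occurs free exactly once, in argument position'' by distinguishing axiom/$(L\imp)$ leaves from weakening leaves of the contraction tree, together with tracking the argument-slot occurrence along the right-branch, $(R\imp)$-free spine. The only difference is organizational --- you wrap this in a structural induction so that only the final-$(R\imp)$ case needs the detailed analysis, whereas the paper argues directly about an $\eta$-pattern located anywhere in $\pi$ and observes that the resulting redex survives in (resp.\ can be located from) the full translation.
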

\begin{proof}
Suppose that $\pi$ is an $(L \imp)$-normal form which contains an $\eta$-pattern \eqref{eq:eta_pattern}. Then Lemma \ref{lemma:var_occurs} shows that $z:p$ occurs as a free variable in $f^{\Theta,z:p,\Theta'}_p(\theta)$ which must therefore be equal to $z:p$. The translation of the part of the $\eta$-pattern ending at the $(L \imp)$ rule is therefore $(y \, x')$ where $x':s = f^{\Delta}_s(\zeta)$. Since $\pi$ is well-labelled there is precisely one occurrence of $x':s$ in $\Delta$ which is a weak ancestor of $x:s$ but not necessarily a strong ancestor. Since this occurrence cannot be a weak ancestor both of $x:s$ and of an occurrence eliminated in a $(L \imp)$ rule, we see that the translation of the $\eta$-pattern is of the form $\lambda x . (M \, x)$ for some term $M$. 

This term $M$ is constructed from $(L \imp)$ rules within the $\eta$-pattern starting with $y$ and the only way for $x$ to appear as a free variable in $M$ is for some weak ancestor of $x$ to appear in the antecedent of the left hand branch of one of these $(L \imp)$ rules. But by condition (iii) of a special $(L \imp)$-normal form such weak ancestors must all be introducing by weakenings, from which we conclude that $x$ is not free in $M$. This shows that the translation of the $\eta$-pattern is an $\eta$-redex, which survives in the translation of $\pi$.

Conversely, suppose that $f^\Gamma_p(\pi)$ contains an $\eta$-redex $\lambda x. (M \, x)$ where $x:s, M:s \imp p$. Then $\eta$ contains, since it is well-labelled, precisely one $(R \imp)$ rule that eliminates an occurrence of $x:s$ and we may assume it is as displayed in \eqref{eq:eta_pattern}. Follow the tree upwards from this rule taking the right hand branch at every $(L \imp)$ rule until an $(L \imp)$ rule is encountered for which the translation of the right hand branch $\theta$ is a variable $z:p$ and an occurrence of this variable is eliminated by the $(L \imp)$ rule. Since the translation of the tree above the $(R \imp)$ rule is $(M \, x)$ this walk encounters no $(R \imp)$ rule and is guaranteed to encounter an $(L \imp)$ rule of the specified kind. The left hand branch $\zeta$ of this $(L \imp)$ rule must similarly have for its translation a variable.

Now consider the contraction tree of $x:s$. It is clear that it contains one leaf corresponding to a weak ancestor introduced by $(\operatorname{ax})$ in $\zeta$. Suppose that there were another weak ancestor introduced by $(L \imp)$ or $(\operatorname{ax})$. By the proof of Lemma \ref{lemma:preimage_vars} we know that $\zeta, \theta$ contain no $(L \imp)$ rules so this other weak ancestor must be introduced between $(R \imp)$ and $(L \imp)$ in the $\eta$-pattern or in one of the left hand branches of one of the intermediate $(L \imp)$ rules and therefore occurs as a free variable in $M$, which is a contradiction. Hence $\pi$ contains an $\eta$-pattern.
\end{proof}

\begin{lemma}\label{lemma:simo_specialnormal} Suppose that $\pi_1,\pi_2$ are $(L \imp)$-normal forms with $\pi_1 \sim_o \pi_2$. If $\pi_1$ is a special $(L \imp)$-normal form then so is $\pi_2$.
\end{lemma}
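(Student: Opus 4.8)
The plan is to deduce this immediately from the two preceding results: Lemma \ref{lemma:F_welldefined}(i), which says $\sim_o$-equivalent preproofs have literally equal translations, and Lemma \ref{lemma:etapattern_implies_redex}, which characterises the presence of an $\eta$-pattern in a $(L\imp)$-normal form in terms of the presence of an $\eta$-redex in its translation.

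Concretely, first I would observe that since $\pi_1 \sim_o \pi_2$, Lemma \ref{lemma:F_welldefined}(i) gives $f^\Gamma_p(\pi_1) = f^\Gamma_p(\pi_2)$ as lambda terms (not merely up to $\beta\eta$). Next, since $\pi_1$ is by hypothesis a special $(L\imp)$-normal form, it contains no $\eta$-pattern, so by the ``only if'' direction of Lemma \ref{lemma:etapattern_implies_redex} the term $f^\Gamma_p(\pi_1)$ contains no $\eta$-redex. Therefore $f^\Gamma_p(\pi_2)$ contains no $\eta$-redex either. Finally, $\pi_2$ is by hypothesis a $(L\imp)$-normal form, so the ``if'' direction of Lemma \ref{lemma:etapattern_implies_redex} applies: absence of an $\eta$-redex in $f^\Gamma_p(\pi_2)$ forces $\pi_2$ to contain no $\eta$-pattern, which is exactly the statement that $\pi_2$ is a special $(L\imp)$-normal form.

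There is no real obstacle here; the only point requiring a moment's care is that Lemma \ref{lemma:etapattern_implies_redex} is an equivalence applicable only to $(L\imp)$-normal forms, and both $\pi_1$ and $\pi_2$ are assumed to be $(L\imp)$-normal forms, so it can legitimately be invoked in both directions. One should also note that it is essential here that $\sim_o$ preserves translations \emph{on the nose} rather than only up to $\beta\eta$ — otherwise $\eta$-redexes could be created or destroyed — but this is precisely the content of Lemma \ref{lemma:F_welldefined}(i), since none of the generating relations of $\sim_o$ (namely $\alpha$-equivalence, $\tau$-equivalence, commuting conversions, $co$-equivalence, $\lambda$-equivalence) alter the translated term.
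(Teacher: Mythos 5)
Your argument is correct and is exactly the paper's proof, which simply cites Lemma \ref{lemma:F_welldefined}(i) and Lemma \ref{lemma:etapattern_implies_redex}; you have merely spelled out the two directions of the latter explicitly. Nothing to add.
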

\begin{proof}
Immediate from Lemma \ref{lemma:F_welldefined}(i) and Lemma \ref{lemma:etapattern_implies_redex}.
\end{proof}

\begin{lemma}\label{lemma:special_means_betaetanormal} If $\pi$ is a special $(L \imp)$-normal form then $f^\Gamma_p(\pi)$ is a $\beta\eta$-normal form.
\end{lemma}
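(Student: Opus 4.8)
The plan is to obtain this immediately by combining the two structural facts already in hand: Lemma~\ref{lemma:cutfree_means_betanormal} and Lemma~\ref{lemma:etapattern_implies_redex}. First I would note that a special $(L \imp)$-normal form is in particular a $(L \imp)$-normal form (Definition~\ref{defn:special_l_normal}), and hence cut-free by condition~(i) of Definition~\ref{definition:Limp_normal_form}. Applying Lemma~\ref{lemma:cutfree_means_betanormal} to $\pi$ then shows that $M := f^\Gamma_p(\pi)$ is a $\beta$-normal form, i.e.\ $M$ contains no $\beta$-redex.

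Next I would invoke the hypothesis that $\pi$ contains no $\eta$-pattern. Since $\pi$ is a $(L \imp)$-normal form, the contrapositive of the ``only if'' direction of Lemma~\ref{lemma:etapattern_implies_redex} shows that $M = f^\Gamma_p(\pi)$ contains no $\eta$-redex. Combining the two conclusions, $M$ contains neither a $\beta$-redex nor an $\eta$-redex.

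It then remains only to observe that a term with no $\beta$-redex and no $\eta$-redex is $\beta\eta$-normal. This is unwinding definitions: a $\beta\eta$-redex is by definition either a $\beta$-redex or an $\eta$-redex, so a term admits a $\beta\eta$-reduction step precisely when it contains a subterm of one of these two kinds; having ruled out both, $M$ admits no $\beta\eta$-reduction, which is exactly the assertion that $M$ is a $\beta\eta$-normal form.

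I do not expect any real obstacle here — the genuine content has already been discharged in Lemma~\ref{lemma:etapattern_implies_redex}, in particular its nontrivial ``only if'' direction where weakenings can obscure the $\eta$-pattern as in Example~\ref{example:bad_eta}. The only step requiring a moment's care is the final paragraph: checking that ``no $\eta$-redex'' genuinely forbids every $\eta$-reduction step, including $\eta$-reduction under $\lambda$-binders, which holds because $\to_\eta$ is the compatible (congruence) closure of the basic $\eta$-rule and causes no difficulty.
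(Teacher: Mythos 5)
Your proposal is correct and is exactly the paper's argument: the paper's proof simply cites Lemma~\ref{lemma:cutfree_means_betanormal} (for $\beta$-normality, via cut-freeness) and Lemma~\ref{lemma:etapattern_implies_redex} (for the absence of $\eta$-redexes), which is precisely the decomposition you spell out.
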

\begin{proof}
Immediate from Lemma \ref{lemma:cutfree_means_betanormal} and Lemma \ref{lemma:etapattern_implies_redex}.
\end{proof}

\begin{lemma}\label{lemma:special_l_normal} Every preproof $\pi$ is equivalent under $\sim_p$ to a special $(L \imp)$-normal form.
\end{lemma}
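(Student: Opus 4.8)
The plan is to peel off $\eta$-patterns one at a time, measuring progress by the size of the translated term. First I would use Lemma~\ref{lemma:L_normal_form} to fix an $(L\imp)$-normal form $\pi_0$ with $\pi_0 \sim_p \pi$, so that it suffices to treat the case where $\pi$ is itself an $(L\imp)$-normal form. For any $(L\imp)$-normal form $\rho$ the term $f^\Gamma_p(\rho)$ is $\beta$-normal by Lemma~\ref{lemma:cutfree_means_betanormal}; let $\mu(\rho)$ be its number of symbols, and argue by strong induction on $\mu(\pi)$. If $\pi$ contains no $\eta$-pattern it is a special $(L\imp)$-normal form by Definition~\ref{defn:special_l_normal} and we are done. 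Otherwise fix an $\eta$-pattern in $\pi$ as displayed in \eqref{eq:eta_pattern}; the goal is to produce an $(L\imp)$-normal form $\pi'$ with $\pi' \sim_p \pi$ and $\mu(\pi') < \mu(\pi)$, after which the inductive hypothesis applied to $\pi'$ shows $\pi' \sim_p$ some special $(L\imp)$-normal form, hence so is $\pi$.

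The substantial step is to $\eta$-contract the chosen pattern using $\sim_p$. The branches $\zeta$ and $\theta$ of the pattern are themselves $(L\imp)$-normal forms (a sub-preproof of an $(L\imp)$-normal form is one), and by hypothesis their translations are variables, so Lemma~\ref{lemma:preimage_vars} puts them $\sim_o$-equivalent to variable normal forms (an axiom followed by weakenings) and in particular free of $(L\imp)$ rules. I would push those weakenings below the displayed $(L\imp)$ rule with \eqref{comm_weak_L}. By clause~(iii) of an $\eta$-pattern the contraction tree of the variable $x:s$ eliminated by the final $(R\imp)$ rule has its unique axiom leaf inside $\zeta$ and all remaining leaves introduced by weakenings, so repeated use of counitality \eqref{co_weak_ctr} together with Remark~\ref{remark:old_co_ctr_comm2} trims this tree to a pure strong-ancestor path; this makes $\zeta$ the single axiom on $x:s$ and reduces the displayed $(L\imp)$ rule to $x:s \vdash s$ and $z:p \vdash p$ on its two branches. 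Since $x:s$ now travels downward only along right-hand branches of the intermediate $(L\imp)$ rules and is never the variable introduced by one of them, \eqref{comm_R_L} commutes the final $(R\imp)$ rule upward past every intermediate $(L\imp)$ rule until it sits directly beneath the displayed $(L\imp)$. At that point the sub-preproof rooted at the relocated $(R\imp)$ is, up to a renaming of bound variables ($\sim_\alpha$), exactly the left-hand side of \eqref{eta}, and applying $\eta$-equivalence replaces it by a single axiom. Every move before the last lies in $\sim_o$ and so preserves the translation, while \eqref{eta} performs one step of $\eta$-reduction (cf.\ the remark following Lemma~\ref{lemma:F_welldefined} and the proof of Lemma~\ref{lemma:etapattern_implies_redex}); writing $\pi''$ for the result, we get $\pi'' \sim_p \pi$ with $\pi''$ cut-free, and $f^\Gamma_p(\pi'')$ obtained from $f^\Gamma_p(\pi)$ by contracting the $\eta$-redex $\lambda x.(M\,x)$ supplied by the pattern, which survives in $f^\Gamma_p(\pi)$ by the proof of Lemma~\ref{lemma:etapattern_implies_redex}. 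Hence $|f^\Gamma_p(\pi'')| < |f^\Gamma_p(\pi)|$.

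Finally I would re-apply Lemma~\ref{lemma:L_normal_form} to $\pi''$ to obtain an $(L\imp)$-normal form $\pi'$ with $\pi' \sim_p \pi''$; since $\pi''$ is already cut-free, the procedure of that lemma uses only relations of $\sim_o$ (contraction normal forms, $\lambda$-equivalences, commuting conversions), which preserve the translation by Lemma~\ref{lemma:F_welldefined}(i), so $f^\Gamma_p(\pi') = f^\Gamma_p(\pi'')$ and $\mu(\pi') = |f^\Gamma_p(\pi'')| < \mu(\pi)$, completing the induction. I expect the main obstacle to be the middle paragraph: checking that the trimming of the contraction tree by counitality, the normalisation of $\zeta$ and $\theta$, and the upward commutation of the $(R\imp)$ rule can always be carried out and really do bring an arbitrary $\eta$-pattern into the rigid four-node shape of \eqref{eta}. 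The delicate bookkeeping is with weakenings (as in Example~\ref{example:bad_eta}) and with the need to rearrange exchanges via $\tau$-equivalence so that the commuting conversions \eqref{comm_weak_L} and \eqref{comm_R_L} literally apply.
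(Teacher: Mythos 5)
Your proposal is correct and follows essentially the same route as the paper: reduce to an $(L\imp)$-normal form, collapse each $\eta$-pattern by normalising its two branches via Lemma~\ref{lemma:preimage_vars}, pushing weakenings down with \eqref{comm_weak_L}, trimming the contraction tree of $x:s$ with \eqref{co_weak_ctr} and Remark~\ref{remark:old_co_ctr_comm2}, commuting the $(R\imp)$ rule upward until the pattern has the rigid shape of \eqref{eta}, and then applying $\eta$-equivalence. The only difference is organisational: you induct on the length of the translated term, whereas the paper simply processes all $\eta$-patterns from greatest to lowest height in one pass; both terminate for the same reason.
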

\begin{proof}
We may by Lemma \ref{lemma:L_normal_form} assume $\pi$ is a $(L \imp)$-normal form. Consider an $\eta$-pattern \eqref{eq:eta_pattern} within $\pi$. By Lemma \ref{lemma:preimage_vars} there is a preproof equivalent under $\sim_p$ to $\pi$ in which the branch of the proof given by the $\eta$-pattern is replaced by
\begin{center}
\begin{tabular}{ >{\centering}m{10cm} >{\centering}m{0.5cm}}
    \AxiomC{}
    \RightLabel{$(\operatorname{ax})$}
    \UnaryInfC{$x':s \vdash x':s$}
    \AxiomC{}
    \RightLabel{$(\operatorname{ax})$}
    \UnaryInfC{$z:p \vdash z:p$}
    \RightLabel{$(L \imp)$}
    \BinaryInfC{$y:s \imp p,x':s \vdash (y \, x'):p$}
 \noLine
 \UnaryInfC{$\vdots$}
 \noLine
 \UnaryInfC{$\Gamma,x:s,\Gamma' \vdash M:p$}
 \RightLabel{$(R \imp)$}
 \UnaryInfC{$\Gamma,\Gamma' \vdash \lambda x.(M \, x):s \imp p$}
 \DisplayProof
 &
 \tagarray{\label{eq:eta_pattern2}}
 \end{tabular}
\end{center}
where we use commuting conversions to move any $(\operatorname{weak})$ rules below the $(L \imp)$. Using \eqref{co_weak_ctr} and Remark \ref{remark:old_co_ctr_comm2} we may eliminate all weak ancestors of $x:s$ in $\pi$ except for the displayed $x':s$, yielding a preproof in which the topmost occurrence of $x:s$ is the strong ancestor of occurrence eliminated in the $(R \imp)$ rule:
\begin{center}
\begin{tabular}{ >{\centering}m{10cm} >{\centering}m{0.5cm}}
    \AxiomC{}
    \RightLabel{$(\operatorname{ax})$}
    \UnaryInfC{$x:s \vdash x:s$}
    \AxiomC{}
    \RightLabel{$(\operatorname{ax})$}
    \UnaryInfC{$z:p \vdash z:p$}
    \RightLabel{$(L \imp)$}
    \BinaryInfC{$y:s \imp p, x:s \vdash (y \, x):p$}
 \noLine
 \UnaryInfC{$\vdots$}
 \noLine
 \UnaryInfC{$\Gamma,x:s,\Gamma' \vdash M:p$}
 \RightLabel{$(R \imp)$}
 \UnaryInfC{$\Gamma,\Gamma' \vdash \lambda x.(M \, x):s \imp p$}
 \DisplayProof
 &
 \tagarray{\label{eq:eta_pattern3}}
 \end{tabular}
\end{center}
The rules intermediate between the $(R \imp)$ and $(L \imp)$ in \eqref{eq:eta_pattern3} are either structural rules or $(L \imp)$ rules and by \eqref{comm_R_ex},\eqref{comm_R_weak},\eqref{comm_R_ctr} and \eqref{comm_R_L} we may commute the $(R \imp)$ with all of these rules, until we obtain a preproof equivalent to $\pi$ under $\sim_p$ with the original $\eta$-pattern branch replaced by
\begin{center}
    \AxiomC{}
    \RightLabel{$(\operatorname{ax})$}
    \UnaryInfC{$x:s \vdash x:s$}
    \AxiomC{}
    \RightLabel{$(\operatorname{ax})$}
    \UnaryInfC{$z:p \vdash z:p$}
    \RightLabel{$(L \imp)$}
    \BinaryInfC{$y:s \imp p,x:s \vdash (y \, x):p$}
    \RightLabel{$(R \imp)$}
    \UnaryInfC{$y:s \imp p \vdash \lambda x.(y \, x):s \imp p$}
 \noLine
 \UnaryInfC{$\vdots$}
 \noLine
 \DisplayProof
\end{center}
which is by $\eqref{eta}$ equivalent to
\begin{center}
    \AxiomC{}
    \RightLabel{$(\operatorname{ax})$}
    \UnaryInfC{$y:s \imp p \vdash y: s \imp p$}
 \noLine
 \UnaryInfC{$\vdots$}
 \noLine
 \DisplayProof
\end{center}
Applying the above reasoning to all $\eta$-patterns in $\pi$ from greatest to lowest height (measuring the height at the $(R \imp)$ rule) completes the proof.
\end{proof}

\begin{proof}[Proof of Proposition \ref{prop:curry_howard_actualwork}] Let $\Gamma$ be repetition-free and let $\mathbb{SL}\Sigma^\Gamma_p$ denote the set of preproofs of $\Gamma \vdash p$ which are special $(L \imp)$-normal forms. Let $\sim_p$ denote the induced relation on $\mathbb{SL}\Sigma^\Gamma_p$ noting that two elements may be equivalent via intermediate preproofs that are not special $(L\imp)$-normal forms. The inclusion $\mathbb{SL}\Sigma^\Gamma_p \subseteq \Sigma^\Gamma_p$ induces by Lemma \ref{lemma:special_l_normal} a bijection
\be
\xymatrix{
\mathbb{SL}\Sigma^\Gamma_p/\!\sim_p \ar[r]^-{\cong} & \Sigma^\Gamma_p/\!\sim_p
}
\ee
Recall that $Q = [\Gamma]$. Now consider the translation map $f^\Gamma_p$ restricted to special $(L \imp)$-normal forms and the induced map on the quotients
\[
\overline{f^\Gamma_p}: \mathbb{SL}\Sigma^\Gamma_p/\!\sim_p \lto \Lambda^Q_p/=_{\beta\eta}\,.
\]
We have a commutative diagram 
\be
\xymatrix@C+1pc{
\Sigma^\Gamma_p\ar[d] & \mathbb{SL}\Sigma^\Gamma_p\ar[d] \ar@{->}[l]_-{\operatorname{inc}}\ar[r]^-{f^\Gamma_p}\ar[d] & \Lambda^Q_p \ar[d]\\
\Sigma^\Gamma_p/\!\sim_p & \mathbb{SL}\Sigma^\Gamma_p/\!\sim_p \ar[l]^-{\cong}\ar[r]_-{\overline{f^\Gamma_p}} & \Lambda^Q_p / =_{\beta \eta}
}
\ee
in which the vertical arrows are the canonical maps to the quotient. It clearly suffices to prove that $\overline{f^\Gamma_p}$ is a bijection.

To prove it is injective, let $\pi_1,\pi_2 \in \mathbb{SL}\Sigma^\Gamma_p$ be such that $f^\Gamma_p(\pi_1) =_{\beta \eta} f^\Gamma_p(\pi_2)$. Since both of these terms are $\beta\eta$-normal forms by Lemma \ref{lemma:special_means_betaetanormal} it follows from a standard result in the theory of lambda calculus \cite[Corollary 4.3]{selinger} that $f^\Gamma_p(\pi_1) = f^\Gamma_p(\pi_2)$ in $\Lambda^Q_p$. Since $\Gamma$ is assumed to be repetition-free Proposition \ref{prop:alpha_implies_p} then implies $\pi_1 \sim_p \pi_2$ as required.

To prove surjectivity of $\overline{f^\Gamma_p}$ we prove surjectivity of the map
\be\label{eq:translate_betaeta}
f^\Gamma_p: \mathbb{SL}\Sigma^\Gamma_p \lto \mathbb{N} \Lambda^Q_p
\ee
where $\mathbb{N} \Lambda^Q_p$ denotes the set of $\beta\eta$-normal forms. The proof is by induction of the proposition $P(n)$ which says that for any repetition-free $\Gamma$ and formula $p$ any $\beta\eta$-normal lambda term $M$ of length $n$ is in the image of \eqref{eq:translate_betaeta} where $Q = [\Gamma]$. By appending exchanges and weakenings we may assume without loss of generality that $\Gamma$ is the set of distinct free variables of $M$, in order of appearance. The base case is clear by inspection of \eqref{eq:var_normal_form}. If $M = \lambda x . N \in \mathbb{N}\Lambda^Q_p$ is an abstraction with $p = q \imp r, x:q$ and $N:r$ and $x \notin Q$ then $N \in \mathbb{N}\Lambda^{Q \cup \{x:q\}}_r$ so by the inductive hypothesis there is a special $(L \imp)$-normal form $\psi$ with $f^{\Gamma,x:q}_r(\psi) = N$ and by appending a $(R \imp)$ rule to $\pi$ as in \eqref{eq:abstract_normal_form} we construct a special $(L \imp)$-normal form $\pi$ with $f^\Gamma_p(\pi) = M$. If $M \in \mathbb{N} \Lambda^Q_p$ is an application then since $M$ is $\beta\eta$-normal it must be of the form \eqref{eq:normal_form_app_term2} that is
\be
M = \big( (\cdots( (y_b \, L'_b) \, L'_{b-1} ) \, \cdots L'_1) \, R'\big)
\ee
for some formulas $p_1,\ldots,p_b,s$ and $\beta\eta$-normal terms $L'_j:p_j$ and $R':s$ and variable $y_b$. Possibly $b = 0$ in which case $M = (y R')$. As in the proof of Lemma \ref{lemma:preimage_app} we construct from this data a sequence of formulas $y_b,\Gamma_b,\ldots,\Gamma_1,\Delta$ and terms $R:s$ and $L_j:p_j$ for $1 \le j \le b$. By the inductive hypothesis we have special $(L \imp)$-normal forms $\tau_j$ and $\zeta$ such that $f^{\Gamma_j}_{p_j}(\tau_j) = L_j$ and $f^{\Delta}_s(\zeta) = R$. From these preproofs and the contraction pattern that produces $y_b,L'_1,\ldots,L'_b,R'$ from $y_b,L_1,\ldots,L_b,R$ we construct an application normal form $\pi$ as given in the statement of Lemma \ref{lemma:preimage_app} with $f^\Gamma_p(\pi) = M$. By construction $\pi$ is a special $(L \imp)$-normal form so the proof is complete.
\end{proof}

Let $\mathbb{N} \Lambda^Q_p$ denote the subset of $\beta\eta$-normal forms in $\Lambda^Q_p$. What the proof of Proposition \ref{prop:curry_howard_actualwork} actually shows is that there is a bijection
\be
\xymatrix{
\mathbb{SL}\Sigma^\Gamma_p / \!\sim_o \ar[r]^-{\cong} & \mathbb{N} \Lambda^Q_p\,.
}
\ee
This is still not satisfactory. For example, we cannot rule out \emph{a priori} that there are some special $(L \imp)$-normal forms $\pi_1,\pi_2$ that are related by $\sim_o$ but every chain of generating relations between them involves intermediate preproofs which are not special $(L \imp)$-normal forms. The methods already developed suffice to prove a much stronger statement, which we treat systematically in Section \ref{section:normal_form_seq}.


\subsection{Normal forms for sequent calculus proofs}\label{section:normal_form_seq}

The cut-elimination theorem of Gentzen \cite{gentzen} is the first step in the direction of establishing a normal form for sequent calculus proofs, but as there remain many cut-free proofs in sequent calculus that are ``the same'' this can hardly be called a normal form. The work of Mints \cite{mints} building on Kleene's work on permutative conversions \cite{kleene} is the first to establish a true normal form result for sequent calculus proofs, albeit in a system that is not quite standard LJ. In this section we revisit the topic of such normal forms.

The guiding principle behind our normal form for sequent calculus proofs is the concept of \emph{encapsulation}. Consider a preproof of the form
\begin{center}
    \AxiomC{$\zeta$}
    \noLine
    \UnaryInfC{$\vdots$}
    \noLine
    \UnaryInfC{$\Delta \vdash R : s$}
    \AxiomC{}
    \RightLabel{$(\operatorname{ax})$}
    \UnaryInfC{$x:p \vdash x:p$}
    \RightLabel{$(L \imp)$}
    \BinaryInfC{$y,\Delta \vdash (y \, R):p$}
    \noLine
    \UnaryInfC{$\vdots$}
    \DisplayProof
\end{center}
The left hand branch of the $(L \imp)$ rule supplies a term $R$ that may be viewed as either data or a subroutine. This subroutine is \emph{well encapsulated} if it is possible to apprehend its role in the broader proof entirely by inspecting the branch itself, that is, if the meeting point between $\zeta$ and the rest of the proof at this $(L \imp)$ rule serves as a \emph{boundary} across which there is minimal information flow. These are vague statements; to be more precise, we identify two kinds of boundary violation which break this principle of encapsulation. There are other kinds of boundary violations that one may imagine, but these are already impossible in special $(L \imp)$-normal forms so we do not elaborate them.

In the following $\pi$ denotes a preproof of $\Gamma \vdash p$ and we assume $\Gamma$ is repetition-free. If two variable occurrences are introduced above a boundary and contracted below it, then this creates a boundary violation of contraction type:

\begin{definition} A \emph{boundary violation of $(\operatorname{ctr})$ type} in $\pi$ is a pair consisting of a $(L \imp)$ rule and a $(\operatorname{ctr})$ rule, with the latter below the former as in
\begin{center}
\begin{tabular}{ >{\centering}m{10cm} >{\centering}m{0.5cm}}
    \AxiomC{$\vdots$}
    \noLine
    \UnaryInfC{$\Lambda \vdash s$}
    \AxiomC{$\vdots$}
    \noLine
    \UnaryInfC{$$}
    \RightLabel{$(L \imp)$}
    \BinaryInfC{$$}
 \noLine
 \UnaryInfC{$\vdots$}
 \noLine
 \UnaryInfC{$\Gamma,x:p,x':p,\Gamma' \vdash q$}
 \RightLabel{$(\operatorname{ctr})$}
 \UnaryInfC{$\Gamma,x:p,\Gamma' \vdash q$}
 \DisplayProof
 &
 \tagarray{\label{eq:bound_ctr_pattern}}
 \end{tabular}
\end{center}
where the final occurrence of $x:p$ has at least two distinct weak ancestors in $\Lambda$.
\end{definition}

If a variable occurrence is introduced above a boundary and eliminated by a $(L \imp)$ rule below it, this creates a boundary violation of $(L \imp)$-type:

\begin{definition} A \emph{boundary violation of $(L \imp)$ type} in $\pi$ is a pair consisting of two $(L \imp)$ rules as in
\begin{center}
\begin{tabular}{ >{\centering}m{10cm} >{\centering}m{0.5cm}}
    \AxiomC{$\vdots$}
    \noLine
    \UnaryInfC{$\Gamma \vdash p$}
    \AxiomC{$\vdots$}
    \noLine
    \UnaryInfC{$\Lambda \vdash s$}
    \AxiomC{$\vdots$}
    \noLine
    \UnaryInfC{$$}
    \RightLabel{$(L \imp)$}
    \BinaryInfC{$$}
    \noLine
    \UnaryInfC{$\vdots$}
    \noLine
    \UnaryInfC{$\Delta, x:q, \Theta \vdash r$}
    \RightLabel{$(L \imp)$}
    \BinaryInfC{$y: p \imp q, \Gamma, \Delta, \Theta \vdash r$}
 \DisplayProof
 &
 \tagarray{\label{eq:bound_L_pattern}}
 \end{tabular}
\end{center}
where the variable occurrence $x:q$ has a strong ancestor in $\Lambda$.
\end{definition}

Recall the notation $(\operatorname{dctr})^{i,j}$ for derived contractions from Definition \ref{definition:derived_ctr}. A \emph{rule pair} in $\pi$ is a pair of rules $(r),(r')$ adjacent in the underlying tree of $\pi$ with $(r')$ occurring immediately after $(r)$ on the path from $(r)$ to the root.

\begin{definition} A preproof $\pi$ of $\Gamma \vdash p$ is called \emph{well-structured} if it is a special $(L \imp)$-normal form and further satisfies the following conditions:
\begin{itemize}
\item[(a)] There are no boundary violations of $(\operatorname{ctr})$ type.
\item[(b)] There are no boundary violations of $(L \imp)$ type.
\item[(c)] The only $(\operatorname{weak})$ rules occur in pairs $(\operatorname{weak}), (R \imp)$ with the second rule eliminating the variable occurrence introduced by the first, which is leftmost in the antecedent.
\item[(d)] There is no rule pair $(r), (L \imp)$ with $(r)$ structural on the right branch.
\item[(e)] There is no rule pair $(R \imp), (r)$ where $(r)$ is structural.
\item[(f)] There is no rule pair $(R \imp), (L \imp)$ with the $(R \imp)$ on the right branch.
\item[(g)] There is no pair $(\operatorname{dctr})^{a,b}, (\operatorname{dctr})^{a',b'}$ of consecutive maximal derived contractions with $(a',b') < (a,b)$ in the lexicographic ordering.
\item[(h)] Every $(\operatorname{ex})$ rule occurs as part of a derived contraction.
\end{itemize}
\end{definition}

Recall from Definition \ref{definition:well_ordered} the notion of a well-ordered preproof.

\begin{definition}\label{defn:normal} A preproof $\pi$ is \emph{normal} if it is of the form
\begin{center}
\AxiomC{$\psi$}
\noLine
\UnaryInfC{$\vdots$}
\noLine
\UnaryInfC{$\Gamma'' \vdash p$}
\RightLabel{$(\operatorname{lad})$}
\doubleLine
\UnaryInfC{$\Gamma' \vdash p$}
\RightLabel{$(\operatorname{weak})$}
\doubleLine
\UnaryInfC{$\Gamma \vdash p$}
\DisplayProof
\end{center}
where $\psi$ is well-ordered and well-structured, and the ladders and weakening rules are
\begin{gather*}
(\operatorname{lad})^{c_1},(\operatorname{lad})^{c_2},\ldots,(\operatorname{lad})^{c_n}\\
(\operatorname{weak})^{d_1},(\operatorname{weak})^{d_2},\ldots,(\operatorname{weak})^{d_m}
\end{gather*}
with $c_1 < c_2 < \cdots < c_n$ and $d_1 < d_2 < \cdots < d_m$ (using the notation of Definition \ref{definition:ladder} and Definition \ref{definition:index_weak}). One or both of these series of rules may be empty.
\end{definition}

\begin{remark}\label{remark:on_normal}
Note that by (c), (h) no well-structured preproof can end with exchanges or weakenings, so that the subproof $\psi$ of Definition \ref{defn:normal} can be unambiguously recovered from the normal preproof $\pi$. The sequence $\Gamma''$ is by the hypothesis of well-ordering determined by the term $f^{\Gamma}_p(\pi) = f^{\Gamma''}_p(\psi)$ and so from this term and $\Gamma$ the ladders and weakening rules and their order are completely determined. 
\end{remark}


\begin{lemma}\label{lemma:subproof} If $\pi$ is well-structured then any subproof of $\pi$ not ending in $(\operatorname{weak})$ or $(\operatorname{ex})$ is also well-structured.
\end{lemma}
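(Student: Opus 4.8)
The plan is to verify each of the conditions (a)--(h) defining well-structuredness, along with the special $(L\imp)$-normal form conditions, for an arbitrary subproof $\sigma$ of a well-structured $\pi$ that does not end in $(\operatorname{weak})$ or $(\operatorname{ex})$. The key observation is that almost all of the defining conditions are \emph{local} constraints on configurations of rules, so that any configuration occurring in $\sigma$ already occurs in $\pi$, and is therefore already excluded.

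First I would handle the special $(L\imp)$-normal form conditions. That $\sigma$ is cut-free is immediate, since $\pi$ is. Well-labelledness of $\sigma$ requires an argument: an interior class $\bold{x}$ of $\sigma$ need not be interior in $\pi$ (its topmost occurrence could be on the boundary of $\sigma$, having been introduced lower down in $\pi$), but in that case, since $\pi$ is well-labelled, the only occurrences of $x:p$ in $\pi$ --- hence in $\sigma$ --- lie on the path of that class and possibly its strong ancestors below; one checks this forces the conclusion, using that $\sigma$ does not end in $(\operatorname{ex})$ and the $\approx_{str}$-structure. Conditions (ii) and (iii) of Definition \ref{definition:Limp_normal_form} are about weak-ancestor and contraction-activeness relations, which are inherited downward: a $(\operatorname{ctr})$ active for a $(L\imp)$-eliminated occurrence in $\sigma$ would be the same configuration in $\pi$, contradiction; similarly for the $(\operatorname{weak})$/$\approx_{str}$ condition. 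The absence of $\eta$-patterns (Definition \ref{defn:special_l_normal}) follows the same way, since an $\eta$-pattern in $\sigma$ is literally a configuration of rules in $\pi$.

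Next I would dispense with conditions (a), (b), (d), (e), (f), (g), (h) of well-structuredness. Each of (a), (b), (d), (e), (f), (g) forbids a specific rule-pair or rule-pattern; any such pattern in $\sigma$ is a pattern in $\pi$, so is excluded. For (h), every $(\operatorname{ex})$ rule in $\sigma$ is an $(\operatorname{ex})$ rule in $\pi$, hence part of a derived contraction there --- and the entire derived contraction it belongs to lies within $\sigma$ (it is an uninterrupted sequence of $(\operatorname{ex})$ rules followed by one $(\operatorname{ctr})$, all below the $(\operatorname{ex})$ in question, and nothing has been truncated below since we pass to a subproof by choosing a higher root). A small check is needed that no derived contraction is ``cut in half'' by passing to $\sigma$; since a subproof is obtained by selecting an edge and taking everything above it, and derived contractions run downward from $(\operatorname{ex})$'s to a $(\operatorname{ctr})$, the only way to truncate one is to choose the root of $\sigma$ strictly inside it, i.e.\ for $\sigma$ to end in $(\operatorname{ex})$ --- which is excluded by hypothesis.

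The main obstacle, and the only condition requiring genuine care, is (c): it asserts that the \emph{only} $(\operatorname{weak})$ rules in the whole preproof occur in $(\operatorname{weak}),(R\imp)$ pairs of the stated form. This is not purely local --- it is a global statement about all weakenings --- but it is inherited trivially: every $(\operatorname{weak})$ in $\sigma$ is a $(\operatorname{weak})$ in $\pi$, hence occurs in $\pi$ as part of such a pair, and since the matching $(R\imp)$ lies immediately below it, that $(R\imp)$ is also in $\sigma$ (again using that we did not truncate by ending in $(\operatorname{weak})$, which would be the only way to separate the pair, and that is excluded). So in fact (c) propagates downward without difficulty once one is careful about the truncation point. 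I would therefore organize the write-up as: (1) a sentence reducing all local conditions to ``a configuration in $\sigma$ is a configuration in $\pi$''; (2) the well-labelledness argument; (3) the derived-contraction/truncation remark covering (h) and the $(\operatorname{weak}),(R\imp)$ pairing in (c); and conclude.
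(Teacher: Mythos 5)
Your proposal is correct; the paper itself leaves this verification to the reader, and your case-by-case check of the local conditions, together with the observation that the only way to sever a $(\operatorname{weak}),(R\imp)$ pair or a derived contraction is for the subproof to end in $(\operatorname{weak})$ or $(\operatorname{ex})$, is exactly the intended argument. One small simplification: since a $\approx_{str}$-class is a downward path from its introduction to its elimination and a subproof contains everything above its root edge, an interior class of the subproof is already an interior class of $\pi$ with the same occurrence set, so well-labelledness is inherited directly and the extra care you take there is unnecessary.
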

\begin{proof}
Left to the reader.
\end{proof}

\begin{proposition}\label{prop:normal_form} Let $\pi$ be a preproof of $\Gamma \vdash p$. Then
\begin{itemize}
\item[(I)] If $M$ is a variable then $\pi$ is well-structured if and only if it is an axiom rule.
\item[(II)] If $M$ is an abstraction then $\pi$ is well-structured if and only it is equivalent under $\sim_\alpha$ to an abstraction normal form \eqref{eq:abstract_normal_form} where there is no $\eta$-pattern involving the final $(R \imp)$ rule and the subproof $\psi$ of \eqref{eq:abstract_normal_form} is either well-structured, or is a well-structured proof followed by a single $(\operatorname{weak})$ rule with the introduced variable leftmost in the antecedent and eliminated by the final rule in $\pi$.
\item[(III)] If $M$ is an application then $\pi$ is well-structured if and only if it is equivalent under $\sim_\alpha$ to an application normal form as in Lemma \ref{lemma:preimage_app} in which $\zeta$ and $\tau_j$ for $1 \le j \le b$ are well-structured and the rule series \eqref{eq:tiny_weak}, \eqref{eq:tiny_lad} are empty. 
\end{itemize}
\end{proposition}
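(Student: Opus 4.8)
The plan is to prove all three parts simultaneously by induction on the length of the term $M = f^\Gamma_p(\pi)$, exactly paralleling the case analysis in Proposition \ref{prop:alpha_implies_p} and the construction of normal forms in Lemmas \ref{lemma:preimage_vars}, \ref{lemma:preimage_abstractions} and \ref{lemma:preimage_app}. Each of (I), (II), (III) is an ``if and only if'', so for each case I would argue the two directions separately: that a well-structured preproof necessarily has the claimed shape, and conversely that a preproof of the claimed shape is well-structured.

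For the forward direction of (I): if $\pi$ is a special $(L\imp)$-normal form whose translation is a variable, then by Lemma \ref{lemma:preimage_vars} (and the argument in its proof) $\pi$ is $(L\imp)$-free and consists of an axiom rule plus structural rules. Conditions (c), (h) of well-structuredness then force all weakenings to appear in $(\operatorname{weak}),(R\imp)$ pairs and all exchanges inside derived contractions; but there are no $(R\imp)$ rules available (the succedent is atomic from the point of view of this branch, or more precisely no $(R\imp)$ can appear without changing the term to an abstraction), so there can be no weakenings, and with no contractions there can be no derived contractions either, hence no exchanges. So $\pi$ is literally an axiom. The converse is immediate since an axiom rule trivially satisfies (a)--(h). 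For (II), the forward direction uses Lemma \ref{lemma:preimage_abstractions} to get $\pi \sim_o$ an abstraction normal form, but I must do better: conditions (e), (f) guarantee that the $(R\imp)$ rule genuinely sits at the root with no intervening structural or $(L\imp)$ rules on the relevant branch, so in fact $\pi$ is already $\sim_\alpha$ (not merely $\sim_o$) to \eqref{eq:abstract_normal_form}; absence of an $\eta$-pattern is part of being a special $(L\imp)$-normal form (Lemma \ref{lemma:etapattern_implies_redex}); and well-structuredness of the subproof $\psi$ follows from Lemma \ref{lemma:subproof}, with the weakening exception handled by (c). The converse direction checks (a)--(h) for the displayed form directly, using the inductive hypothesis on $\psi$.

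The substantial case is (III). For the forward direction I would start from a well-structured $\pi$ whose translation is an application and follow the walk in the proof of Lemma \ref{lemma:preimage_app}: descend from the root taking right branches at $(L\imp)$ rules. Conditions (d), (e), (f), (g), (h) are precisely engineered so that this walk meets no structural rules, no stray $(R\imp)$ rules, and no ill-ordered derived contractions or bare exchanges on the ``porch''; combined with the absence of boundary violations (a), (b) — which is what kills the rule series \eqref{eq:tiny_weak} and forces the contraction and ladder series to be empty as claimed — this shows $\pi$ is already $\sim_\alpha$ to the application normal form of Lemma \ref{lemma:preimage_app} with $\zeta, \tau_j$ well-structured (the latter again by Lemma \ref{lemma:subproof}). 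Conversely, given a preproof of the stated shape with $\zeta, \tau_j$ well-structured and empty weakening/ladder series, I verify each clause (a)--(h): (a) and (b) hold because any contraction or $(L\imp)$-elimination below a boundary would have to involve variables from the $\Gamma_j$ or $\Delta$, contradicting repetition-freeness (condition (iii) of the application normal form) or the inductive well-structuredness of the branches; (c)--(h) are checked against the explicit list of rules in the porch, derived-contraction series (iv), and the now-empty \eqref{eq:tiny_weak}, \eqref{eq:tiny_lad}.

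The main obstacle I anticipate is the bookkeeping in the forward direction of (III): showing that well-structuredness is strong enough to upgrade the $\sim_o$-equivalence of Lemma \ref{lemma:preimage_app} to a genuine $\sim_\alpha$-equivalence. The proof of Lemma \ref{lemma:preimage_app} freely applies commuting conversions, $\eqref{co_ctr_assoc}$, $\eqref{co_ctr_comm_alt}$, $\eqref{lambda_weak_L}$ and migration of $(L\imp)$ rules; I need to argue that each such move is either unnecessary (because well-structuredness already forbids the configuration it was designed to fix) or is itself an $\alpha$-renaming. Concretely: clause (d) eliminates the need for the commuting conversions that push structural rules past $(L\imp)$ on the porch; clause (h) and (g) eliminate the $\eqref{co_ctr_assoc}$/$\eqref{co_ctr_comm_alt}$ rearrangements of the derived-contraction series except for the choice of canonical-series variable names (which is $\eqref{alpha_ctr}$); clauses (a), (b) together with repetition-freeness kill the weakening-migration and the $\eqref{lambda_weak_L}$ steps; and the $(L\imp)$-migration step is vacuous once (b) holds because every $t_\alpha$ is already introduced by the previous porch $(L\imp)$. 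Assembling these observations carefully — and in the right order, so the induction on term length is genuinely available for the subproofs — is where the real work lies; everything else is routine verification against the definitions.
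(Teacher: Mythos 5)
Your treatment of (I), (II), the reverse direction of (III), and the ``upgrade from $\sim_o$ to $\sim_\alpha$'' bookkeeping in the forward direction of (III) all track the paper's proof closely and are essentially correct. But there is a genuine gap in the forward direction of (III): you never establish condition (ii) of the application normal form, namely that the branches $\zeta$ and $\tau_j$ are \emph{well-ordered} (Definition \ref{definition:well_ordered}: the antecedent of each branch equals $\operatorname{FV}^{seq}$ of its translation). This is part of the definition of an application normal form in Lemma \ref{lemma:preimage_app}, so without it you cannot conclude that a well-structured $\pi$ is $\sim_\alpha$-equivalent to such a normal form. None of the clauses (a)--(h) yields well-orderedness by direct inspection; in the proof of Lemma \ref{lemma:preimage_app} it is arranged by \emph{inserting exchanges} into the branches, a move that is not available here because you are only allowed $\alpha$-renaming. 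Your sentence ``this shows $\pi$ is already $\sim_\alpha$ to the application normal form \dots with $\zeta,\tau_j$ well-structured'' silently substitutes ``well-structured'' for the condition actually required.

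The missing idea is the implication \emph{well-structured $\Rightarrow$ well-ordered} (Corollary \ref{corollary:well_struc_implies_well_ord}), and it cannot simply be appended afterwards: the paper proves it by a minimal-counterexample argument on term length that invokes (I), (II), the already-established portion of (III) (everything except condition (ii)), Lemma \ref{lemma:app_normal_wello} for the application case, and a separate direct argument for the abstraction case (if $x:q$ is not weakened in then minimality gives $\Delta,x:q,\Delta'=\operatorname{FV}^{seq}(N)$, whence $\Delta,\Delta'=\operatorname{FV}^{seq}(\lambda x.N)$). So the proof of (III) must be interleaved with the proof of this auxiliary statement: first verify conditions (i), (iii)--(vii) using (a)--(h) as you describe, then run the well-orderedness induction using that partial result, and only then close condition (ii). Your proposal anticipates that ``ordering the induction correctly'' is where the work lies, but it does not identify well-orderedness as the thing that needs to be proved, nor the abstraction-case argument that makes the induction go through.
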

\begin{proof}
(I) If $M$ is a variable and $\pi$ is well-structured, consulting the proof of Lemma \ref{lemma:preimage_vars} we see that by condition (c) of the well-structured property there are no $(\operatorname{ctr})$ or $(\operatorname{weak})$ rules in $\pi$. There are no $(\operatorname{ex})$ rules by (h). So $\pi$ is an axiom rule. Conversely, it is clear that an axiom rule is well-structured. 

(II) Suppose $M$ is an abstraction and $\pi$ is well-structured. We must show the final rule in $\pi$ is $(R \imp)$. If we walk the tree from the root taking only right branches of $(L \imp)$ rules we eventually encounter a $(R \imp)$ rule. The only rules that may precede the first $(R \imp)$ on this walk are $(L \imp)$ and structural rules, and these are impossible by (e),(f) so $\pi$ must end in $(R \imp)$ and we are done. The reverse implication in (II) is also clear.

(III) For the reverse direction in (III) observe that if $\pi$ is a well-labelled application normal form satisfying the conditions then it is a special $(L \imp)$-normal form. Conditions (a), (b) follow respectively from condition (iii) of application normal form and the shape of the normal form proof tree, together with the assumption that the branches are well-structured. Any $(\operatorname{weak})$ rule in $\pi$ either occurs in the $\tau_j$ or $\zeta$ or at the bottom of $\pi$, and the latter is explicitly ruled out, so (c) is satisfied. Similarly for conditions (d)-(h), noting that (g) uses condition (iv) of an application normal form.

Finally suppose $M$ is an application and that $\pi$ is well-structured. Consulting the proof of Lemma \ref{lemma:preimage_app} the structural rules can only occur at the bottom of the porch by (d). Nothing needs to be done in the migration phase by (b). By (c) there are no $(\operatorname{weak})$ rules in $\zeta, \tau_j$ that need to be moved to the bottom of the porch, and nothing needs to be done to satisfy (iii) by (a). By (c),(h) the only structural rules on the porch are part of derived contractions which satisfy (iv) by condition (g). We are free to change $\pi$ up to $\alpha$-equivalence so we may assume (v) is satisfied, and (vi),(vii) are vacuous. So it only remains to prove (ii).

To do this we first prove Corollary \ref{corollary:well_struc_implies_well_ord} below, which requires only the part of (III) that we have already proven. Suppose for a contradiction that a well-structured preproof exists which is not well-ordered, and let $\rho$ be an example with $L = f^\Lambda_r(\rho)$ of minimal length. By (I) this term $L$ cannot be a variable, since an axiom rule is well-ordered. If $L$ were an application then by the part of (III) already proven $\rho$ is equivalent under $\sim_\alpha$ to a preproof which is an application normal form but for the possible failure of (ii); but if any of the branches failed to be well-ordered this would contradict minimality of $L$, and if they are all well-ordered then (ii) is satisfied and $\rho$ would therefore be well-ordered by Lemma \ref{lemma:app_normal_wello}, a contradiction. So the only possibility is that $L$ is an abstraction. By (II) then $\rho$ is equivalent under $\sim_\alpha$ to an abstraction normal form
\begin{center}
\begin{tabular}{ >{\centering}m{10cm}}
    \AxiomC{$\psi$}
    \noLine
    \UnaryInfC{$\vdots$}
    \noLine
    \UnaryInfC{$\Delta, x:q, \Delta' \vdash N:r$}
    \RightLabel{$(R \imp)$}
    \UnaryInfC{$\Delta, \Delta' \vdash \lambda x. N: q \imp r$}
 \DisplayProof
 \end{tabular}
\end{center}
By hypothesis $\Delta, \Delta' \neq \operatorname{FV}^{seq}( \lambda x . N )$. If $x:q$ is introduced by $(\operatorname{weak})$ then this contradicts minimality of $L$, and if not then by minimality $\Delta, x:q, \Delta' = \operatorname{FV}^{seq}(N)$ which contradicts $\Delta, \Delta' \neq \operatorname{FV}^{seq}( \lambda x . N )$. This completes the proof of the corollary.

Returning now to the proof of the theorem proper, the branches $\zeta, \tau_j$ cannot end in $(\operatorname{weak})$ by (c) and cannot end in $(\operatorname{ex})$ by (h) so by Lemma \ref{lemma:subproof} they are well-structured and hence by Corollary \ref{corollary:well_struc_implies_well_ord} they are well-ordered, which shows condition (ii) of an application normal form and completes the proof.
\end{proof}

\begin{corollary}\label{corollary:well_struc_implies_well_ord} If $\rho$ is a well-structured preproof then it is well-ordered.
\end{corollary}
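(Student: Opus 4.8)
The plan is to prove Corollary \ref{corollary:well_struc_implies_well_ord} by strong induction on the length of the term $L = f^\Lambda_r(\rho)$ produced by a well-structured preproof $\rho$ of $\Lambda \vdash r$ (equivalently, by taking a counterexample of minimal term length, which is the phrasing already used inside the proof of Proposition \ref{prop:normal_form}, where this corollary is in fact discharged). Throughout, $\Lambda$ is repetition-free as standing-assumed in this section. I would first record the two invariance facts that make the passage to normal forms legitimate: $\sim_\alpha$ changes neither the final sequent of a preproof (by condition (C0) for the compatible relation $\sim_\alpha$) nor its translation (since $\sim_\alpha \subseteq \sim_o$ and Lemma \ref{lemma:F_welldefined}(i)), so being well-ordered in the sense of Definition \ref{definition:well_ordered} is invariant under $\sim_\alpha$. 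This lets me freely replace $\rho$ by the abstraction or application normal forms supplied by Lemma \ref{lemma:preimage_abstractions} and Lemma \ref{lemma:preimage_app}.

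The induction then splits on the shape of $L$, using only the forward directions of parts (I), (II) of Proposition \ref{prop:normal_form} together with the fragment of (III) that does not itself invoke this corollary — namely that a well-structured $\rho$ with $L$ an application is $\sim_\alpha$-equivalent to an application normal form whose ladder and weakening series \eqref{eq:tiny_weak}, \eqref{eq:tiny_lad} are empty and whose branches $\zeta, \tau_1, \dots, \tau_b$ are well-structured (this is everything in the proof of (III) except condition (ii) of an application normal form). If $L$ is a variable, part (I) forces $\rho$ to be an axiom rule $z:s \vdash s$, which is tautologically well-ordered. If $L$ is an application, the branches $\zeta, \tau_j$ of the associated application normal form have strictly shorter translations, so the inductive hypothesis makes each of them well-ordered; that is precisely condition (ii), and Lemma \ref{lemma:app_normal_wello} then yields that $\rho$ is well-ordered. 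If $L = \lambda x . N$ is an abstraction, part (II) presents $\rho$ up to $\sim_\alpha$ as an abstraction normal form \eqref{eq:abstract_normal_form} with subproof $\psi$ which is either well-structured or is a well-structured proof $\psi'$ followed by a single weakening introducing the abstracted variable $x$ leftmost and eliminated by the final $(R\imp)$; Lemma \ref{lemma:subproof} guarantees the relevant well-structured subproof, whose translation is again $N$ of strictly smaller length, so the inductive hypothesis makes it well-ordered, and one concludes by the elementary observation that $\operatorname{FV}^{seq}(\lambda x. N)$ is obtained from $\operatorname{FV}^{seq}(N)$ by deleting the entry $x$ when $x \in \operatorname{FV}(N)$ and leaving it unchanged when $x$ was weakened in.

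The only genuine subtlety — and the point I expect to be the main obstacle — is the bookkeeping needed to see that this induction is not circular: the corollary is consumed inside the proof of Proposition \ref{prop:normal_form}(III) to establish condition (ii) of an application normal form, so in the application case I must invoke only the already-proven portion of (III) and not (ii) itself. The remaining work is routine: in the abstraction case, separating the two possibilities for how $x$ is introduced (and checking, via well-labelledness, that $x \notin \operatorname{FV}(N)$ in the weakening case); and in the application case, checking that the hypotheses of Lemma \ref{lemma:app_normal_wello} — emptiness of the ladder and weakening series — are met, which is exactly what the already-established part of the proof of (III) provides.
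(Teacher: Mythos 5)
Your proposal is correct and follows essentially the same route as the paper: the paper also proves this corollary (inside the proof of Proposition \ref{prop:normal_form}(III)) by taking a counterexample of minimal term length, handling the variable, application and abstraction cases via parts (I), (II) and the already-established portion of (III), and closing the application case with Lemma \ref{lemma:app_normal_wello}. Your explicit attention to the non-circularity of the induction and to the $\sim_\alpha$-invariance of well-orderedness matches what the paper does implicitly.
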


In particular, a well-structured preproof is precisely a normal preproof in which the series of ladders and weakenings at the bottom are empty. We may now prove a strengthening of Proposition \ref{prop:alpha_implies_p}:

\begin{proposition}\label{prop:alpha_iff_alpha} If $\Gamma$ is repetition-free and $\pi_1, \pi_2$ are normal preproofs of $\Gamma \vdash p$ then $f^\Gamma_p(\pi_1) = f^\Gamma_p(\pi_2)$ implies $\pi_1 \sim_\alpha \pi_2$.
\end{proposition}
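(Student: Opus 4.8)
The plan is to mirror the structure of the proof of Proposition \ref{prop:alpha_implies_p}, but now upgrading the conclusion from $\sim_o$ to $\sim_\alpha$ by exploiting the rigidity of the normal form. The key observation is that a normal preproof, by Remark \ref{remark:on_normal} together with Proposition \ref{prop:normal_form} and Corollary \ref{corollary:well_struc_implies_well_ord}, is essentially a well-structured well-ordered preproof with a canonically-determined series of ladders and weakenings appended at the bottom. So I would first reduce to the case that $\pi_1, \pi_2$ are well-structured (hence well-ordered): since $f^\Gamma_p(\pi_1) = f^\Gamma_p(\pi_2) =: M$, the sequence $\Gamma'' = \operatorname{FV}^{seq}(M)$ is the same for both, and the ladders $(\operatorname{lad})^{c_i}$ and weakenings $(\operatorname{weak})^{d_j}$ at the bottom of each $\pi_i$ are determined (as rule instances and in their order) by $M$ and $\Gamma$ alone, so it suffices to show that the well-structured subproofs $\psi_1, \psi_2$ of $\Gamma'' \vdash p$ with $f^{\Gamma''}_p(\psi_i) = M$ satisfy $\psi_1 \sim_\alpha \psi_2$.

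Next I would induct on the length of $M$, following the three cases of Proposition \ref{prop:normal_form}. If $M$ is a variable, then by part (I) each $\psi_i$ is an axiom rule $z:s \vdash s$ (with $\Gamma''$ repetition-free forcing the decomposition of $\Gamma''$ to be identical), hence literally equal, so $\psi_1 \sim_\alpha \psi_2$. If $M = \lambda x.N$ is an abstraction, then by part (II) each $\psi_i$ is, up to $\sim_\alpha$, an abstraction normal form \eqref{eq:abstract_normal_form} ending in $(R\imp)$; stripping the final $(R\imp)$ (after $\alpha$-renaming so both eliminate the same leftmost variable $x:q \notin \Gamma''$) gives well-structured preproofs $\chi_1, \chi_2$ of $\Gamma'', x:q \vdash r$ (or the variant where $x:q$ was weakened in, which can be normalised uniformly) with $f(\chi_i) = N$; the inductive hypothesis gives $\chi_1 \sim_\alpha \chi_2$ and hence $\psi_1 \sim_\alpha \psi_2$. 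If $M = (M^1\, M^2)$ is an application, then by part (III) each $\psi_i$ is, up to $\sim_\alpha$, an application normal form as in Lemma \ref{lemma:preimage_app} with empty $(\operatorname{weak})$ and $(\operatorname{lad})$ series; by the uniqueness clause of Lemma \ref{lemma:preimage_app} the index $b$, the sequents $\Gamma_j \vdash p_j$, the sequent $\Delta \vdash s$, the terms $L_j$ and $R$, and the derived-contraction pattern (fixed by conditions (iv),(v),(g)) all coincide; applying the inductive hypothesis to the repetition-free branches $\tau^1_j \sim_\alpha \tau^2_j$ and $\zeta^1 \sim_\alpha \zeta^2$, and assembling, yields $\psi_1 \sim_\alpha \psi_2$.

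The main obstacle I anticipate is the bookkeeping around $\alpha$-equivalence in the application case: Lemma \ref{lemma:preimage_app} only asserts that the $L_j$ and $R$ agree \emph{after} a canonical renaming of the variable occurrences that get contracted away (condition (v) and the $\bullet_u$ argument), so I must check that the normalising choices made in Proposition \ref{prop:normal_form}(III) — in particular the choice of canonical series \eqref{eq:canonical_series} for the interior contracted occurrences and the leftmost-variable conventions in (c) — can be made identically for $\pi_1$ and $\pi_2$, and that the resulting preproofs are then related purely by $\sim_\alpha$ (relabelling of interior classes) rather than requiring any of the other generating relations. This is where the hypothesis that $\Gamma$ is repetition-free does the real work, since it forces $\Delta$ and the $\Gamma_j$ to be repetition-free and hence their orders to be honestly determined by $\operatorname{FV}^{seq}$ of the corresponding subterm; once that is in place the remaining verification is a routine induction. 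A secondary subtlety is the abstraction case where $x:q$ is introduced by a weakening: here one should use \eqref{co_weak_ctr} / commuting conversions only implicitly (they are already baked into the well-structured normal form via (c)), so that the comparison of $\chi_1$ and $\chi_2$ stays within $\sim_\alpha$; I expect this to go through but it deserves an explicit sentence.
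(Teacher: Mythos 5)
Your proposal is correct and follows essentially the same route as the paper's proof: reduce via Remark \ref{remark:on_normal} to the well-structured subproofs, then induct on the length of the common translation $M$ through the three cases of Proposition \ref{prop:normal_form}, using Corollary \ref{corollary:well_struc_implies_well_ord} and the rigidity of the application normal form to keep everything within $\sim_\alpha$. The subtleties you flag (the canonical-series renaming in the application case and the weakened-in variable in the abstraction case) are exactly the points the paper's argument also handles, and your treatment of them is sound.
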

\begin{proof}
If $\pi_1,\pi_2$ are normal and $f^\Gamma_p(\pi_1) = f^\Gamma_p(\pi_2)$ then by Remark \ref{remark:on_normal} the ladders and weakenings at the bottom of $\pi_1,\pi_2$ agree and writing $\psi_1, \psi_2$ for the well-structured subproofs as in Definition \ref{defn:normal} we have
\begin{align}
f^\Gamma_p(\pi_1) = f^\Gamma_p(\pi_2) &\Longleftrightarrow f^{\Gamma''}_p(\psi_1) = f^{\Gamma''}_p(\psi_2)\label{eq:alpha_iff_alpha1}\\
\pi_1 \sim_\alpha \pi_2 &\Longleftrightarrow \psi_1 \sim_\alpha \psi_2 \label{eq:alpha_iff_alpha2}
\end{align}
The proof is similar to Proposition \ref{prop:alpha_implies_p} and is again by induction on the length of the term $M = f^\Gamma_p(\pi_1) = f^\Gamma_p(\pi_2)$. In the base case $M$ is a variable, and by what we have just said and Proposition \ref{prop:normal_form} (I) it is immediate that $\pi_1 \sim_{\alpha} \pi_2$. If $M$ is an abstraction $\lambda x. N$ then by Proposition \ref{prop:normal_form} (II) both $\psi_1, \psi_2$ end in $(R \imp)$ rules and we let $\psi'_1, \psi'_2$ denote the subproofs of $\Delta_1, x:q, \Delta'_1 \vdash q$ and $\Delta_2, x:q, \Delta'_2 \vdash q$ respectively obtained by deleting these final rules. These are either both well-structured (let us call this the first case) or both well-structured after deleting a final $(\operatorname{weak})$ rule which introduces $x:q$ in the leftmost position (call this the second case) hence from Corollary \ref{corollary:well_struc_implies_well_ord} we deduce that
\[
\Delta_1, x:q, \Delta'_1 = \Delta_2, x:q, \Delta'_2
\]
and this sequence is in the first case $\operatorname{FV}^{seq}(N)$ and in the second case $x:q,\operatorname{FV}^{seq}(N)$. In the first case let $\psi''_i = \psi'_i$ and in the second case let $\psi''_i$ be obtained from $\psi'_i$ by deleting the final $(\operatorname{weak})$, for $i \in \{1, 2\}$. Then by Corollary \ref{corollary:well_struc_implies_well_ord} the preproofs $\psi''_i$ are normal preproofs of the same sequent $\Theta \vdash q$ and 
\[
f^{\Theta}_q(\psi''_1) = N = f^{\Theta}_q(\psi''_2)
\]
so by the inductive hypothesis $\psi''_1 \sim_\alpha \psi''_2$ from which it follows that $\psi'_1 \sim_\alpha \psi'_2$ and hence $\pi_1 \sim_\alpha \pi_2$. If $M$ is an application then by Proposition \ref{prop:normal_form} (III) both $\psi_1, \psi_2$ are equivalent under $\sim_\alpha$ to application normal forms, in which the $\tau_j$ and $\zeta$ are well-structured, and so by the inductive hypothesis equivalent under $\sim_\alpha$, hence $\pi_1 \sim_\alpha \pi_2$.
\end{proof}

\begin{lemma}\label{lemma:all_normal_eq} If $\Gamma$ is repetition-free then every preproof $\pi$ is equivalent under $\sim_p$ to a normal preproof.
\end{lemma}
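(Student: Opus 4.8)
The plan is to reduce the statement about an arbitrary preproof $\pi$ to the normal form results already in hand. First I would invoke Lemma \ref{lemma:special_l_normal} to replace $\pi$ by an equivalent (under $\sim_p$) special $(L \imp)$-normal form, so from the outset we may assume $\pi$ is a special $(L \imp)$-normal form. The goal is then to show that any such preproof is equivalent under $\sim_p$ to a \emph{normal} preproof in the sense of Definition \ref{defn:normal}. Since $\Gamma$ is repetition-free, Proposition \ref{prop:normal_form} is available, and the strategy is to push $\pi$ through the three structural conditions of being well-structured, peeling off the trailing ladders and weakenings at the end.

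The core of the argument is an induction on the length of the term $M = f^\Gamma_p(\pi)$, paralleling the proofs of Lemma \ref{lemma:L_normal_form} and Proposition \ref{prop:curry_howard_actualwork}. In the base case $M$ is a variable and Lemma \ref{lemma:preimage_vars} puts $\pi$ (up to $\sim_o$, hence up to $\sim_p$) into variable normal form \eqref{eq:var_normal_form}, which after reordering the weakenings so that $(\operatorname{weak})^{d_1},\ldots,(\operatorname{weak})^{d_m}$ have $d_1 < \cdots < d_m$ (using commuting conversions \eqref{comm_weak_weak}) is a normal preproof with empty ladder block. If $M$ is an abstraction $\lambda x.N$, then by Lemma \ref{lemma:preimage_abstractions} (and the fact that a special $(L \imp)$-normal form contains no $\eta$-pattern by Lemma \ref{lemma:etapattern_implies_redex}) $\pi$ is equivalent under $\sim_o$ to an abstraction normal form \eqref{eq:abstract_normal_form} whose subproof $\psi$ translates to $N$; apply the inductive hypothesis to $\psi$ (as a preproof of $\Delta, x:q, \Delta' \vdash r$, which has a repetition-free antecedent since $x$ is fresh) to replace it by a normal preproof, then absorb any trailing weakenings/ladders of $\psi$ appropriately. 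If $M$ is an application, Lemma \ref{lemma:preimage_app} gives an application normal form with branches $\zeta, \tau_j$; the inductive hypothesis makes each branch equivalent under $\sim_p$ to a normal preproof, and after discarding the trailing ladders/weakenings of those branches into well-structured pieces (Lemma \ref{lemma:subproof}) we obtain a well-structured $\pi'$ by Proposition \ref{prop:normal_form}(III), whose residual bottom ladders and weakenings can then be commuted into the canonical order demanded by Definition \ref{defn:normal}, using \eqref{tau_ex_ex}, \eqref{comm_weak_weak} and the ordering arguments already used in the proof of Lemma \ref{lemma:preimage_app}.

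The main obstacle I anticipate is the bookkeeping in the application case: verifying that the ladders $(\operatorname{lad})^{c_i}$ and weakenings $(\operatorname{weak})^{d_j}$ left over at the bottom can always be rearranged, using only $\tau$-equivalence and commuting conversions, into the strictly increasing index order of Definition \ref{defn:normal} without disturbing the well-structured subproof above them — and that this rearrangement interacts correctly with the well-ordering property. This is exactly the content that Lemma \ref{lemma:preimage_app} conditions (vi), (vii) and Remark \ref{remark:on_normal} were set up to handle, so the work is largely a matter of assembling those pieces: the ladders commute among themselves and past the weakenings by \eqref{tau_ex_ex} and \eqref{comm_ex_weak}-type conversions, and the weakenings commute among themselves by \eqref{comm_weak_weak}. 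Once the ordering is fixed the resulting preproof matches Definition \ref{defn:normal} verbatim, so the induction closes and the lemma follows.
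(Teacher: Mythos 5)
Your proposal is correct and follows essentially the same route as the paper: reduce to a special $(L\imp)$-normal form via Lemma \ref{lemma:special_l_normal}, then induct on the length of $f^\Gamma_p(\pi)$ using Lemmas \ref{lemma:preimage_vars}, \ref{lemma:preimage_abstractions}, \ref{lemma:preimage_app} together with Proposition \ref{prop:normal_form}. The only point where the paper is a little more explicit than you are is in the application case, where it observes that the branches produced by the inductive hypothesis cannot end in weakenings by condition (ii) of an application normal form, and that any trailing exchanges must implement the identity permutation (both the branch and its well-structured subproof being well-ordered) and so can be deleted by \eqref{tau_ex_ex}, making the branches genuinely well-structured rather than merely normal.
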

\begin{proof}
We may by Lemma \ref{lemma:special_l_normal} prove the lemma for special $(L \imp)$-normal forms $\pi$, in which case the proof is by induction on the length of $M = f^\Gamma_p(\pi)$.
By Lemma \ref{lemma:preimage_vars}, Lemma \ref{lemma:preimage_abstractions} and Lemma \ref{lemma:preimage_app} $\pi$ is equivalent under $\sim_o$ to one of the three types of normal forms $\pi'$. In the base case $M$ is a variable, and the claim follows from Proposition \ref{prop:normal_form} (I). 

For the inductive step, if $\pi'$ is an abstraction normal form, we may assume by the inductive hypothesis that the subproof $\psi$ obtained by deleting the final $(R \imp)$ rule is normal, and after moving exchanges and weakenings below the $(R \imp)$ rule we may assume $\psi$ satisfies the hypotheses of Proposition \ref{prop:normal_form} (II), so that $\pi'$ is normal. If $\pi'$ is an application normal form then by the inductive hypothesis we may assume $\tau_j$ for $1 \le j \le b$ and $\zeta$ are normal. By condition (ii) of an application normal form these branches cannot end in $(\operatorname{weak})$ rules. Let $\kappa$ denote one of the $\tau_j$ or $\zeta$ and suppose that $\kappa$ ends in a series of $(\operatorname{ex})$ rules. The well-structured subproof $\kappa'$ of $\kappa$ obtained by deleting these rules is well-ordered and has the same translation as $\kappa$, which is also well-ordered, so the series of $(\operatorname{ex})$ rules implement the identity permutation and may be deleted using \eqref{tau_ex_ex}. Hence we may assume without loss of generality that $\tau_j$ for $1 \le j \le b$ and $\zeta$ are not just normal, but well-structured. Hence by Proposition \ref{prop:normal_form} (III) the preproof $\pi'$ is normal.
\end{proof}

Recall that $\mathbb{N} \Lambda^Q_p$ denotes the subset of $\beta\eta$-normal forms in $\Lambda^Q_p$. We let $\mathbb{N}\Sigma^\Gamma_p$ denote the set of normal preproofs of $\Gamma \vdash p$ in the sense of Definition \ref{defn:normal}.

\begin{theorem}\label{theorem:normal_form_prop} If $\Gamma$ is repetition-free there is a commutative diagram
\be\label{eq:most_important_map_2}
\xymatrix@C+1pc@R+1pc{
\Sigma^\Gamma_p/ \sim_p\ar^-{\cong}[r] & \Lambda^Q_p/ \! =_{\beta \eta}\\
\mathbb{N}\Sigma^\Gamma_p / \sim_\alpha \ar[u]^-{\cong}\ar_-{\cong}[r] & \mathbb{N}\Lambda^Q_p \ar_-{\cong}[u]
}
\ee
in which the rows are bijections induced by the function $f^\Gamma_p$ and the columns are bijections induced by the inclusions $\mathbb{N} \Sigma^\Gamma_p \subseteq \Sigma^\Gamma_p$ and $\mathbb{N} \Lambda^Q_p \subseteq \Lambda^Q_p$.
\end{theorem}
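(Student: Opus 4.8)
The plan is to assemble the commutative square \eqref{eq:most_important_map_2} from pieces that have, in effect, already been established in the preceding lemmas and propositions; the theorem is a packaging result rather than one requiring new arguments. First I would define the four maps. The top row is the bijection of Proposition \ref{prop:curry_howard_actualwork}. The right column is the inclusion $\mathbb{N}\Lambda^Q_p \subseteq \Lambda^Q_p$ followed by the quotient map; since the simply-typed lambda calculus is strongly normalising and confluent (cited via \cite{sorensen}), every $\beta\eta$-equivalence class contains exactly one $\beta\eta$-normal form, so this map is a bijection. The bottom row is the map induced by $f^\Gamma_p$ restricted to normal preproofs; the left column is induced by the inclusion $\mathbb{N}\Sigma^\Gamma_p \subseteq \Sigma^\Gamma_p$.

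Next I would verify each of the three remaining claims. \textbf{Left column is a bijection:} surjectivity is exactly Lemma \ref{lemma:all_normal_eq}, which says every preproof is $\sim_p$-equivalent to a normal one (using that $\Gamma$ is repetition-free). Injectivity amounts to: if $\pi_1,\pi_2$ are normal and $\pi_1 \sim_p \pi_2$ then $\pi_1 \sim_\alpha \pi_2$. Here I would argue that $\pi_1 \sim_p \pi_2$ forces $f^\Gamma_p(\pi_1) =_{\beta\eta} f^\Gamma_p(\pi_2)$ by Lemma \ref{lemma:F_welldefined}(ii); a normal preproof is in particular a special $(L\imp)$-normal form, so its translation is a $\beta\eta$-normal form by Lemma \ref{lemma:special_means_betaetanormal}, hence by the uniqueness of $\beta\eta$-normal forms \cite[Corollary 4.3]{selinger} we get $f^\Gamma_p(\pi_1) = f^\Gamma_p(\pi_2)$ on the nose; then Proposition \ref{prop:alpha_iff_alpha} yields $\pi_1 \sim_\alpha \pi_2$. \textbf{Bottom row is a bijection:} injectivity is again Proposition \ref{prop:alpha_iff_alpha} together with the uniqueness of $\beta\eta$-normal forms just invoked; surjectivity onto $\mathbb{N}\Lambda^Q_p$ follows by composing surjectivity of the left column with the known surjectivity of $\mathbb{SL}\Sigma^\Gamma_p \to \mathbb{N}\Lambda^Q_p$ from the proof of Proposition \ref{prop:curry_howard_actualwork}, or more directly: given a $\beta\eta$-normal $M$, that proof produces a special $(L\imp)$-normal form mapping to $M$, and Lemma \ref{lemma:all_normal_eq} replaces it by a normal preproof, whose translation is still $M$ since the relations used in Lemma \ref{lemma:all_normal_eq} preserve the translated term (they lie in $\sim_o$, cf. Lemma \ref{lemma:F_welldefined}(i)) — I would need to check that the passage through Proposition \ref{prop:normal_form} in the proof of Lemma \ref{lemma:all_normal_eq} also preserves the term, which it does since it only rearranges via $\sim_o$-relations and $\alpha$-equivalence.

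\textbf{Commutativity:} the square commutes essentially by construction — going right-then-up sends a normal $\pi$ to $[f^\Gamma_p(\pi)]_{\beta\eta}$, and going up-then-right sends $[\pi]_{\sim_\alpha}$ to $[\pi]_{\sim_p}$ and then to $[f^\Gamma_p(\pi)]_{\beta\eta}$; these agree because $f^\Gamma_p$ is well-defined on $\sim_\alpha$-classes (indeed on $\sim_o$-classes, Lemma \ref{lemma:F_welldefined}(i)) and the top and bottom rows are both induced by the same function $f^\Gamma_p$. I would spell this out by a short diagram chase using the factorisation diagram in the proof of Proposition \ref{prop:curry_howard_actualwork}. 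The step I expect to be the main obstacle — or at least the one requiring the most care — is confirming injectivity of the left column, i.e. that $\sim_p$-equivalent normal preproofs are already $\sim_\alpha$-equivalent; this is where the full strength of Proposition \ref{prop:alpha_iff_alpha} is needed, and one must be careful that a chain of $\sim_p$-generating relations between two normal preproofs, though it may pass through non-normal intermediates, still lets one conclude equality of the translated $\beta\eta$-normal forms and hence apply Proposition \ref{prop:alpha_iff_alpha}. Everything else is bookkeeping: assembling bijections whose existence is already in hand and checking the one evident commutativity identity.
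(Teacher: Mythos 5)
Your proposal is correct and follows essentially the same route as the paper: top row from Proposition \ref{prop:curry_howard_actualwork}, right column from uniqueness of $\beta\eta$-normal forms, left-column surjectivity from Lemma \ref{lemma:all_normal_eq}, and Proposition \ref{prop:alpha_iff_alpha} as the remaining key input. The only difference is presentational — the paper notes that, given the other three facts and commutativity, it suffices to check injectivity of the bottom row, whereas you additionally verify left-column injectivity and bottom-row surjectivity directly; these are redundant but harmless, being exactly the diagram chase the paper leaves implicit.
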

\begin{proof}
There is clearly a commutative diagram of this form, and the first row is a bijection by Proposition \ref{prop:curry_howard_actualwork}. The second column is a bijection by the existence and uniqueness of $\beta\eta$-normal forms \cite[Corollary 4.3]{selinger}. Surjectivity of the first column is Lemma \ref{lemma:all_normal_eq} so it suffices to prove the second row is injective, which is Proposition \ref{prop:alpha_iff_alpha}.
\end{proof}

\begin{example}\label{example:church_2_normalise} The well-labelled Church numeral $\underline{2}$ of Example \ref{example:translation_2} is not normal, since it contains a boundary violation of $(L \imp)$ type, highlighted below:
\begin{prooftree}
        \AxiomC{}
        \RightLabel{$({\operatorname{ax}})$}
        \UnaryInfC{$x:p \vdash x:p$}
        \AxiomC{}
        \RightLabel{$({\operatorname{ax}})$}
        \UnaryInfC{$\textcolor{blue}{x'}:p \vdash x':p$}
        \AxiomC{}
        \RightLabel{$({\operatorname{ax}})$}
        \UnaryInfC{$x'':p \vdash x'':p$}
        \RightLabel{$(L \imp)$}
        \BinaryInfC{$y': p \imp p, \textcolor{blue}{x'}:p \vdash (y'\, x') : p$}
        \RightLabel{$(L \imp)$}
        \BinaryInfC{$y: p \imp p, y': p \imp p, x:p \vdash (y' \, (y \, x)) : p$}
        \RightLabel{$(\operatorname{ctr})$}
        \UnaryInfC{$y: p \imp p , x:p\vdash (y \, (y \, x)):p$}
        \RightLabel{$(R \imp)$}
        \UnaryInfC{$y:p \imp p \vdash \lambda x . (y \, (y \, x)) : p \imp p$}
\end{prooftree}
The algorithm of the proof of Lemma \ref{lemma:preimage_app} eliminates this boundary violation as part of the ``migration'' phase, which consists in this case of an application of \eqref{comm_L_L2} resulting in the $\sim_o$-equivalent preproof
\begin{prooftree}
        \AxiomC{}
        \RightLabel{$({\operatorname{ax}})$}
        \UnaryInfC{$x:p \vdash x:p$}
        \AxiomC{}
        \RightLabel{$({\operatorname{ax}})$}
        \UnaryInfC{$\textcolor{blue}{x'}:p \vdash x':p$}
        \RightLabel{$(L \imp)$}
        \BinaryInfC{$y: p \imp p, x:p \vdash (y\, x) : p$}
        \AxiomC{}
        \RightLabel{$({\operatorname{ax}})$}
        \UnaryInfC{$x'':p \vdash x'':p$}
        \RightLabel{$(L \imp)$}
        \BinaryInfC{$y': p \imp p, y: p \imp p, x:p \vdash (y' \, (y \, x)) : p$}
        \RightLabel{$(\operatorname{ex})$}
        \UnaryInfC{$y: p \imp p, y': p \imp p, x:p \vdash (y' \, (y \, x)) : p$}
        \RightLabel{$(\operatorname{ctr})$}
        \UnaryInfC{$y: p \imp p , x:p\vdash (y \, (y \, x)):p$}
        \RightLabel{$(R \imp)$}
        \UnaryInfC{$y:p \imp p \vdash \lambda x . (y \, (y \, x)) : p \imp p$}
\end{prooftree}
This is still not normal, but applying \eqref{co_ctr_comm_alt} the above preproof is $\sim_o$-equivalent to
\begin{center}
\begin{tabular}{ >{\centering}m{12cm} >{\centering}m{0.5cm}}
        \AxiomC{}
        \RightLabel{$({\operatorname{ax}})$}
        \UnaryInfC{$x:p \vdash x:p$}
        \AxiomC{}
        \RightLabel{$({\operatorname{ax}})$}
        \UnaryInfC{$x':p \vdash x':p$}
        \RightLabel{$(L \imp)$}
        \BinaryInfC{$y': p \imp p, x:p \vdash (y'\, x) : p$}
        \AxiomC{}
        \RightLabel{$({\operatorname{ax}})$}
        \UnaryInfC{$x'':p \vdash x'':p$}
        \RightLabel{$(L \imp)$}
        \BinaryInfC{$y: p \imp p, y': p \imp p, x:p \vdash (y \, (y' \, x)) : p$}
        \RightLabel{$(\operatorname{ctr})$}
        \UnaryInfC{$y: p \imp p , x:p\vdash (y \, (y \, x)):p$}
        \RightLabel{$(R \imp)$}
        \UnaryInfC{$y:p \imp p \vdash \lambda x . (y \, (y \, x)) : p \imp p$}
        \DisplayProof
        &
        \tagarray{\label{eq:2_final_normal}}
    \end{tabular}
\end{center}
which is normal.
\end{example}

\begin{remark}\label{remark:inverse_map} The translation function $f^\Gamma_p$ induces by Theorem \ref{theorem:normal_form_prop} a bijection between $\alpha$-equivalence classes of normal preproofs and $\beta\eta$-normal lambda terms. The inverse map
\be\label{equation:inverse_translation}
g^\Gamma_p: \mathbb{N}\Lambda^Q_p \lto \mathbb{N} \Sigma^\Gamma_p / \sim_\alpha
\ee
is implicit in Proposition \ref{prop:normal_form} and we now make this explicit. Given a $\beta\eta$-normal lambda term $M$ with free variables contained in $Q$ and writing $\Delta = \operatorname{FV}^{seq}(M)$, the preproof $g^\Gamma_p(M)$ is the preproof $g^\Delta_p(M)$ followed by the ladders and weakenings uniquely determined by the pair $\Delta, \Gamma$ as explained in Remark \ref{remark:on_normal}. It therefore suffices to define a well-structured preproof $g^\Gamma_p(M)$ in the special case where $\Gamma = \operatorname{FV}^{seq}(M)$, which we denote by $g(M)$. 

The preproof $g(M)$ has the following inductive definition:
\begin{itemize}
\item If $M = x:p$ is a variable then $g(M)$ is an axiom rule.
\item If $M = \lambda x. N$ is an abstraction with $x:q$ and $N:r$ then $g(M)$ is $g(N)$ followed by, if $x \notin \operatorname{FV}(N)$, a rule pair $(\operatorname{weak}),(R \imp)$ respectively introducing and then eliminating an occurrence of $x:q$, or if $x \in \operatorname{FV}(N)$, a $(R \imp)$ rule eliminating $x:q$. 
\item If $M = (M^1 \, M^2)$ is an application then it is of the form
\be
\big( (\cdots( (y_b \, L_b) \, L_{b-1} ) \, \cdots L_1) \, R\big)
\ee
where $y_b$ is a variable and $R$ and $L_j$ for $1 \le j \le b$ are $\beta\eta$-normal forms. Then $g(M)$ is the application normal form with branches $g(R)$ and $g(L_j)$ for $1 \le j \le b$ ending in the uniquely determined derived contractions.
\end{itemize}
For example, with $M = \lambda x.(y \, (y \, x))$ the normal preproof $g(M)$ is \eqref{eq:2_final_normal}.
\end{remark}


\begin{remark}\label{opencurryhoward}
It is convenient to treat the relationship between our sequent calculus and Zucker's via the Curry-Howard correspondence. Zucker has defined a surjective function which maps from the set of derivations in his sequent calculus $\scr{S}$ \cite[\S 2.2]{zucker} to derivations in natural deduction $\scr{N}$ \cite[\S 2.3]{zucker} denoted $\varphi: \scr{S} \to \scr{N}$ \cite[\S 2.4]{zucker}. Moreover, an equivalence relation $\sim$ on $\scr{S}$ is defined \cite[\S 4.1.2]{zucker} so that the induced map $\scr{S}/\!\sim \lto \scr{N}$ is a bijection. 
The sequent calculus $\scr{S}$ differs from the one considered in this paper in that it omits the weakening and exchange rules; the absence of exchange is compensated by the system $\scr{S}$ having a set of formulas as the antecedent of a sequent, and the absence of weakening is compensated by a special form of the $(R \imp)$ rule.

Let $\Gamma$ be a repetition-free sequence of variables and $\gamma$ a set of indexed formulas in the sense of Zucker \cite[\S 2.2.2]{zucker} such that multiple occurrences of formulas in $\Gamma$ are represented by formulas with distinct indices. With $Q = [\Gamma]$, by Theorem \ref{gentzen_mints_zucker}, \cite[Theorem 1]{zucker} and the Curry-Howard correspondence \cite[\S 6.5]{selinger}, we have a sequence of bijections
\be
\xymatrix@C+2pc{
\mathbb{N} \Sigma^\Gamma_p/\!\sim_\alpha \ar[r]_-{\cong} & \mathbb{N} \Lambda^Q_p \ar[r]_-{\cong} & \scr{N}^{\gamma}_p \ar[r]^-{\varphi^{-1}}_-{\cong} & \scr{S}^{\gamma}_p/\!\sim
}
\ee
where $\scr{N}^{\gamma}_p$ is the set of natural deduction derivations of $p$ from $\gamma$ (a set of assumption classes) and $\scr{S}^{\gamma}_p$ is the set of proofs in $\scr{S}$ of $\gamma \vdash p$, following the convention of \cite[\S 2.4.1]{zucker}.

Informally this bijection takes a normal preproof $\pi \in \bb{N}\Sigma_p^\Gamma$, erases the final weakenings and replaces all $(\operatorname{weak}),(R\imp)$ pairs by Zucker's special $(R \imp)$ rule, erases all exchanges and replaces the antecedent $\Gamma$ in every sequent by an appropriate set of indexed formulas.
\end{remark}

Next we compare the Mints normal form of \cite{mints} with ours; see also \cite{troelstra_margin}. For clarity we refer to the sequent calculus proofs of Mints as \emph{derivations}.

\begin{remark}\label{remark:mints_normal_2} In the sequent calculus system GJ of Mints a normal derivation (in the sense of \cite[Definition 4]{mints}) whose translation is the Church numeral $\lambda x . (y \, (y \, x))$ is
\begin{center}
\begin{tabular}{ >{\centering}m{10cm} >{\centering}m{0.5cm}}
        \AxiomC{}
        \RightLabel{$({\operatorname{ax}})$}
        \UnaryInfC{$p \vdash p$}
        \AxiomC{}
        \RightLabel{$({\operatorname{ax}})$}
        \UnaryInfC{$p \vdash p$}
        \RightLabel{$(\operatorname{weak})$}
        \UnaryInfC{$p, \textcolor{blue}{p} \vdash p$}
        \RightLabel{$(L \imp)$}
        \BinaryInfC{$p \imp p, p \vdash p$}
        \AxiomC{}
        \RightLabel{$({\operatorname{ax}})$}
        \UnaryInfC{$p \vdash p$}
        \RightLabel{$(\operatorname{weak})$}
        \doubleLine
        \UnaryInfC{$p, \textcolor{blue}{p \imp p}, \textcolor{blue}{p} \vdash p$}
        \RightLabel{$(L \imp)$}
        \BinaryInfC{$p \imp p, p \imp p, p \vdash p$}
        \RightLabel{$(R \imp)$}
        \UnaryInfC{$p \imp p, p \imp p \vdash p \imp p$}
        \RightLabel{$(\operatorname{ctr})$}
        \UnaryInfC{$p \imp p \vdash p$}
        \DisplayProof
        &
        \tagarray{\label{eq:2_final_normal_mints}}
    \end{tabular}
\end{center}
Note the weakenings (shown coloured) are forced by GJ's use of a synchronised antecedent $\Gamma$ in the $(L \imp)$ rule\footnote{A literal reading of \cite[Definition 4]{mints} would suggest the above derivation is not M-normal, but this seems to be due to a lack of precision in \emph{loc.cit.}, which should read ``a main formula of an inference rule or axiom, with only weakenings intervening'' see also \cite[Example 1]{mints}.} and introduce global structure into the tree (since the weakening on the right branch of $p \imp p$ reflects the appearance of this formula on the left branch). The only other difference to \eqref{eq:2_final_normal} is the order of the $(R \imp), (\operatorname{ctr})$ rules. 

In general, in a Mints normal form contraction rules take place as late as possible (that is, as close as possible to the bottom of the proof tree) whereas in our normal form these rules take place as early as possible. This encapsulation means that our normal forms are composable in a way that Mints normal forms are not. For example, denoting by $\underline{2}$ the normal preproof of \eqref{eq:2_final_normal}, the preproof
\begin{center}
\AxiomC{$\underline{2}$}
\noLine
\UnaryInfC{$\vdots$}
\noLine
\UnaryInfC{$y: p \imp p \vdash M:p \imp p$}
\AxiomC{}
\RightLabel{$(\operatorname{ax})$}
\UnaryInfC{$z:q \vdash q$}
\RightLabel{$(L \imp)$}
\BinaryInfC{$t: (p \imp p) \imp q, y: p \imp p \vdash (t \, M):q$}
\DisplayProof
\end{center}
is normal. However, appending a similar $(L \imp)$ rule (with attendant weakenings) to \eqref{eq:2_final_normal_mints} results in a derivation that is not in Mints normal form; to obtain the normal form the contractions must be brought down past the $(L \imp)$ rule.

A normal derivation is cut-free, W-normal, C-normal and M-normal. A normal preproof is W-normal and M-normal but not necessarily C-normal (as the above discussion shows). Hence, apart from the differences between LJ and GJ, the only difference between our notion of normality and that of Mints lies in the arrangement of contractions.
\end{remark}

\subsection{Internal BHK}\label{section:internal_bhk}

What is the intuitionist logical reading of the $(L \imp)$ rule in sequent calculus? Let us first recall that the Brouwer-Heyting-Kolmogorov (BHK) interpretation of intuitionistic propositional logic, as given by Heyting in \cite[\S 7.1.1]{heyting} and Troelstra-van Dalen    in \cite[Chapter 1, \S 3.1, \S 5.3]{troelstra}, gives the following interpretation of the logical sign $\imp$. The following quote is from \cite[\S 7.1.1]{heyting}:
\begin{displayquote}
``The implication $\mathfrak{p} \rightarrow \mathfrak{q}$ can be asserted, if and only if we possess a construction $\mathfrak{r}$, which, joined to any construction proving $\mathfrak{p}$ (supposing the latter be effected), would automatically effect a construction proving $\mathfrak{q}$. In other words, a proof of $\mathfrak{p}$, together with $\mathfrak{r}$, would form a proof of $\mathfrak{q}$.''
\end{displayquote} 
The justification of the deduction rules of natural deduction by the BHK-interpretation is given for example in \cite[\S 1.2, \S 1.4]{troelstra}. Let us briefly provide an analogue for sequent calculus of the discussion in \cite[\S 1.2]{troelstra}, using the same language. Suppose we have established $q$ repeatedly appealing to assumption $p$. This means that we have shown how to construct a proof of $q$ from hypothetical proof of $p$; thus on the BHK-interpretation this means that we have established the implication $p \imp q$ and this justifies the $(R \imp)$ rule of sequent calculus by the same argument justifying introduction for $\imp$ in natural deduction. Consider now the following simplified form of the $(L \imp)$ rule in sequent calculus
\begin{center}
\begin{tabular}{ >{\centering}m{10cm} >{\centering}m{0.5cm}}
        \AxiomC{$\vdash p$}
        \AxiomC{$x:q \vdash r$}
        \RightLabel{$(L \imp)$}
        \BinaryInfC{$y: p \imp q \vdash r$}
        \DisplayProof
        &
        \tagarray{\label{eq:bhk_imp}}
\end{tabular}
\end{center}
Suppose that we have shown how to construct a proof of $p$, and a proof of $r$ from a hypothetical proof of $q$. Then we can we may construct a proof of $r$ from a hypothetical proof of $p \imp q$ according to the following recipe. Given the earlier justification of the $(R \imp)$ rule, to prove $p \imp q$ we must possess a construction of a proof of $q$ from a hypothetical proof of $p$. Enact this construction on the given proof of $p$, and enact on the resulting proof of $q$ the construction which produces from such an object a proof of $r$.
\\

With this intuitionist reading of $(L \imp)$ in hand let us now consider the logical status of the following simplified forms of the rules \eqref{lambda_L_L_ctr} and \eqref{lambda_weak_L}:
    \begin{center}
    \begin{tabular}{ >{\centering}m{5cm} >{\centering}m{0.5cm} >{\centering}m{6cm} >{\centering}m{0.5cm}}
        \AxiomC{$\vdash p$}
        \AxiomC{$x:q, x':q \vdash r$}
        \RightLabel{$(\operatorname{ctr})$}
        \UnaryInfC{$x:q \vdash r$}
        \RightLabel{$(L \imp)$}
        \BinaryInfC{$y: p \imp q \vdash r$}
        \DisplayProof
        
        &
        $\sim_\lambda$
        &

        \AxiomC{$\vdash p$}
        \AxiomC{$\vdash p$}
        \AxiomC{$x:q, x': q \vdash r$}
        \RightLabel{$(L \imp)$}
        \BinaryInfC{$x:q, y': p \imp q \vdash r$}
        \RightLabel{$(L \imp)$}
        \BinaryInfC{$y: p\imp q, y': p\imp q \vdash r$}
        \RightLabel{$(\operatorname{ctr})$}
        \UnaryInfC{$y: p \imp q\vdash r$}
        \DisplayProof        
        &
        \tagarray{\label{lambda_L_L_ctr_bhk}}
    \end{tabular}
    \end{center}
    
\begin{center}
\begin{tabular}{ >{\centering}m{5cm} >{\centering}m{0.5cm} >{\centering}m{6cm} >{\centering}m{0.5cm}} 
     \AxiomC{$\vdash p$}
     \AxiomC{$\vdash r$}
     \RightLabel{$(\operatorname{weak})$}
     \UnaryInfC{$x: q \vdash r$}
     \RightLabel{$(L \imp)$}
     \BinaryInfC{$y: p \imp q \vdash r$}
     \DisplayProof &$\sim_\lambda$&
     \AxiomC{$\vdash r$}
     \RightLabel{$(\operatorname{weak})$}
     \UnaryInfC{$y: p \imp q\vdash r$}
     \DisplayProof
     &
     \tagarray{\label{lambda_weak_L_bhk}}
\end{tabular}
\end{center}
What is the intuitionist logical reading of the $(\operatorname{ctr})$ rule in sequent calculus? There are at least two. Suppose we have shown how to construct a proof of $r$ from two hypothetical proofs of formulas $q,q'$ that just happen to be the same, that is $q = q'$. Joining this with a construction of a proof of $q$ and a construction of a proof of $q'$ certainly effects a construction of a proof of $r$. The question is: does simply stating $q = q'$ and showing a single construction of a proof of $q$ suffice as a construction of a proof of $r$? One possible answer is ``yes it suffices, because we can simply run the construction of a proof of $q$ and copy the result'' and another is ``yes it suffices, because however many copies are required, we can repeat the construction that number of times (necessarily entailing the repetition of earlier constructions that feed into this one)''. Principle \eqref{lambda_L_L_ctr_bhk} and the corresponding cut-elimination rule \eqref{cut:log_vs_ctr} correspond to the endorsement of the second possible reading: there is no fundamental operation of ``copying'' when it comes to constructions of proofs. This is another logical principle (the first being coalgebraic structure, see Definition \ref{co_equivalence}) that is emphasised by linear logic.

The intuitionist logical reading of $(\operatorname{weak})$ is that any construction of a proof of $r$ is also a construction of a proof of $r$ from a hypothetical proof of $q$, which is ``ignored'' during the construction. The question here: is ignoring a hypothetical proof of $q$, constructed from a proof of $p$ by a hypothetical proof of $p \imp q$, the same as ignoring the hypothetical proof of $p \imp q$? One possible answer is ``no, because in the former case more information is discarded than in the latter'' and another is ``yes it is the same, I do not believe in a logical distinction between ignoring a machine and ignoring all of its outputs''. Principle \eqref{lambda_weak_L_bhk} and the corresponding cut-elimination rule \eqref{cut:log_vs_weak} endorse the second reading.

\begin{remark}[(Internal vs external composition)]\label{remark:limp_internal_cut}
Composition in the category $\cat{S}$ (recall that this means $\cat{S}_\Gamma$ with $\Gamma$ empty) which we henceforth refer to as \emph{external} composition, is effected via the $(\operatorname{cut})$ rule. The \emph{internal} composition, in the sense of the theory of Cartesian closed categories, is a morphism $\kappa \in \cat{S}(p, (p \imp q) \imp q)$ given by
\begin{center}
\begin{tabular}{ >{\centering}m{10cm} >{\centering}m{0.5cm}}
        \AxiomC{}
        \UnaryInfC{$x:p \vdash p$}
        \AxiomC{}
        \UnaryInfC{$y:q \vdash q$}
        \RightLabel{$(L \imp)$}
        \BinaryInfC{$x:p, z: p \imp q \vdash q$}
        \RightLabel{$(R \imp)$}
        \UnaryInfC{$x:p \vdash (p \imp q) \imp q$}
        \RightLabel{$(R \imp)$}
        \UnaryInfC{$\vdash p \imp ( (p \imp q) \imp q)$}
        \DisplayProof
        &
        \tagarray{\label{eq:bhk_imp2}}
\end{tabular}
\end{center}
In this sense the $(L \imp)$ rule determines structure on the category $\cat{S}$ which internalises composition, and thus the $(\operatorname{cut})$ rule. To make this connection fully precise, let us compare the rules for $(\operatorname{cut})$ with those for $(L \imp)$ in our sequent calculus system:
\begin{itemize}
\item \eqref{cut:struc_vs_any} for $(\operatorname{cut})$ vs \eqref{comm_weak_L},\eqref{comm_L_ctr},\eqref{comm_L_ex} for $(L \imp)$.
\item \eqref{cut:L_vs_any} for $(\operatorname{cut})$ vs \eqref{comm_L_L2} for $(L \imp)$.
\item \eqref{cut:log_vs_struc_np} for $(\operatorname{cut})$ vs \eqref{comm_weak_L},\eqref{comm_L_ctr2},\eqref{comm_L_ex2} for $(L \imp)$.
\item \eqref{cut:log_vs_ctr} for $(\operatorname{cut})$ vs \eqref{lambda_L_L_ctr} for $(L \imp)$.
\item \eqref{cut:log_vs_weak} for $(\operatorname{cut})$ vs \eqref{lambda_weak_L} for $(L \imp)$.
\item \eqref{cut:R_vs_R} for $(\operatorname{cut})$ vs \eqref{comm_R_L} for $(L \imp)$.
\item \eqref{cut:R_vs_L} for $(\operatorname{cut})$ has no analogue for $(L \imp)$.
\item \eqref{cut:R_vs_L_nonp} for $(\operatorname{cut})$ vs \eqref{comm_L_L} for $(L \imp)$.
\item \eqref{cut:R_vs_L_nonp2} for $(\operatorname{cut})$ vs \eqref{comm_L_L2} for $(L \imp)$.
\end{itemize}
\end{remark}

\subsection{Local vs global}\label{section:comparison}

As elaborated in the introduction, Gentzen-Mints-Zucker duality is interesting because sequent calculus proofs and lambda terms are different. The principal difference is that the structure of sequent calculus is \emph{local} while that of lambda calculus is \emph{global}.

Let us first collect some preliminary comments. Theorem \ref{gentzen_mints_zucker} can be read as saying that the ``true'' proof objects are $\beta\eta$-equivalence classes of lambda terms (or via the Curry-Howard correspondence, natural deduction proofs) since there is, up to $\alpha$-equivalence, a unique such object representing every morphism in $\cat{S}_\Gamma$. From this point of view sequent calculus is a system that enables us to work on these objects \cite[p.39]{girard_blind} and a proof in sequent calculus ``can be looked upon as an instruction on how to construct a corresponding natural deduction'' \cite[\S A.2]{prawitz} (although see \cite[\S 1.5.1]{zucker}). This raises a natural question: what advantages does this more complicated object, the sequent calculus proof, have over the lambda term that it constructs?
\\

This brings us to the issue of global structure in lambda calculus. A variable $x:p$ may occur multiple times as a free variable in a term $M$, and hence $\beta$-reduction involves global coordination: reducing $(\lambda x.M) N$ to $M[ x:= N ]$ may make arbitrarily many ``simultaneous'' substitutions. This global rewriting is the principal reason that time complexity is difficult to analyse directly in lambda calculus. If $\pi$ is a well-labelled preproof with $f^\Gamma_p(\pi) = M$ then $\pi$ contains, in the form of the contraction tree of the occurrence $x:p$ in $\Gamma$, a specification for \emph{how} any two occurrences of $x:p$ in $M$ are equal and it is by use of this information that cut-elimination is able to present a refinement of $\beta$-reduction which is local, in the precise sense that the relation \eqref{cut:log_vs_ctr} represents copying a term only once. The advantage of the sequent calculus proof is that it provides the ``missing'' structural rules $(\operatorname{ctr}), (\operatorname{weak})$ and $(\operatorname{ex})$ that allow global $\beta$-reduction steps to be replaced by more local transformations.

The generating relations of proof equivalence for sequent calculus preproofs also involve global changes to preproof trees, so this dichotomy between local and global needs to be understood in the proper sense. The minor examples are $\alpha$-equivalence and the strong ancestor substitution in \eqref{cut:ax_left}. The more important instances are the generating relations \eqref{lambda_L_L_ctr} and \eqref{cut:log_vs_ctr} which copy a branch and \eqref{lambda_weak_L} and \eqref{cut:log_vs_weak} which delete a branch; various other relations rearrange branches. Apart from $\alpha$-equivalence and this copying, deleting and rearranging of branches, the changes in the proof tree are localised to a small group of nearby vertices, edges and their labels and in this sense the generating relations of proof equivalence are local. For further discussion of ``locality'' in the context of differences between sequent calculus and natural deduction see \cite[\S 3]{negriplato} and \cite{negri}.


\subsection{Related work}\label{section:related_work}

We have already discussed in some detail the relation of our work to that of Zucker \cite{zucker} and Mints \cite{mints}, see Remark \ref{remark:zucker_1}, Remark \ref{remark:zucker_2}, Remark \ref{opencurryhoward} and Remark \ref{remark:mints_normal_2}. In this section we contrast our approach to that of  Dyckhoff-Pinto \cite{dyckhoffpinto} and Pottinger \cite{pottinger}.

The most important differences between sequent calculus and natural deduction are the explicit structural rules in the former, and the fact that sequent calculus has a left introduction rule for $\imp$ whereas natural deduction has an elimination rule
\begin{center}
\begin{tabular}{ >{\centering}m{10cm} >{\centering}m{0.5cm}}
        \AxiomC{$\Gamma \vdash p \imp q$}
        \AxiomC{$\Gamma \vdash p$}
        \RightLabel{$(\imp E)$}
        \BinaryInfC{$\Gamma \vdash q$}
        \DisplayProof
        &
        \tagarray{\label{eq:nd_elim}}
\end{tabular}
\end{center}
We refer to the fact that the antecedent $\Gamma$ is the same in all the sequents appearing in the elimination rule by saying that the antecedents are \emph{synchronised}. This allows for a form of contraction in natural deduction, as shown in the following example.

\begin{example} Compare the Church numeral $\underline{2}$ in sequent calculus (Example \ref{example:church_2}) to the natural deduction
\begin{center}
\begin{tabular}{ >{\centering}m{10cm} >{\centering}m{0.5cm}}
        \AxiomC{}
        \RightLabel{$({\operatorname{ax}})_f$}
        \UnaryInfC{$\Gamma \vdash p \imp p$}
        \AxiomC{}
        \RightLabel{$({\operatorname{ax}})_f$}
        \UnaryInfC{$\Gamma \vdash p \imp p$}
        \AxiomC{}
        \RightLabel{$({\operatorname{ax}})_x$}
        \UnaryInfC{$\Gamma \vdash p$}
        \RightLabel{$(\imp E)$}
        \BinaryInfC{$\Gamma \vdash p$}
        \RightLabel{$(\imp E)$}
        \BinaryInfC{$f: p \imp p, x: p \vdash p$}
        \RightLabel{$(\imp I)_x$}
        \UnaryInfC{$f: p \imp p \vdash p \imp p$}
        \RightLabel{$(\imp I)_f$}
        \UnaryInfC{$\vdash (p \imp p) \imp (p \imp p)$}
        \DisplayProof
        &
        \tagarray{\label{eq:nd_2}}
\end{tabular}
\end{center}
where $\Gamma = \{ f: p \imp p, x: p \}$. Here we follow natural deduction as presented in \cite[\S 6.4, \S 6.5]{selinger} noting that any presentation of natural deduction that arrives at a bijection between deductions and lambda terms must have a similar flavour. 

Observe that the synchronised antecedent in the $\imp$ elimination rule allows for a form of contraction on $f$ at the cost of introducing global structure (the axiom rules $(\operatorname{ax})_f$ must include $x$ in the antecedent, and the rule $(\operatorname{ax})_x$ must include $f$).
\end{example}

There are a variety of systems which are ``in between'' sequent calculus and natural deduction in the sense that they either modify the $(L \imp)$ rule of sequent calculus to be more like $\imp$ elimination, or they omit some or all of the structural rules; see \cite{negri} and \cite[\S 2.2, \S 2.3, \S 2.4]{negriplato}. For example, Mints uses a system very similar to Kleene's G \cite{kleene} which modifies the $(L \imp)$ rule to have synchronised antecedent and drops exchange but keeps weakening and contraction, Zucker has a standard $(L \imp)$ rule but omits weakening and exchange, Dyckhoff-Pinto \cite{dyckhoffpinto} consider a system like Kleene's G but without any structural rules, and Pottinger \cite{pottinger} revisits the work of Zucker for a sequent calculus system without structural rules but with a standard $(L \imp)$ rule; see \cite[p.331]{pottinger}.

The motivation for these modifications appears to be primarily technical: it is easier to analyse the relationship between ``sequent calculus'' and natural deduction (or lambda calculus) if the former is redefined to be more similar to the latter. There are also applications in logic programming and proof search \cite{dyckhoffpinto} where the simplified systems are sufficient. However these modifications come at the price of introducing global structure into proofs: the synchronised antecedent of the $(L \imp)$ rule of Kleene's G necessitates changes to the Church numeral $\underline{2}$ along the lines of the natural deduction version \eqref{eq:nd_2} (see Remark \ref{remark:mints_normal_2}) and omitting structural rules works against the local nature of cut-elimination as discussed in Section \ref{section:comparison}. Since in our view it is the duality between local and global that makes the comparison of sequent calculus and natural deduction interesting, it seems desirable to avoid these compromises.
\\

Another line of development relating sequent calculus to lambda calculus due to Herbelin \cite{herbelin} builds on the work of Zucker \cite{zucker} by considering a restricted set of cut-free proofs in sequent calculus and showing that this is isomorphic to a form of lambda calculus with explicit substitutions. This yields a close alignment between cut-elimination and $\beta$-reduction. It is not the purpose of the present paper to study such alignment.



\appendix

\section{Background on lambda calculus}\label{section:intro_lambda}

In the simply-typed lambda calculus \cite[Chapter 3]{sorensen} there is an infinite set of \emph{atomic types} and the set $\Phi_{\typearrow}$ of \emph{simple types} is built up from the atomic types using $\typearrow$. Let $\Lambda'$ denote the set of untyped lambda calculus preterms in these variables, as defined in \cite[Chapter 1]{sorensen}. We define a subset $\Lambda'_{wt} \subseteq \Lambda'$ of \emph{well-typed} preterms, together with a function $t: \Lambda'_{wt} \lto \Phi_{\typearrow}$ by induction:
\begin{itemize}
\item all variables $x : \sigma$ are well-typed and $t(x) = \sigma$,
\item if $M = (P \, Q)$ and $P,Q$ are well-typed with $t(P) = \sigma \typearrow \tau$ and $t(Q) = \sigma$ for some $\sigma, \tau$ then $M$ is well-typed and $t(M) = \tau$,
\item if $M = \lambda x\ldot N$ with $N$ well-typed, then $M$ is well-typed and $T(M) = t(x) \typearrow t(N)$.
\end{itemize}
We define $\Lambda'_\sigma = \{ M \in \Lambda'_{wt} \l t(M) = \sigma \}$ and call these \emph{preterms of type $\sigma$}. Next we observe that $\Lambda'_{wt} \subseteq \Lambda'$ is closed under the relation of $\alpha$-equivalence on $\Lambda'$, as long as we understand $\alpha$-equivalence type by type, that is, we take
\[
\lambda x \ldot M =_\alpha \lambda y\ldot M[x := y]
\]
as long as $t(x) = t(y)$. Denoting this relation by $=_\alpha$, we may therefore define the sets of \emph{well-typed lambda terms} and \emph{well-typed lambda terms of type $\sigma$}, respectively:
\begin{align}
\Lambda_{wt} &= \Lambda'_{wt} / =_\alpha\,\\
\Lambda_\sigma &= \Lambda'_\sigma / =_\alpha\,.
\end{align}
Note that $\Lambda_{wt}$ is the disjoint union over all $\sigma \in \Phi_{\typearrow}$ of $\Lambda_\sigma$. We write $M: \sigma$ as a synonym for $[M] \in \Lambda_\sigma$, and call these equivalence classes \emph{terms of type $\sigma$}. Since terms are, by definition, $\alpha$-equivalence classes, the expression $M = N$ henceforth means $M =_\alpha N$ unless indicated otherwise. We denote the set of free variables of a term $M$ by $\FV(M)$.


\begin{definition}\label{defn:subst} The substitution operation on lambda terms is a family of functions
 \[
\big\{ \operatorname{subst}_\sigma: Y_\sigma \times \Lambda_{\sigma} \times \Lambda_{wt} \longrightarrow \Lambda_{wt} \big\}_{\sigma \in \Phi_{\rightarrow}} \]
We write $M[ x:= N ]$ for $\operatorname{subst}_\sigma( x, N, M )$ and this term is defined inductively (on the structure of $M$) as follows:
 \begin{itemize}
  \item if $M$ is a variable then either $M = x$ in which case $M[x := N] = N$, or $M \neq x$ in which case $M[ x:= N ] = M$.
  \item if $M = (M_1 \, M_2)$ then $M[x := N] = \big(M_1[x:= N] \, M_2[x := N]\big)$.
  \item if $M = \lambda y. L$ we may assume by $\alpha$-equivalence that $y \neq x$ and that $y$ does not occur in $N$ and set $M[x := N] = \lambda y. L[ x := N ]$. 
 \end{itemize}
 Note that if $x \notin \operatorname{FV}(M)$ then $M[ x:= N] = M$.
\end{definition}

\bibliographystyle{amsalpha}
\providecommand{\bysame}{\leavevmode\hbox to3em{\hrulefill}\thinspace}
\providecommand{\href}[2]{#2}

\end{document}